\newtheorem{theorem}{Theorem}[section]
\newtheorem{lemma}[theorem]{Lemma}
\newtheorem{corollary}[theorem]{Corollary}
\newtheorem{proposition}[theorem]{Proposition}
\theoremstyle{definition}
\newtheorem{definition}[theorem]{Definition}
\newtheorem{assumption}[theorem]{Assumption}
\theoremstyle{remark}
\newtheorem{remark}[theorem]{Remark}
\numberwithin{equation}{section}
\DeclareMathOperator*{\argmin}{arg\,min}
\begin{document}

\title{Hypoelliptic Diffusion Maps I: Tangent Bundles}



\author{Tingran Gao}
\address{Department of Mathematics, Duke University, Durham, NC 27708-0320}
\email{trgao10@math.duke.edu}


\date{\today}

\keywords{Hypoelliptic Diffusion Maps, Manifold Learning, Riemannian Geometry, Tangent Bundles}

\begin{abstract}
We introduce the concept of \emph{Hypoelliptic Diffusion Maps} (HDM), a framework generalizing \emph{Diffusion Maps} in the context of manifold learning and dimensionality reduction. Standard non-linear dimensionality reduction methods (e.g., LLE, ISOMAP, Laplacian Eigenmaps, Diffusion Maps) focus on mining massive data sets using weighted affinity graphs; Orientable Diffusion Maps and Vector Diffusion Maps enrich these graphs by attaching to each node also some local geometry. HDM likewise considers a scenario where each node possesses additional structure, which is now itself of interest to investigate. Virtually, HDM augments the original data set with attached structures, and provides tools for studying and organizing the augmented ensemble. The goal is to obtain information on individual structures attached to the nodes and on the relationship between structures attached to nearby nodes, so as to study the underlying manifold from which the nodes are sampled. In this paper, we analyze HDM on tangent bundles, revealing its intimate connection with sub-Riemannian geometry and a family of hypoelliptic differential operators. In a later paper, we shall consider more general fibre bundles.
\end{abstract}

\maketitle

\tableofcontents

\section{Introduction}
\label{sec:introduction}
Acquiring complex, massive, and often high-dimensional data sets has become a common practice in many fields of natural and social sciences; while inspiring and stimulating, these data sets can be challenging to analyze or understand efficiently. To gain insight despite the volume and dimension of the data, methods from a wide range of science fields have been brought into the picture, rooted in statistical inference, machine learning, signal processing, to mention just a few.

Among the exploding research interests and directions in data science, the relation between the graph Laplacian~\cite{Chung1997} and the manifold Laplacian~\cite{Rosenberg1997Laplacian} has emerged as a useful guiding principle. Specifically, the field of \emph{non-linear dimensionality reduction} has witnessed the emergence of a variety of Laplacian-based techniques, such as Locally Linear Embedding (LLE)~\cite{LLE2000}, ISOMAP~\cite{ISOMAP2000}, Hessian Eigenmaps~\cite{HessianLLE2003}, Local Tangent Space Alignment (LTSA)~\cite{LTSA2005}, Diffusion Maps~\cite{CoifmanLafon2006}, Orientable Diffusion Maps (ODM)~\cite{SingerWu2011ODM}, Vector Diffusion Maps (VDM)~\cite{SingerWu2012VDM}, and Schr\"odinger Eigenmaps~\cite{SSSE2014}. The general practice of these methods is to treat each object in the data set (these objects could be images, texts, shapes, etc.) as an abstract node or vertex, and form a similarity graph by connecting each pair of similar nodes with an edge, weighted by their similarity score. Built with varying flexibility, these methods provide valuable tools for organizing complex networks and data sets by ``learning'' the global geometry from the local connectivity of weighted graphs.

The Diffusion Map (DM) framework~\cite{CoifmanLafon2006,LafonThesis2004,CoifmanLafonLMNWZ2005PNAS1,CoifmanLafonLMNWZ2005PNAS2,CoifmanMaggioni2006,SingerWu2011ODM,SingerWu2012VDM} proposes a probabilistic interpretation for graph-Laplacian-based dimensionality reduction algorithms. Under the assumption that the discrete graph is appropriately sampled from a smooth manifold, it assigns transition probabilities from a vertex to each of its neighbors (vertices connected to it) according to the edge weights, thus defining a graph random walk the continuous limit of which is a diffusion process~\cite{WatanabeIkeda1981SDE,Durrett1996SC} over the underlying manifold. The eigenvalues and eigenvectors of the graph Laplacian, which converge to those of the manifold Laplacian under appropriate assumptions~\cite{BelkinNiyogi2005,BelkinNiyogi2007}, then reveal intrinsic information about the smooth manifold. More precisely, ~\cite{BBG1994} proves that these eigenvectors embed the manifold into an infinite dimensional $l^2$ space, in such a way that the \emph{diffusion distance}~\cite{CoifmanLafon2006} (rather than the geodesic distance) is preserved. Appropriate truncation of these sequences leads to an embedding of the smooth manifold into a finite dimensional Euclidean space, with small metric distortion.

Under the manifold assumption, ~\cite{SingerWu2011ODM,SingerWu2012VDM} recently observed that estimating random walks and diffusion processes on structures associated with the original manifold (as opposed to estimates of diffusion on the manifold itself) are able to handle a wider range of tasks, or obtain improved precision or robustness for tasks considered earlier. For instance, ~\cite{SingerWu2011ODM} constructed a random walk on the \emph{orientation bundle}~\cite[\S I.7]{BottTu1982} associated with the manifold, and translated the detection of orientability into an eigenvector problem, the solution of which reveals the existence of a global section on the orientation bundle; ~\cite{SingerWu2012VDM} introduced a random walk on the \emph{tangent bundle} associated with the manifold, and proposed an algorithm that embeds the manifold into an $l^2$ space using eigen-vector-fields instead of eigenvectors (and thus the name Vector Diffusion Maps (VDM)). In~\cite{Wu2013VDMEmbedding} the VDM approach is used, analogously to ~\cite{BBG1994}, to embed the manifold into a finite dimensional Euclidean space. Although the VDM embedding does not reduce the dimensionality as much as standard diffusion embedding methods, it benefits from improved robustness to noise, as illustrated by the analysis of some notoriously noisy data sets~\cite{KarouiWu2013,KarouiWu2014}.

Both ~\cite{SingerWu2011ODM} and ~\cite{SingerWu2012VDM} incorporate additional structures into the graph Laplacian framework: in ~\cite{SingerWu2012VDM} this is an extra orthogonal transformation (estimated from local tangent planes) attached to each weighted edge in the graph; in ~\cite{SingerWu2011ODM} the edge weights are overwritten with signs determined by this orthogonal transformation. These methods are successful because they incorporate more local geometry in the path to dimensionality reduction, by estimating tangent planes. In fact, the advantage of utilizing local geometric information from the tangent bundle had been noticed earlier: Fig.~\ref{fig:diff_map_tangent} shows a simple example, borrowed from~\cite[\S 2.6.1]{LafonThesis2004}, where the original data set (shown in Fig.~\ref{fig:diff_map_tangent}(a)) is a Descartes Folium with self-intersection at the origin, parametrized by
\begin{equation*}
  x \left( \theta \right) = \frac{3\tan\theta}{1+\tan^3\theta}, \quad y \left( \theta \right) = \frac{3\tan^2\theta}{1+\tan^3\theta},\quad \theta\in \left[ -\frac{\pi}{2},\frac{\pi}{2} \right].
\end{equation*}
This curve is the projection onto a plane of a helix in $\mathbb{R}^3$. A standard isotropic random walker on the planar curve would get lost at the intersection, even when sober, as shown in Fig.~\ref{fig:diff_map_tangent}(b), where the embedding completely mixes blue and red tails beyond the crossing point. In contrast, incorporating tangent information into local similarity scores yields a much more clear embedding back to $\mathbb{R}^3$ (see Fig.~\ref{fig:diff_map_tangent}(c)), which \emph{blows up} (in the sense of complex algebraic geometry~\cite[pp.182]{GriffithsHarris2011}) the self-intersecting curve at its singularity and unraveled its hidden geometry. Specifically, the similarity measure used in the modified diffusion map between any pair of points $\left( x \left( \theta_1 \right),y \left( \theta_1 \right) \right)$ and $\left( x \left( \theta_2 \right),y \left( \theta_2 \right) \right)$ on the curve is
\begin{equation*}
  \begin{aligned}
    d\left( \left(x \left( \theta_1 \right),y \left( \theta_1 \right)\right), \left(x \left( \theta_2 \right),y \left( \theta_2 \right) \right)\right)^2&=\left\|\left(x \left( \theta_1 \right), y \left( \theta_1 \right)\right)-\left(x \left( \theta_2 \right), y \left( \theta_2 \right)\right)\right\|_2^2\\
    &+\mu\left\|\frac{\left( x' \left( \theta_1 \right),y' \left( \theta_1 \right) \right)}{\left\| \left( x' \left( \theta_1 \right),y' \left( \theta_1 \right) \right)  \right\|_2}-\frac{\left( x' \left( \theta_2 \right),y' \left( \theta_2 \right) \right)}{\left\| \left( x' \left( \theta_2 \right),y' \left( \theta_2 \right) \right) \right\|_2} \right\|_2^2,
  \end{aligned}
\end{equation*}
where $\mu>0$ is a parameter that balances the two contributions to the dissimilarity score in consideration. (Two distinct tangent vectors exist at the self-intersection, but they each belong to a distinct point in the parametrization.)
\begin{figure}[htps]
  \centering
  \includegraphics[width=1.0\textwidth]{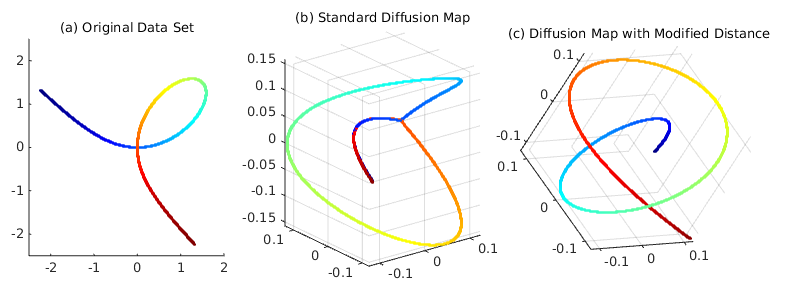}
  \caption{A Diffusion Map Incorporating Local Geometric Information}
  \label{fig:diff_map_tangent}
\end{figure}
It is possible to use the methodology of ODM and VDM to tackle similar problems in much broader contexts, where the local geometric information can be of a different type than information about tangent planes. Indeed, for many data sets, a single data point has abundant structural details; typically graph-Laplacian-based methods begin by ``abstracting away'' these details, encoding only pairwise similarites. In some circumstances, the hidden details (pixels in an image, vertices/faces on a triangular mesh, key words and transition sentences in a text, etc.) may themselves be of interest. For example, in the geometry processing problem of analyzing large collections of 3D shapes, it is desirable to enable user exploration of shape variations across the collection. In this case, abstracting each single shape as a graph node completely ignores the spatial configuration of an individual shape. On the other hand, even when sticking to pairwise similarity scores significantly simplifies the data manipulation, the best way to score similarity is not always clear. In practice, the similarity measure is often dictated by practical heuristics, which may be misguided for incompletely understood data. In addition, there are situations for which it can be proved that no finite-dimensional representation will do justice to the data. (For instance, in topological data analysis of shapes and surfaces, the only known sufficient statistics (other than the data set itself) is the set of all persistent diagrams taken from all directions~\cite{TMB2014}.)

In this paper, we propose the \emph{Hypoelliptic Diffusion Map} (HDM), a new graph-Laplacian-based framework for analyzing complex data sets. This method focuses on data sets in which pairwise similarity between data points is not sufficiently informative, but each single data point carries sophisticated \emph{individual structure}. In practice, this type of data set often arises when the data acquired is too noisy, has huge degrees of freedom, or contains un-ordered features (as opposed to sequential data). An example that has all these characteristics is, e.g., a data set in which each data point is a two-dimensional surface in $\mathbb{R}^3$, represented either by a triangular mesh or a collection of persistent diagrams. In many cases, computing pairwise similarity within such data sets requires minimizing some functional over the space of admissible pairwise correspondences, and the similarity score between two surfaces is achieved by a certain optimal correspondence map between the surfaces. It is conceivable that the optimal correspondence contains substantial information, missing from the condensed similarity score. The HDM framework is our first attempt at mining this hidden information from correspondences.

Like ODM and VDM, HDM generalizes the DM framework, but it takes an essentially different path. We are most interested in the scenario in which the individual structures themselves are also manifolds. In order to take them into consideration, we first augment the manifold underlying DM, denoted as $M$, with extra dimensions. To each point $x$ on $M$, this augmentation attaches the individual manifold at $x$, denoted as $F_x$; we assure that around each $x\in M$ there exists an open neighborhood $U$ such that on $U$ the augmented structure ``looks like'' a product of $U$ with a ``universal template'' manifold $F$. Intuitively, $M$ plays the role of a ``parametrization'' for all the $F_x$. Of course, the existence of such a universal template makes sense only if the $F_x, x\in M$ are compatible in some appropriate sense (each $F_x$ should at least be diffeomophic to $F$; we shall add more restrictions below); however, such compatibility is not uncommon for many data sets of interest, as we shall see in Section \ref{sec:fibre-bundle-assumpt}. This picture of parametrizing a family of manifolds with an underlying manifold is reminiscent of the modern differential geometric concept of a \emph{fibre bundle}, which played an important role in the development of geometry, topology, and mathematical physics in the past century. Therefore, we shall refer to this geometric object as the underlying \emph{fibre bundle} of the data set. Adopting the terminology from differential geometry, we call $M$ the \emph{base manifold}, the universal template manifold $F$ the \emph{fibre}, and each $F_x$ a \emph{fibre at $x$}.

The probabilistic interpretation of HDM is a random walk on the fibre bundle. In one step, the transition occurs either between points on adjacent but distinct fibres, or within the same fibre. Since the fibre bundle is itself a manifold (referred to as the \emph{total manifold}, denoted as $E$), this looks so far no different from a direct application of DM, only on an augmented geometric object. However, HDM also incorporates the pairwise correspondences of data points in the fibre bundle formulation, by requiring transitions between distinct fibres to satisfy certain directional constraints imposed by the correspondences. The resulting random walk is no longer a direct analogy of its standard counterpart on the total manifold, but rather a ``lift'' of a random walk on the base manifold $M$. Under mild assumptions, its continuous limit is a diffusion process on the total manifold $E$, infinitesimally generated by a \emph{hypoelliptic differential operator}~\cite{Hoermander1967} (thus the name HDM). We can then embed the whole fibre bundle into a Euclidean space using the eigenvectors of this hypoelliptic differential operator; discretely this corresponds to solving for the eigenvectors of our new graph Laplacian, referred to as a \emph{hypoelliptic Laplacian of the graph}. It turns out that, by varying a couple of parameters in its construction, the family of graph hypoelliptic Laplacians contains the discrete analogue of several important and informative partial differential operators on the fibre bundle, relating the geometry of the base manifold with that of the total manifold. Our numerical experiments revealed interesting phenomena when embedding the fibre bundle using eigenvectors of these new graph Laplacians.

Though the HDM framework applies to general fibre bundles, the focus of this paper is the study of tangent and unit tangent bundles of Riemannian manifolds; in a sequel paper we shall study more general fibre bundles. Note that even though the fibre bundles in this paper are the same as for VDM, HDM for tangent bundle nevertheless differs from VDM; we shall come back to this below.

This paper is organized as follows: Section~\ref{sec:fibre-bundle-assumpt} sets up notations and terminology, and discusses the meaning of the \emph{fibre bundle assumption}; Section~\ref{sec:hypo-diff-formulation} describes the formulation of HDM in detail; Section~\ref{sec:hypo-diff-maps-tangent} characterizes the hypoelliptic graph Laplacians on tangent and unit tangent bundles, and studies their pointwise convergence from finite samples; some numerical experiments are shown in Section~\ref{sec:numer-exper}; finally we conclude with a brief discussion and propose potentially interesting directions for future work. In Appendix~\ref{app:geom-tang-bundl} we include preliminaries on the geometry of tangent bundles and (as their subbundles) unit tangent bundles.


\section{Motivating The Fibre Bundle Assumption}
\label{sec:fibre-bundle-assumpt}
For high-dimensional data generated by some implicit process with relatively fewer degrees of freedom, it is often reasonable to assume that the data lie approximately on a manifold of much lower dimension than the ambient space. In the literature on semi-supervised learning, this is often referred to as the \emph{manifold assumption}~\cite{BelkinNiyogi2004ML,Zhu05Survey}. The goal of semi-supervised learning is to build a classifier based on a partially labeled training set; learning the underlying manifold structure of high-dimensional data is often the first step in this practice, not only because it reduces the dimensionality, but also due because it simplifies the data and exposes the structure.

Our \emph{fibre bundle assumption} is a generalization of the manifold assumption. In differential geometry, a fibre bundle is a manifold itself, that is structured as a family of related manifolds parametrized by another underlying manifold. Following~\cite{Steenrod1951}, a fibre bundle consists of the following data\footnote{Strictly speaking, the definition given here is that of a \emph{coordinate bundle}~\cite[\S 2.3]{Steenrod1951}; fibre bundles are equivalence classes of coordinate bundles. This distinction is less crucial since in the HDM framework we describe the structure of a fibre bundle using coordinates. This is similar to how manifold learning uses the notion of a manifold.}:
\begin{enumerate}
\item\label{item:1} the \emph{total manifold} $E$;
\item\label{item:2} the \emph{base manifold} $M$;
\item\label{item:3} the \emph{bundle projection} $\pi$, a surjective smooth map from $E$ onto $M$;
\item\label{item:4} the \emph{fibre manifold} $F$, satisfying
\begin{enumerate}
\item\label{item:5} for any $x\in M$, $\pi^{-1}\left( x \right)$ is diffeomorphic to $F$;
\item\label{item:6} for any $x\in M$, there exists an open neighborhood $U$ of $x$ in $M$ and a diffeomorphism $\phi_U$ from $\pi^{-1}\left( U \right)$ to $U\times F$;
\end{enumerate}
\item\label{item:7} the \emph{structure group} $G$, a topological transformation group that acts effectively\footnote{$G$ acts effectively on $F$ if $g \left( f \right)=f$ for all $f\in F$ implies $g=e$, the identity element of $G$} on $F$, satisfying
\begin{enumerate}
\item\label{item:8} for any $x\in M$ and two open neighborhoods $U$ and $V$ that both satisfy \eqref{item:6}, the diffeomorphism on $F$, defined as ``freezing the first component as $x$'', obtained from $\phi_V\circ\phi_{U}^{-1}$ as
$$g^x_{UV}:=\left[\phi_V\circ\phi_U^{-1}\right]\left( x,\cdot \right):F\rightarrow F,$$
is an element $g^x_{UV}$ in $G$, and this correspondence $$x\mapsto g^x_{UV}$$ is continuous with respect to the topology on $G$;
\item\label{item:9} for any $x\in M$ and three open neighborhoods $U,V,W$ that all satisfy \eqref{item:6},
$$g^x_{UU}=\textrm{the identity element $e$ of $G$}$$ and $$g^x_{UV}\circ g^x_{VW}=g^x_{UW}.$$
\end{enumerate}
\end{enumerate}

The diffeomorphisms in~\eqref{item:6} are also known as \emph{local trivializations}. For each $x$ on the base manifold $M$, it is conventional to denote the \emph{fibre over $x$} as $F_x:=\pi^{-1}\left( x \right)$. The \emph{fibre bundle assumption} can now be stated as follows:
\begin{framed}
\begin{assumption}[The Fibre Bundle Assumption]
  The data lie approximately on a fibre bundle, in the sense that each data object is a subset of a fibre over some point on a base manifold.
\end{assumption}
\end{framed}
Note that in the special case where the fibre manifold $F$ is a single point, the fibre bundle is diffeomorphic to its base manifold, and our fibre bundle assumption reduces to the manifold assumption.

The definition of a fibre bundle is technical, especially for the part involving the structure group $G$. The key point is that a fibre bundle is locally a product manifold, and these local pieces are carefully patched together so that the product structures remain consistent when they intersect. Product manifolds are thus fibre bundles by definition, but the concept of a fibre bundle becomes interesting only when the global geometry gets twisted and exposes non-trivial topology. The M\"obius band, the Klein bottle, and the Hopf fibration are standard illustrations of this; see e.g. ~\cite[\S  1]{Steenrod1951}.

At a first glance, the fibre bundle assumption imposes strong restrictions on the data set structure. However, when understanding the structure of individual data points is equally as interesting as understanding the structure of the data set in the large, the framework based on the manifold assumption becomes insufficient. For instance, in geometric morphormetrics~\cite{Zelditch2004}, the data sets of interest are collections of \emph{shapes}, i.e., two-dimensional smooth surfaces in $\mathbb{R}^3$, and the central problem is to infer species and other biological information from shape variations. Under the assumption that these variations are governed by relatively few degrees of freedom, it is possible to learn manifold coordinates for each shape in the collection (e.g., applying the diffusion map to the shape collection based on some pairwise shape-distance, e.g.,~\cite{RangarajanChuiBookstein1997,Memoli2008,GhoshSharfAmenta2009,Mitteroecker2009,LipmanDaubechies2011,LipmanPuenteDaubechies13,CP13}). Yet it is difficult to infer shape variation from such coordinates, since the geometry of each individual shape is ``abstracted away'', collapsing each shape to a single point. To add interpretability to the manifold learning framework in this circumstance, it is a natural idea to learn different coordinates for distinct points on the same shape, and simultaneously keep similar the coordinates of points belonging to different shapes that are developmentally or functionally equivalent. This geometric intuition is embodied by the fibre bundle assumption. From this point of view, the fibre bundle assumption is but \emph{an extra level of indirection} (borrowing a term from Andrew Koenig's ``fundamental theorem of software engineering'') for the manifold assumption.

The example of shape analysis in geometric morphometics is particularly interesting, because it contains another source of ideas that naturally models the data set as a fibre bundle: the global registration problem. Geometric morphometricians typically select equal numbers of \emph{homologous landmarks} on each shape in a globally consistent manner, then reduce the analysis to investigation of the \emph{shape space}~\cite{Kendall1984,Kendall1989} of these landmark points. Along these lines, tools like the \emph{Generalized Procrustes Analysis} (GPA)~\cite{Goodall1991Procrustes,DrydenMardia1993,Kent1994,GowerDijksterhuis2004GPA} have been developed in statistical shape analysis~\cite{DrydenMardia1998SSA}, and software products~\cite{OHigginsJones1998,Evan2010} made available. (Recent progress in this area~\cite{Nemirovski2007,So2011,NaorRegevVidick2013,BandeiraKennedySinger2013,ChaudhuryKhooSinger2013} relates \emph{semidefinite programming} with the \emph{little Grothendieck problem}.) A common basis for these GPA-based methods is that the homology of landmark points depends on human input. Manually placing landmarks on each shape among a large collection is a tedious task, and the skill to perform it correctly typically requires years of professional training. Recently, automated methods have been proposed in this field, based on efficient and robust pairwise surface comparison algorithms~\cite{PNAS2011,LipmanDaubechies2011,LipmanPuenteDaubechies13,CP13,PuenteThesis2013,PuenteBoyerGlaDaubechies2013}. However, biological morphologists typically do not compare surfaces merely pairwise: in practice, an experienced morphologist uses a large database of anatomical structures to improve the consistency and accuracy of visual interpretation of biological features. This consistency can not be trivially achieved by any geometric algorithm that uses only pairwise comparison information, even when each pairwise comparison is of remarkably high quality. This is shown in Fig.\ref{fig:nonflat_connection}, where a small set of landmarks is propagated from a \emph{Microcebus} molar to a corresponding \emph{Lepilemur} molar, along three different paths. Though all surfaces A through E are fairly similar to each other (and hence the algorithm in \cite{CP13} guarantees high quality pairwise correspondences), direct propagation of landmarks via path $A \!\!\rightarrow\!\! B$ gives a different result from $A \!\!\rightarrow\!\! C\!\!\rightarrow\!\! B$ or $A\!\!\rightarrow\!\! D\!\!\rightarrow\!\! E\!\!\rightarrow\!\! B$. Using the collection $\left\{A,B,C,D,E\right\}$ leads to a more accurate correspondence between $A$ and $B$ then an isolated $A$-$B$ comparison would.
\begin{figure}[htps]
  \centering
  \includegraphics[width=0.6\textwidth]{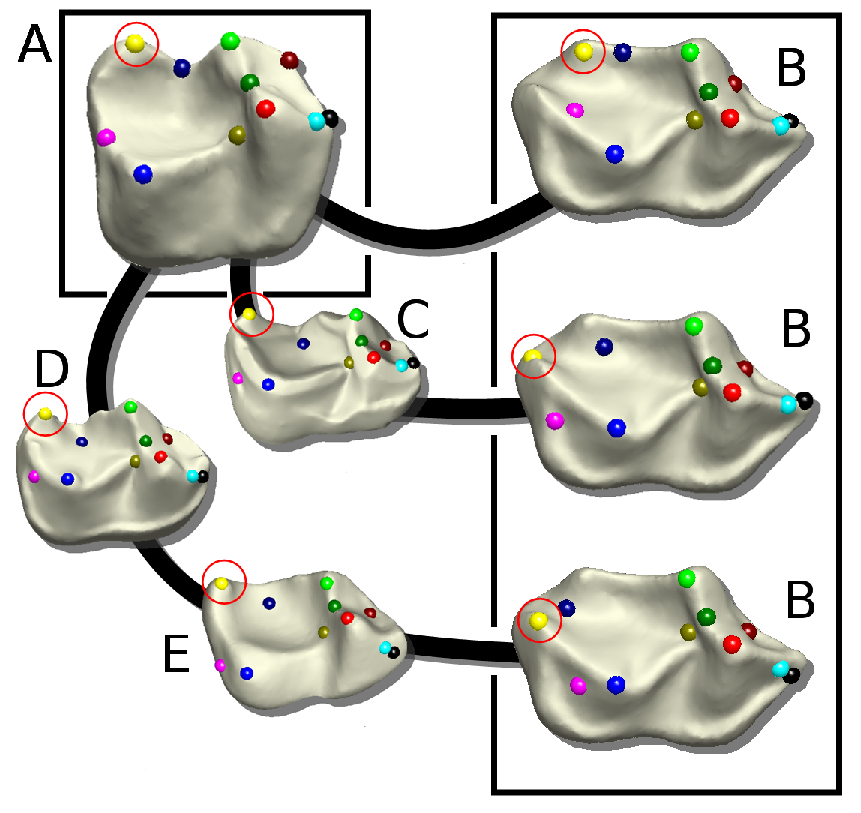}
  \caption{Non-Triviality in Analyzing a Collection of Teeth (c.f.~\cite{PNAS2011})}
  \label{fig:nonflat_connection}
\end{figure}

In the fibre bundle framework, the inherent inconsistency for pairwise-comparison-based global registration can be modeled using the concept of the \emph{holonomy} of \emph{connections}. In the sense of Ehresmann~\cite{Ehresmann1950Connexions}, a \emph{connection} is a choice of splitting the short exact sequence
\begin{equation}
  \label{eq:ses}
  0\rightarrow VE \rightarrow TE \rightarrow \pi^{*}TM \rightarrow 0
\end{equation}
In this short exact sequence, $TE$ is the tangent bundle of the total manifold $E$; $VE$ is the \emph{vertical tangent bundle} of $E$, a subbundle of $TE$ spanned by vectors that are tangent not only to $E$ at some point $u\in E$, but also to the fibre $F_{\pi \left( u \right)}$ over $\pi \left( u \right)\in M$; $\pi^{*}TM$ is the \emph{pullback bundle} of $TM$ to $TE$. The practical meaning of this definition is as follows: since the fibre $F_x$ over $x\in M$ carries manifold structure for itself, the notion of vectors that are ``tangent to the fibre'' is well-defined; they correspond to $VE$. The short exact sequence~\eqref{eq:ses} tells us that the quotient bundle of $TE$ by $VE$ is isomorphic to $\pi^{*}TM$, but there is no canonical way to choose a ``horizontal tangent bundle'' $HE$ for $TE$ such that $$HE\oplus VE = TE.$$
The definition of an Ehresmann connection is just the choice of such a subbundle $HE$. More concretely, a connection specifies for each point $u\in E$ a subspace $H_uE$ of $T_uE$, such that $H_uE$ together with all vertical tangent vectors at $u$ spans the entire tangent space $T_uE$ at $u$. Of course, the choice of subspaces $H_uE$ should depend smoothly on $u$. We shall call vectors in $H_uE$ \emph{horizontal}, while keeping in mind that this concept builds upon the connection.

As long as a connection is given on a fibre bundle, tangent vectors on the base manifold $M$ can always be canonically \emph{lifted} to $E$. That is, for any $u\in E$ and any tangent vector $X_{\pi \left( u \right)}\in T_{\pi \left( u \right)}M$, there exists in $H_uE$ a unique tangent vector $X^{\mathscr{L}}_u\in T_uE$. In fact, this follows immediately from the fact that $HE$ is isomorphic to $\pi^{*}TM$, as implied in the short exact sequence~\eqref{eq:ses}. Moreover, a smooth vector field $X$ on $M$ can be uniquely lifted to $E$, resulting in a vector field $X^{\mathscr{L}}$ on $E$ that is horizontal everywhere. This eventually enables us to lift any smooth curve $\gamma:\mathbb{R}\rightarrow M$ on the base manifold to a \emph{horizontal curve} $\tilde{\gamma}$ on $E$, defined by the ODE
\begin{equation*}
  \frac{d\tilde{\gamma}}{dt}\bigg|_{u \left( t \right)}=\left( \frac{d\gamma}{dt}\bigg|_{\pi \left( u \left( t \right) \right)} \right)^{\mathscr{L}}.
\end{equation*}
Note that the horizontal curve is uniquely determined once its starting point on $E$ is specified. Therefore, given a smooth curve $\gamma:\left[ 0,1 \right]\rightarrow M$ that connects $\gamma \left( 0 \right)$ to $\gamma \left( 1 \right)$ on $M$, there exists a smooth map from $F_{\gamma \left( 0 \right)}$ to $F_{\gamma \left( 1 \right)}$  (at least when $\gamma \left( 0 \right)$ and $\gamma \left( 1 \right)$ are sufficiently close), defined as
\begin{equation*}
  F_{\gamma \left( 0 \right)}\ni s\mapsto \tilde{\gamma}_s\left( 1 \right)\in F_{\gamma \left( 1 \right)},
\end{equation*}
where $\tilde{\gamma}_s$ denotes the horizontal lift of $\gamma$ with starting point $s$. Such constructed maps between neighboring fibres, obviously depending on the choice of path $\gamma$, is called the \emph{parallel transport along $\gamma$}. Like the concept of horizontal tangent vectors, parallel transport depends on the choice of the connection. We shall denote the parallel transport from fibre $F_y$ to fibre $F_x$ as $P^{\gamma}_{xy}:F_y\rightarrow F_x$. When $\gamma$ is a unique geodesic on $M$ that connects $y$ to $x$, we drop the super-index $\gamma$ and simply write $P_{xy}:F_y\rightarrow F_x$. We shall see later that the probabilistic interpretation of HDM (and even VDM) implicitly depends on lifting from the base manifold a path that is continuous but not necessarily smooth. Though this can not be trivially achieved by the ODE based approach, stochastic differential geometry has already prepared the appropriate tools for tackling this technicality (see e.g. ~\cite[\S 5.1.2]{Stroock2005AnalysisPaths}).

We now return to modeling the inherent inconsistency for geometric morphometrics. Similar to the diffusion map framework, where small distances are considered to approximate geodesic distances on the manifold, we assume, when the pairwise distance between surfaces $S_1,S_2$ is relatively small among all pairwise distances within the collection, that the shape distance is approximately equal to the geodesic distance on the base manifold. Moreover, under the fibre bundle assumption, we consider the pairwise correspondence map from $S_1$ to $S_2$ to approximate $P_{S_2,S_1}$, the parallel transport along the geodesic connection $S_1$ to $S_2$. By routing through different intermediates, one obtains different maps from $S_1$ to $S_2$, which is conceptually equivalent to parallel-transporting along different piecewise geodesics. Due to the dependency on the underlying path, the parallel transport typically does not define globally consistent maps. The inconsistency shown in Fig.\ref{fig:nonflat_connection}, caused by propagation along three different paths, fits into this geometric picture.

The inconsistency of parallel transport, closely related to the \emph{curvature} of the corresponding connection \cite{AMS1953}, is characterized by the notion of \emph{holonomy} \cite{Taubes2011DG,Bryant2006holonomy,Besse2007EinsteinManifolds}. If for all $x,y\in M$ the parallel transport $P_{xy}^{\gamma}:F_y\rightarrow F_x$ is independent of the choice of path $\gamma$, then the connection is said to be \emph{flat} or has \emph{trivial} holonomy; otherwise the connection is \emph{non-flat} or the holonomy is \emph{non-trivial}. Fig.\ref{fig:holonomy_on_sphere} illustrates the non-trivial holonomy of the \emph{Levi-Civita connection} on the unit sphere in $\mathbb{R}^3$: if we parallel transport a tangent vector $v\in T_AS^2$, first from $A$ to $C$ along the equator and then from $C$ to $B$ along the meridian, then the result $P_{BC}P_{CA}v$ is generally different from $P_{BA}v$, the result obtained by directly parallel transporting $v$ from $A$ to $B$ along the meridian that connects the two points.
\begin{figure}[htps]
  \centering
  \includegraphics[width=0.5\textwidth]{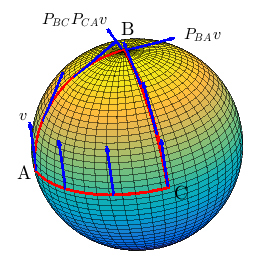}
  \caption{Holonomy on a Unit Sphere}
  \label{fig:holonomy_on_sphere}
\end{figure}

The fibre bundle of interest in Fig.\ref{fig:holonomy_on_sphere} is an example of a \emph{tangent bundle}. Generally, the tangent bundle $TM$ of a $d$-dimensional Riemannian manifold $M$ is a fibre bundle with base manifold $M$, fibre $\mathbb{R}^d$, and structure group $O \left( d \right)$ (the $d$-dimensional orthogonal group); the fibre over each $x\in M$ is $T_xM$, the tangent space of $M$ at $x$. On this bundle, there uniquely exists a canonical connection, the \emph{Levi-Civita connection}, that is simultaneously \emph{torsion-free} and \emph{compatible} with the Riemannian metric on $M$. The \emph{unit tangent bundle} $UTM$ is a subbundle of $TM$, with the same base manifold and structure group, but has a different type of fibre $S^d$, the unit $\left(d-1\right)$-dimensional sphere in $\mathbb{R}^d$; the fibre over each $x\in M$ consists of all tangent vectors of $M$ at $x$ with unit length. The Levi-Civita connection carries over to a canonical connection on $UTM$. We focus on analyzing HDM on these two types of fibre bundles in this paper.

Note that the tangent bundle is also of fundamental importance for VDM. However, as we shall see in Section~\ref{sec:hypo-diff-formulation}, HDM aims at a goal different from VDM's, even on tangent bundles: VDM acts on vector fields on $M$, or equivalently operates on \emph{sections} of $TM$ (denoted as $\Gamma \left( M,TM \right)$); HDM focuses on functions on $TM$, and thus operates on sections of the trivial line bundle $TM\times \mathbb{R}$ (denoted as $\Gamma \left( TM, \mathbb{R} \right)$). While VDM embeds the base manifold $M$ into a Euclidean space of lower dimension, HDM is more interested in how each fibre of $TM$ corresponds to its neighboring fibres. In short, VDM and HDM extend DM in two different directions.

The use of diffusion maps to solve the global registration problem was proposed earlier in the geometry processing community~\cite{SvKKZC2011,Kim12FuzzyCorr}, as was the concept of a ``template'' for a collection of shapes~\cite{WangHuangGuibas2013,SolomonBenChenButscherGuibas2011,NguyenBWYG2011,HuangZhangGHBG2012,HuangGuibas2013,CGH2014}. These approaches were quite successful, albeit based mostly on heuristics; the fibre bundle framework provides geometric interpretations and insights for many of them. For instance, cycle-consistency-based approaches ~\cite{NguyenBWYG2011,HuangGuibas2013} focus on improving the consistency of composed correspondence maps along $1,2,3$-cycles, which is implicitly an attempt to recover from condition~\eqref{item:9} the fibre bundle structure that underlies the shape collection; from this point of view, these method sample only one point from each coordinate patch on the base manifold, and likely suffer from an inaccurate recovery due to low sampling rate. \cite{Kim12FuzzyCorr} uses the diffusion map as a visualization tool, based on a dissimilarity score computed from local and global shape alignments. This is similar to the random walk HDM constructs on a fibre bundle; the geometric meaning of the fuzzy correspondence score in \cite{Kim12FuzzyCorr} is vague from a manifold learning point of view, but then, it was not the main focus of \cite{Kim12FuzzyCorr} to analyze the new graph Laplacian on the discretized fibre bundle.

From the fibre bundle point of view, the goal of many global registration problems is to learn the fibre bundle structure that underlies the collection of objects. Making an analogy with the terminology \emph{manifold learning}, we call this type of learning problems \emph{fibre learning}. A flat connection, or its induced parallel transport, is the key to resolving the problem. However, we remark that the existence of a flat connection on an arbitrary fibre bundle is not guaranteed: the geometry and topology of the fibre bundle may be an obstruction. For a discussion on tangent bundles, see~\cite{Milnor1958,Goldman2011}.


\section{Hypoelliptic Diffusion Maps: The Formulation}
\label{sec:hypo-diff-formulation}
\subsection{Basic Setup}
\label{sec:basic-setup}
The data set considered in the HDM framework is a triplet $\left( \mathscr{X},\rho,G \right)$, where
\begin{enumerate}
\item\label{item:10} The \emph{total} data set $\mathscr{X}$ is formed by the union
  \begin{equation*}
    \mathscr{X}=\bigcup_{j=1}^n X_j
  \end{equation*}
where each subset $X_j$ is referred to as the $j$-th \emph{fibre} of $\mathscr{X}$, containing $\kappa_j$ points
\begin{equation*}
  X_j=\left\{ x_{j,1},x_{j,2},\cdots,x_{j,\kappa_j} \right\}.
\end{equation*}
We call the collection of fibres the \emph{base} data set
\begin{equation*}
  \mathscr{B}=\left\{ X_1,X_2,\cdots,X_n \right\},
\end{equation*}
and let $\pi:\mathscr{X}\rightarrow\mathscr{B}$ be the \emph{canonical projection} from $\mathscr{X}$ to $\mathscr{B}$
\begin{equation*}
  \begin{aligned}
    \pi:\mathscr{X} & \longrightarrow \mathscr{B}\\
    x_{j,k} & \longmapsto X_j,\quad 1\leq j\leq n, 1\leq k\leq \kappa_j.
  \end{aligned}
\end{equation*}
We shall denote the total number of points in $\mathscr{X}$ as
\begin{equation*}
  \kappa=\kappa_1+\kappa_2+\cdots+\kappa_n.
\end{equation*}

\item\label{item:11} The \emph{similarity measure} $\rho$ is a real-valued function on $\mathscr{X}\times \mathscr{X}$, such that for all $\xi,\eta\in \mathscr{X}$
\begin{equation*}
  \begin{aligned}
    \rho \left( \xi, \eta\right)\geq 0,\quad \rho \left( \xi,\xi \right)=0,\quad \rho \left( \xi, \eta \right) = \rho \left( \eta, \xi \right).
  \end{aligned}
\end{equation*}
On the product set $X_i\times X_j$, we denote
\begin{equation*}
  \rho_{ij}\left( s,t \right)=\rho \left( x_{i,s},x_{j,t} \right);
\end{equation*}
then $\rho_{ij}$ is an $\kappa_i\times \kappa_j$ matrix on $\mathbb{R}$, to which we will refer as the \emph{similarity matrix} between $X_i$ and $X_j$.

\item\label{item:12} The \emph{affinity graph} $G= \left( V,E \right)$ has $K$ vertices, with each $v_{i,s}$ corresponding to a point $x_{i,s}\in \mathscr{X}$; without loss of generality, we shall assume $G$ is connected. (In our applications, each $x_{i,s}$ is typically connected to several $x_{j,t}$'s on neighboring fibres.) If there is an edge between $v_{i,s}$ and $v_{j,t}$ in $G$, then $x_{i,s}$ is a \emph{neighbor} of $x_{j,t}$ and $x_{j,t}$ is a neighbor of $x_{i,s}$. Moreover, we also call $X_i$ a \emph{neighbor} of $X_j$ (and similarly $X_j$ a neighbor of $X_i$) if there is an edge in $G$ linking one point in $X_i$ with one point in $X_j$; this terminology implicitly defines a graph $G_B = \left( V_B,E_B \right)$, where vertices of $V_B$ are in one-to-one correspondences with fibres of $\mathscr{X}$, and $E_B$ encodes the neighborhood relations between pairs of fibres. $G_B$ will be called as the \emph{base affinity graph}.
\end{enumerate}

\subsection{Graph Hypoelliptic Laplacians}
\label{sec:graph-hypo-lapl}

Let $W\in\mathbb{R}^{\kappa\times \kappa}$ be the \emph{weighted adjacency matrix} of the graph $G$, i.e., $W$ is a block matrix in which the $\left( i,j \right)$-th block
\begin{equation}
  \label{eq:hedm_W}
  W_{ij}=\rho_{ij}.
\end{equation}
The $\left( s,t \right)$ entry in $W_{ij}$ is thus the edge weight $\rho_{ij}\left( s,t \right)$ between $v_{i,s}$ and $v_{j,t}$. Note that $W$ is a symmetric matrix, since $\rho$ is symmetric. Let $D$ be the $\kappa\times \kappa$ diagonal matrix
\begin{equation}
  \label{eq:diagonal_D}
  D:=\mathrm{diag}\left\{\sum_{j=1}^n\sum_{t=1}^{\kappa_j}W_{1j}\left( 1,t \right),\cdots,\sum_{j=1}^n\sum_{t=1}^{\kappa_j}W_{n,j}\left( \kappa_n,t \right) \right\},
\end{equation}
then the \emph{graph hypoelliptic Laplacian} for the triplet $\left( \mathscr{X},\rho,G \right)$ is defined as the graph Laplacian of the graph $G$ with edge weights given by $W$, that is
\begin{equation}
  \label{eq:hypoelliptic_graph_laplacian}
  L^H:=D-W.
\end{equation}
Since $G$ is connected, the diagonal elements of $D$ are all non-zero, and we can define the \emph{random-walk} and \emph{normalized} version of $L^H$
\begin{equation}
  \label{eq:hypoelliptic_graph_laplacian_rw}
  L^H_{\textrm{rw}}:=D^{-1}L^H=I-D^{-1}W,
\end{equation}
\begin{equation}
  \label{eq:hypoelliptic_graph_laplacian_normalized}
  L^H_{*}:=D^{-1/2}L^HD^{-1/2} = I-D^{-1/2}WD^{-1/2}.
\end{equation}
Following~\cite{CoifmanLafon2006}, we can also repeat the constructions above on a renormalized graph of $G$. More precisely, let $Q_{\alpha}$ be the $K\times K$ diagonal matrix
\begin{equation}
  \label{eq:diagonal_Q}
  Q_{\alpha}:=\mathrm{diag}\left\{\left(\sum_{j=1}^N\sum_{t=1}^{K_j}W_{1j}\left( 1,t \right)\right)^{\alpha},\cdots,\left(\sum_{j=1}^N\sum_{t=1}^{K_j}W_{N,j}\left( K_N,t \right)\right)^{\alpha} \right\},
\end{equation}
where $\alpha$ is some constant between $0$ and $1$, and set
\begin{equation}
  \label{eq:W_alpha}
  W_{\alpha}:=Q_{\alpha}^{-1}WQ_{\alpha}^{-1}.
\end{equation}
The graph hypoelliptic Laplacians can then be constructed for $W_{\alpha}$ instead of $W$, by first forming the $K\times K$ diagonal matrix $D_{\alpha}$
\begin{equation}
  \label{eq:diagonal_D_alpha}
  D_{\alpha}:=\mathrm{diag}\left\{\sum_{j=1}^N\sum_{t=1}^{K_j}\left(W_{\alpha}\right)_{1j}\left( 1,t \right),\cdots,\sum_{j=1}^N\sum_{t=1}^{K_j}\left(W_{\alpha}\right)_{N,j}\left( K_N,t \right) \right\},
\end{equation}
and then set
\begin{equation}
  \label{eq:hypoelliptic_graph_laplacian_alpha}
  L_{\alpha}^H:=D_{\alpha}-W_{\alpha},
\end{equation}
\begin{equation}
  \label{eq:hypoelliptic_graph_laplacian_rw_alpha}
  L^{H}_{\alpha,\textrm{rw}}:=D_{\alpha}^{-1}L^{H}_\alpha=I-D_{\alpha}^{-1}W_{\alpha},
\end{equation}
\begin{equation}
  \label{eq:hypoelliptic_graph_laplacian_normalized_alpha}
  L^{H}_{\alpha,*}:=D_{\alpha}^{-1/2}L^H_{\alpha}D_{\alpha}^{-1/2} = I-D_{\alpha}^{-1/2}W_{\alpha}D_{\alpha}^{-1/2}.
\end{equation}

\subsection{Spectral Distances and Embeddings}
\label{sec:spectr-dist-embedd}

In order to define spectral distances, we shall use eigen-decompositions. This is the reason to consider the symmetric matrices $L^H_{\alpha,*}$; their eigen-decompositions lead to a natural representation for $L^H_{\alpha,\mathrm{rw}}$, since $L^{H}_{\alpha,*}$ and $L^H_{\alpha,\mathrm{rw}}$ are \emph{diagonal-similar}:
$$L_{\alpha,\textrm{*}}^H=D_{\alpha}^{1/2}L_{\alpha,\mathrm{rw}}^H D_{\alpha}^{-1/2}.$$

Let $v\in\mathbb{R}^{\kappa\times 1}$ be an eigenvector of $L_{\alpha,*}^H$ corresponding to eigenvalue $\lambda$; $v$ defines a function on the vertices of $G$, or equivalently on the data set $\mathscr{X}$. By the construction of $L_{\alpha,*}^H$, $v\in \mathbb{R}^{\kappa\times 1}$ can be written as the concatenation of $n$ segments of length $\kappa_1,\cdots,\kappa_n$,
\begin{equation*}
  v = \left( v^{\top}_{\left[ 1 \right]},\cdots, v_{\left[ n \right]}^{\top}\right)^{\top}
\end{equation*}
where $v_{\left[ j \right]}\in\mathbb{R}^{\kappa_j\times 1}$ defines a function on fibre $X_j$. Now let $\lambda_0\leq\lambda_1\leq\lambda_2\leq\cdots\leq\lambda_{\kappa-1}$ be the $\kappa$ eigenvalues of $L^H_{\alpha,*}$ in ascending order, and denote the eigenvector corresponding to eigenvalue $\lambda_j$ as $v_j$. By our connectivity assumption for $G$, we know from spectral graph theory~\cite{Chung1997} that $\lambda_0=0$, $\lambda_0<\lambda_1$, and $v_0$ is a constant vector with all entries equal to $1$; we have thus
\begin{equation*}
  0=\lambda_0<\lambda_1\leq \lambda_2\leq\cdots\leq \lambda_{\kappa-1}.
\end{equation*}
By the spectral decomposition of $L_{\alpha,*}^H$,
\begin{equation}
  \label{eq:spectral_decomposition}
  L_{\alpha,*}^H=\sum_{l=0}^{\kappa-1} \lambda_l v_lv_l^{\top},
\end{equation}
and for any fixed \emph{diffusion time} $t\in\mathbb{R}^{+}$,
\begin{equation}
  \label{eq:spectral_decomposition_t}
  \left(L_{\alpha,*}^H\right)^t=\sum_{l=0}^{\kappa-1} \lambda^t_l v_lv_l^{\top},
\end{equation}
with the $\left( i,j \right)$-block taking the form
\begin{equation}
  \label{eq:spectral_decomposition_block}
  \left(\left(L_{\alpha,*}^H\right)^t\right)_{ij}=\sum_{l=0}^{\kappa-1} \lambda^t_l v_{l \left[ i \right]} v_{l \left[ j \right]}^{\top}.
\end{equation}
In general, this block is not square. Its Frobenius norm can be computed as
\begin{equation}
  \label{eq:spectral_decomposition_block_HSnorm}
  \begin{aligned}
    \left\|\left(\left(L_{\alpha,*}^H\right)^t\right)_{ij}\right\|_{\mathrm{F}}^2&=\mathrm{Tr}\left[ \left(\left(L_{\alpha,*}^H\right)^t\right)_{ij}\left(\left(L_{\alpha,*}^H\right)^t\right)_{ij}^{\top} \right]\\
    &=\mathrm{Tr}\left[ \sum_{l,m=0}^{\kappa-1}\lambda_l^t\lambda_m^t v_{l \left[ i \right]}v_{l \left[ j \right]}^{\top}v_{m \left[ j \right]}v_{m \left[ i \right]}^{\top} \right]\\
    &=\mathrm{Tr}\left[ \sum_{l,m=0}^{\kappa-1}\lambda_l^t\lambda_m^t v_{m \left[ i \right]}^{\top}v_{l \left[ i \right]}v_{l \left[ j \right]}^{\top}v_{m \left[ j \right]} \right]\\
    &=\sum_{l,m=0}^{\kappa-1}\lambda_l^t\lambda_m^t v_{m \left[ i \right]}^{\top}v_{l \left[ i \right]}v_{l \left[ j \right]}^{\top}v_{m \left[ j \right]}.
  \end{aligned}
\end{equation}
Let us define the \emph{hypoelliptic base diffusion map}
\begin{equation}
  \label{eq:hbdm}
  \begin{aligned}
    V^t:\mathscr{B}&\longrightarrow \mathbb{R}^{\kappa^2}\\
    X_j &\longmapsto \left( \lambda_l^{t/2}\lambda_m^{t/2}v_{l \left[ j \right]}^{\top}v_{m \left[ j \right]} \right)_{0\leq l,m\leq \kappa-1}
  \end{aligned}
\end{equation}
then (denoting the standard Euclidean inner produce in $\mathbb{R}^{\kappa^2}$ as $\langle\cdot, \cdot\rangle$)
\begin{equation}
  \label{eq:HDM_innerproduct}
  \begin{aligned}
    \left\|\left(\left(L_{\alpha,*}^H\right)^t\right)_{ij}\right\|_{\mathrm{F}}^2 = \left\langle V^t \left( X_i \right), V^t \left( X_j \right) \right\rangle,
\end{aligned}
\end{equation}
with which we can define the \emph{hypoelliptic base diffusion distance} on $\mathscr{B}$ as
\begin{equation}
  \label{eq:hbdd}
  \begin{aligned}
    d_{\mathrm{HBDM},t}&\left( X_i,X_j \right) = \left\|V^t \left( X_i \right)- V^t \left( X_j \right) \right\|\\
    &=\left\{\left\langle V^t \left( X_i \right), V^t \left( X_i \right) \right\rangle+\left\langle V^t \left( X_j \right), V^t \left( X_j \right) \right\rangle-2\left\langle V^t \left( X_i \right), V^t \left( X_j \right) \right\rangle  \right\}^{\frac{1}{2}}.
  \end{aligned}
\end{equation}
The hypoelliptic base diffusion map embeds the base data set $\mathscr{B}$ into a Euclidean space using $G_B$, the base affinity graph with edges weighted by entry-wise non-negative matrices. In this sense, it is closely related to the vector diffusion maps~\cite{SingerWu2012VDM}: if
$$\kappa_1=\kappa_2=\cdots=\kappa_n=d$$
and (relaxing the constraint $\rho\geq 0$)
$$\left(v_i, v_j\right)\in E_B \Leftrightarrow \rho_{ij}=w_{ij}O_{ij}, \,\textrm{where}\,w_{ij}\geq 0\textrm{ and } O_{ij}\textrm{ is } d\times d \textrm{ orthogonal},$$
then the weighted adjacency matrix $W$ (as defined in \eqref{eq:hedm_W}) coincides with the adjacency matrix $S$ defined in \cite[\S 3]{SingerWu2012VDM}. In this case, the graph hypoelliptic Laplacian of $\left( \mathscr{X},\rho,G \right)$ reduces to the graph connection Laplacian for $\left(G_B,\left\{ w_{ij} \right\}, \left\{ O_{ij} \right\}\right)$. Note that in HDM we assume the non-negativity of the similarity measure $\rho$, which is generally not the case for vector diffusion maps. (The non-negativity of the eigenvalues of $L_{\alpha,*}^H$ allows us to consider arbitrary powers $L^H_{\alpha,*}$; in VDM, this is circumvented by considering powers of $S^2$.) From a different point of view, by the Riesz Representation Theorem, smooth vector fields on a manifold $M$ can be identified with linear functions on $TM$, thus VDM can be viewed as HDM restricted on the space of linear functions on $TM$.

In addition to embedding the base data set $\mathscr{B}$, HDM is also capable of embedding the total data set $\mathscr{X}$ into Euclidean spaces. Define for each \emph{diffusion time} $t\in\mathbb{R}^{+}$ the \emph{hypoelliptic diffusion map}
\begin{equation}
\label{eq:hdm}
  \begin{aligned}
    H^t:\mathscr{X}&\longrightarrow \mathbb{R}^{\kappa-1}\\
    x_{j,s}&\longmapsto \left(\lambda_1^t v_{1\left[ j \right]}\left( s \right),\lambda^t_2v_{2 \left[ j \right]}\left( s \right),\cdots,\lambda_{\kappa-1}^tv_{\left(\kappa-1\right)\left[ j \right]}\left( s \right) \right).
  \end{aligned}
\end{equation}
where $v_{l \left[ j \right]} \left( s \right)$ is the $s$-th entry of the $j$-th segment of the $l$-th eigenvector, with $j=1,\cdots,n, s=1,\cdots,\kappa_j$. We could also have written
\begin{equation*}
  v_{l \left[ j \right]} \left( s \right) = v_l \left( s_j+s \right),\quad\textrm{where } s_1=0 \textrm{ and } s_j=\sum_{p=1}^{j-1}\kappa_p \textrm{ for }j\geq 2.
\end{equation*}
Following a similar argument as in~\cite{CoifmanLafon2006}, we can define the \emph{hypoelliptic diffusion distance} on $\mathscr{X}$ as
\begin{equation}
  \label{eq:hdd}
  d_{\mathrm{HDM},t}\left( x_{i,s},x_{j,t} \right) = \left\|H^t \left( x_{i,s} \right) - H^t \left( x_{j,t} \right) \right\|.
\end{equation}
As a result, $H^t$ embeds the total data set $\mathscr{X}$ into a Euclidean space in such a manner that the hypoelliptic diffusion distance on $\mathscr{X}$ is preserved. Moreover, this embedding automatically suggests a global registration for all fibres, according to the similarity measure $\rho$. For simplicity of notations, let us write
\begin{equation*}
  H^t_j:=H^t\restriction_{X_j}
\end{equation*}
for the restriction of $H^t$ to fibre $X_j$, and call it the $j$-th \emph{component} of $H^t$. Up to scaling, the components of $H^t$ bring the fibres of $\mathscr{X}$ to a common ``template'', such that points $x_{i,s}$ and $x_{j,t}$ with a high similarity measure $\rho_{ij}\left( s,t \right)$ tend to be close to each other in the embedded Euclidean space. Pairwise correspondences between fibres $X_i,X_j$ can then be reconstructed from the hypoelliptic diffusion map. Indeed, assuming each $X_j$ is sampled from some manifold $F_j$, and a \emph{template fibre} $F\subset\mathbb{R}^m$ can be estimated from
$$H^t_1 \left( X_1 \right),\cdots,H^t_n \left( X_n \right),$$
then one can often extend (by interpolation) $H_j^t$ from a discrete correspondence to a continuous bijective map from $F_j$ to $F$, and build correspondence maps between an arbitrary pair $X_i,X_j$ by composing (the interpolated continuous maps) $H_i^t$ and $\left(H_j^t\right)^{-1}$. A similar construction was implicit in~\cite{Kim12FuzzyCorr}. Sometimes, it is more useful to consider a normalized version of hypoelliptic diffusion map that takes value on the standard unit sphere in $\mathbb{R}^{\kappa-1}$:
\begin{equation}
  \label{eq:hdm_sphere}
  \begin{aligned}
    \widetilde{H}^t:\mathscr{X}&\longrightarrow S^{\kappa-2}\subset \mathbb{R}^{\kappa}\\
    x_{j,s}&\longmapsto \frac{H_j^t \left( x_{j,s} \right)}{\left\|H_j^t \left( x_{j,s} \right)\right\|}.
  \end{aligned}
\end{equation}
We shall see an example that applies $\widetilde{H}^t$ to $\mathrm{SO}\left( 3 \right)$ in Section~\ref{sec:numer-exper}.


\section{HDM on Tangent and Unit Tangent Bundles}
\label{sec:hypo-diff-maps-tangent}
The HDM framework is very flexible: if each fibre consists of one single point, the hypoelliptic graph Laplacian reduces to the graph Laplacian that underlies diffusion maps; if all the fibres have the same number of points and all similarity matrices (defined in Section~\ref{sec:spectr-dist-embedd}) are orthogonal (up to a multiplicative constant), the hypoelliptic graph Laplacian reduces to the graph connection Laplacian that underlies vector diffusion maps. The goal of this section is to relate HDM to some other partial differential operators of geometric importance on tangent and unit tangent bundles of (compact, closed) Riemannian manifolds. In a follow-up paper, we will extend the geometric setting to more general fibre bundles.

This section builds upon the fibre bundle assumption. Adopting notation in Section~\ref{sec:basic-setup}, we assume that $\mathscr{X}$ is sampled from a fibre bundle $E$, and each fibre $X_j$ is sampled from a fibre over some point on the base manifold $M$. Moreover, we shall assume that $E$ is the tangent bundle or unit tangent bundle of $M$, i.e., $E = TM$ or $E=UTM$. For the convenience of the reader, some basic properties about the geometry of these fibre bundles are reviewed in Appendix~\ref{app:geom-tang-bundl}.

\subsection{HDM on Tangent Bundles}
\label{sec:hdm-tangent-bundles}

Let $K:\mathbb{R}^2\rightarrow\mathbb{R}^{\geq 0}$ be a smooth \emph{kernel function} supported on the unit square $\left[ 0,1 \right]\times \left[ 0,1 \right]$. In all that follows, we shall assume that $M$ is a compact manifold without boundary, which, according to standard custom, we shall simply call a \emph{closed} manifold. Let the closed manifold $M$ be equipped with a Riemannian metric tensor $g$, which induces on $M$ a \emph{geodesic distance} function $d_M \left( \cdot,\cdot \right):M\times M\rightarrow \mathbb{R}^{\geq 0}$; $g$ defines an inner product on each tangent space of $M$, denoted as
\begin{equation}
  \label{eq:metric_induce_inner_product}
  \left\langle u,v\right\rangle_x = g_{jk}\left( x \right)u^jv^k,\quad u,v\in T_xM,
\end{equation}
where, and for the remainder of this section unless otherwise specified, we have adopted the Einstein summation convention. The vector norm on $T_xM$ with respect to this inner product shall be denoted as
\begin{equation}
  \label{eq:metric_induce_norm}
  \left\| u\right\|_x = \left(g_{jk}\left( x \right)u^ju^k\right)^{\frac{1}{2}},\quad u\in T_xM.
\end{equation}
We denote
\begin{equation}
  \label{eq:parallel_transport}
  P_{y,x}:T_xM\rightarrow T_yM
\end{equation}
for the \emph{parallel transport} from $x\in M$ to $y\in M$ with respect to the \emph{Levi-Civita connection} on $M$, along a \emph{geodesic segment} that connects $x$ to $y$. It is well known that a tangent vector can be parallel-transported along any smooth curve on $M$; since $M$ is compact, its \emph{injectivity radius} $\mathrm{Inj}\left( M \right)$ is positive, and thus any $x\in M$ lies within a \emph{geodesic normal neighborhood} in which any point $y$ can be connected to $x$ through a unique geodesic with length smaller than $\mathrm{Inj}\left( M \right)$. Therefore, $P_{y,x}$ is well-defined, at least for $x,y\in M$ with $d_M \left( x,y \right)<\mathrm{Inj}\left( M \right)$. Furthermore, for such $x,y\in M$, $P_{y,x}$ is an orientation-preserving isometry between the domain and target tangent planes~\cite[Exercise 2.1]{doCarmo1992RG}.

For \emph{bandwidth} parameters $\epsilon>0$, $\delta>0$, define for all $\left( x,v \right),\left( y,w \right)\in TM$
\begin{equation}
  \label{eq:kernel_TM}
  K_{\epsilon,\delta}\left( x,v; y,w \right):=K \left( \frac{d^2_M \left( x,y \right)}{\epsilon},\frac{\left\|P_{y,x}v-w\right\|_y^2}{\delta} \right).
\end{equation}
Note that the requirement that $\mathrm{supp}\left( K \right)\subset \left[ 0,1 \right]\times \left[ 0,1 \right]$ implies that $K_{\epsilon,\delta}\left( x,v;y,w \right)\neq 0$ only if $d_M \left( x,y \right)\leq \sqrt{\epsilon}$. It follows that $P_{y,x}$, and further $K_{\epsilon,\delta}\left( x,v;y,w \right)$, are well-defined when $\epsilon\leq \mathrm{Inj}\left( M \right)^2$; we shall restrict ourselves to such sufficiently small $\epsilon$. $K_{\epsilon,\delta}$ is symmetric because $P_{y,x}$ is an isometry between $T_xM$ and $T_yM$:
\begin{equation*}
  \begin{aligned}
    K_{\epsilon,\delta}\left( y,w; x,v \right) &= K \left( \frac{d^2_M \left( y,x \right)}{\epsilon}, \frac{\left\|P_{x,y}w-v  \right\|^2_x}{\delta} \right)=K \left( \frac{d^2_M \left( x,y \right)}{\epsilon}, \frac{\left\|P_{y,x}\left(P_{x,y}w-v\right)  \right\|^2_y}{\delta} \right)\\
    &=K \left( \frac{d^2_M \left( x,y \right)}{\epsilon}, \frac{\left\|w-P_{y,x}v  \right\|^2_y}{\delta} \right)=K_{\epsilon,\delta}\left( x,v; y,w \right).
  \end{aligned}
\end{equation*}
This symmetry is of particular importance for the definition of symmetric diffusion semigroups~\cite{Stein1970}.

Let $p\in C^{\infty}\left( TM \right)$ be the \emph{probability density function} according to which we shall sample. Assume $p$ is bounded from both above and below (away from $0$):
\begin{equation}
  \label{eq:density}
  0<p_m\leq p \left( x,v \right)\leq p_M<\infty,\quad\forall \left( x,v \right)\in TM.
\end{equation}
Define
\begin{equation}
  \label{eq:density_empirical}
  p_{\epsilon,\delta}\left( x,v \right):=\int_{TM}K_{\epsilon,\delta}\left( x,v;y,w \right)p \left( y,w \right)\,d\mu \left( y,w \right).
\end{equation}
where $d\mu$ is the standard volume form on $TM$. As in Appendix~\ref{app:geom-tang-bundl}, $d\mu$ is a product of $dV_y \left( w \right)$ (the standard translation- and rotation-invariant Borel measure on $T_yM$) and $d\mathrm{vol}_M \left( y \right)$ (the standard Riemannian volume element on $M$). If we fix $x\in M$ and integrate $p \left( x,v \right)$ along $T_xM$, then
\begin{equation}
  \label{eq:density_on_base}
  \overline{p}\left( x \right):=\int_{T_xM}p \left( x,v \right)\,dV_x \left( v \right)
\end{equation}
is a density function on $M$, since
\begin{equation*}
  \int_M \overline{p} \left( x \right)\,d\mathrm{vol}\left( x \right)=\int_M\!\int_{T_xM}p \left( x,v \right)\,dV_x \left( v \right) \,d\mathrm{vol}\left( x \right)=\int_{TM}p \left( x,v \right)\,d\mu \left( x,v \right)=1.
\end{equation*}
We call $\overline{p}$ the \emph{projection} of $p$ on $M$. Furthermore, dividing $p \left( x,v \right)$ by $\overline{p}\left( x \right)$ yields a \emph{conditional probability density function} on $T_xM$
\begin{equation}
  \label{eq:conditional_density_on_fibre}
  p \left( v \mid x\right) = \frac{p \left( x,v \right)}{\overline{p}\left( x \right)},
\end{equation}
since
\begin{equation*}
  \int_{T_xM}p \left( v \mid x \right)\,dV_x \left( v \right)=\frac{\int_{T_xM}p \left( x,v \right)\,dV_x \left( v \right)}{\overline{p}\left( x \right)}=\frac{\overline{p}\left( x \right)}{\overline{p}\left( x \right)}=1.
\end{equation*}
For any $f\in C^{\infty}\left( TM \right)$, we can define its \emph{average along fibres} on $TM$, with respect to the conditional probability density functions, as the following function on $M$:
\begin{equation}
  \label{eq:average_along_fibres}
  \overline{f}\left( x \right)=\int_{T_xM}f \left( x,v \right)p \left( v\mid x \right)\,dV_x \left( v \right),\quad \forall x\in M.
\end{equation}

Finally, for any $0\leq\alpha\leq 1$, define the $\alpha$-normalized kernel
\begin{equation}
  \label{eq:alpha_normalized_kernel}
  K_{\epsilon,\delta}^{\alpha}\left(x,v;y,w \right):=\frac{K_{\epsilon,\delta}\left(x,v;y,w \right)}{p_{\epsilon,\delta}^{\alpha}\left( x,v \right)p_{\epsilon,\delta}^{\alpha}\left( y,w \right)}.
\end{equation}
We are now ready to define a family of \emph{hypoellitpic diffusion operators} on $TM$ as
\begin{equation}
  \label{eq:hdo_tm}
  H_{\epsilon,\delta}^{\alpha}f \left( x,v \right):=\frac{\displaystyle\int_{TM}K_{\epsilon,\delta}^{\alpha}\left( x,v;y,w \right)f \left( y,w \right)p \left( y,w \right)\,d\mu \left( y,w \right)}{\displaystyle\int_{TM}K_{\epsilon,\delta}^{\alpha}\left( x,v;y,w \right)p \left( y,w \right)\,d\mu \left( y,w \right)}
\end{equation}
for any $f\in C^{\infty}\left( TM \right)$.

We are interested in the asymptotic behavior of $H_{\epsilon,\delta}^{\alpha}$ in the limit $\epsilon\rightarrow0$, $\delta\rightarrow0$. It turns out that this depends on the relative rate with which $\epsilon$ and $\delta$ approach $0$. For simplicity of notation, let us write $\gamma=\delta/\epsilon$.

\begin{theorem}[HDM on Tangent Bundles]
\label{thm:main_tm}
  Let $M$ be a closed Riemannian manifold, and $H_{\epsilon,\delta}^{\alpha}:C^{\infty}\left( TM \right)\rightarrow C^{\infty}\left( TM \right)$ defined as in \eqref{eq:hdo_tm}. If $\delta=O \left( \epsilon \right)$ as $\epsilon\rightarrow0$, or equivalently if $\gamma=\delta/\epsilon$ is asymptotically bounded, then for any $f\in C^{\infty}\left( TM \right)$, as $\epsilon\rightarrow0$ (and thus $\delta\rightarrow 0$),
\begin{equation}
  \label{eq:main_tm_i}
  \begin{aligned}
    H_{\epsilon,\delta}^{\alpha}f\left( x,v \right) &= f \left( x,v \right)+\epsilon \frac{m_{21}}{2m_0}\left[\frac{\Delta^H\left[fp^{1-\alpha}\right]\left( x,v \right)}{p^{1-\alpha}\left( x,v \right)}-f \left( x,v \right) \frac{\Delta^Hp^{1-\alpha}\left( x,v \right)}{p^{1-\alpha}\left( x,v \right)}  \right]\\
    &+\delta \frac{m_{22}}{2m_0}\left[\frac{\Delta^V\left[fp^{1-\alpha}\right]\left( x,v \right)}{p^{1-\alpha}\left( x,v \right)}-f \left( x,v \right) \frac{\Delta^Vp^{1-\alpha}\left( x,v \right)}{p^{1-\alpha}\left( x,v \right)}  \right]+O\left(\epsilon^2+\delta^2 \right),\\
\end{aligned}
\end{equation}
where $m_0$, $m_{21}$, $m_{22}$ are positive constants depending only on the kernel $K$.
\end{theorem}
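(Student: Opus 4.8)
The plan is to follow the template of Coifman and Lafon's analysis of diffusion maps~\cite{CoifmanLafon2006}, adapted to the presence of two bandwidths and to the horizontal/vertical splitting of $TTM$ recorded in Appendix~\ref{app:geom-tang-bundl}. Everything reduces to a single \emph{local expansion lemma}, which is then invoked three times: once on the sampling density $p$ (to expand $p_{\epsilon,\delta}$, hence the $\alpha$-normalization), once on the numerator of \eqref{eq:hdo_tm}, and once on its denominator; dividing and running the usual $\alpha$-normalization algebra produces \eqref{eq:main_tm_i}.

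The local expansion lemma asserts that for every $g\in C^{\infty}\left( TM \right)$ and every $\left( x,v \right)\in TM$,
\begin{equation*}
  \int_{TM}K_{\epsilon,\delta}\left( x,v;y,w \right)g \left( y,w \right)\,d\mu \left( y,w \right)=\epsilon^{d/2}\delta^{d/2}\left[ m_0\,g \left( x,v \right)+\frac{\epsilon\,m_{21}}{2}\Delta^Hg \left( x,v \right)+\frac{\delta\,m_{22}}{2}\Delta^Vg \left( x,v \right)+\epsilon\,\theta \left( x \right)g \left( x,v \right)+O\left( \epsilon^2+\delta^2 \right) \right],
\end{equation*}
where $d=\dim M$, $m_0$ is the mass of $K$ and $m_{21},m_{22}$ its quadratic moments in the base and fibre directions, and $\theta \left( x \right)$ is a multiple of the scalar curvature of $M$ at $x$. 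To prove it I would first use $\mathrm{supp}\left( K \right)\subset \left[ 0,1 \right]\times \left[ 0,1 \right]$ to localize the integral to $d_M \left( x,y \right)\le \sqrt{\epsilon}$ and $\left\| P_{y,x}v-w \right\|_y\le \sqrt{\delta}$, then change variables by $y=\exp_x \left( \sqrt{\epsilon}\,s \right)$ and $w=P_{y,x}\left( v+\sqrt{\delta}\,\tilde r \right)$ with $s,\tilde r\in T_xM$. Since $\exp_x$ is a radial isometry and $P_{y,x}$ a linear isometry, one gets exactly $d^2_M \left( x,y \right)/\epsilon=\left\| s \right\|^2_x$ and $\left\| P_{y,x}v-w \right\|^2_y/\delta=\left\| \tilde r \right\|^2_x$, the fibre volume contributes exactly $\delta^{d/2}\,d\tilde r$, and the base volume contributes $\epsilon^{d/2}\left( 1-\tfrac{\epsilon}{6}\mathrm{Ric}_x \left( s,s \right)+O\left( \epsilon^{3/2} \right) \right)ds$. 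The integrand becomes $K \left( \left\| s \right\|^2,\left\| \tilde r \right\|^2 \right)$ times $g \left( y,\,P_{y,x}\left( v+\sqrt{\delta}\tilde r \right) \right)$ with $y=\exp_x \left( \sqrt{\epsilon}s \right)$, which I would Taylor-expand in $\left( \sqrt{\epsilon},\sqrt{\delta} \right)$: the term linear in $s$ is the horizontal derivative of $g$ along the horizontal lift of the geodesic $t\mapsto \exp_x \left( ts \right)$, the term linear in $\tilde r$ is the vertical derivative, and the diagonal quadratic terms, summed over an orthonormal frame, reproduce $\Delta^Hg$ and $\Delta^Vg$ as defined in the appendix. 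Every monomial odd in $s$ or odd in $\tilde r$ integrates to zero against $K \left( \left\| s \right\|^2,\left\| \tilde r \right\|^2 \right)$; this parity cancellation removes all $\sqrt{\epsilon}$, $\sqrt{\delta}$, $\sqrt{\epsilon\delta}$, $\epsilon\sqrt{\delta}$, $\sqrt{\epsilon}\delta$ contributions and pushes the first genuine error to order $\epsilon^2+\epsilon\delta+\delta^2=O\left( \epsilon^2+\delta^2 \right)$, with constants uniform in $\left( x,v \right)$ by compactness of $M$.

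Granting the lemma, the remainder is bookkeeping. Taking $g=p$ yields $p_{\epsilon,\delta}\left( x,v \right)=\epsilon^{d/2}\delta^{d/2}m_0\,p \left( x,v \right)\left( 1+\tfrac{\epsilon m_{21}}{2m_0}\tfrac{\Delta^Hp}{p}+\tfrac{\delta m_{22}}{2m_0}\tfrac{\Delta^Vp}{p}+\tfrac{\epsilon\theta}{m_0}+O\left( \epsilon^2+\delta^2 \right) \right)$; raising to the power $\alpha$ and substituting into \eqref{eq:alpha_normalized_kernel} and \eqref{eq:hdo_tm}, the factor $p_{\epsilon,\delta}^{\alpha}\left( x,v \right)$ together with the overall constant $\left( \epsilon^{d/2}\delta^{d/2}m_0 \right)^{-\alpha}$ cancel between numerator and denominator, leaving $H_{\epsilon,\delta}^{\alpha}f \left( x,v \right)$ as the ratio of $\int K_{\epsilon,\delta}\left( x,v;y,w \right)f \left( y,w \right)\tilde p \left( y,w \right)d\mu$ over $\int K_{\epsilon,\delta}\left( x,v;y,w \right)\tilde p \left( y,w \right)d\mu$, where $\tilde p=p^{1-\alpha}\left( 1-\alpha \left[ \tfrac{\epsilon m_{21}}{2m_0}\tfrac{\Delta^Hp}{p}+\tfrac{\delta m_{22}}{2m_0}\tfrac{\Delta^Vp}{p}+\tfrac{\epsilon\theta}{m_0} \right]+O \left( \epsilon+\delta \right) \right)$. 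Applying the lemma once with $g=f\tilde p$ and once with $g=\tilde p$ (using linearity to absorb the $O\left( \epsilon+\delta \right)$ dependence of $\tilde p$ itself), then dividing the two expansions via $\tfrac{f+\epsilon P+\delta Q}{1+\epsilon R+\delta S}=f+\epsilon \left( P-fR \right)+\delta \left( Q-fS \right)+O\left( \epsilon^2+\delta^2 \right)$, the terms $\alpha fp^{1-\alpha}\tfrac{\Delta^Hp}{p}$ and the curvature terms $\theta$ cancel inside $P-fR$, and what survives is exactly $\tfrac{m_{21}}{2m_0}\left( \tfrac{\Delta^H \left[ fp^{1-\alpha} \right]}{p^{1-\alpha}}-f\tfrac{\Delta^Hp^{1-\alpha}}{p^{1-\alpha}} \right)$, with the vertical analogue emerging from $Q-fS$ with coefficient $\tfrac{m_{22}}{2m_0}$; this is \eqref{eq:main_tm_i}. (As a consistency check, both bracketed operators annihilate constants, matching $H_{\epsilon,\delta}^{\alpha}1=1$; the hypothesis $\delta=O\left( \epsilon \right)$, i.e.\ $\gamma$ bounded, is used only to guarantee $\delta\to0$ with $\epsilon\to0$ so that the two displayed first-order terms dominate the $O\left( \epsilon^2+\delta^2 \right)$ remainder.)

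The main obstacle is the local expansion lemma itself, and within it two points require care. The first is the geometric identification of the traced second-order terms of $g \left( y,\,P_{y,x}\left( v+\sqrt{\delta}\tilde r \right) \right)$, $y=\exp_x \left( \sqrt{\epsilon}s \right)$, with the operators $\Delta^H,\Delta^V$ of Appendix~\ref{app:geom-tang-bundl}: one must track how the ``$\exp$ on the base, parallel transport along the fibre'' parametrization interacts with the Sasaki-type geometry of $TM$ — in particular whether, for the chosen normalization of $\Delta^H$, horizontal lifts of base geodesics carry any second-order correction — and assemble the correct orthonormal-frame computation. The second is the uniform error bound: establishing that the remainder is genuinely $O\left( \epsilon^2+\delta^2 \right)$ rather than $O\left( \epsilon^{3/2} \right)$ or $O\left( \sqrt{\epsilon\delta} \right)$, which rests on the parity argument above together with compactness of $M$ (uniform control of curvature, injectivity radius, and all Taylor remainders) and the density bounds $p_m\le p\le p_M$ (so that $p_{\epsilon,\delta}^{\alpha}$ stays bounded away from $0$ and $\infty$ and the $\alpha$-normalization is harmless). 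Once the lemma is secured, the $\alpha$-normalization algebra is routine.
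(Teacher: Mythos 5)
Your proposal is correct and takes essentially the same route as the paper: the paper proves your ``local expansion lemma'' by iterating a fibre integral and a base integral (Lemmas~\ref{lem:basic_kernel_integral}, \ref{lem:kernel_parallel_transport}, and \ref{lem:asymp_sym_kernels_utm}) rather than via your single joint change of variables, but the substance — the Ricci correction from the second-order Taylor expansion of the parallel transport that turns the traced base second derivatives into $\Delta^H$, the parity argument upgrading the remainder from $O\left(\epsilon^{3/2}\right)$ to $O\left(\epsilon^2+\delta^2\right)$, and the cancellation of the curvature term in the $\alpha$-normalization quotient — is identical. The two delicate points you flag at the end are precisely the ones the paper settles in Lemma~\ref{lem:taylor_parallel_transport} and Remark~\ref{rem:asymptotic_expansion_parallel_transport}.
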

Theorem~\ref{thm:main_tm} is the tangent bundle version of Theorem~\ref{thm:main_utm} (which applies to unit tangent bundles), and the proofs for these two theorems are essentially identical. We included a proof of Theorem~\ref{thm:main_utm} in Appendix~\ref{app:proof-main-theorem}, from which a proof of Theorem~\ref{thm:main_tm} can be easily adapted.

\begin{proposition}
\label{prop:gamma_infty}
Let $M$ and $H_{\epsilon,\delta}^{\alpha}$ be as in Theorem~{\rm \ref{thm:main_tm}}. If $\delta=\gamma\epsilon$, then for any $f\in C^{\infty}\left( TM \right)$ and sufficiently small $\epsilon>0$,
\begin{equation}
  \label{eq:main_tm_ii_base}
  \begin{aligned}
  \lim_{\gamma\rightarrow\infty}H^{\alpha}_{\epsilon,\gamma\epsilon}f \left( x,v \right)=\overline{f}\left( x \right)+\epsilon \frac{m'_2}{2m'_0} \left[\frac{\Delta_M\left[\overline{f}\overline{p}^{1-\alpha}\right]\left( x \right)}{\overline{p}^{1-\alpha}\left( x \right)}-\overline{f}\left( x \right)\frac{\Delta_M\overline{p}^{1-\alpha}\left( x \right)}{\overline{p}^{1-\alpha}\left( x \right)} \right]+O \left( \epsilon^2 \right),
  \end{aligned}
\end{equation}
where $\Delta_M$ is the Laplace-Beltrami operator on $M$, $\overline{p}$ is the projected density function on $M$, $\overline{f}\left( x \right)$ is the average of $f$ along fibres on $TM$, and $m_0'$, $m_2'$ are constants that only depend on the kernel function $K$.
\end{proposition}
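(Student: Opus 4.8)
The plan is to compute the iterated limit in two stages, because Theorem~\ref{thm:main_tm} cannot be applied directly: its expansion holds only for bounded $\gamma=\delta/\epsilon$, and the vertical term $\delta\,\frac{m_{22}}{2m_0}\Delta^V$ \emph{saturates} rather than vanishes as $\gamma\to\infty$ --- intuitively, the vertical smoothing becomes so strong that it projects $f$ onto the fibrewise-constant functions, i.e.\ replaces $f$ by its fibre average $\overline{f}$. Concretely, I would first fix $\epsilon>0$ small enough that $P_{y,x}$ --- hence $K_{\epsilon,\delta}$ --- is well-defined on $\left\{ d_M\left( x,y \right)\le\sqrt{\epsilon} \right\}$, send $\gamma\to\infty$ to obtain an $\epsilon$-dependent operator that factors through the base manifold $M$, and only then expand that operator as $\epsilon\to0$.

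For the first stage, because $\mathrm{supp}\left( K \right)\subset\left[ 0,1 \right]\times\left[ 0,1 \right]$ and $K$ is bounded, for each fixed $\left( x,v \right),\left( y,w \right)\in TM$ one has, as $\gamma\to\infty$,
\begin{equation*}
  K_{\epsilon,\gamma\epsilon}\left( x,v;y,w \right)=K\!\left( \frac{d_M^2\left( x,y \right)}{\epsilon},\frac{\left\| P_{y,x}v-w \right\|_y^2}{\gamma\epsilon} \right)\longrightarrow K\!\left( \frac{d_M^2\left( x,y \right)}{\epsilon},0 \right)=:\widetilde{K}_{\epsilon}\left( x,y \right),
\end{equation*}
a quantity independent of $v$ and $w$. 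Using the product decomposition $d\mu\left( y,w \right)=dV_y\left( w \right)\,d\mathrm{vol}_M\left( y \right)$ together with $\int_{T_yM}p\left( y,w \right)\,dV_y\left( w \right)=\overline{p}\left( y \right)$, dominated convergence gives $p_{\epsilon,\gamma\epsilon}\left( x,v \right)\to\overline{p}_{\epsilon}\left( x \right):=\int_M\widetilde{K}_{\epsilon}\left( x,y \right)\overline{p}\left( y \right)\,d\mathrm{vol}_M\left( y \right)$, again independent of $v$. Feeding these limits into~\eqref{eq:hdo_tm}, the prefactors $\overline{p}_{\epsilon}\left( x \right)^{-\alpha}$ cancel between numerator and denominator, each fibre integral collapses by $\int_{T_yM}f\left( y,w \right)p\left( y,w \right)\,dV_y\left( w \right)=\overline{f}\left( y \right)\overline{p}\left( y \right)$ (a consequence of~\eqref{eq:average_along_fibres}), and one is left with
\begin{equation*}
  \lim_{\gamma\to\infty}H^{\alpha}_{\epsilon,\gamma\epsilon}f\left( x,v \right)=\frac{\int_M\widetilde{K}_{\epsilon}\left( x,y \right)\,\overline{p}_{\epsilon}\left( y \right)^{-\alpha}\,\overline{f}\left( y \right)\,\overline{p}\left( y \right)\,d\mathrm{vol}_M\left( y \right)}{\int_M\widetilde{K}_{\epsilon}\left( x,y \right)\,\overline{p}_{\epsilon}\left( y \right)^{-\alpha}\,\overline{p}\left( y \right)\,d\mathrm{vol}_M\left( y \right)},
\end{equation*}
which no longer depends on $v$.

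For the second stage, I would recognize the right-hand side as precisely the $\alpha$-normalized diffusion-map operator of~\cite{CoifmanLafon2006} on $M$, built from the smooth, symmetric, compactly supported kernel $\widetilde{K}_{\epsilon}$ (a function of geodesic distance squared), with sampling density $\overline{p}$, applied to the fibre average $\overline{f}$; equivalently, it is the specialization of $H_{\epsilon,\delta}^{\alpha}$ to the trivial one-point fibre, i.e.\ ordinary Diffusion Maps on $M$. Invoking the pointwise-convergence theorem for diffusion maps~\cite{CoifmanLafon2006} --- or, equivalently, carrying out the normal-coordinate expansion of Appendix~\ref{app:proof-main-theorem} with only the base directions present --- this operator equals $\overline{f}\left( x \right)+\epsilon\frac{m'_2}{2m'_0}\left[ \frac{\Delta_M\left[\overline{f}\,\overline{p}^{1-\alpha}\right]\left( x \right)}{\overline{p}^{1-\alpha}\left( x \right)}-\overline{f}\left( x \right)\frac{\Delta_M\overline{p}^{1-\alpha}\left( x \right)}{\overline{p}^{1-\alpha}\left( x \right)} \right]+O\left( \epsilon^2 \right)$ as $\epsilon\to0$, where $m'_0$ and $m'_2$ are the moments $\int_{\mathbb{R}^{d}}K\left( \left| u \right|^2,0 \right)\,du$ and $\int_{\mathbb{R}^{d}}u_1^2\,K\left( \left| u \right|^2,0 \right)\,du$ with $d=\dim M$. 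Since $M$ is closed, the $O\left( \epsilon^2 \right)$ error is uniform in $x$, and this is exactly~\eqref{eq:main_tm_ii_base}.

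The main obstacle is justifying the passage of the limit $\gamma\to\infty$ through the integrals: because $K_{\epsilon,\gamma\epsilon}\left( x,v;y,\cdot \right)$ is supported on a ball of radius $\sqrt{\gamma\epsilon}$ in $T_yM$, the effective fibre-domain of integration grows without bound, so no $\gamma$-uniform integrable majorant follows from boundedness of $K$ alone. For the unit tangent bundle this difficulty is absent, since the fibres are compact; for $TM$ one needs $\left\| K \right\|_\infty\,p\left( y,w \right)\left| f\left( y,w \right) \right|$ and $\left\| K \right\|_\infty\,p\left( y,w \right)$ to be $\mu$-integrable, which holds once $p$ and the admissible test functions $f$ decay fast enough in the fibre for~\eqref{eq:hdo_tm} to be well-defined at all --- under that standing hypothesis the dominated convergence theorem applies on $TM$ as well. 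Two remaining checks are routine: that $\widetilde{K}_{\epsilon}$ satisfies the regularity hypotheses of~\cite{CoifmanLafon2006} (immediate, as it is smooth on $\left\{ d_M\left( x,y \right)<\sqrt{\epsilon}\le\mathrm{Inj}\left( M \right) \right\}$ and vanishes outside), and that the two limiting regimes are taken in the order stated --- $\gamma\to\infty$ first, then $\epsilon\to0$ --- which is precisely how Proposition~\ref{prop:gamma_infty} is phrased.
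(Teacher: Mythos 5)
Your proposal follows essentially the same route as the paper's proof (given for the unit tangent bundle version, Proposition~\ref{prop:gamma_infty_utm}, in Appendix~\ref{app:proof-main-theorem}): pass the limit $\gamma\rightarrow\infty$ through the integrals so that $K_{\epsilon,\gamma\epsilon}$ degenerates to $K\left( d_M^2\left( x,y \right)/\epsilon,0 \right)$, collapse the fibre integrals to $\overline{f}\,\overline{p}$ and $\overline{p}$, and then invoke the Coifman--Lafon expansion~\cite{CoifmanLafon2006} for the resulting $\alpha$-normalized kernel on $M$. Your additional remark about needing a $\gamma$-uniform integrable majorant on the non-compact fibres of $TM$ is a legitimate point of care that the paper elides by only writing out the compact-fibre ($UTM$) case, but it does not change the argument.
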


\begin{corollary}
\label{cor:gamma_infty}
  Let $M$ and $H_{\epsilon,\delta}^{\alpha}$ be as in Theorem~{\rm \ref{thm:main_tm}}. If $\delta/\epsilon=\gamma\rightarrow\infty$ as $\epsilon\rightarrow0$, $\delta\rightarrow 0$, then for any $f\in C^{\infty}\left( TM \right)$, in general
\begin{equation*}
  \lim_{\gamma\rightarrow\infty}\lim_{\delta\rightarrow0}\frac{H_{\epsilon,\gamma\epsilon}^{\alpha}f\left( x,v \right)-f \left( x,v \right)}{\epsilon}\neq \lim_{\delta\rightarrow0}\lim_{\gamma\rightarrow\infty}\frac{H_{\epsilon,\gamma\epsilon}^{\alpha}f\left( x,v \right)-f \left( x,v \right)}{\epsilon},
\end{equation*}
and thus an asymptotic expansion of $H_{\epsilon,\delta}^{\alpha}f \left( x,v \right)$ for small $\epsilon, \delta$ is not well-defined. In fact, for each fixed $\gamma$, as in \eqref{eq:main_tm_i},
\begin{equation}
  \label{eq:main_tm_ii}
  \begin{aligned}
    H_{\epsilon,\gamma\epsilon}^{\alpha}f\left( x,v \right) = &f \left( x,v \right)+\epsilon\frac{m_{21}}{2m_0}\left[\frac{\Delta^H\left[fp^{1-\alpha}\right]\left( x,v \right)}{p^{1-\alpha}\left( x,v \right)}-f \left( x,v \right) \frac{\Delta^Hp^{1-\alpha}\left( x,v \right)}{p^{1-\alpha}\left( x,v \right)}  \right]\\
    +\gamma \epsilon&\frac{m_{22}}{2m_0}\left[\frac{\Delta^V\left[fp^{1-\alpha}\right]\left( x,v \right)}{p^{1-\alpha}\left( x,v \right)}-f \left( x,v \right) \frac{\Delta^Vp^{1-\alpha}\left( x,v \right)}{p^{1-\alpha}\left( x,v \right)} \right]+O \left( \epsilon^2 \right),
\end{aligned}
\end{equation}
whereas by Proposition{\rm ~\ref{prop:gamma_infty}}
\begin{equation}
  \begin{aligned}
  \lim_{\gamma\rightarrow\infty}H^{\alpha}_{\epsilon,\gamma\epsilon}f \left( x,v \right)=\overline{f}\left( x \right)+\epsilon \frac{m'_2}{2m'_0} \left[\frac{\Delta_M\left[\overline{f}\overline{p}^{1-\alpha}\right]\left( x \right)}{\overline{p}^{1-\alpha}\left( x \right)}-\overline{f}\left( x \right)\frac{\Delta_M\overline{p}^{1-\alpha}\left( x \right)}{\overline{p}^{1-\alpha}\left( x \right)} \right]+O \left( \epsilon^2 \right).
  \end{aligned}
\end{equation}
\end{corollary}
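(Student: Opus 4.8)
The plan is to obtain the corollary directly from Theorem~\ref{thm:main_tm} and Proposition~\ref{prop:gamma_infty}: these two results already compute the two inner limits in question, so the remaining work is just to record them and compare. For brevity write $A^H(x,v)$ and $A^V(x,v)$ for the bracketed horizontal and vertical weighted-Laplacian expressions in \eqref{eq:main_tm_i}, and $B_M(x)$ for the bracketed Laplace--Beltrami expression in \eqref{eq:main_tm_ii_base}. For the left-hand iterated limit I would fix $\gamma>0$, set $\delta=\gamma\epsilon$ (so that $\delta=O(\epsilon)$), apply Theorem~\ref{thm:main_tm} to get \eqref{eq:main_tm_ii}, divide by $\epsilon$, and let $\epsilon\to0$ (equivalently $\delta\to0$ with $\gamma$ held fixed); the remainder drops out and
\[
  \lim_{\delta\to0}\frac{H^{\alpha}_{\epsilon,\gamma\epsilon}f(x,v)-f(x,v)}{\epsilon}=\frac{m_{21}}{2m_0}\,A^H(x,v)+\gamma\,\frac{m_{22}}{2m_0}\,A^V(x,v),
\]
to which the outer limit $\gamma\to\infty$ is then applied. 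For the right-hand iterated limit I would instead keep $\epsilon>0$ fixed and small and invoke Proposition~\ref{prop:gamma_infty} to pass $\gamma\to\infty$ first, which replaces $H^{\alpha}_{\epsilon,\gamma\epsilon}f(x,v)$ by $\overline{f}(x)+\epsilon\,\frac{m'_2}{2m'_0}B_M(x)+O(\epsilon^2)$; dividing by $\epsilon$ gives the inner limit $\epsilon^{-1}\bigl(\overline{f}(x)-f(x,v)\bigr)+\frac{m'_2}{2m'_0}B_M(x)+O(\epsilon)$, to which the outer limit $\epsilon\to0$ is applied.

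Comparing the two settles the corollary. On the left the inner limit is affine in $\gamma$ with slope $\tfrac{m_{22}}{2m_0}A^V$, so the outer limit diverges wherever $A^V\not\equiv0$, i.e.\ for generic $f$; on the right the inner limit contains the quotient $\epsilon^{-1}(\overline{f}-f)$, which diverges as $\epsilon\to0$ whenever $f$ is not constant along the fibre. Hence the two iterated limits disagree, and they disagree in genuinely different ways, so no single asymptotic expansion $H^{\alpha}_{\epsilon,\delta}f=f+\epsilon(\cdots)+\delta(\cdots)+\cdots$ can be valid as $\epsilon,\delta\to0$ without constraining $\gamma=\delta/\epsilon$; indeed the Proposition~\ref{prop:gamma_infty} regime is reached by letting $\delta/\epsilon\to\infty$, which is not recovered by sending $\delta\to0$ in \eqref{eq:main_tm_ii}. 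For a clean strict inequality between honestly defined quantities I would take $\alpha=1$ and an $f$ that is non-constant along some fibre but harmonic along every fibre --- such functions abound on $TM$ since its fibres are Euclidean, e.g.\ $f(x,v)=\langle v,V(x)\rangle_x$ for a smooth vector field $V\not\equiv0$. Then $A^V=\Delta^V f\equiv0$, so the left iterated limit is the \emph{finite} function $\tfrac{m_{21}}{2m_0}\Delta^H f$, whereas $\overline{f}\not\equiv f$ forces the right iterated limit to be $+\infty$ or $-\infty$; in particular the two are unequal.

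The computations above are routine once Theorem~\ref{thm:main_tm} and Proposition~\ref{prop:gamma_infty} are in hand, and I do not expect a serious obstacle. The one point that needs care is the precise meaning of ``$\neq$'': for a generic $f\in C^{\infty}(TM)$ both iterated limits are actually $\pm\infty$, so the substantive content is the non-commutativity of the two limiting procedures, which is made rigorous either by exhibiting (as above) an $f$ on which one side is finite and the other is not, or by comparing the two limits on a subspace --- such as the functions harmonic along fibres --- on which the left side is always finite. The genuinely hard analytic input --- in particular the collapse of the vertical diffusion as $\gamma\to\infty$ in Proposition~\ref{prop:gamma_infty} --- is assumed here, so nothing further is needed for the corollary itself.
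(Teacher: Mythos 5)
Your proposal is correct and is essentially the paper's own argument: the corollary is obtained by juxtaposing the fixed-$\gamma$ expansion \eqref{eq:main_tm_ii} from Theorem~\ref{thm:main_tm} with the $\gamma\to\infty$ expansion from Proposition~\ref{prop:gamma_infty}, dividing each by $\epsilon$, and observing that the resulting inner limits behave incompatibly (one is affine in $\gamma$ through $\Delta^V$, the other carries the singular term $\epsilon^{-1}(\overline{f}-f)$). Your additional step of pinning down the meaning of ``$\neq$'' via a fibre-harmonic witness such as $f(x,v)=\langle v,V(x)\rangle_x$, for which the left iterated limit is finite while the right one diverges, is a worthwhile sharpening that the paper leaves implicit.
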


\begin{corollary}
  Under the same assumptions and notation as in Theorem{\rm~\ref{thm:main_tm}}, if $\alpha=1$, then
\begin{enumerate}[(i)]
\item\label{item:15} If $\delta=O \left( \epsilon \right)$ as $\epsilon\rightarrow0$, then for any $f\in C^{\infty}\left( TM \right)$, as $\epsilon\rightarrow0$ (and thus $\delta\rightarrow0$),
  \begin{equation}
    \label{eq:corollary_alpha1}
    H_{\epsilon,\delta}^1f \left( x,v \right)=f \left( x,v \right)+\epsilon \frac{m_{21}}{2m_0}\Delta^Hf \left( x,v \right)+\delta \frac{m_{22}}{2m_0}\Delta^Vf \left( x,v \right)+O \left( \epsilon^2+\delta^2 \right);
  \end{equation}
\item\label{item:16} For any $f\in C^{\infty}\left( TM \right)$,
  \begin{equation}
    \label{eq:corollary_alpha1_base}
    \lim_{\gamma\rightarrow\infty}H_{\epsilon,\gamma\epsilon}^1f \left( x,v \right)=\overline{f}\left( x \right)+\epsilon \frac{m_2'}{2m_0'}\Delta_M\overline{f}\left( x \right)+O \left( \epsilon^2 \right).
  \end{equation}
\end{enumerate}
\end{corollary}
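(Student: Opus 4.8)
The plan is to obtain both statements by specializing to $\alpha = 1$ the asymptotic expansions already established in Theorem~\ref{thm:main_tm} and Proposition~\ref{prop:gamma_infty}. The whole point of the choice $\alpha = 1$ is that it removes the density weights: on $TM$ one has $p^{1-\alpha} = p^{0}\equiv 1$, and on $M$ one has $\overline{p}^{1-\alpha} = \overline{p}^{0}\equiv 1$. Since $\Delta^{H}$, $\Delta^{V}$, and $\Delta_{M}$ are second-order differential operators assembled from covariant derivatives (see Appendix~\ref{app:geom-tang-bundl}), they carry no zeroth-order term and therefore annihilate constants; in particular $\Delta^{H}1 = \Delta^{V}1 = 0$ on $TM$ and $\Delta_{M}1 = 0$ on $M$.

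First I would treat the statement \eqref{eq:corollary_alpha1}. Starting from \eqref{eq:main_tm_i} under the hypothesis $\delta = O(\epsilon)$, I set $\alpha = 1$ and use $p^{1-\alpha}\equiv 1$ to simplify the two bracketed terms: the horizontal bracket becomes $\Delta^{H}f(x,v) - f(x,v)\,\Delta^{H}1(x,v) = \Delta^{H}f(x,v)$, and likewise the vertical bracket becomes $\Delta^{V}f(x,v)$. Substituting these back into \eqref{eq:main_tm_i} gives exactly \eqref{eq:corollary_alpha1}, with the same kernel constants $m_{0}, m_{21}, m_{22}$ and the same remainder $O(\epsilon^{2}+\delta^{2})$. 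For \eqref{eq:corollary_alpha1_base} the argument is identical, now applied to \eqref{eq:main_tm_ii_base} in Proposition~\ref{prop:gamma_infty}: with $\alpha = 1$ and $\overline{p}^{1-\alpha}\equiv 1$, the single bracket collapses to $\Delta_{M}\overline{f}(x) - \overline{f}(x)\,\Delta_{M}1(x) = \Delta_{M}\overline{f}(x)$, which yields \eqref{eq:corollary_alpha1_base}.

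There is essentially no obstacle here; the corollary is a formal consequence of the two preceding results. The only point worth a line of justification is that $\Delta^{H}$, $\Delta^{V}$, and $\Delta_{M}$ have no constant term, so that the $\alpha$-normalization corrections vanish at $\alpha = 1$; this reproduces, in the hypoelliptic setting, the familiar phenomenon from \cite{CoifmanLafon2006} that taking $\alpha = 1$ cancels the effect of the sampling density and leaves the pure diffusion operator (here a fixed positive combination of $\Delta^{H}$ and $\Delta^{V}$ on $TM$, respectively the Laplace--Beltrami operator on $M$).
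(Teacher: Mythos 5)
Your proposal is correct and is exactly the intended argument: the paper states this corollary without separate proof precisely because, at $\alpha=1$, one has $p^{1-\alpha}\equiv 1$ (resp.\ $\overline{p}^{1-\alpha}\equiv 1$), and since $\Delta^{H}$, $\Delta^{V}$, and $\Delta_{M}$ are pure second-order operators with no zeroth-order part (see \eqref{eq:horizontal_laplace_beltrami_geodesic_normal} and \eqref{eq:vertical_laplace_beltrami}), the subtracted terms in \eqref{eq:main_tm_i} and \eqref{eq:main_tm_ii_base} vanish, leaving \eqref{eq:corollary_alpha1} and \eqref{eq:corollary_alpha1_base}. Nothing is missing.
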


\subsection{HDM on Unit Tangent Bundles}
\label{sec:hdm-unit-tangent}

The construction of HDM for unit tangent bundles is very similar to the construction in Section~\ref{sec:hdm-tangent-bundles}. We only need to replace the volume element $d\mu$ on $TM$ with $d\Theta$, the \emph{Liouville measure} on $UTM$ (see, e.g., \cite[Chapter VII]{Chavel2006}), and modify the definition of $K_{\epsilon,\delta}$ in \eqref{eq:kernel_TM} into
\begin{equation}
  \label{eq:kernel_UTM}
  K_{\epsilon,\delta}\left( x,v; y,w \right):=K \left( \frac{d^2_M \left( x,y \right)}{\epsilon},\frac{d^2_{S_y}\left( P_{y,x}v,w \right)}{\delta} \right),
\end{equation}
where $S_y$ is the unit sphere in $T_yM$, and $d_{S_y}\left( \cdot,\cdot \right)$ is the geodesic distance on $S_y$ under the induced metric from $T_yM$. Note that $K_{\epsilon,\delta}$ as defined in \eqref{eq:kernel_UTM} is still symmetric. Abusing notation, we shall not distinguish the $K_{\epsilon,\delta}$ in \eqref{eq:kernel_TM} with the unit tangent bundle version \eqref{eq:kernel_UTM}, whenever the specification can be inferred from contexts. Similarly, notation
\begin{equation}
  \label{eq:density_on_base_utm}
  \overline{p}\left( x \right):=\int_{S_x}p \left( x,v \right)\,dV_x \left( v \right),
\end{equation}
\begin{equation}
  \label{eq:conditional_density_on_fibre_utm}
  p \left( v \mid x\right) = \frac{p \left( x,v \right)}{\overline{p}\left( x \right)},
\end{equation}
and
\begin{equation}
  \label{eq:average_along_fibres_utm}
  \overline{f}\left( x \right)=\int_{S_x}f \left( x,v \right)p \left( v\mid x \right)\,dV_x \left( v \right),\quad \forall f\in C^{\infty}\left( UTM \right), \forall x\in M
\end{equation}
will stay the same as in \eqref{eq:density_on_base}, \eqref{eq:conditional_density_on_fibre}, and \eqref{eq:average_along_fibres}. Like in~\eqref{eq:hdo_tm}, now we can define a family of \emph{hypoellitpic diffusion operators} on $UTM$ for any $0\leq \alpha\leq 1$ as
\begin{equation}
  \label{eq:hdo_utm}
  H_{\epsilon,\delta}^{\alpha}f \left( x,v \right):=\frac{\displaystyle\int_{UTM}K_{\epsilon,\delta}^{\alpha}\left( x,v;y,w \right)f \left( y,w \right)p \left( y,w \right)\,d\Theta \left( y,w \right)}{\displaystyle\int_{UTM}K_{\epsilon,\delta}^{\alpha}\left( x,v;y,w \right)p \left( y,w \right)\,d\Theta \left( y,w \right)}
\end{equation}
for any $f\in C^{\infty}\left( UTM \right)$.

\begin{theorem}[HDM on Unit Tangent Bundles]
\label{thm:main_utm}
  Let $M$ be a closed Riemannian manifold, and $H_{\epsilon,\delta}^{\alpha}:C^{\infty}\left( UTM \right)\rightarrow C^{\infty}\left( UTM \right)$ defined as in \eqref{eq:hdo_utm}. If $\delta=O \left( \epsilon \right)$ as $\epsilon\rightarrow0$, or equivalently if $\gamma=\delta/\epsilon$ is asymptotically bounded, then for any $f\in C^{\infty}\left( UTM \right)$, as $\epsilon\rightarrow0$ (and thus $\delta\rightarrow0$),
\begin{equation}
  \label{eq:main_utm_i}
  \begin{aligned}
    H_{\epsilon,\delta}^{\alpha}f\left( x,v \right) &= f \left( x,v \right)+\epsilon \frac{m_{21}}{2m_0}\left[\frac{\Delta_S^H\left[fp^{1-\alpha}\right]\left( x,v \right)}{p^{1-\alpha}\left( x,v \right)}-f \left( x,v \right) \frac{\Delta_S^Hp^{1-\alpha}\left( x,v \right)}{p^{1-\alpha}\left( x,v \right)}  \right]\\
    &+\delta \frac{m_{22}}{2m_0}\left[\frac{\Delta_S^V\left[fp^{1-\alpha}\right]\left( x,v \right)}{p^{1-\alpha}\left( x,v \right)}-f \left( x,v \right) \frac{\Delta_S^Vp^{1-\alpha}\left( x,v \right)}{p^{1-\alpha}\left( x,v \right)}  \right]+O\left(\epsilon^2+\delta^2 \right),\\
\end{aligned}
\end{equation}
where $m_0$, $m_{21}$, $m_{22}$ are positive constants depending only on the kernel $K$.
\end{theorem}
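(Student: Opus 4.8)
The plan is to adapt the scalar diffusion-map expansion of Coifman and Lafon~\cite{CoifmanLafon2006} to the two-bandwidth, sub-Riemannian geometry of $UTM$. Since $\mathrm{supp}(K)\subset[0,1]\times[0,1]$, the kernel $K_{\epsilon,\delta}(x,v;\cdot,\cdot)$ from \eqref{eq:kernel_UTM} is supported on $\{(y,w)\in UTM:d_M(x,y)\le\sqrt{\epsilon},\ d_{S_y}(P_{y,x}v,w)\le\sqrt{\delta}\}$, so it suffices to compute in a single coordinate patch around $(x,v)$ adapted to the horizontal/vertical splitting of $T_{(x,v)}UTM$ recalled in Appendix~\ref{app:geom-tang-bundl}. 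First I would introduce rescaled coordinates $y=\exp^M_x(\sqrt{\epsilon}\,s)$ with $s\in T_xM$, $\|s\|\le1$, and $w=\exp^{S_y}_{P_{y,x}v}(\sqrt{\delta}\,\eta)$ with $\eta\in T_{P_{y,x}v}S_y$, $\|\eta\|\le1$, so that the two arguments of $K$ become exactly $\|s\|^2$ and $\|\eta\|^2$; then expand the Liouville measure, using that $d\Theta$ is the product of the round measure on $S_y$ with $d\mathrm{vol}_M(y)$, together with the normal-coordinate volume expansions on $M$ (Ricci correction at order $\epsilon$) and on $S_y$ (sectional-curvature correction at order $\delta$), to obtain $d\Theta(y,w)=\epsilon^{d/2}\delta^{(d-1)/2}\bigl(1+\epsilon\,a^H(s)+\delta\,a^V(\eta)+O(\epsilon^2+\delta^2)\bigr)\,ds\,d\eta$ for explicit quadratic forms $a^H,a^V$.

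The core of the argument is a two-bandwidth expansion lemma: for every $h\in C^\infty(UTM)$,
\begin{align*}
  \int_{UTM}K_{\epsilon,\delta}(x,v;y,w)\,h(y,w)\,d\Theta(y,w)
  &=\epsilon^{d/2}\delta^{(d-1)/2}\Bigl[\,m_0\,h(x,v)+\tfrac{m_{21}}{2}\,\epsilon\,(\Delta_S^H h)(x,v)\\
  &\quad+\tfrac{m_{22}}{2}\,\delta\,(\Delta_S^V h)(x,v)+R(x,v)\,\Bigr],
\end{align*}
where $m_0=\int\!\int K(\|s\|^2,\|\eta\|^2)\,ds\,d\eta$, $m_{21}=\int\!\int s_1^2\,K\,ds\,d\eta$, $m_{22}=\int\!\int\eta_1^2\,K\,ds\,d\eta$, and $R=O(\epsilon^2+\delta^2)$ once $\delta=O(\epsilon)$. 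This is where the geometry enters. Taylor-expanding $h(y,w)$ simultaneously along the horizontal lift $t\mapsto(\exp^M_x(ts),P_{\exp^M_x(ts),x}v)$ of the geodesic in direction $s$ and along the great circle $t\mapsto\exp^{S_x}_v(t\eta)$ in the fibre, the first-order terms integrate to zero by the parities $s\mapsto-s$ and $\eta\mapsto-\eta$ of $K(\|s\|^2,\|\eta\|^2)$; the pure second-order terms, integrated against $\int s_i s_j\,K=m_{21}\delta_{ij}$ and $\int\eta_k\eta_\ell\,K=m_{22}\delta_{k\ell}$, produce exactly $\tfrac{m_{21}}{2}\Delta_S^H h+\tfrac{m_{22}}{2}\Delta_S^V h$, which identifies $\Delta_S^H$ and $\Delta_S^V$ with the horizontal and vertical Laplacians of Appendix~\ref{app:geom-tang-bundl} (along geodesics the relevant second derivatives are the corresponding Hessians, and the orthonormal traces are the Laplacians). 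One must simultaneously check that the odd cross terms of order $\epsilon^{1/2}\delta^{1/2}$ and $\epsilon^{3/2}\delta^{1/2}$ vanish by parity or fall into $R$, and --- this is the delicate point --- that the $s$-dependence of the fibre sphere $S_y$, of its metric $d_{S_y}$, and of the base point $P_{y,x}v$ (through which the horizontal and vertical coordinates are coupled) perturbs only terms already of order $O(\epsilon^2+\delta^2)$; here one exploits that $P_{y,x}$ is an isometry and $d_M(x,y)=O(\sqrt{\epsilon})$.

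With the lemma in hand, the theorem follows by the Coifman--Lafon normalization bookkeeping. Taking $h\equiv p$ gives $p_{\epsilon,\delta}(x,v)=\epsilon^{d/2}\delta^{(d-1)/2}\bigl[\,m_0 p(x,v)+\epsilon\,b^H(x,v)+\delta\,b^V(x,v)+O(\epsilon^2+\delta^2)\,\bigr]$ for $b^H,b^V$ built from $\Delta_S^H,\Delta_S^V$; substituting this and the analogous expansion of $p_{\epsilon,\delta}(y,w)$ into the definition \eqref{eq:alpha_normalized_kernel} of $K_{\epsilon,\delta}^\alpha$, the $(x,v)$-factor cancels between numerator and denominator of \eqref{eq:hdo_utm}, while $p_{\epsilon,\delta}^{-\alpha}(y,w)$ contributes $p^{-\alpha}(y,w)$ times a $1+O(\epsilon+\delta)$ factor whose $O(\epsilon)$ and $O(\delta)$ corrections must be carried along. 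Applying the lemma once more to the numerator (with $h=fp^{1-\alpha}$ together with those correction terms) and to the denominator (with $h=p^{1-\alpha}$), then expanding the quotient $\mathrm{num}\cdot\mathrm{den}^{-1}$, the curvature corrections and the $p_{\epsilon,\delta}$ corrections recombine --- exactly as in~\cite{CoifmanLafon2006} --- into the gauge-invariant expressions $\Delta_S^H[fp^{1-\alpha}]/p^{1-\alpha}-f\,\Delta_S^Hp^{1-\alpha}/p^{1-\alpha}$ at order $\epsilon$ and $\Delta_S^V[fp^{1-\alpha}]/p^{1-\alpha}-f\,\Delta_S^Vp^{1-\alpha}/p^{1-\alpha}$ at order $\delta$, with coefficients $m_{21}/(2m_0)$ and $m_{22}/(2m_0)$, giving \eqref{eq:main_utm_i}. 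I expect the main obstacle to be the geometric part of the expansion lemma --- disentangling the horizontal and vertical second-order contributions despite the $y$-dependence of the fibre sphere induced by $P_{y,x}$ --- rather than the lengthy but routine quotient algebra of the last step; the hypothesis $\delta=O(\epsilon)$ is used precisely to absorb all mixed $\epsilon^a\delta^b$ error terms into $O(\epsilon^2+\delta^2)$.
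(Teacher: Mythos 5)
Your proposal follows essentially the same route as the paper's proof: the paper's Appendix~\ref{app:proof-main-theorem} carries out exactly your two-bandwidth expansion lemma (via Lemmas~\ref{lem:basic_kernel_integral}--\ref{lem:asymp_sym_kernels_utm}, iterating the fibre and base integrals by Fubini rather than working in a single product chart, and extracting $\Delta_S^H$ from the second-order Taylor expansion of $P_{y,x}v$, i.e.\ along the horizontal lift, precisely as you describe), followed by the same Coifman--Lafon quotient bookkeeping. One caution: as literally stated your expansion lemma is false, because the volume-form corrections $a^H,a^V$ you introduce contribute zeroth-order curvature terms (namely $-\tfrac{1}{3}\mathrm{Scal}(x)\,h(x,v)$ at order $\epsilon$ and $-\tfrac{(d-1)(d-2)}{3}\,h(x,v)$ at order $\delta$, cf.\ \eqref{eq:numerator_taylor_expansion_utm}) that do not belong in $R=O(\epsilon^2+\delta^2)$ and must be carried explicitly until they cancel between numerator and denominator --- as you in fact anticipate when you say the ``curvature corrections recombine.''
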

We included a proof of Theorem~\ref{thm:main_utm} in Appendix~\ref{app:proof-main-theorem}. The proof of Theorem~\ref{thm:main_tm} is essentially the same.

\begin{proposition}
\label{prop:gamma_infty_utm}
Let $M$ and $H_{\epsilon,\delta}^{\alpha}$ be as in Theorem~{\rm \ref{thm:main_utm}}. If $\delta=\gamma\epsilon$, then for any $f\in C^{\infty}\left( UTM \right)$ and sufficiently small $\epsilon>0$,
\begin{equation}
\label{eq:main_utm_ii_base}
  \begin{aligned}
  \lim_{\gamma\rightarrow\infty}H^{\alpha}_{\epsilon,\gamma\epsilon}f \left( x,v \right)=\overline{f}\left( x \right)+\epsilon \frac{m'_2}{2m'_0} \left[\frac{\Delta_M\left[\overline{f}\overline{p}^{1-\alpha}\right]\left( x \right)}{\overline{p}^{1-\alpha}\left( x \right)}-\overline{f}\left( x \right)\frac{\Delta_M\overline{p}^{1-\alpha}\left( x \right)}{\overline{p}^{1-\alpha}\left( x \right)} \right]+O \left( \epsilon^2 \right),
  \end{aligned}
\end{equation}
where $\Delta_M$ is the Laplace-Beltrami operator on $M$, $\overline{p}$ is the projected density function on $M$, $\overline{f}\left( x \right)$ is the average of $f$ along fibres on $UTM$, and $m_0'$, $m_2'$ are constants that only depend on the kernel function $K$.
\end{proposition}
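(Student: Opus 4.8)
The plan is to use crucially the order of the two limits in Proposition~\ref{prop:gamma_infty_utm}: the statement sends $\gamma=\delta/\epsilon\to\infty$ \emph{first}, with $\epsilon$ fixed and small, and only afterwards expands in $\epsilon$. Once $\delta=\gamma\epsilon$ is sent to infinity the fibre variable $w$ is averaged completely out of the kernel \eqref{eq:kernel_UTM}, and $H^{\alpha}_{\epsilon,\gamma\epsilon}$ degenerates into an honest $\alpha$-normalized diffusion operator of Coifman--Lafon type~\cite{CoifmanLafon2006} on the base manifold $M$, applied to the fibre-average $\overline{f}$. The known small-$\epsilon$ asymptotics of that operator then give \eqref{eq:main_utm_ii_base} directly.

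\emph{Washing out the fibre variable.} Since $M$ is closed, each fibre $S_y$ of $UTM$ is a round $(d-1)$-sphere of diameter $\pi$, so $0\le d^2_{S_y}(P_{y,x}v,w)\le\pi^2$ for all $w\in S_y$; by continuity of $K$ and \eqref{eq:kernel_UTM},
\[
K_{\epsilon,\gamma\epsilon}(x,v;y,w)=K\!\left(d_M^2(x,y)/\epsilon,\; d^2_{S_y}(P_{y,x}v,w)/(\gamma\epsilon)\right)\longrightarrow K\!\left(d_M^2(x,y)/\epsilon,\,0\right)=:\widetilde K_\epsilon(x,y)\qquad(\gamma\to\infty),
\]
and, because $UTM$ is compact, this convergence is uniform in $(x,v;y,w)$. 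Integrating against $p$ over $UTM$, using the fibrewise factorization $d\Theta(y,w)=dV_y(w)\,d\mathrm{vol}_M(y)$ of the Liouville measure recalled in Appendix~\ref{app:geom-tang-bundl} together with \eqref{eq:density_on_base_utm}, dominated convergence gives that $p_{\epsilon,\gamma\epsilon}(x,v)$ converges, uniformly in $(x,v)$, to $\overline p_\epsilon(x):=\int_M\widetilde K_\epsilon(x,y)\,\overline p(y)\,d\mathrm{vol}_M(y)$, a limit independent of $v$. Here I would invoke the mild nondegeneracy of $K$ near the origin (already implicit in the positivity of the constants $m_0,m_{21},m_{22}$ in Theorem~\ref{thm:main_utm}, and needed so that $\widetilde K_\epsilon\not\equiv 0$): it yields $\overline p_\epsilon(x)\ge c_\epsilon>0$ uniformly in $x$ for all sufficiently small $\epsilon$, hence a uniform-in-$\gamma$ positive lower bound for $p_{\epsilon,\gamma\epsilon}$ once $\gamma$ is large, which is what licenses pushing the negative power $p_{\epsilon,\gamma\epsilon}^{-\alpha}$ through the limit.

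\emph{Passing to the limit and integrating out the fibre.} In the quotient \eqref{eq:hdo_utm} the factor $p_{\epsilon,\delta}^{\alpha}(x,v)$ coming from \eqref{eq:alpha_normalized_kernel} is constant in $(y,w)$ and cancels between numerator and denominator, so $H^{\alpha}_{\epsilon,\gamma\epsilon}f(x,v)$ is the ratio of the integrals over $UTM$ of $K_{\epsilon,\gamma\epsilon}(x,v;y,w)\,p_{\epsilon,\gamma\epsilon}^{-\alpha}(y,w)\,p(y,w)$ weighted by $f(y,w)$, resp.\ by $1$. By the uniform convergences just established,
\[
\lim_{\gamma\to\infty}H^{\alpha}_{\epsilon,\gamma\epsilon}f(x,v)=\frac{\displaystyle\int_{UTM}\widetilde K_\epsilon(x,y)\,\overline p_\epsilon^{-\alpha}(y)\,f(y,w)\,p(y,w)\,d\Theta(y,w)}{\displaystyle\int_{UTM}\widetilde K_\epsilon(x,y)\,\overline p_\epsilon^{-\alpha}(y)\,p(y,w)\,d\Theta(y,w)}.
\]
Now integrate out the fibre: $\widetilde K_\epsilon(x,y)$ and $\overline p_\epsilon^{-\alpha}(y)$ are constant on $S_y$, while by \eqref{eq:conditional_density_on_fibre_utm} and \eqref{eq:average_along_fibres_utm} one has $\int_{S_y}f(y,w)\,p(y,w)\,dV_y(w)=\overline f(y)\,\overline p(y)$, and by \eqref{eq:density_on_base_utm} $\int_{S_y}p(y,w)\,dV_y(w)=\overline p(y)$, so the above collapses to
\[
\lim_{\gamma\to\infty}H^{\alpha}_{\epsilon,\gamma\epsilon}f(x,v)=\frac{\displaystyle\int_{M}\widetilde K_\epsilon(x,y)\,\overline p_\epsilon^{-\alpha}(y)\,\overline p(y)\,\overline f(y)\,d\mathrm{vol}_M(y)}{\displaystyle\int_{M}\widetilde K_\epsilon(x,y)\,\overline p_\epsilon^{-\alpha}(y)\,\overline p(y)\,d\mathrm{vol}_M(y)},
\]
which depends on $(x,v)$ only through $x$.

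\emph{Base-manifold asymptotics, and the main difficulty.} The right-hand side above is exactly the $\alpha$-normalized diffusion operator of~\cite{CoifmanLafon2006} on the closed manifold $M$ associated with the scalar kernel $k(s):=K(s,0)$ and the sampling density $\overline p\in C^{\infty}(M)$ (smooth and bounded away from $0$ since $p$ is, by \eqref{eq:density} and compactness of the fibres, and likewise $\overline f\in C^{\infty}(M)$), with $\overline p_\epsilon$ its associated kernel density estimate $\int_M k\!\left(d_M^2(x,\cdot)/\epsilon\right)\overline p\,d\mathrm{vol}_M$. Applying the standard pointwise expansion of that operator --- equivalently, specializing the computation in Appendix~\ref{app:proof-main-theorem} to its ``horizontal'' part after fibre-integration --- yields, as $\epsilon\to0$, precisely \eqref{eq:main_utm_ii_base}, with $m_0'=\int_{\mathbb{R}^d}k(\|u\|^2)\,du$ and $m_2'=\int_{\mathbb{R}^d}u_1^2\,k(\|u\|^2)\,du$, which indeed depend only on $K$. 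The step I expect to be the crux is the $\gamma\to\infty$ limit: making the interchange with the integrals fully rigorous requires uniform --- not merely pointwise --- control of both $K_{\epsilon,\gamma\epsilon}$ and $p_{\epsilon,\gamma\epsilon}^{-\alpha}$ over the compact total space $UTM$, and in particular the uniform-in-$\gamma$ lower bound $\inf_{(x,v)}p_{\epsilon,\gamma\epsilon}(x,v)>0$; this is exactly where a nondegeneracy hypothesis on $K$ at the origin (tacitly used elsewhere in the paper to guarantee $m_0,m_0'>0$) must be brought in.
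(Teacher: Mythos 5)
Your proposal is correct and follows essentially the same route as the paper's proof: send $\gamma\to\infty$ to replace the kernel by $K\left(d_M^2(x,y)/\epsilon,0\right)$, integrate out the fibre using $\int_{S_y}f\,p\,dV_y=\overline{f}\,\overline{p}$ and $\int_{S_y}p\,dV_y=\overline{p}$, and then apply the Coifman--Lafon expansion of the resulting $\alpha$-normalized base-manifold operator. Your added attention to the uniformity of the convergence and the uniform positive lower bound on $p_{\epsilon,\gamma\epsilon}$ is a more careful justification of an interchange the paper performs without comment, not a different argument.
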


\begin{corollary}
\label{cor:gamma_infty_utm}
Let $M$ and $H^{\alpha}_{\epsilon,\delta}$ be as in Theorem{\rm ~\ref{thm:main_utm}}. If $\delta/\epsilon=\gamma\rightarrow\infty$ as $\epsilon\rightarrow0$, $\delta\rightarrow0$, then for any $f\in C^{\infty}\left( UTM \right)$, in general
\begin{equation*}
  \lim_{\gamma\rightarrow\infty}\lim_{\delta\rightarrow0}\frac{H_{\epsilon,\gamma\epsilon}^{\alpha}f\left( x,v \right)-f \left( x,v \right)}{\epsilon}\neq \lim_{\delta\rightarrow0}\lim_{\gamma\rightarrow\infty}\frac{H_{\epsilon,\gamma\epsilon}^{\alpha}f\left( x,v \right)-f \left( x,v \right)}{\epsilon}.
\end{equation*}
and thus an asymptotic expansion of $H_{\epsilon,\delta}^{\alpha}f \left( x,v \right)$ for small $\epsilon,\delta$ is not well-defined. In fact, for each fixed $\gamma$, as in \eqref{eq:main_utm_i},
\begin{equation}
  \label{eq:main_utm_ii}
  \begin{aligned}
    H_{\epsilon,\gamma\epsilon}^{\alpha}f\left( x,v \right) = &f \left( x,v \right)+\epsilon\frac{m_{21}}{2m_0}\left[\frac{\Delta_S^H\left[fp^{1-\alpha}\right]\left( x,v \right)}{p^{1-\alpha}\left( x,v \right)}-f \left( x,v \right) \frac{\Delta_S^Hp^{1-\alpha}\left( x,v \right)}{p^{1-\alpha}\left( x,v \right)}  \right]\\
    +\gamma \epsilon&\frac{m_{22}}{2m_0}\left[\frac{\Delta_S^V\left[fp^{1-\alpha}\right]\left( x,v \right)}{p^{1-\alpha}\left( x,v \right)}-f \left( x,v \right) \frac{\Delta_S^Vp^{1-\alpha}\left( x,v \right)}{p^{1-\alpha}\left( x,v \right)} \right]+O \left( \epsilon^2 \right),
\end{aligned}
\end{equation}
whereas by Proposition{\rm ~\ref{prop:gamma_infty_utm}}
\begin{equation}
  \begin{aligned}
  \lim_{\gamma\rightarrow\infty}H^{\alpha}_{\epsilon,\gamma\epsilon}f \left( x,v \right)=\overline{f}\left( x \right)+\epsilon \frac{m'_2}{2m'_0} \left[\frac{\Delta_M\left[\overline{f}\overline{p}^{1-\alpha}\right]\left( x \right)}{\overline{p}^{1-\alpha}\left( x \right)}-\overline{f}\left( x \right)\frac{\Delta_M\overline{p}^{1-\alpha}\left( x \right)}{\overline{p}^{1-\alpha}\left( x \right)} \right]+O \left( \epsilon^2 \right).
  \end{aligned}
\end{equation}
\end{corollary}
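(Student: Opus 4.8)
The plan is to obtain both displayed expansions as immediate specializations of results already established, and then to read off the failure of commutativity from the structural mismatch of the leading-order operators they produce. First I would check that, for each \emph{fixed} $\gamma>0$, the choice $\delta=\gamma\epsilon$ meets the hypothesis $\delta=O(\epsilon)$ of Theorem~\ref{thm:main_utm} (trivially, with implied constant $\gamma$), so that substituting $\delta=\gamma\epsilon$ into \eqref{eq:main_utm_i} reproduces \eqref{eq:main_utm_ii} verbatim, the vertical contribution now appearing as $\gamma\epsilon\cdot\frac{m_{22}}{2m_0}[\cdots]$. The second display is just Proposition~\ref{prop:gamma_infty_utm} restated. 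Hence the only genuine content of the corollary is that these two behaviors are incompatible, which I would establish by comparing the two iterated limits of $\epsilon^{-1}\bigl(H^{\alpha}_{\epsilon,\gamma\epsilon}f(x,v)-f(x,v)\bigr)$.

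For the inner-then-outer order, holding $\gamma$ fixed and sending $\delta\to 0$ (equivalently $\epsilon\to0$) in \eqref{eq:main_utm_ii} gives
\[
\lim_{\delta\to0}\frac{H^{\alpha}_{\epsilon,\gamma\epsilon}f(x,v)-f(x,v)}{\epsilon}
=\frac{m_{21}}{2m_0}\,\mathcal{L}^{H}f(x,v)+\gamma\,\frac{m_{22}}{2m_0}\,\mathcal{L}^{V}f(x,v),
\]
where $\mathcal{L}^{H}$ and $\mathcal{L}^{V}$ abbreviate the horizontal and vertical second-order operators appearing inside the brackets of \eqref{eq:main_utm_i}; as $\gamma\to\infty$ this diverges as soon as $\mathcal{L}^{V}f\not\equiv0$, which is the generic situation. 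For the outer-then-inner order, Proposition~\ref{prop:gamma_infty_utm} first collapses $H^{\alpha}_{\epsilon,\gamma\epsilon}f$ to $\overline{f}(x)+\epsilon\,\frac{m'_2}{2m'_0}[\cdots]+O(\epsilon^2)$, so now the $\epsilon\to0$ behavior is controlled by the Laplace--Beltrami operator $\Delta_M$ acting on the fibre average $\overline f$, and already the zeroth-order terms disagree ($\overline f$ versus $f$) unless $f$ is fibrewise constant. Thus the two procedures are governed by genuinely different operators --- one living on $UTM$ and differentiating along the fibres, the other living on $M$ and seeing only the average --- and for any $f$ with $f\neq\overline f$, or with $\mathcal{L}^{V}f\neq0$, they return different (indeed differently divergent) answers; this is precisely the asserted non-commutativity, and it shows there is no single asymptotic expansion of $H^{\alpha}_{\epsilon,\delta}f$ valid uniformly as $(\epsilon,\delta)\to(0,0)$ without pinning down the rate $\gamma$.

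I do not anticipate a real obstacle: the statement is essentially a bookkeeping consequence of Theorem~\ref{thm:main_utm} and Proposition~\ref{prop:gamma_infty_utm}, and the unit-tangent-bundle operators $\Delta_S^H$, $\Delta_S^V$ are exactly those appearing in \eqref{eq:main_utm_i}. The only points needing care are (i) being explicit about which parameter is held fixed in each iterated limit, since $\delta=\gamma\epsilon$ couples $\epsilon$, $\delta$, and $\gamma$; and (ii) making ``in general'' precise, for which it suffices to name a single witness $f\in C^{\infty}(UTM)$ --- for instance any $f$ whose restriction to some fibre is non-constant and not annihilated by the vertical operator --- and verify on it that the two expressions differ. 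With these clarifications the argument is complete.
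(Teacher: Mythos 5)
Your proposal is correct and follows essentially the same route as the paper, which gives no separate proof of this corollary: the two displayed expansions are obtained exactly as you describe, by substituting $\delta=\gamma\epsilon$ into \eqref{eq:main_utm_i} and by citing Proposition~\ref{prop:gamma_infty_utm}, and the non-commutativity is read off from the mismatch of the resulting leading-order operators. Your additional care in naming a witness $f$ with $f\neq\overline{f}$ or $\Delta_S^V f\neq 0$, and in tracking which limit is held fixed under the coupling $\delta=\gamma\epsilon$, only makes the argument more explicit than the paper's.
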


\begin{corollary}
  Under the same assumptions and notation as in Theorem{\rm~\ref{thm:main_utm}}, if $\alpha=1$, then
\begin{enumerate}[(i)]
\item\label{item:17} If $\delta=O \left( \epsilon \right)$ as $\epsilon\rightarrow0$, then for any $f\in C^{\infty}\left( UTM \right)$, as $\epsilon\rightarrow0$ (and thus $\delta\rightarrow0$),
  \begin{equation}
    \label{eq:corollary_alpha1_utm}
    H_{\epsilon,\delta}^1f \left( x,v \right)=f \left( x,v \right)+\epsilon \frac{m_{21}}{2m_0}\Delta_S^Hf \left( x,v \right)+\delta \frac{m_{22}}{2m_0}\Delta_S^Vf \left( x,v \right)+O \left( \epsilon^2+\delta^2 \right);
  \end{equation}
\item\label{item:18} For any $f\in C^{\infty}\left( UTM \right)$,
  \begin{equation}
    \label{eq:corollary_alpha1_base_utm}
    \lim_{\gamma\rightarrow\infty}H_{\epsilon,\gamma\epsilon}^1f \left( x,v \right)=\overline{f}\left( x \right)+\epsilon \frac{m_2'}{2m_0'}\Delta_M\overline{f}\left( x \right)+O \left( \epsilon^2 \right).
  \end{equation}
\end{enumerate}
\end{corollary}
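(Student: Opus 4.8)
The plan is to obtain both assertions directly from Theorem~\ref{thm:main_utm} and Proposition~\ref{prop:gamma_infty_utm} by specializing the parameter to $\alpha=1$, the key observation being that the weight $p^{1-\alpha}$ then degenerates to the constant function $1$. Since the density $p$ is bounded away from $0$ by \eqref{eq:density}, the function $p^{1-\alpha}$ is a well-defined positive smooth function on $UTM$ for every $0\le\alpha\le1$, and for $\alpha=1$ it is identically $1$; likewise $\overline{p}$ is positive by \eqref{eq:density_on_base_utm}, so $\overline{p}^{1-\alpha}\equiv 1$ on $M$ when $\alpha=1$. The only other ingredient I would use is the elementary fact that each of the operators $\Delta_S^H$, $\Delta_S^V$, and $\Delta_M$ is a second-order differential operator with no zeroth-order term (equivalently, each is a divergence-form / sum-of-squares-of-vector-fields operator, as is clear from the constructions recalled in Appendix~\ref{app:geom-tang-bundl}), and hence annihilates constants: $\Delta_S^H 1=\Delta_S^V 1=0$ on $UTM$ and $\Delta_M 1=0$ on $M$.

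For part~\eqref{item:17}, I would set $\alpha=1$ in the expansion \eqref{eq:main_utm_i} of Theorem~\ref{thm:main_utm}, which applies under the stated hypothesis $\delta=O(\epsilon)$. The horizontal bracket then collapses as
\[
\frac{\Delta_S^H[f\cdot 1]}{1}-f\cdot\frac{\Delta_S^H 1}{1}=\Delta_S^H f - f\cdot\Delta_S^H 1=\Delta_S^H f,
\]
and the vertical bracket becomes $\Delta_S^V f$ by the identical computation using $\Delta_S^V 1=0$; substituting these into \eqref{eq:main_utm_i} while retaining the same $O(\epsilon^2+\delta^2)$ remainder gives \eqref{eq:corollary_alpha1_utm}. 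For part~\eqref{item:18}, I would set $\alpha=1$ in \eqref{eq:main_utm_ii_base} of Proposition~\ref{prop:gamma_infty_utm}; the single bracket there collapses in the same manner to $\Delta_M[\overline{f}\cdot 1]-\overline{f}\cdot\Delta_M 1=\Delta_M\overline{f}$, which yields \eqref{eq:corollary_alpha1_base_utm}.

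There is essentially no obstacle: the corollary is a clean specialization, and the single point that genuinely requires verification is that the three Laplace-type operators kill constants, which is immediate from their definitions. As an alternative route one could re-run the proof of Theorem~\ref{thm:main_utm} from Appendix~\ref{app:proof-main-theorem} with $\alpha=1$ fixed from the start, in which case the renormalization by $p_{\epsilon,\delta}^{\alpha}$ in \eqref{eq:alpha_normalized_kernel} disappears entirely and the cancellation is already visible at the level of the kernel; but specializing the already-established expansions is shorter and is the approach I would take.
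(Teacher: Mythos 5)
Your proof is correct and follows exactly the route the paper intends (the paper states this corollary without a written proof precisely because it is the specialization $\alpha=1$, $p^{1-\alpha}\equiv 1$, of Theorem~\ref{thm:main_utm} and Proposition~\ref{prop:gamma_infty_utm}, combined with the fact that $\Delta_S^H$, $\Delta_S^V$, and $\Delta_M$ annihilate constants). Nothing is missing.
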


\subsection{Finite Sampling on Unit Tangent Bundles}
\label{sec:finite-sampling-unit}

Though the theory of hypoelliptic diffusion maps on unit tangent bundles is completely parallel to its counterpart on tangent bundles, in practice it is usually much easier to sample from the unit tangent bundle since it is compact whenever the base manifold is. It thus makes much more sense to study finite sampling on unit tangent bundles. In this section, we first consider sampling without noise, i.e. where we sample exactly on unit tangent bundles; next, we study the case where the tangent spaces are empirically estimated from samples on the base manifold. The latter scenario is a proof-of-concept for applying the hypoelliptic diffusion map framework to much more general fibre bundles in practice, where data representing each fibre are often acquired with noise. The proofs of Theorem~\ref{thm:utm_finite_sampling_noiseless} and Theorem~\ref{thm:utm_finite_sampling_noise} can be found in Appendix~\ref{app:proof-main-theorem}. In Section~\ref{sec:numer-exper}, we shall demonstrate a numerical experiment that addresses the difference between the two sampling strategies.

\subsubsection{Sampling without Noise}
\label{sec:sampl-with-noise}

We begin with some assumptions and definitions. Assumption~\ref{assum:technical} includes our technical assumptions, and Assumption~\ref{assum:utm_finite_sampling_noiseless} specifies the noiseless sampling strategy.
\begin{assumption}
\label{assum:technical}
\begin{enumerate}
\item\label{item:13} $\iota: M\hookrightarrow \mathbb{R}^D$ is an isometric embedding of a $d$-dimensional closed Riemannian manifold into $\mathbb{R}^D$, with $D\gg d$.

\item\label{item:20} Let the two-variable smooth function $K:\mathbb{R}^2\rightarrow\mathbb{R}^{\geq0}$ be compactly supported on the unit square $\left[ 0,1 \right]\times \left[ 0,1 \right]$. The partial derivatives $\partial_1K$, $\partial_2K$ are therefore automatically compactly supported on the unit square as well. (In fact, a similar result still holds if $K$ and its first order derivatives decay faster at infinity than any inverse polynomials; we avoid such technicalities and focus on demonstrating the idea, using compactly supported $K$.)
\end{enumerate}
\end{assumption}

\begin{assumption}
\label{assum:utm_finite_sampling_noiseless}
The $\left( N_B\times N_F \right)$ data points
  \begin{equation*}
    \begin{matrix}
      x_{1,1}, &x_{1,2}, &\cdots, &x_{1,N_F}\\
      x_{2,1}, &x_{2,2}, &\cdots, &x_{2,N_F}\\
      \vdots & \vdots &\cdots &\vdots\\
      x_{N_B,1}, &x_{N_B,2}, &\cdots, &x_{N_B,N_F}
    \end{matrix}
  \end{equation*}
are sampled from $UTM$ with respect to a probability density function $p \left( x,v \right)$ satisfying \eqref{eq:density}, following a two-step strategy: (i) sample $N_B$ points $\xi_1,\cdots,\xi_{N_B}$ i.i.d. on $M$ with respect to $\overline{p}$, the projection of $p$ on $M$  (recall \eqref{eq:density_on_base_utm}); (ii) sample $N_F$ points $x_{j,1},\cdots,x_{j,N_F}$ on $S_{\xi_j}$ with respect to $p \left(\cdot\mid\xi_j\right)$, the conditional probability density on the fibre (recall \eqref{eq:conditional_density_on_fibre_utm}).
\end{assumption}

\begin{definition}
\label{defn:utm_finite_sampling_noiseless}
\begin{enumerate}
\item\label{item:34} For $\epsilon>0$, $\delta>0$ and $1\leq i,j\leq N_B$, $1\leq r,s\leq N_F$, define
  \begin{equation*}
    \hat{K}_{\epsilon,\delta} \left( x_{i,r}, x_{j,s} \right) =
   \begin{cases}
    \displaystyle K \left( \frac{\|\xi_i-\xi_j\|^2}{\epsilon}, \frac{\|P_{\xi_j,\xi_i}x_{i,r}-x_{j,s}\|^2}{\delta} \right), & i\neq j,\\
   0,&i=j.
    \end{cases}
  \end{equation*} 
where $P_{\xi_j,\xi_i}:S_{\xi_i}\rightarrow S_{\xi_j}$ is the parallel transport from $S_{\xi_i}$ to $S_{\xi_j}$. Note the difference between $\hat{K}_{\epsilon,\delta}$ and $K_{\epsilon,\delta}$ (defined in~\eqref{eq:kernel_UTM}): $\hat{K}_{\epsilon,\delta}$ uses Euclidean distance while $K_{\epsilon,\delta}$ uses geodesic distance.
\item\label{item:35} For $0\leq \alpha\leq 1$, define
\begin{equation*}
    \hat{p}_{\epsilon,\delta}\left( x_{i,r} \right) = \sum_{j=1}^{N_B}\sum_{s=1}^{N_F}\hat{K}_{\epsilon,\delta} \left( x_{i,r}, x_{j,s} \right)
  \end{equation*}
and the \emph{empirical $\alpha$-normalized kernel} $\hat{K}_{\epsilon,\delta}^{\alpha}$
  \begin{equation*}
    \hat{K}_{\epsilon,\delta}^{\alpha} \left( x_{i,r}, x_{j,s} \right) = \frac{\hat{K}_{\epsilon,\delta}\left( x_{i,r}, x_{j,s} \right)}{\hat{p}^{\alpha}_{\epsilon,\delta}\left( x_{i,r}\right)\hat{p}^{\alpha}_{\epsilon,\delta}\left( x_{j,s} \right)},\quad 1\leq i, j\leq N_B, 1\leq r,s\leq N_F.
  \end{equation*}
\item\label{item:36} For $0\leq \alpha\leq 1$ and $f\in C^{\infty}\left( UTM \right)$, denote the \emph{$\alpha$-normalized empirical hypoelliptic diffusion operator} by
  \begin{equation*}
    \hat{H}_{\epsilon,\delta}^{\alpha}f\left( x_{i,r} \right)=\frac{\displaystyle \sum_{j=1}^{N_B}\sum_{s=1}^{N_F}\hat{K}_{\epsilon,\delta}^{\alpha} \left( x_{i,r}, x_{j,s}\right)f \left( x_{j,s} \right)}{\displaystyle \sum_{j=1}^{N_B}\sum_{s=1}^{N_F}\hat{K}_{\epsilon,\delta}^{\alpha} \left( x_{i,r}, x_{j,s}\right)}.
  \end{equation*}
\end{enumerate}
\end{definition}

\begin{theorem}[Finite Sampling without Noise]
\label{thm:utm_finite_sampling_noiseless}
  Under Assumption~{\rm\ref{assum:technical}} and Assumption~{\rm\ref{assum:utm_finite_sampling_noiseless}}, if
\begin{enumerate}[(i)]
\item\label{item:41} $\delta=O \left( \epsilon \right)$ as $\epsilon\rightarrow0$;
\item\label{item:42}
\begin{equation*}
  \lim_{N_B\rightarrow\infty\atop N_F\rightarrow\infty}\frac{N_F}{N_B}=\rho\in \left( 0,\infty \right),
\end{equation*}
\end{enumerate}
then for any $x_{i,r}$ with $1\leq i\leq N_B$ and $1\leq r\leq N_F$, as $\epsilon\rightarrow0$ (and thus $\delta\rightarrow0$), with high probability
\begin{equation}
\label{eq:finite_sample_theorem}
\begin{aligned}
    \hat{H}_{\epsilon,\delta}^{\alpha}f\left( x_{i,r} \right) &= f \left( x_{i,r} \right)+\epsilon \frac{m_{21}}{2m_0}\left[\frac{\Delta_S^H\left[fp^{1-\alpha}\right]\left( x_{i,r} \right)}{p^{1-\alpha}\left( x_{i,r} \right)}-f \left( x_{i,r} \right) \frac{\Delta_S^Hp^{1-\alpha}\left( x_{i,r} \right)}{p^{1-\alpha}\left( x_{i,r} \right)}  \right]\\
    &+\delta \frac{m_{22}}{2m_0}\left[\frac{\Delta_S^V\left[fp^{1-\alpha}\right]\left( x_{i,r} \right)}{p^{1-\alpha}\left( x_{i,r} \right)}-f \left( x_{i,r} \right) \frac{\Delta_S^Vp^{1-\alpha}\left( x_{i,r} \right)}{p^{1-\alpha}\left( x_{i,r} \right)}  \right]\\
    &+O \left( \epsilon^2+\delta^2+\theta_{*}^{-1}N_B^{-\frac{1}{2}}\epsilon^{-\frac{d}{4}} \right),
\end{aligned}
\end{equation}
where
\begin{equation*}
  \theta_{*}=1-\frac{\displaystyle 1}{\displaystyle 1+\epsilon^{\frac{d}{4}}\delta^{\frac{d-1}{4}}\sqrt{\frac{N_F}{N_B}}}.
\end{equation*}
\end{theorem}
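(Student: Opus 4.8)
The plan is to establish Theorem~\ref{thm:utm_finite_sampling_noiseless} as the finite-sample counterpart of Theorem~\ref{thm:main_utm}: for a fixed index pair $\left( i,r \right)$ (equivalently, conditioning on $\xi_i$), I would show that $\hat H^{\alpha}_{\epsilon,\delta}f \left( x_{i,r} \right)$ equals $H^{\alpha}_{\epsilon,\delta}f \left( x_{i,r} \right)$ up to a concentration error, and then read off the $\epsilon$- and $\delta$-order terms from \eqref{eq:main_utm_i}. The first reduction is purely algebraic: since $\hat K^{\alpha}_{\epsilon,\delta}\left( x_{i,r},x_{j,s} \right)$ carries the factor $\hat p^{-\alpha}_{\epsilon,\delta}\left( x_{i,r} \right)$ in both the numerator and the denominator of $\hat H^{\alpha}_{\epsilon,\delta}f$, it cancels, leaving
\[
  \hat H^{\alpha}_{\epsilon,\delta}f \left( x_{i,r} \right)=\frac{\displaystyle\sum_{j,s}\hat K_{\epsilon,\delta}\left( x_{i,r},x_{j,s} \right)\,\hat p^{-\alpha}_{\epsilon,\delta}\left( x_{j,s} \right)f \left( x_{j,s} \right)}{\displaystyle\sum_{j,s}\hat K_{\epsilon,\delta}\left( x_{i,r},x_{j,s} \right)\,\hat p^{-\alpha}_{\epsilon,\delta}\left( x_{j,s} \right)}=:\frac{\mathcal N}{\mathcal D},
\]
where the $j=i$ terms drop out because $\hat K_{\epsilon,\delta}$ vanishes on the diagonal, so only the empirical densities at the neighbours of $x_{i,r}$ intervene. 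The remaining work is to show that $\mathcal N$ and $\mathcal D$ concentrate around deterministic quantities whose ratio reproduces \eqref{eq:hdo_utm}.

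First I would \emph{decouple the $\alpha$-normalization}. A uniform law of large numbers for the sums $\hat p_{\epsilon,\delta}$ --- Bernstein's inequality for each $\hat p_{\epsilon,\delta}\left( x_{j,s} \right)$ together with a union bound over the at most $N_BN_F$ indices --- shows that, on an event of high probability, $\hat p_{\epsilon,\delta}\left( x_{j,s} \right)$ equals $N_BN_F$ times its continuum analogue up to a relative error of the same order as the final bound, simultaneously for every $\left( j,s \right)$ with $\hat K_{\epsilon,\delta}\left( x_{i,r},x_{j,s} \right)\neq 0$; here the lower bound $p_{\epsilon,\delta}\gtrsim\epsilon^{d/2}\delta^{\left( d-1 \right)/2}$, which follows from \eqref{eq:density} and $\mathrm{supp}\,K\subset\left[ 0,1 \right]^2$, keeps the normalization away from zero. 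Replacing $\hat p^{-\alpha}_{\epsilon,\delta}\left( x_{j,s} \right)$ by this continuum value turns the summands of $\mathcal N,\mathcal D$ into (conditionally) independent \emph{bounded} random variables --- precisely what the next step needs --- at the cost of an error absorbed into $O \left( \theta_{*}^{-1}N_B^{-1/2}\epsilon^{-d/4} \right)$. For the expectation of these frozen sums, the two-step sampling of Assumption~\ref{assum:utm_finite_sampling_noiseless} and the factorization $d\Theta=dV_y \left( w \right)\,d\mathrm{vol}_M \left( y \right)$, $\overline{p} \left( y \right)p \left( w\mid y \right)=p \left( y,w \right)$, give
\[
  \mathbb{E}\Big[\textstyle\sum_s\hat K_{\epsilon,\delta}\left( x_{i,r},x_{j,s} \right)g \left( x_{j,s} \right)\Big]=N_F\!\int_{UTM}\!\hat K_{\epsilon,\delta}\left( x_{i,r};y,w \right)g \left( y,w \right)p \left( y,w \right)\,d\Theta \left( y,w \right),
\]
and summing over $j$ yields $N_BN_F$ times the corresponding continuum integral built from the \emph{Euclidean} kernel $\hat K_{\epsilon,\delta}$. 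Passing from $\hat K_{\epsilon,\delta}$ to the intrinsic kernel $K_{\epsilon,\delta}$ of \eqref{eq:kernel_UTM} costs only $O \left( \epsilon^2 \right)$: on the support of $K$ the chordal and geodesic distances on $M\hookrightarrow\mathbb{R}^D$, and the chordal and spherical distances on $S_{\xi_j}$, agree up to relative $O \left( \epsilon \right)$, and the attendant curvature corrections occur identically in numerator and denominator and cancel after normalization --- the same mechanism used for Theorem~\ref{thm:main_utm} in Appendix~\ref{app:proof-main-theorem}. Thus $\mathbb{E}\mathcal N/\mathbb{E}\mathcal D=H^{\alpha}_{\epsilon,\delta}f \left( x_{i,r} \right)+O \left( \epsilon^2 \right)$, whose asymptotics are \eqref{eq:main_utm_i}.

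Next comes the \emph{variance estimate}, which produces the term $O \left( \theta_{*}^{-1}N_B^{-1/2}\epsilon^{-d/4} \right)$ and is the crux. Both $\mathcal N$ and $\mathcal D$ are sums over $j$ of independent bounded blocks $Y_j:=\sum_s\left( \cdots \right)$, so Bernstein applies once I quantify the two geometric scales at play: first, only about $N_B\epsilon^{d/2}$ of the fibres are \emph{active} (their bases $\xi_j$ lie in the geodesic $\sqrt\epsilon$-ball of $\xi_i$), which governs the base-level fluctuation and yields the summand $N_B^{-1/2}\epsilon^{-d/4}$; second, within an active fibre only about $N_F\delta^{\left( d-1 \right)/2}$ of the $x_{j,s}$ fall in the spherical $\sqrt\delta$-cap around $P_{\xi_j,\xi_i}x_{i,r}$, which controls $\mathrm{Var}\left( Y_j \right)$ and yields the fibre-level fluctuation. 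Carrying the bound $p_{\epsilon,\delta}\gtrsim\epsilon^{d/2}\delta^{\left( d-1 \right)/2}$ through the $\alpha$-normalization and combining the two scales collapses --- after a short computation --- to exactly
\[
  \theta_{*}^{-1}N_B^{-1/2}\epsilon^{-d/4}=N_B^{-1/2}\epsilon^{-d/4}+\big(N_F^{1/2}\,\epsilon^{d/2}\,\delta^{\left( d-1 \right)/4}\big)^{-1}.
\]
A standard perturbation bound for the ratio $\mathcal N/\mathcal D$ --- if $\max\big(\abs{\mathcal N-\mathbb{E}\mathcal N},\abs{\mathcal D-\mathbb{E}\mathcal D}\big)\le\eta\,\mathbb{E}\mathcal D$ with $\eta$ small, then $\mathcal N/\mathcal D=\mathbb{E}\mathcal N/\mathbb{E}\mathcal D+O \left( \eta \right)$ --- combined with the union bound already used for the normalization, upgrades the estimate to ``with high probability.''

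The hard part will be the bookkeeping in the variance step. The randomness is genuinely two-layered --- random fibre locations $\xi_j$ on the $d$-dimensional base and random points $x_{j,s}$ on the $\left( d-1 \right)$-dimensional fibre spheres --- the summands of $\mathcal N,\mathcal D$ are not bounded by a deterministic constant until the $\alpha$-normalization has been frozen, and the nested kernel support $\left[ 0,1 \right]\times\left[ 0,1 \right]$ makes the two effective sample sizes $N_B\epsilon^{d/2}$ and $N_F\delta^{\left( d-1 \right)/2}$ interact multiplicatively; keeping the two scales separate while invoking Bernstein, and recognizing that their interaction is precisely packaged by $\theta_{*}$, is the delicate point. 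Everything else is comparatively routine: the bias analysis is the computation behind Theorem~\ref{thm:main_utm} performed under the expectation, and --- this being the noiseless regime --- the parallel transport $P_{\xi_j,\xi_i}$ is the exact Levi-Civita transport, so no additional error from estimating tangent planes enters (that is deferred to Theorem~\ref{thm:utm_finite_sampling_noise}).
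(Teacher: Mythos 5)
Your proposal follows essentially the same route as the paper's proof: reduce $\hat H^{\alpha}_{\epsilon,\delta}f$ to a ratio of sums, control the empirical density normalization by concentration plus a union bound, compute the bias from the continuum expansion with the Euclidean kernel (where the extra curvature and second-fundamental-form corrections cancel between numerator and denominator), and bound the variance with a two-level Bernstein inequality adapted to the hierarchical base-then-fibre sampling, with $\theta_{*}$ chosen to balance the two exponents --- you even decode $\theta_{*}^{-1}N_B^{-1/2}\epsilon^{-d/4}$ into its base and fibre contributions exactly as the paper's choice of $\theta_{*}$ dictates. The only difference is bookkeeping order (you freeze the $\alpha$-normalization first, whereas the paper proves the $\alpha=0$ case and then perturbs), which does not change the argument.
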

We give a proof of Theorem~\ref{thm:utm_finite_sampling_noiseless} in Appendix~\ref{sec:proof-finite-sampling-theorem}.

\subsubsection{Sampling from Empirical Tangent Spaces}
\label{sec:sampl-estim-tang}

In practice, it has been shown in~\cite{SingerWu2012VDM} that, under the manifold assumption, a local PCA procedure can be used for estimating tangent spaces from a point cloud; we are using PCA here as a procedure that determines the dimension of a local good linear approximation to the manifold, and also, conveniently, provides a good basis, which can be viewed as a basis for each tangent plane. To sample on these tangent spaces, it suffices to repeatedly sample coordinate coefficients from a fixed standard unit sphere; each sample can be interpreted as giving the coordinates of a point (approximately) on the tangent space. Parallel-transports will take the corresponding point that truly lies on the tangent space at $\xi$ to the tangent space at $\zeta$, another point on the manifold. This new tangent space is, however, again known only approximately; points in this approximate space are characterized by coordinates with respect to the local PCA basis at $\zeta$. We can thus express the whole (approximate) parallel-transport procedure by maps between coordinates with respect to PCA basis at $\xi$ to sets of coordinates at $\zeta$; these changes of coordinates incorporate information on the choices of basis at each end as wells as on the parallel-transport itself.

Let us now describe this in more detail, setting up notation simultaneously. Throughout this section, Assumption~\ref{assum:technical} still holds. Let $\left\{ \xi_1,\cdots,\xi_{N_B} \right\}$ be a collection of i.i.d. samples from $M$; then the local PCA procedure can be summarized as follows: for any $\xi_j$, $1\leq j\leq N_B$, let $\xi_{j_1},\cdots,\xi_{j_k}$ be its $k$ nearest neighboring points. Then
\begin{equation*}
  X_j=\left[ \xi_{j_1}-\xi_j,\cdots, \xi_{j_k}-\xi_j\right]
\end{equation*}
is a $D\times k$ matrix. Let $K_{\mathrm{PCA}}$ be a positive monotonic decreasing function supported on  the unit interval, e.g., the Epanechnikov kernel
\begin{equation*}
  K_{\mathrm{PCA}}\left( u \right)=\left( 1-u^2 \right)\chi_{\left[ 0,1 \right]},
\end{equation*}
where $\chi$ is the indicator function. Fix a scale parameter $\epsilon_{\mathrm{PCA}}>0$, let $D_j$ be the $k\times k$ diagonal matrix
\begin{equation*}
  D_j = \mathrm{diag}\left( \sqrt{K_{\mathrm{PCA}}\left( \frac{\left\|\xi_j-\xi_{j_1}\right\|}{\sqrt{\epsilon_{\mathrm{PCA}}}} \right)}, \cdots, \sqrt{K_{\mathrm{PCA}}\left( \frac{\left\|\xi_j-\xi_{j_k}\right\|}{\sqrt{\epsilon_{\mathrm{PCA}}}} \right)} \right)
\end{equation*}
and carry out the singular value decomposition (SVD) of matrix $X_jD_j$ as
\begin{equation*}
  X_jD_j=U_j\Sigma_jV_j^{\top}.
\end{equation*}
An estimated basis $B_j$ for the local tangent plane at $\xi_j$ is formed by the first $d$ left singular vectors (corresponding to the $d$ largest singular values in $\Sigma_j$), arranged into a matrix as follows:
\begin{equation*}
  B_j=\left[ u_j^{\left( 1 \right)},\cdots, u_j^{\left( d \right)} \right]\in \mathbb{R}^{D\times d}.
\end{equation*}
Note that the intrinsic dimension $d$ is generally not known \emph{a priori}; \cite{SingerWu2012VDM} proposed estimating dimension locally from the decay of singular values in $\Sigma_j$, and then take the median of all local dimensions to estimate $d$; \cite{LittleMaggioniRosasco2011} proposed a different approach based on multi-scale singular value decomposition.

Once a pair of estimated bases $B_i,B_j$ is obtained for neighboring points $\xi_i,\xi_j$, one estimates a parallel-transport from $T_{\xi_i}M$ to $T_{\xi_j}M$ as
\begin{equation*}
  O_{ji}:=\argmin_{O\in O \left( d \right)}\left\|O-B_j^{\top}B_i\right\|_{\mathrm{HS}},
\end{equation*}
where $\left\|\cdot\right\|_{\mathrm{HS}}$ is the Hilbert-Schmidt norm. Though this minimization problem is non-convex, it has a efficient closed-form solution via the SVD of $B_i^{\top}B_j$, namely
\begin{equation*}
  O_{ji}=UV^{\top},\quad\textrm{where }B_j^{\top}B_i=U\Sigma V^{\top} \textrm{ is the SVD of $B_j^{\top}B_i$.}
\end{equation*}
It is worth noting that $O_{ji}$ depends on the bases; it operates on the coordinates of tangent vectors under $B_i$ and $B_j$, as explained above. $O_{ji}$ approximates the true parallel-transport $P_{\xi_j,\xi_i}$ (composed with the bases-expansions) with an error of $O \left( \epsilon_{\mathrm{PCA}} \right)$, in the sense of \cite[lemma B.1]{SingerWu2012VDM}.

We summarize our sampling strategy for this section (with some new notations) in the following definition.
\begin{definition}
\label{defn:utm_finite_sampling_noise}
\begin{enumerate}
\item\label{item:31} Let $\left\{ \xi_1,\cdots,\xi_{N_B} \right\}$ be a collection of samples from the base manifold $M$, i.i.d. with respect to some probability density function $\overline{p}\in C^{\infty}\left( M \right)$. For each $\xi_j$, $1\leq j\leq N_B$, sample $N_F$ points uniformly from the $\left(d-1\right)$-dimensional standard unit sphere $S^{d-1}$ in $\mathbb{R}^d$, and denote the set of samples as $\mathscr{C}_j = \left\{ c_{j,1},\cdots,c_{j,N_F} \right\}$, where each $c_{j,s}$ is a $d\times 1$ column vector. Using the basis $B_j$  estimated from the local PCA procedure, each $c_{j,s}$ corresponds to an ``approximate tangent vector at $\xi_j$'', denoted as
  \begin{equation*}
    \tau_{j,s}:=B_jc_{j,s}.
  \end{equation*}
We use the notation $\mathscr{S}_j$ for the unit sphere in the estimated tangent space (i.e., the column space of $B_j$). Note that the $\tau_{j,1},\cdots,\tau_{j,N_F}$ are uniformly distributed on $\mathscr{S}_j$.
\item\label{item:40} By \cite[lemma B.1]{SingerWu2012VDM}, for any $B_j$ there exists a $D\times d$ matrix $Q_j$, such that the columns of $Q_j$ constitutes an orthonormal basis for $\iota_{*}T_{\xi_j}M$ and
  \begin{equation*}
    \left\|B_j-Q_j\right\|_{\mathrm{HS}}=O \left( \epsilon_{\mathrm{PCA}} \right).
  \end{equation*}
We define the \emph{tangent projection} from $\iota_{*}S_{\xi_j}$ to the estimated tangent plane as
\begin{equation*}
  \tau_{j,s}\mapsto\overline{\tau}_{j,s}= \frac{Q_jQ_j^{\top}\tau_{j,s}}{\left\|Q_jQ_j^{\top}\tau_{j,s}\right\|}.
\end{equation*}
This map is well-defined for sufficiently small $\epsilon_{\mathrm{PCA}}$, and then it is an isometry. Its inverse is given by
\begin{equation*}
  \overline{\tau}_{j,s}\mapsto \tau_{j,s}=\frac{B_jB_j^{\top}\overline{\tau}_{j,s}}{\left\|B_jB_j^{\top}\overline{\tau}_{j,s}\right\|}.
\end{equation*}
Note that we have
\begin{equation*}
  \left\|\tau_{j,s}-\overline{\tau}_{j,s}\right\|\leq C\epsilon_{\mathrm{PCA}}
\end{equation*}
for some constant $C>0$ independent of indices $j,s$. Since we sample each $\mathscr{S}_j$ uniformly and the projection map $\tau_{j,s}\mapsto\bar{\tau}_{j,s}$ is an isometry, the points $\left\{ \overline{\tau}_{j,1},\cdots,\overline{\tau}_{j,N_F} \right\}$ are also uniformly distributed on $S_{\xi_j}$. The points
\begin{equation*}
  \begin{matrix}
    \overline{\tau}_{1,1}, &\overline{\tau}_{1,2}, &\cdots, &\overline{\tau}_{1,N_F}\\
    \overline{\tau}_{2,1}, &\overline{\tau}_{2,2}, &\cdots, &\overline{\tau}_{2,N_F}\\
    \vdots & \vdots &\cdots &\vdots\\
    \overline{\tau}_{N_B,1}, &\overline{\tau}_{N_B,2}, &\cdots, &\overline{\tau}_{N_B,N_F}
  \end{matrix}
\end{equation*}
are therefore distributed on $UTM$ according to a joint probability density function $p$ on $UTM$ defined as
\begin{equation*}
  p \left( x,v \right)=\overline{p}\left( x \right),\quad \forall \left( x,v \right)\in UTM.
\end{equation*}
As in Assumption~\ref{assum:utm_finite_sampling_noiseless}, we assume $p$ satisfies \eqref{eq:density}, i.e.,
\begin{equation*}
  0<p_m\leq p \left( x,v \right)=\overline{p}\left( x \right)\leq p_M <\infty,\quad \forall \left( x,v \right)\in UTM
\end{equation*}
for positive constants $p_m,p_M$.
\item\label{item:14} For $\epsilon>0$, $\delta>0$ and $1\leq i,j\leq N_B$, $1\leq r,s\leq N_F$, define
  \begin{equation*}
    \mathscr{K}_{\epsilon,\delta} \left( \overline{\tau}_{i,r}, \overline{\tau}_{j,s} \right) =
   \begin{cases}
    \displaystyle K \left( \frac{\|\xi_i-\xi_j\|^2}{\epsilon}, \frac{\|O_{ji}c_{i,r}-c_{j,s}\|^2}{\delta} \right), & i\neq j,\\
   0,&i=j.
    \end{cases}
  \end{equation*} 
where $O_{ji}$ is the estimated parallel-transport from $T_{\xi_i}M$ to $T_{\xi_j}M$.
\item\label{item:29} For $0\leq \alpha\leq 1$, define
\begin{equation*}
    \hat{q}_{\epsilon,\delta}\left( \overline{\tau}_{i,r} \right) = \sum_{j=1}^{N_B}\sum_{s=1}^{N_F}\mathscr{K}_{\epsilon,\delta} \left( \overline{\tau}_{i,r}, \overline{\tau}_{j,s} \right)
  \end{equation*}
and
  \begin{equation*}
    \mathscr{K}_{\epsilon,\delta}^{\alpha} \left( \overline{\tau}_{i,r}, \overline{\tau}_{j,s} \right) = \frac{\mathscr{K}_{\epsilon,\delta}\left( \overline{\tau}_{i,r}, \overline{\tau}_{j,s} \right)}{\hat{q}^{\alpha}_{\epsilon,\delta}\left( \overline{\tau}_{i,r}\right)\hat{q}^{\alpha}_{\epsilon,\delta}\left( \overline{\tau}_{j,s} \right)},\quad 1\leq i, j\leq N_B, 1\leq r,s\leq N_F.
  \end{equation*}
\item\label{item:30} For $0\leq \alpha\leq 1$ and $f\in C^{\infty}\left( UTM \right)$, denote
  \begin{equation*}
    \mathscr{H}_{\epsilon,\delta}^{\alpha}f\left( \overline{\tau}_{i,r} \right)=\frac{\displaystyle \sum_{j=1}^{N_B}\sum_{s=1}^{N_F}\mathscr{K}_{\epsilon,\delta}^{\alpha} \left( \overline{\tau}_{i,r}, \overline{\tau}_{j,s}\right)f \left( \overline{\tau}_{j,s} \right)}{\displaystyle \sum_{j=1}^{N_B}\sum_{s=1}^{N_F}\mathscr{K}_{\epsilon,\delta}^{\alpha} \left( \overline{\tau}_{i,r}, \overline{\tau}_{j,s}\right)}.
  \end{equation*}
\end{enumerate}
\end{definition}

\begin{theorem}[Finite Sampling from Empirical Tangent Planes]
\label{thm:utm_finite_sampling_noise}
  In addition to Assumption~{\rm\ref{assum:technical}}, suppose
\begin{enumerate}[(i)]
\item\label{item:33} $\epsilon_{\textrm{PCA}} = O \left( N_B^{-\frac{2}{d+2}} \right)$ as $N_B\rightarrow\infty$;
\item\label{item:32} As $\epsilon\rightarrow0$, $\delta=O \left( \epsilon \right)$ and $\delta\gg \left( \epsilon_{\mathrm{PCA}}^{\frac{1}{2}}+\epsilon^{\frac{3}{2}} \right)$;
\item\label{item:39}
\begin{equation*}
  \lim_{N_B\rightarrow\infty\atop N_F\rightarrow\infty}\frac{N_F}{N_B}=\rho\in \left( 0,\infty \right).
\end{equation*}
\end{enumerate}
Then for any $\tau_{i,r}$ with $1\leq i\leq N_B$ and $1\leq r\leq N_F$, as $\epsilon\rightarrow0$ (and thus $\delta\rightarrow0$), with high probability
\begin{equation}
\label{eq:finite_sample_theorem_noise}
\begin{aligned}
    \mathscr{H}_{\epsilon,\delta}^{\alpha}f\left( \overline{\tau}_{i,r} \right) &= f \left( \overline{\tau}_{i,r} \right)+\epsilon \frac{m_{21}}{2m_0}\left[\frac{\Delta_S^H\left[fp^{1-\alpha}\right]\left( \overline{\tau}_{i,r} \right)}{p^{1-\alpha}\left( \overline{\tau}_{i,r} \right)}-f \left( \overline{\tau}_{i,r} \right) \frac{\Delta_S^Hp^{1-\alpha}\left( \overline{\tau}_{i,r} \right)}{p^{1-\alpha}\left( \overline{\tau}_{i,r} \right)}  \right]\\
    &+\delta \frac{m_{22}}{2m_0}\left[\frac{\Delta_S^V\left[fp^{1-\alpha}\right]\left( \overline{\tau}_{i,r} \right)}{p^{1-\alpha}\left( \overline{\tau}_{i,r} \right)}-f \left( \overline{\tau}_{i,r} \right) \frac{\Delta_S^Vp^{1-\alpha}\left( \overline{\tau}_{i,r} \right)}{p^{1-\alpha}\left( \overline{\tau}_{i,r} \right)}  \right]\\
    &+O \left( \epsilon^2+\delta^2+\theta_{*}^{-1}N_B^{-\frac{1}{2}}\epsilon^{-\frac{d}{4}}+\delta^{-1}\left( \epsilon_{\mathrm{PCA}}^{\frac{1}{2}}+\epsilon^{\frac{3}{2}} \right) \right),
\end{aligned}
\end{equation}
where
\begin{equation*}
  \theta_{*}=1-\frac{\displaystyle 1}{\displaystyle 1+\epsilon^{\frac{d}{4}}\delta^{\frac{d-1}{4}}\sqrt{\frac{N_F}{N_B}}}.
\end{equation*}
\end{theorem}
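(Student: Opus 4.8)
The plan is to obtain Theorem~\ref{thm:utm_finite_sampling_noise} as a perturbation of Theorem~\ref{thm:utm_finite_sampling_noiseless}. The two empirical operators share the same architecture; the only structural change is that the kernel $\mathscr{K}_{\epsilon,\delta}$ of Definition~\ref{defn:utm_finite_sampling_noise} replaces, in the second slot of $K$, the chordal mismatch $\|P_{\xi_j,\xi_i}\overline{\tau}_{i,r}-\overline{\tau}_{j,s}\|$ between genuine points of $UTM$ by the coordinate mismatch $\|O_{ji}c_{i,r}-c_{j,s}\|$ assembled from the local PCA bases $B_i,B_j$ and the estimated transport $O_{ji}$, while the first slot $\|\xi_i-\xi_j\|^2/\epsilon$ is unchanged. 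Since $B_j$ has orthonormal columns, $\|O_{ji}c_{i,r}-c_{j,s}\|=\|B_jO_{ji}c_{i,r}-\tau_{j,s}\|$ in $\mathbb{R}^D$, so the whole discrepancy is controlled once one bounds $\|B_jO_{ji}c_{i,r}-P_{\xi_j,\xi_i}\overline{\tau}_{i,r}\|$ and $\|\tau_{j,s}-\overline{\tau}_{j,s}\|$. The input for this is \cite[lemma B.1]{SingerWu2012VDM} together with assumption~(\ref{item:33}), $\epsilon_{\mathrm{PCA}}=O\bigl(N_B^{-2/(d+2)}\bigr)$: with high probability, and uniformly over the sampled base points, it yields the matrices $Q_j$ of Definition~\ref{defn:utm_finite_sampling_noise} with $\|B_j-Q_j\|_{\mathrm{HS}}=O(\epsilon_{\mathrm{PCA}})$, hence $\|\tau_{j,s}-\overline{\tau}_{j,s}\|=O(\epsilon_{\mathrm{PCA}})$, and it shows that $O_{ji}$ agrees with the coordinate representation of $P_{\xi_j,\xi_i}$ up to $O(\epsilon_{\mathrm{PCA}}+\|\xi_i-\xi_j\|^2)$, the quadratic term being the curvature correction between parallel transport along a geodesic and the ambient projection map.

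On the support of $\mathscr{K}_{\epsilon,\delta}$ one has $\|\xi_i-\xi_j\|\le\sqrt{\epsilon}$, so the two bounds combine to give $\|B_jO_{ji}c_{i,r}-P_{\xi_j,\xi_i}\overline{\tau}_{i,r}\|=O(\epsilon_{\mathrm{PCA}}+\epsilon)$; hypothesis~(\ref{item:32}), $\delta\gg\epsilon_{\mathrm{PCA}}^{1/2}+\epsilon^{3/2}$, forces $\epsilon_{\mathrm{PCA}}+\epsilon=o(\sqrt{\delta})$, so both $\|P_{\xi_j,\xi_i}\overline{\tau}_{i,r}-\overline{\tau}_{j,s}\|$ and $\|B_jO_{ji}c_{i,r}-\tau_{j,s}\|$ are $O(\sqrt{\delta})$ there. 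Hence the \emph{squared}-distance arguments of $\mathscr{K}_{\epsilon,\delta}$ and of the noiseless kernel $\hat K_{\epsilon,\delta}$ differ by $O\bigl((\epsilon_{\mathrm{PCA}}+\epsilon)\sqrt{\delta}\bigr)$, and a first-order Taylor expansion of $K$ in its second variable --- legitimate since $\partial_2K$ is compactly supported by Assumption~\ref{assum:technical} --- shows that, after the double summation defining numerator and denominator of $\mathscr{H}^{\alpha}_{\epsilon,\delta}$, the two kernels agree up to a relative error $O\bigl(\delta^{-1}(\epsilon_{\mathrm{PCA}}+\epsilon)\sqrt{\delta}\bigr)$. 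Under $\delta=O(\epsilon)$ this equals $O\bigl(\delta^{-1}(\epsilon_{\mathrm{PCA}}^{1/2}+\epsilon^{3/2})\bigr)$, exactly the extra term in \eqref{eq:finite_sample_theorem_noise}. This relative bound is inherited by the $\alpha$-renormalized weights $\mathscr{K}^{\alpha}_{\epsilon,\delta}$: the density estimates $\hat q_{\epsilon,\delta}$ are sums of perturbed kernels and, by the concentration argument underlying Theorem~\ref{thm:utm_finite_sampling_noiseless}, stay bounded below away from zero, so the renormalization only multiplies the relative error by a bounded factor. Propagating it through $\mathscr{H}^{\alpha}_{\epsilon,\delta}$ yields
\begin{equation*}
  \mathscr{H}^{\alpha}_{\epsilon,\delta}f\bigl(\overline{\tau}_{i,r}\bigr)=\hat H^{\alpha}_{\epsilon,\delta}f\bigl(\overline{\tau}_{i,r}\bigr)+O\bigl(\delta^{-1}(\epsilon_{\mathrm{PCA}}^{1/2}+\epsilon^{3/2})\bigr),
\end{equation*}
with an implied constant that depends on $f$ but is finite since $UTM$ is compact.

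It then remains to expand $\hat H^{\alpha}_{\epsilon,\delta}f(\overline{\tau}_{i,r})$, and here one simply invokes Theorem~\ref{thm:utm_finite_sampling_noiseless} with the sample $\{\overline{\tau}_{j,s}\}$ in place of $\{x_{j,s}\}$: by Definition~\ref{defn:utm_finite_sampling_noise} these points form a sample of $UTM$ drawn by precisely the two-step recipe of Assumption~\ref{assum:utm_finite_sampling_noiseless} --- i.i.d.\ base points, conditionally uniform on each fibre sphere --- with density $p(x,v)=\overline{p}(x)$ satisfying \eqref{eq:density}, while hypotheses~(\ref{item:32}) and~(\ref{item:39}) supply the remaining requirements $\delta=O(\epsilon)$ and $N_F/N_B\to\rho\in(0,\infty)$. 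That theorem expands $\hat H^{\alpha}_{\epsilon,\delta}f(\overline{\tau}_{i,r})$ into the main term of \eqref{eq:finite_sample_theorem_noise} together with an error $O\bigl(\epsilon^2+\delta^2+\theta_{*}^{-1}N_B^{-1/2}\epsilon^{-d/4}\bigr)$, and adding the perturbation above gives the assertion; hypothesis~(\ref{item:32}) also makes the new error term $o(1)$, so the expansion is non-degenerate. I expect the main obstacle to be the uniformity needed in the first two steps: \cite[lemma B.1]{SingerWu2012VDM} is a high-probability statement attached to a single base point, whereas $\mathscr{H}^{\alpha}_{\epsilon,\delta}$ couples all $O(N_B^2)$ pairs $(\xi_i,\xi_j)$, so a union bound is required whose cost must be absorbed by the quantitative rate in~(\ref{item:33}), and one must verify that the exceptional event does not interfere with the kernel-sum concentration borrowed from the proof of Theorem~\ref{thm:utm_finite_sampling_noiseless}; a secondary delicate point is keeping the $O(\epsilon_{\mathrm{PCA}})$ PCA error separate from the $O(\epsilon)$ curvature correction to $O_{ji}$ and checking that the two contribute additively, rather than resonantly, once amplified by $\delta^{-1}$. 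Everything else --- the Taylor expansion of $K$, the bias/variance bookkeeping, and the handling of the $\alpha$-normalization --- is imported essentially verbatim from the proof of Theorem~\ref{thm:utm_finite_sampling_noiseless}.
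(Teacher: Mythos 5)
Your overall strategy is exactly the paper's: compare $\mathscr{K}_{\epsilon,\delta}$ to the noiseless kernel by controlling $\|O_{ji}c_{i,r}-c_{j,s}\|^2-\|P_{\xi_j,\xi_i}\overline{\tau}_{i,r}-\overline{\tau}_{j,s}\|^2$, Taylor-expand $K$ in its second slot to convert this into a kernel perturbation of size $\delta^{-1}\times(\text{transport error})$, propagate the relative error through the $\alpha$-normalization using the lower bound on the density estimates, and then invoke Theorem~\ref{thm:utm_finite_sampling_noiseless} for the sample $\{\overline{\tau}_{j,s}\}$ (which, as you note, satisfies Assumption~\ref{assum:utm_finite_sampling_noiseless} with $p(x,v)=\overline{p}(x)$).

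The one step that does not hold up as written is your input for the transport approximation. You derive, from $\|B_j-Q_j\|_{\mathrm{HS}}=O(\epsilon_{\mathrm{PCA}})$ plus a curvature correction, that $O_{ji}$ agrees with the coordinate representation of $P_{\xi_j,\xi_i}$ up to $O(\epsilon_{\mathrm{PCA}}+\epsilon)$. That is not what the relevant result gives: the correct statement is \cite[Theorem B.2]{SingerWu2012VDM} (restated as Lemma~\ref{lem:approximate_parallel_transport}), whose error is $O\left(\epsilon_{\mathrm{PCA}}^{1/2}+\epsilon^{3/2}\right)$ --- worse in $\epsilon_{\mathrm{PCA}}$ and better in $\epsilon$ than your claim. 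The $\epsilon_{\mathrm{PCA}}^{1/2}$ cannot be improved to $\epsilon_{\mathrm{PCA}}$ by the argument you sketch (the basis error alone does not control how $O_{ji}=\argmin_{O}\|O-B_j^{\top}B_i\|_{\mathrm{HS}}$ deviates from the true transport), and it is precisely this $\epsilon_{\mathrm{PCA}}^{1/2}$ that appears in the theorem's error term $\delta^{-1}(\epsilon_{\mathrm{PCA}}^{1/2}+\epsilon^{3/2})$ --- so your stated final bound is correct only because you wrote down the target rather than what your intermediate estimate delivers. Once the correct lemma is substituted, your chain does close: the squared-distance discrepancy is then $O(\epsilon_{\mathrm{PCA}}^{1/2}+\epsilon^{3/2})$ (the paper simply bounds $\left|\|a\|^2-\|b\|^2\right|\le 4\|a-b\|$ since both vectors are differences of unit vectors; your extra factor of $\sqrt{\delta}$ from the kernel support is a harmless refinement), and dividing by $\delta$ gives the stated perturbation. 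The remaining concerns you raise (union bounds over pairs, separating the PCA and curvature contributions) are genuine but are treated no more carefully in the paper itself.
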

We give a proof of Theorem~\ref{thm:utm_finite_sampling_noise} in Appendix~\ref{sec:proof-finite-sampling-theorem}.


\section{Numerical Experiments and the Riemannian Adiabatic Limits}
\label{sec:numer-exper}
In this section, we consider a numerical experiment on $\mathrm{SO}\left( 3 \right)$, the special linear group of dimension $3$, realized as the unit tangent bundle of the standard sphere $S^2$ in $\mathbb{R}^3$. We shall compare both sampling strategies covered in Section~\ref{sec:hypo-diff-maps-tangent}.

\subsection{Sampling without Noise}
\label{sec:sampl-with-noise-numerical}
In the first step, we uniformly sample $N_B=2,000$ points $\left\{\xi_1,\cdots,\xi_{N_B} \right\}$ on the unit sphere $S^2$, and find for each sample point the $K_B=100$ nearest neighbors in the point cloud. Next, we sample $N_F=50$ vectors of unit length tangent to the unit sphere at each sample point (which in this case is a circle), thus collecting a total of $N_B\times N_F=100,000$ points on $UTS^2=\mathrm{SO}\left( 3 \right)$, denoted as
\begin{equation*}
\left\{ x_{j,s}\mid 1\leq j\leq N_B,1\leq s\leq N_F \right\}.
\end{equation*}
The hypoelliptic diffusion matrix $H$ is then constructed as an $N_B\times N_B$ block matrix with block size $N_F\times N_F$, and $H_{ij}$ (the $\left( i,j \right)$-th block of $H$) is non-zero only if the sample points $\xi_i,\xi_j$ are each among the $K_B$-nearest neighbors of the other; when $H_{ij}$ is non-zero, its $\left( r,s \right)$-entry ($1\leq r,s\leq N_F$) is non-zero only if $P_{\xi_j,\xi_i}x_{i,r}$ and $x_{j,s}$ are each among the $K_F=50$ nearest neighbors of the other, and in that case
\begin{equation}
\label{eq:matrix_entry_noiseless}
  \begin{aligned}
    H_{ij} \left( r,s \right)&=\exp \left( -\frac{\left\|\xi_i-\xi_j\right\|^2}{\epsilon} \right)\exp \left( -\frac{\left\|P_{\xi_j,\xi_i}x_{i,r}-x_{j,s}\right\|^2}{\delta} \right)\\
    &=\exp \left[ -\left(\frac{\left\|\xi_i-\xi_j\right\|^2}{\epsilon}+\frac{\left\|P_{\xi_j,\xi_i}x_{i,r}-x_{j,s}\right\|^2}{\delta}\right) \right],
  \end{aligned}
\end{equation}
where the choices of $\epsilon,\delta$ will be explained below. Note that for the unit sphere $S^2$ the parallel-transport from $T_{\xi_i}S^2$ to $T_{\xi_j}S^2$ can be explicitly constructed as a rotation along the axis $\xi_i\times \xi_j$. Finally, we form the $\alpha$-normalized hypoelliptic diffusion matrix $H_{\alpha}$ by
\begin{equation}
\label{eq:normalize_diffusion_matrix}
  \left(H_{\alpha}\right)_{ij}\left( r,s \right)=\frac{H_{ij} \left( r,s \right)}{\left( \displaystyle \sum_{l=1}^{N_B}\sum_{m=1}^{N_F}H_{il} \left( r,m \right) \right)^{\alpha}\left( \displaystyle \sum_{k=1}^{N_B}\sum_{n=1}^{ N_F}H_{jk} \left( r,n \right) \right)^{\alpha}},
\end{equation}
and solve the eigenvalue problem
\begin{equation}
\label{eq:generalized_eigen_problem}
  \left(D^{-\frac{1}{2}}H_{\alpha}D^{-\frac{1}{2}}\right)U = U\Lambda
\end{equation}
where $D$ is the $\left(N_BN_F\right)\times\left(N_BN_F\right)$ diagonal matrix with entry $\left( k,k \right)$ equal to the $k$-th column sum of $H_{\alpha}$:
\begin{equation*}
  D \left( k,k \right) = \sum_{v=1}^{N_B N_F}H_{\alpha}\left( k,v \right),
\end{equation*}
and $\Lambda$ is a diagonal matrix of the same dimensions. Throughout this experiment, we fix $\alpha=1$, $\epsilon=0.2$ and choose various values of $\delta$ ranging from $0.0005$ to $50$, and observe the spacing of the eigenvalues stored in $\Lambda$.

The purpose of this experiment is to investigate the influence of the ratio $\gamma=\delta/\epsilon$ on the spectral behavior of graph hypoelliptic Laplacians. As shown in Figure~\ref{fig:laplacians}, the spacing in the spectrum of these graph hypoelliptic Laplacians follow patterns similar to the multiplicities of the eigenvalues of corresponding Laplacians on $\mathrm{SO}\left( 3 \right)$ (governed by the relative size of $\delta$ and $\epsilon$). In Figure~\ref{fig:laplacians}(a), $\delta\ll\epsilon$, hence the graph hypoelliptic Laplacian approximates the horizontal Laplacian on $\mathrm{SO}\left( 3 \right)$ (according to Theorem~\ref{thm:main_utm}), in which the smallest eigenvalues have  multiplicities $1,6,13,\cdots$; in Figure~\ref{fig:laplacians}(b), $\delta=O \left(\epsilon \right)$, hence the graph hypoelliptic Laplacian approximates the total Laplacian on $\mathrm{SO}\left( 3 \right)$ (according to Theorem~\ref{thm:main_utm}), with eigenvalue multiplicities $1,9,25,\cdots$); in Figure~\ref{fig:laplacians}(c), $\delta\gg\epsilon$, hence the graph hypoelliptic Laplacian approximates the Laplacian on the base manifold $S^2$ (according to Corollary~\ref{cor:gamma_infty_utm}), with eigenvalue multiplicities $1,3,5,\cdots$). Note that in Figure~\ref{fig:laplacians}(c) we fixed $\epsilon$ and pushed $\delta$ to $\infty$, which is essentially equivalent to the limit process in \eqref{eq:main_utm_ii_base} rather than \eqref{eq:main_utm_ii}. Moreover, if in each figure we divide the sequence of eigenvalues by the smallest non-zero eigenvalue, the resulting sequence coincide with the eigenvalues of the corresponding manifold Laplacian up to numerical error. For a description of the spectrum of these partial differential operators, see \cite[Chapter 2]{Taylor1990NHA}.
\begin{figure}[htps]
  \centering
\begin{tabular}{ccc}
  \includegraphics[width=0.31\textwidth]{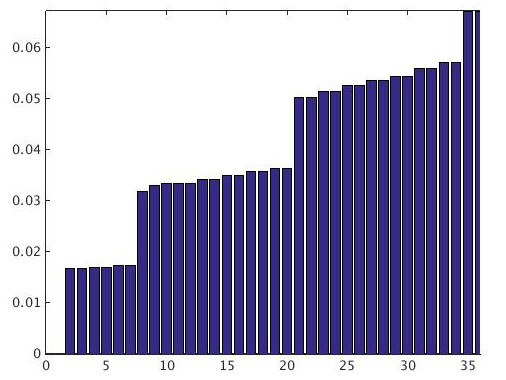}&
  \includegraphics[width=0.31\textwidth]{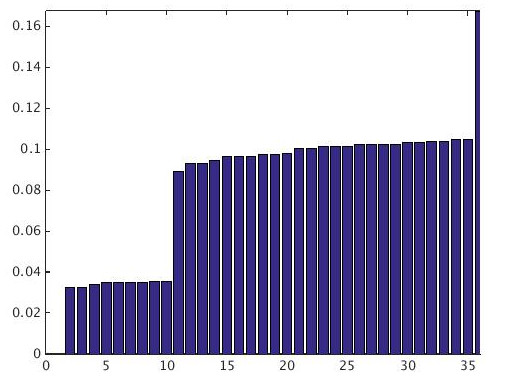}&
  \includegraphics[width=0.31\textwidth]{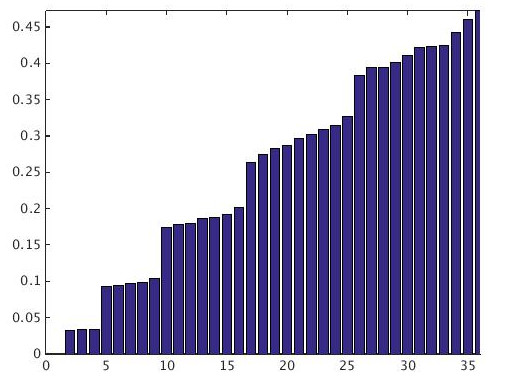}\\
  (a) $\delta=0.002$ & (b) $\delta=0.015$ & (c) $\delta=20$\\
\end{tabular}
  \caption{Bar plots of the smallest $36$ eigenvalues of $3$ graph hypoelliptic Laplacians with fixed $\epsilon=0.2$ and varying $\delta$ (sampling without noise)}
  \label{fig:laplacians}
\end{figure}

\subsection{Sampling from Empirical Tangent Spaces}
\label{sec:sampl-from-empir-numerical}
Similar to sampling without noise, we uniformly sample $N_B$ points $\left\{\xi_1,\cdots,\xi_{N_B} \right\}$ on the unit sphere in the first step, then construct the $K_B$-nearest-neighbor-graph for the point cloud with $K_B=100$; the only difference is that here $N_B=4,000$. (This finer discretization is necessary since we know from Theorem~\ref{thm:utm_finite_sampling_noise} and Theorem~\ref{thm:utm_finite_sampling_noiseless} that sampling from empirically estimated tangent spaces results in a slower convergence rate for HDM on unit tangent bundles. For the same reason we choose a larger $N_F$, see below.) Next, we perform local PCA (with $K_{\mathrm{PCA}}\left( u \right)=e^{-5u^2}\chi_{\left[ 0,1 \right]}$) in the $K_B$-neighborhood around each sample point $\xi_j$, and solve for $O_{ij}$ from the local PCA bases $B_i,B_j$ whenever $\xi_i,\xi_j$ are among the $K_B$-nearest-neighbors of each other. We then sample $N_F=100$ points from the standard unit circle $S^1$ in $\mathbb{R}^2$ for each $\xi_j$, and denote them as
\begin{equation*}
  \left\{ c_{j,s}\mid 1\leq j\leq N_B,1\leq s\leq N_F \right\}.
\end{equation*}
The block construction of the hypoelliptic diffusion matrix $H$ is similar to the noiseless case, but with non-zero $H_{ij}\left( r,s \right)$ replaced with
\begin{equation}
\label{eq:matrix_entry_noise}
  \begin{aligned}
    H_{ij} \left( r,s \right)&=\exp \left( -\frac{\left\|\xi_i-\xi_j\right\|^2}{\epsilon} \right)\exp \left( -\frac{\left\|O_{ji}c_{i,r}-c_{j,s}\right\|^2}{\delta} \right)\\
    &=\exp \left[ -\left(\frac{\left\|\xi_i-\xi_j\right\|^2}{\epsilon}+\frac{\left\|O_{ji}c_{i,r}-c_{j,s}\right\|^2}{\delta}\right) \right].
  \end{aligned}
\end{equation}
Finally, we construct $H_{\alpha}$ as in \eqref{eq:normalize_diffusion_matrix}, set $\alpha=1$, and solve the same generalized eigenvalue problems \eqref{eq:generalized_eigen_problem} with fixed $\epsilon=0.2$ and varying $\delta$. As shown in Figure~\ref{fig:laplacians_noise}, the spacing of the spectrum of graph hypoelliptic Laplacians is quite similar to what was obtained in sampling without noise.
\begin{figure}[htps]
  \centering
\begin{tabular}{ccc}
  \includegraphics[width=0.31\textwidth]{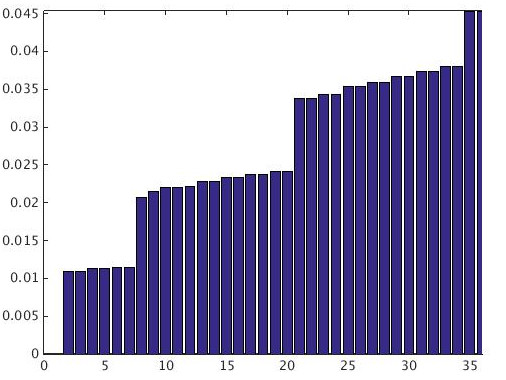}&
  \includegraphics[width=0.31\textwidth]{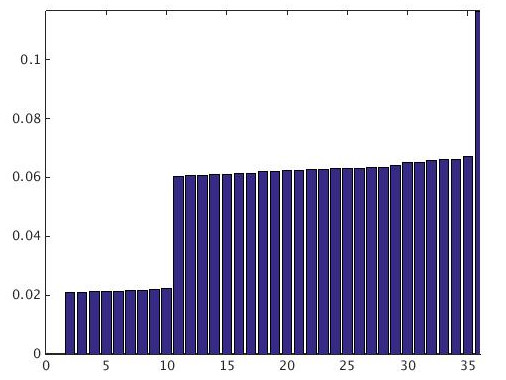}&
  \includegraphics[width=0.31\textwidth]{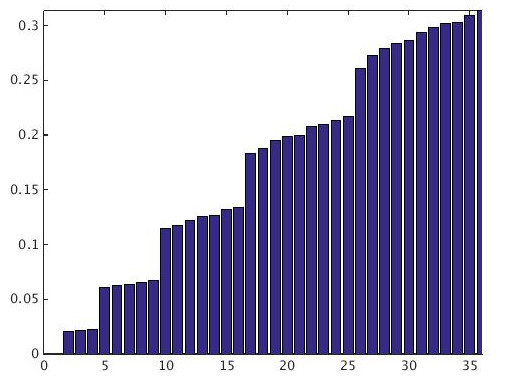}\\
  (a) $\delta=0.002$ & (b) $\delta=0.042$ & (c) $\delta=20$\\
\end{tabular}
  \caption{Bar plots of the smallest $36$ eigenvalues of $3$ graph hypoelliptic Laplacians with fixed $\epsilon=0.2$ and varying $\delta$ (sampling from empirical tangent spaces)}
  \label{fig:laplacians_noise}
\end{figure}

\subsection{As-Flat-As-Possible (AFAP) Connections}
\label{sec:as-flat-as-numerical}
The purpose of this experiment is to provide some insights into the embeddings introduced in \eqref{eq:hbdm} and \eqref{eq:hdm}. As mentioned in Section~\ref{sec:spectr-dist-embedd}, the embedding resulting from hypoelliptic diffusion maps tends to map ``similar points'' (where the similarity is specified by the pairwise correspondences) on different fibres to points on a common ``template'' fibre that are close to each other with respect to the Euclidean space into which the embedding takes place. This is illustrated in Figure~\ref{fig:afap} using the HDM obtained from the unit sphere example in Section~\ref{sec:sampl-from-empir-numerical}, with $N_B=4,000$, $N_F=100$, $K_B=100$, $K_F=50$, $\epsilon=0.2$, and $\delta=0.042$.
\begin{figure}[htps]
  \centering
  \includegraphics[width=0.4\textwidth]{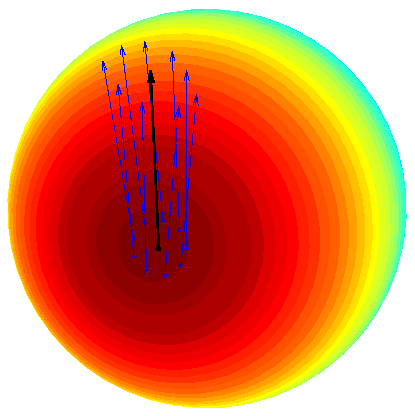}
  \caption{The vector field (near $\xi_j$) on $S^2$ determined by \eqref{eq:afap_construction}}
  \label{fig:afap}
\end{figure}
We pick an arbitrary point $x_{j,s}$ on $S_{\xi_j}$, the $j$-th fibre sampled from the unit tangent bundle, which stands for a unit tangent vector (see the black arrow in Figure~\ref{fig:afap}) to the unit sphere at $\xi_j$, the $j$-th sample. On each fibre $S_{\xi_k}$ where $1\leq k\neq j\leq N_B$, we then look for
\begin{equation}
\label{eq:afap_construction}
  \widetilde{P}_{\xi_k,\xi_j}x_{j,s}:=\argmin_{x_{k,r}\in S_{\xi_k}}\left\|\widetilde{H}^t_k \left( x_{k,r} \right)-\widetilde{H}^t_j \left( x_{j,s} \right)\right\|
\end{equation}
where $\widetilde{H}^t_k$ is defined in \eqref{eq:hdm_sphere}, and we choose $t=1$. The resulting collection of unit tangent vectors
\begin{equation}
  \label{eq:parallel_vector_field}
  \widetilde{\Gamma}:=\left\{ x_{j,s} \right\}\bigcup\left\{ \widetilde{P}_{\xi_k,\xi_j}x_{j,s}\mid 1\leq k\neq j\leq N_B\right\}
\end{equation}
gives rise to a discretization of a section on the unit tangent bundle $UTS^2$; this discretization can then be extended to the entire $S^2$ by interpolation. Since the connection we used in this construction of HDM is the canonical Levi-Civita connection, the similarity between two points on different fibres are measured according to their deviation from being \emph{parallel along geodesic segments} to each other; therefore, each $\widetilde{P}_{\xi_k,\xi_j}x_{j,s}$ stands for the unit tangent vector in the fibre $S_{\xi_k}$ that is the closest to $P_{\xi_k,\xi_j}x_{j,s}$ among all discrete samples $\left\{ x_{k,l}\mid 1\leq l\leq N_F \right\}$. As shown in Figure~\ref{fig:afap}, near $\xi_j$ the vector field $\widetilde{\Gamma}$ (extended by interpolation) is approximately constructed from parallel-transporting $x_{j,s}$ to its neighboring fibres along geodesic segments. HDM thus implicitly constructs vector fields on $S^2$ that are locally as close to a parallel vector field as possible, though we know from differential topology that, globally, there is no ``truly parallel'' unit-norm vector field on the manifold $S^2$. In this particular example, the ``as-parallel-as-possible vector fields'' produced by HDM can also be interpreted as generated by a connection that is \emph{as flat as possible} (AFAP), or \emph{as close to trivial as possible}. A related construction on triangular meshes can be found in \cite{CDS2010}, which relies heavily on the connectivity information stored in the mesh structure. It is worthwhile to note that our approach using HDM is fundamentally different, in that our computational approach uses only a random neighborhood graph of the point cloud constituted by approximate samples of the manifold, as opposed to a structured triangular mesh.

\subsection{An Excursion to the Riemannian Adiabatic Limits}
\label{sec:an-excurs-riem}
The formulae \eqref{eq:matrix_entry_noiseless} and \eqref{eq:matrix_entry_noise} provide an alternative interpretation (other than the one given by the HDM framework developed in this paper) for our numerical experiments in Section~\ref{sec:sampl-with-noise-numerical} and Section~\ref{sec:sampl-from-empir-numerical}, as follows. If we set $\gamma=\delta/\epsilon$, then
\begin{align*}
  &\exp \left[ -\left(\frac{\left\|\xi_i-\xi_j\right\|^2}{\epsilon}+\frac{\left\|P_{\xi_j,\xi_i}x_{i,r}-x_{j,s}\right\|^2}{\delta}\right) \right]\\
  &=\exp \left[ -\frac{1}{\epsilon}\left( \left\|\xi_i-\xi_j\right\|^2+\frac{1}{\gamma}\left\|P_{\xi_j,\xi_i}x_{i,r}-x_{j,s}\right\|^2 \right) \right],
\end{align*}
and our numerical experiments can be understood as applying the standard diffusion map (with bandwidth parameter $\epsilon$) to the total manifold of $\mathrm{SO}\left( 3 \right)$, except that the total manifold is equipped with a family of metrics different from the canonical bi-invariant one. These metrics all rely on the \emph{splitting} of the tangent bundle of $S^2$ by the Levi-Civita connection (as defined in Appendix~\ref{app:geom-tang-bundl}, see \eqref{eq:splitting}), and are formed by recombining the horizontal and vertical components of the Sasaki metric tensor using a parameter $\gamma>0$ that controls the relative weight of the two components. This is in contrast to the interpretation given by the hypoelliptic diffusion map framework: assuming $\gamma>0$ is fixed (implying $\delta=O \left( \epsilon \right)$), recall from Theorem~\ref{thm:main_utm} that
\begin{align}
\label{eq:infinitesimal_generator}
  \lim_{\epsilon\rightarrow0}\frac{H_{\epsilon,\gamma\epsilon}^1 f \left( x \right)-f \left( x \right)}{\epsilon}=\frac{m_{21}}{2m_0} \left( \Delta_S^H+\frac{m_{22}}{m_{21}}\gamma\Delta_S^V\right)f \left( x \right),
\end{align}
thus $\gamma$ controls the infinitesimal generator of the diffusion process in consideration, while the metric on the total manifold of $\mathrm{SO}\left( 3 \right)$ is fixed. (Though we do not fix $\gamma$ in our numerical experiments, the limit in \eqref{eq:infinitesimal_generator} still provides insights for small values of $\epsilon>0$.) This duality between metrics and infinitesimal generators, reflected in the change of the relative size of the bandwidth parameters, is a natural consequence from a differential geometric point of view: the Laplace-Beltrami operator on a Riemannian manifold depends on the choice of the Riemannian metric tensor; while the bandwidth parameters are characteristics of the chosen diffusion map, they can equivalently be interpreted as deformations of the underlying metric tensor. We would also like to mention related work that investigated the link between bandwidth and kernel density estimation~\cite{BGK2010}, as well as recent progress in analyzing diffusion kernels with data-dependent bandwidth~\cite{BerryHarlim2015}; their relation with HDM will be explored in more detail in future work.

The decomposition of tangent spaces of the total manifold is not only an essential element in the HDM framework but also the source of the duality relation discussed above. In differential geometry, such a decomposition can be studied in the broader context of \emph{Riemannian submersions}, the purpose of which is to study the index theory for a family of smooth manifolds (parametrized by a base manifold). It is then important to ``blow-up'' the horizontal component of the metric so as to extract the fibre information; the approach adopted there is formally similar to the metric deformation we utilize in HDM, except that the parameter $\gamma$ multiplies the horizontal component of the metric tensor and sent to $\infty$ in the limit process (known as the \emph{Riemannian Adiabatic Limit}~\cite{BismutCheeger1989,Bismut2013}). Though there is thus a close relation between that approach and HDM, we emphasize that our main focus here is the spectral geometry of the fibres rather than their topological invariants.


\section{Discussion and Future Work}
\label{sec:disc-future-work}
This paper introduced \emph{hypoelliptic diffusion maps} (HDM), a novel semi-supervised learning framework for the analysis and organization of a class of complex data sets, in which individual structures at each data point carry abundant information that can not be easily extracted away by a pairwise similarity measure. We also introduced the fibre bundle assumption, a generalization of the manifold assumption, and proved that under this assumption HDM provides embeddings for both the base and the total manifold; furthermore, the flexibility of the HDM framework enables us to view VDM and the standard diffusion maps (DM) as special cases. The rest of the paper focused on analyzing HDM on the tangent and unit tangent bundles of closed Riemannian manifolds, with convergence rate estimated for finite sampling on unit tangent bundles. These results provide the mathematical foundation for HDM on tangent bundles, and motivate further studies concerning both wider applicability and deeper mathematical understanding of the algorithmic framework. We conclude this paper with a few potential directions for further exploration.

\begin{enumerate}[1)]
\item\label{item:16} \emph{HDM on General Fibre Bundles.} We are interested in providing a more general mathematical framework for studying HDM on a wider class of fibre bundles. This is necessary and interesting, since data sets of interest to HDM (such as shape collections or persistent diagrams) are naturally modeled on fibre bundles that are more general than tangent and unit tangent bundles. The theory of shape spaces~\cite{Mumford2012} is of particular importance in this direction, since the concepts of horizontal and vertical Laplacians are readily available in the \emph{Sub-Riemannian} literature~\cite{Montgomery2006TourSubRG,Rifford2014,Baudoin2014}.
\item\label{item:17} \emph{Spectral Convergence of HDM.} The convergence results in this paper are pointwise; as in~\cite{BelkinNiyogi2007,SingerWu2013}, we believe that it is possible to show the convergence of the eigenvalues and eigenvectors of the graph hypoelliptic Laplacians to the eigenvalues and eigenvectors of the manifold hypoelliptic Laplacians, thus establishing the mathematical foundation for the spectral analysis of the HDM framework. Moreover, the hypoelliptic diffusion maps differ from diffusion maps and vector diffusion maps in that the fibres tend to be registered to a common ``template'', which, to our knowledge, is a new phenomenon that is addressed here for the first time.
\item\label{item:18} \emph{Spectral Clustering and Cheeger-Type Inequalities.} An important application of graph Laplacian is spectral clustering (graph partitioning). In a simple case, for a connected graph, the eigenvector corresponding to the smallest positive eigenvalue of the graph Laplacian partitions the graph vertices into two similarly sized subsets, in such a way that the number of edges across the subsets is as small as possible. In spectral graph theory~\cite{Chung1997}, the classical Cheeger's Inequality provides upper and lower bounds for the performance of the partition; recently, \cite{Bandeira2013} established similar results for the graph connection Laplacian, the central object of VDM. We believe that similar inequalities can be established for graph hypoelliptic Laplacians as well, with potentially more interesting behavior of the eigenvectors. For instance, we observed in practice that the eigenvector corresponding to the smallest positive eigenvalue of the graph hypoelliptic Laplacian stably partitions all the fibres in a globally consistent manner.
\item\label{item:19} \emph{Multiscale Analysis and Hierarchical Coarse-Graining.} Multiscale representation of massive, complex data sets based on similarity graphs is an interesting and fruitful application of diffusion operators~\cite{LafonLee2006,CoifmanMaggioni2006}. Based on HDM, one can build a similar theory for data sets possessing fibre bundle structures, providing a natural framework for coarse-graining that is meaningful (or even possible) only when performed simultaneously on the base and fibre manifolds. Moreover, since the hypoelliptic diffusion matrix is often of high dimensionality, an efficient approach to store and compute its powers will significantly improve the applicability of the HDM algorithm. We thus expect to develop a theory of \emph{hypoelliptic diffusion wavelets} and investigate their performance on real data sets with underlying fibre bundle structures.
\end{enumerate}


\appendix
\section{The Geometry of Tangent Bundles}
\label{app:geom-tang-bundl}
In this appendix, we briefly summarize some preliminaries on the geometry of tangent and unit tangent bundles. For the Sasaki metric~\cite{Sasaki1958,Sasaki1962}, readers may find useful the expositions in \cite{Gudmundsson2002,Dombrowski1962,Musso1988}, or jump start from~\cite[Exercise 3.2]{doCarmo1992RG}; for the unit tangent bundle, some results are collected in \cite{Boeckx1997,Boeckx2005} and the references therein. We define horizontal differential operators by directly lifting vector fields from the base manifold to the fibre bundle, which in principle applies to any diffusion operators~\cite{ELL2010Filtering}.

\subsection{Coordinate Charts on Tangent Bundles}
\label{sec:coord-charts-tang}
Let $M$ be a $d$-dimensional Riemannian manifold, $TM$ its tangent bundle, and $\pi:TM\rightarrow M$ the canonical projection from $TM$ to $M$. In a local coordinate chart $\left( U; x^1,\cdots,x^d \right)$ of $M$, $\left\{ \partial\slash\partial x^1,\cdots, \partial\slash\partial x^d\right\}$ is a local frame, and we write the basis for $T_xM$ as
\begin{equation*}
  \left\{\frac{\partial}{\partial x^1}\Bigg|_x,\cdots,\frac{\partial}{\partial x^d}\Bigg|_x\right\}.
\end{equation*}
A trivialization for $TM$ on $U$ can be chosen as
\begin{equation*}
  \begin{aligned}
  \left( x,v \right)&\mapsto \left( x^1,\cdots,x^d,v^1,\cdots,v^d \right),\quad x\in U, v\in T_xM,
  \end{aligned}
\end{equation*}
and we write
\begin{equation*}
  \left\{\frac{\partial}{\partial x^1}\Bigg|_{\left( x,v \right)},\cdots,\frac{\partial}{\partial x^d}\Bigg|_{\left( x,v \right)},\frac{\partial}{\partial v^1}\Bigg|_{\left( x,v \right)},\cdots,\frac{\partial}{\partial v^d}\Bigg|_{\left( x,v \right)}\right\},
\end{equation*}
for a natural basis for $T_{\left( x,v \right)}TM$. It is immediately verifiable that
\begin{equation*}
  d\pi_{\left( x,v \right)}\left(\frac{\partial}{\partial x^j}\Bigg|_{\left( x,v \right)}\right)=\frac{\partial}{\partial x^j}\Bigg|_x,\quad d\pi_{\left( x,v \right)}\left(\frac{\partial}{\partial v^j}\Bigg|_{\left( x,v \right)}\right)=0,\quad j=1,\cdots,d,
\end{equation*}
where $d\pi_{\left( x,v \right)}:T_{\left( x,v \right)}TM\rightarrow T_xM$ denotes the differential of the canonical projection $\pi$ at $\left( x,v \right)\in TM$. Note our usage of ``$|_x$'' and ``$|_{\left( x,v \right)}$'' to distinguish tangent vectors in $T_xM$ or $T_{\left( x,v \right)}TM$.

Even when a connection is not present, the \emph{vertical tangent vectors} to $TM$ are well-defined. It suffices to take the subspace spanned by tangent vectors along the $v$-coordinates
\begin{equation}
  \label{eq:vertical_tangent_vector}
  VT_{\left( x,v \right)}M:=\textrm{span}\left\{\frac{\partial}{\partial v^j}\Bigg|_{\left( x,v \right)},\quad j=1\cdots,d \right\}=\mathrm{Ker}\left( d\pi_{\left( x,v \right)} \right).
\end{equation}
We call the subbundle of $TM$ consisting of all vertical tangent vectors the \emph{vertical tangent bundle}
\begin{equation}
  \label{eq:vertical_tangent_bundle}
  VTM:=\coprod_{\left(x,v\right)\in TM}VT_{\left( x,v \right)}M=\mathrm{Ker}\left( d\pi \right).
\end{equation}
This immediately gives
\begin{equation*}
  T\left(TM\right) \Big\slash VTM\simeq\pi^{*}TM.
\end{equation*}
Using the Levi-Civita connection on $M$, we can find another subbundle of $T \left( TM \right)$, called the \emph{horizontal tangent bundle}
\begin{equation}
  \label{eq:horizontal_tangent_bundle}
  HTM:=\coprod_{\left(x,v\right)\in TM}HT_{\left( x,v \right)}M
\end{equation}
where
\begin{equation}
  \label{eq:horizontal_tangent_vector}
  HT_{\left( x,v \right)}M:=\textrm{span}\left\{\frac{\partial}{\partial x^j}\Bigg|_{\left( x,v \right)}\!\!\!\!-\Gamma_{\alpha j}^{\beta}\left( x \right)v^{\alpha} \frac{\partial}{\partial v^{\beta}}\Bigg|_{\left( x,v \right)},\quad j=1\cdots,d \right\}.
\end{equation}
The symbols $\Gamma_{\alpha j}^{\beta}$ are the \emph{connection coefficients} of the Levi-Civita connection, or the \emph{Christoffel symbols}. The tangent bundle of $TM$ splits into the direct sum of its horizontal and vertical components
\begin{equation}
  \label{eq:splitting}
  T\left(TM\right)=HTM\oplus VTM.
\end{equation}

The \emph{Sasaki metric} is a \emph{natural metric}~\cite{Gudmundsson2002} on $TM$. For two tangent vectors $X,Y\in T_{\left( x,v \right)}TM$, choose curves in $TM$
\begin{equation*}
  \alpha:t\mapsto \left( p \left( t \right),u \left( t \right) \right),\quad\beta:s\mapsto \left(q \left( s \right),w \left( s \right)  \right),
\end{equation*}
such that
\begin{equation*}
  p \left( 0 \right) = q \left( 0 \right) = x, \quad u \left( 0 \right)=w \left( 0 \right)=v,
\end{equation*}
and define
\begin{equation*}
  \langle X,Y \rangle_{\left(x,v\right)} = \langle d\pi \left( X \right),d\pi \left( Y \right)\rangle_x+\left\langle \frac{D u}{dt}\left( 0 \right),\frac{D w}{ds}\left( 0 \right)\right\rangle_x,
\end{equation*}
where $Du/dt$ and $Dw/ds$ are covariant derivatives. Using the horizontal-vertical splitting of $T \left( TM \right)$, this metric can be equivalently defined as
\begin{equation}
  \label{eq:sasaki_metric}
  \begin{aligned}
    \langle X,Y \rangle_{\left( x,v \right)} &= \langle d\pi\left(X\right),d\pi\left(Y\right) \rangle_x\quad \textrm{if } X,Y\in HT_{\left( x,v \right)}M,\\
    \langle X,Y \rangle_{\left( x,v \right)} &= \langle X,Y \rangle_x\quad \textrm{if } X,Y\in VT_{\left( x,v \right)}M,\\
    \langle X,Y\rangle_{\left( x,v \right)} &= 0\quad \textrm{if } X\in HT_{\left( x,v \right)}M, Y\in VT_{\left( x,v \right)}M.
  \end{aligned}
\end{equation}
In words, the Sasaki metric imposes orthogonality between horizontal and vertical tangent bundles, and adopts metrics on $HTM$ and $VTM$ induced from the Riemannian metric on $M$.

\subsection{Horizontal and Vertical Differential Operators on Tangent Bundles}
\label{sec:horiz-vert-diff}
Let $\Gamma$ denote the Christoffel symbols of the Levi-Civita connection on $M$. Define the \emph{horizontal lift operator} $\mathscr{L}: T_xM\rightarrow T_{\left( x,v \right)}TM$, from the tangent space of $M$ at $x\in M$ to the tangent space of $TM$ at $\left( x,v \right)\in TM$, by
\begin{equation*}
  \mathscr{L}\left(\frac{\partial}{\partial x^j}\Bigg|_x\right)= \frac{\partial}{\partial x^j}\Bigg|_{\left( x,v \right)}\!\!\!\!-\Gamma_{\alpha j}^{\beta}\left( x \right)v^{\alpha} \frac{\partial}{\partial v^{\beta}}\Bigg|_{\left( x,v \right)}.
\end{equation*}
It is direct to verify that this definition is independent of coordinates, as for \eqref{eq:horizontal_tangent_vector}.

$\mathscr{L}$ can be used to define ``horizontal'' first order differential operators on $TM$: just lift vector fields from $M$ to $TM$. For instance, the \emph{gradient operator} on $M$, denoted as $\nabla:C^{\infty}\left( M \right)\rightarrow\Gamma \left( TM \right)$, is defined as
\begin{equation*}
  \begin{aligned}
      \langle \nabla f,v \rangle_g = df \left( v \right)\quad\textrm{ for all }v\in TM,
  \end{aligned}
\end{equation*}
or in local coordinates
\begin{equation*}
  \begin{aligned}
    g_{jk}\left( \nabla f \right)^jv^k&=\frac{\partial f}{\partial x^k}v^k\\
    \Rightarrow g_{jk}\left( \nabla f \right)^j&=\frac{\partial f}{\partial x^k},k=1,\cdots,d\\
    \Rightarrow \left(\nabla f\right)\big|_x&=g^{ik}\left( x \right)\frac{\partial f}{\partial x^k}\Bigg|_x\frac{\partial}{\partial x^i}\Bigg|_x.
  \end{aligned}
\end{equation*}
Note that here $f$ is a smooth function on $M$. The \emph{horizontal gradient operator} on $TM$ can be defined using $\mathscr{L}$ as follows
\begin{equation}
\label{eq:horizontal_gradient}
\begin{aligned}
  &\nabla^Hf \left( x,v \right)=g^{ik}\left( x \right)\mathscr{L}\left(\frac{\partial}{\partial x^k}\Bigg|_x\right)\mathscr{L}\left(\frac{\partial}{\partial x^i}\Bigg|_x\right)\\
  &=g^{ik}\left( x \right)\left( \frac{\partial f}{\partial x^k}\Bigg|_{\left( x,v \right)}\!\!\!\!\!-\Gamma_{\alpha k}^{\beta}\left( x \right)v^{\alpha} \frac{\partial f}{\partial v^{\beta}}\Bigg|_{\left( x,v \right)}\right)\left( \frac{\partial}{\partial x^i}\Bigg|_{\left( x,v \right)}\!\!\!\!\!-\Gamma_{\alpha i}^{\beta}\left( x \right)v^{\alpha} \frac{\partial}{\partial v^{\beta}}\Bigg|_{\left( x,v \right)}\right).
\end{aligned}
\end{equation}
Similarly, for the \emph{Laplace-Beltrami operator} on $M$
\begin{equation*}
  \Delta f\left( x \right)=\frac{1}{\sqrt{\left| g \left( x \right) \right|}}\frac{\partial}{\partial x^j}\Bigg|_x \left( \sqrt{\left| g \left( x \right) \right|}\,g^{jk}\left( x \right)\frac{\partial f}{\partial x^k}\Bigg|_x \right),
\end{equation*}
(where $\left| g \right|=\left|\mathrm{det}\,g\right|$), its horizontal counterpart is
\begin{equation}
  \label{eq:horizontal_laplace_beltrami}
  \begin{aligned}
    &\Delta^Hf \left( x,v \right)=\frac{1}{\sqrt{\left| g \left( x \right) \right|}}\mathscr{L}\left(\frac{\partial}{\partial x^j}\Bigg|_x\right) \left[ \sqrt{\left| g \left( x \right) \right|}\,g^{jk}\left( x \right)\mathscr{L}\left(\frac{\partial f}{\partial x^k}\Bigg|_x\right)\right]\\
    &=\frac{1}{\sqrt{\left| g \left( x \right) \right|}}\left( \frac{\partial}{\partial x^j}\Bigg|_{\left( x,v \right)}\!\!\!\!\!-\Gamma_{\alpha j}^{\beta}\left( x \right)v^{\alpha} \frac{\partial}{\partial v^{\beta}}\Bigg|_{\left( x,v \right)} \right)\\
    &\qquad\qquad\qquad\left[ \sqrt{\left| g \left( x \right) \right|}\,g^{jk}\left( x \right)\left(\frac{\partial f}{\partial x^k}\Bigg|_{\left( x,v \right)}\!\!\!\!\!-\Gamma_{\alpha k}^{\beta}\left( x \right)v^{\alpha} \frac{\partial f}{\partial v^{\beta}}\Bigg|_{\left( x,v \right)}  \right) \right].
  \end{aligned}
\end{equation}
In a geodesic normal neighborhood centered at some fixed $x\in M$, $\Gamma_{ij}^k \left( x \right)=0$ for all indices $1\leq i,j,k\leq d$. Consequently, \eqref{eq:horizontal_gradient} and \eqref{eq:horizontal_laplace_beltrami} simplify as
\begin{equation}
  \label{eq:horizontal_gradient_geodesic_normal}
  \nabla^Hf \left( x,v \right)=\sum_{k=1}^d\frac{\partial f}{\partial x^k}\Bigg|_{\left( x,v \right)}\frac{\partial}{\partial x^k}\Bigg|_{\left( x,v \right)}.
\end{equation}
and
\begin{equation}
\label{eq:horizontal_laplace_beltrami_geodesic_normal}
    \Delta^Hf \left( x,v \right)=\sum_{k=1}^d \frac{\partial^2 f}{\partial \left( x^k \right)^2}\Bigg|_{\left( x,v \right)}+\frac{1}{3}R_{\alpha\beta}\left( x \right)v^{\alpha}\frac{\partial f}{\partial v^{\beta}}\Bigg|_{\left( x,v \right)}.
\end{equation}

The definition of \emph{vertical differential operators} does not depend on the Levi-Civita connection. The \emph{vertical gradient operator} on $TM$ is simply
\begin{equation}
  \label{eq:vertical_gradient_operator}
  \nabla^Vf \left( x,v \right) = g^{ik}\left( x \right)\frac{\partial f}{\partial v^k}\Bigg|_{\left( x,v \right)}\frac{\partial}{\partial v^i}\Bigg|_{\left( x,v \right)},
\end{equation}
and the \emph{vertical Laplace-Beltrami operator} on $TM$
\begin{equation}
  \label{eq:vertical_laplace_beltrami}
  \begin{aligned}
    \Delta^Vf \left( x,v \right)&=\frac{1}{\sqrt{\left| g \left( x \right) \right|}}\frac{\partial}{\partial v^j}\Bigg|_{\left( x,v \right)} \left( \sqrt{\left| g \left( x \right) \right|}\,g^{jk}\left( x \right)\frac{\partial f}{\partial v^k}\Bigg|_{\left( x,v \right)} \right)\\
    &=g^{jk}\left( x \right)\frac{\partial^2 f}{\partial v^j\partial v^k}\Bigg|_{\left( x,v \right)}.
  \end{aligned}
\end{equation}
The coordinate independence of these vertical differential operators follows from the observation that the $v$-components of the coordinates ``behave like'' the $x$-components under change of coordinates:
\begin{equation}
\label{eq:vertical_change_coordinates}
  \begin{aligned}
  v&=v^j \frac{\partial}{\partial x^j}\Bigg|_x=v^j \frac{\partial\tilde{x}^k}{\partial x^j}\Bigg|_x\frac{\partial}{\partial\tilde{x}^k}\Bigg|_x=\tilde{v}^k \frac{\partial}{\partial \tilde{x}^k}\Bigg|_{x}=\tilde{v}^k \frac{\partial x^j}{\partial \tilde{x}^k}\Bigg|_{x}\frac{\partial}{\partial x^j}\Bigg|_{x},\\
   &\quad\Rightarrow v^j=\tilde{v}^k \frac{\partial x^j}{\partial \tilde{x}^k}\Bigg|_{x}, \tilde{v}^j=v^k \frac{\partial \tilde{x}^j}{\partial x^k}\Bigg|_{x},\quad\Rightarrow \frac{\partial v^j}{\partial\tilde{v}^k}=\frac{\partial x^j}{\partial\tilde{x}^k}, \frac{\partial\tilde{v}^j}{\partial v^k}=\frac{\partial\tilde{x}^j}{\partial x^k},
  \end{aligned}
\end{equation}
or equivalently, they have the same Jacobian. Therefore, the coordinate invariance of $\nabla$ and $\Delta$ is equivalent to the coordinate invariance of $\nabla^V$ and $\Delta^V$. For the same reason, the volume form on $T_xM$
\begin{equation}
  \label{eq:tangent_space_measure}
  dV_x \left( v \right)  = \sqrt{\left| g \left( x \right) \right|}\,dv^1\wedge\cdots\wedge dv^d,
\end{equation}
is also coordinate invariant, since the volume form on $M$
\begin{equation}
  \label{eq:Riemannian_volume_form}
  d\mathrm{vol}_M \left( x \right)=\sqrt{\left| g \left( x \right) \right|}\,dx^1\wedge\cdots\wedge dx^d
\end{equation}
is well-defined. The volume element on $TM$ with respect to the Sasaki metric is locally the product of the volume forms in \eqref{eq:tangent_space_measure} and \eqref{eq:Riemannian_volume_form}. According to \emph{Liouville's Theorem}, this volume element is invariant under geodesic flows (see, e.g.,\cite[Exercise 3.14]{doCarmo1992RG}). Note that the existence of a volume element on a general fibre bundle is not guaranteed; the tangent bundle is special in that its total manifold is always orientable regardless of the orientability of its base manifold (see, e.g., \cite[Exercise 0.2]{doCarmo1992RG}). The volume element on the tangent bundle of $M$ induces a volume element on the unit tangent bundle of $M$, as we shall see below.

\subsection{The Unit Tangent Bundle as a Subbundle}
\label{sec:unit-tangent-bundle}

The hypoelliptic diffusion map constructed in Section \ref{sec:hypo-diff-formulation} works for both compact and non-compact manifolds. In practice, due to the constraint of finite sampling, we prefer to apply HDM to a compact object. Assuming $M$ is compact is not sufficient, since $TM$ is always non-compact. This motivates us to apply HDM to the unit tangent bundle $UTM$, a natural subbundle of $TM$ with compact fibres.

Following the notations used in Section \ref{sec:coord-charts-tang}, a \emph{unit tangent bundle} over a Riemannian manifold $\left(M,g\right)$ is a subbundle of $TM$, with fibre over $x\in M$ consisting of the tangent vectors in $T_xM$ with unit length:
\begin{equation*}
  UTM := \coprod_{x\in M}S_x,\quad S_x:=\left\{ v\in T_xM\mid g_x \left( v,v \right)=1  \right\} \subset T_xM,
\end{equation*}
where $g_x \left( \cdot,\cdot \right)$ denotes for the inner product on $T_xM$, defined by the Riemannian metric on $M$. Note that $UTM$ is a hypersurface of $TM$, and thus induces a metric from that of $TM$. The volume form on $UTM$ with respect to the induced metric, known as the \emph{Liouville measure} or the \emph{kinematic density}~\cite[Chapter VII]{Chavel2006}, is the only invariant measure on $UTM$ under geodesic flows.

We can define the gradient and the Laplace-Beltrami operator on $S_x$ with respect to the metric on $UTM$, denoted as $\nabla_{S_x}$ and $\Delta_{S_x}$, respectively. The \emph{vertical spherical gradient} $\nabla_S^V$ and \emph{vertical spherical Laplace-Beltrami operator} $\nabla_S^V$ can be defined through $\nabla_{S_x}$ and $\Delta_{S_x}$, similar to \eqref{eq:vertical_gradient_operator} and \eqref{eq:vertical_laplace_beltrami}:
\begin{equation}
  \label{eq:vertical_spherical_gradient_laplace_beltrami}
  \begin{aligned}
    \nabla_S^Vf \left( x,v \right)&:=\nabla_{S_x}f \left( x,v \right),\quad x\in M,\,\,v\in S_x,\\
  \Delta_S^Vf \left( x,v \right)&:=\Delta_{S_x}f \left( x,v \right),\quad x\in M,\,\,v\in S_x,
  \end{aligned}
\end{equation}
where we implicitly identified $f$ with its restriction on $S_x$ when  and  are applied to it, as long as no confusion arises.

In order to define the horizontal lifts of $\nabla$ and $\Delta$ from $M$ to $UTM$, we take advantage of the fact that $UTM$ is a subbundle of $TM$, and set
\begin{equation}
  \label{eq:horizontal_spherical_gradient_laplace_beltrami}
  \begin{aligned}
    \nabla_S^Hf \left( x,v \right)&:=\nabla^H\hat{f}\left( x,v \right),\\
    \Delta_S^Hf \left( x,v \right)&:=\Delta^H\hat{f}\left( x,v \right),
  \end{aligned}
\end{equation}
where $\hat{f}\left( x,u \right)$ is an arbitrary extension of $f$ from $C^{\infty}\left( UTM \right)$ to $C^{\infty}\left( TM \right)$.

We can show that the definition in \eqref{eq:horizontal_spherical_gradient_laplace_beltrami} does not depend on any specific choice of extensions. In fact, from \eqref{eq:horizontal_gradient}\eqref{eq:horizontal_laplace_beltrami} it is clear that the value of $\nabla^H\hat{f}, \Delta^H\hat{f}$ at $\left( x,v \right)\in S_x$ only depends on the data of $\hat{f}$ along the flow generated by vector fields
\begin{equation}
\label{eq:basic_lifts}
  \frac{\partial}{\partial x^j}\Bigg|_{\left( x,v \right)}\!\!\!\!\!-\Gamma_{\alpha j}^{\beta}\left( x \right)v^{\alpha} \frac{\partial}{\partial v^{\beta}}\Bigg|_{\left( x,v \right)},\quad j=1,\cdots,d.
\end{equation}
Thus it suffices to show that such a flow, if started at a point $\left( x,v \right)\in UTM$, will remain on $UTM$ for all time. Consider a curve $\gamma:t\mapsto TM$ starting at $\left( x,v \right)\in UTM$ that follows along the direction of one of the vector fields in~\eqref{eq:basic_lifts}, say the one indexed by $j$. Write $\gamma \left( t \right)$ in coordinates as
\begin{equation*}
  \gamma \left( t \right)=\left( x \left( t \right),v \left( t \right) \right)= \left( x^1 \left( t \right),\cdots,x^d \left( t \right),v^1 \left( t \right),\cdots,v^d \left( t \right) \right).
\end{equation*}
By construction,
\begin{equation*}
  \begin{aligned}
    \gamma' \left( t \right)&=\sum_{k=1}^d\left(\frac{dx^k \left( t \right)}{dt}\frac{\partial}{\partial x^k}\Bigg|_{\left(x \left( t \right),v \left( t \right)\right)}+\frac{dv^k \left( t \right)}{dt}\frac{\partial}{\partial v^k}\Bigg|_{\left(x \left( t \right),v \left( t \right)\right)}\right)\\
    &=\frac{\partial}{\partial x^j}\Bigg|_{\left( x \left( t \right),v \left( t \right) \right)}\!\!\!\!\!-\Gamma_{\alpha j}^{\beta}\left( x \left( t \right) \right)v^{\alpha} \left( t \right) \frac{\partial}{\partial v^{\beta}}\Bigg|_{\left( x \left( t \right),v \left( t \right)\right)},
  \end{aligned}
\end{equation*}
which implies
\begin{equation*}
  \frac{dx^j \left( t \right)}{dt}=1,\quad\frac{dx^k \left( t \right)}{dt}=0,k\neq j,k=1,\cdots,d,
\end{equation*}
and
\begin{equation*}
  \frac{dv^j \left( t \right)}{dt}=-\Gamma_{\alpha j}^{\beta}\left( x \left( t \right) \right)v^{\alpha} \left( t \right),\quad\frac{dv^k \left( t \right)}{dt}=0,k\neq j,k=1,\cdots,d,
\end{equation*}
by linear independence. In other words, $v \left( t \right)$ is indeed the parallel transport of $v \left( 0 \right)$ along curve $\pi\circ\gamma: t\mapsto x \left( t \right)$ on $M$, since $v \left( t \right)$ satisfies
\begin{equation*}
  \frac{dv^k \left( t \right)}{dt}+\Gamma_{ij}^k \left( x \left( t \right) \right)v^j \left( t \right) \frac{dx^i \left( t \right)}{dt}=0,\quad k=1,\cdots,d.
\end{equation*}
which is the same equation that defines the parallel transport on $M$, from $v \left( 0 \right)$ and long the curve $\pi\circ\gamma \left( t \right)=x \left( t \right)$. In other words, if $\gamma\left(0 \right)= \left( x \left( 0 \right),v \left( 0 \right) \right)\in UTM$, then $\gamma \left( t \right)= \left( x \left( t \right),v \left( t \right) \right)$ will stay on $UTM$ for all time, since the parallel transport is an isometry between tangent spaces and thus preserves the unit length of $v \left( 0 \right)=v$.


\section{Proofs of Theorem~\ref{thm:main_tm}, ~\ref{thm:main_utm}, ~\ref{thm:utm_finite_sampling_noiseless}, and~\ref{thm:utm_finite_sampling_noise}}
\label{app:proof-main-theorem}
\subsection{Proofs of Theorem~\ref{thm:main_tm} and Theorem~\ref{thm:main_utm}}
\label{sec:proofs-bias-terms}

We include in this section a proof of Theorem~\ref{thm:main_utm}. The proof of Theorem~\ref{thm:main_tm} can be similarly constructed. The Einstein summation convention is assumed everywhere unless otherwise specified.

Our starting point is the following lemma, in reminiscent of \cite[Lemma 8]{CoifmanLafon2006} and \cite[Lemma B.10]{SingerWu2012VDM}.
\begin{lemma}
\label{lem:basic_kernel_integral}
  Let $\Phi:\mathbb{R}\rightarrow\mathbb{R}$ be a smooth function compactly supported in $\left[ 0,1 \right]$. Assume $M$ is a $d$-dimensional compact Riemannian manifold without boundary, with injectivity radius $\mathrm{Inj}\left( M \right)>0$. For any $\epsilon>0$, define kernel function
  \begin{equation}
    \label{eq:epsilon_kernel_family}
    \Phi_{\epsilon}\left( x,y \right)=\Phi \left( \frac{d^2_M \left( x,y \right)}{\epsilon} \right)
  \end{equation}
on $M\times M$, where $d^2_M \left( \cdot,\cdot \right)$ is the geodesic distance on $M$. If the parameter $\epsilon$ is sufficiently small such that $0\leq\epsilon\leq\sqrt{\mathrm{Inj}\left( M \right)}$, then the integral operator associated with kernel $\Phi_{\epsilon}$
\begin{equation}
  \label{eq:integral_operator}
  \left(\Phi_{\epsilon}\,g\right)\left( x \right):=\int_M \Phi_{\epsilon}\left( x,y \right)g \left( y \right)d\mathrm{vol}_M \left( y \right)
\end{equation}
has the following asymptotic expansion as $\epsilon\rightarrow 0$
\begin{equation}
  \label{eq:basic_asymp_expansion}
  \left(\Phi_{\epsilon}\,g\right)\left( x \right) = \epsilon^{\frac{d}{2}}\left[ m_0 g \left( x \right)+\epsilon \frac{m_2}{2}\left( \Delta_M g \left( x \right)-\frac{1}{3}\mathrm{Scal}\left( x \right)g \left( x \right) \right)+O \left( \epsilon^2 \right) \right],
\end{equation}
where $m_0,m_2$ are constants that depend on the moments of $\Phi$ and the dimension $d$ of the Riemannian manifold $M$, $\Delta_M$ is the Laplace-Beltrami operator on $M$, and $\mathrm{Scal}\left( x \right)$ is the scalar curvature of $M$ at $x$.
\end{lemma}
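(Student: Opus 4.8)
\emph{Proof proposal.} The plan is to reduce the statement to a Euclidean computation by passing to geodesic normal coordinates centered at the point $x$ at which we evaluate. Since $\mathrm{supp}\left(\Phi\right)\subset\left[0,1\right]$, the kernel $\Phi_{\epsilon}\left(x,y\right)$ vanishes unless $d_M\left(x,y\right)\leq\sqrt{\epsilon}$, so for $\epsilon$ small enough that the metric ball $B\left(x,\sqrt{\epsilon}\right)$ lies inside a normal neighborhood of $x$, the integral \eqref{eq:integral_operator} is supported in that ball and we may use $\exp_x:T_xM\to M$ as a chart. Identifying $T_xM$ with $\mathbb{R}^d$ via an orthonormal basis and writing $y=\exp_x\left(u\right)$, we have $d_M\left(x,y\right)=\left\|u\right\|$ and $d\mathrm{vol}_M\left(y\right)=\sqrt{\left|g\left(u\right)\right|}\,du$, so that
\[
  \left(\Phi_{\epsilon}\,g\right)\left(x\right)=\int_{B\left(0,\sqrt{\epsilon}\right)}\Phi\!\left(\frac{\left\|u\right\|^{2}}{\epsilon}\right)g\left(\exp_x u\right)\sqrt{\left|g\left(u\right)\right|}\,du .
\]

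Next I would Taylor-expand the two $u$-dependent factors around $u=0$: the normal-coordinate expansion of the volume density $\sqrt{\left|g\left(u\right)\right|}=1-\tfrac16\,\mathrm{Ric}_{jk}\left(x\right)u^{j}u^{k}+O\left(\left\|u\right\|^{3}\right)$, and the ordinary Taylor expansion $g\left(\exp_x u\right)=g\left(x\right)+\partial_{j}g\left(x\right)u^{j}+\tfrac12\,\partial_{j}\partial_{k}g\left(x\right)u^{j}u^{k}+O\left(\left\|u\right\|^{3}\right)$ (coordinate derivatives in the normal chart). Multiplying the two expansions and substituting $u=\sqrt{\epsilon}\,z$ rescales the domain to $B\left(0,1\right)$, contributes a Jacobian $\epsilon^{d/2}$, and replaces $\left\|u\right\|^{2}/\epsilon$ by $\left\|z\right\|^{2}$. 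Integrating term by term, every contribution with an odd total power of the $z$-variables vanishes by the symmetry $z\mapsto-z$ of $\Phi\left(\left\|z\right\|^{2}\right)$; the surviving even terms are evaluated using $\int_{B\left(0,1\right)}\Phi\left(\left\|z\right\|^{2}\right)dz=:m_{0}$ and $\int_{B\left(0,1\right)}z^{j}z^{k}\Phi\left(\left\|z\right\|^{2}\right)dz=m_{2}\,\delta^{jk}$, where $m_{0}$ and $m_{2}=\tfrac1d\int_{B\left(0,1\right)}\left\|z\right\|^{2}\Phi\left(\left\|z\right\|^{2}\right)dz$ depend only on $\Phi$ and $d$.

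Collecting the $O\left(\epsilon\right)$ term inside the bracket, the quadratic Taylor term of $g$ contributes $\tfrac{m_{2}}{2}\sum_{j}\partial_{j}^{2}g\left(x\right)$ and the quadratic term of the volume density contributes $-\tfrac{m_{2}}{6}\sum_{j}\mathrm{Ric}_{jj}\left(x\right)g\left(x\right)$; there is no cross term, since the volume density has no linear part. At the center of a normal coordinate system $g_{jk}\left(x\right)=\delta_{jk}$ and $\partial_{l}g_{jk}\left(x\right)=0$, hence $\sum_{j}\partial_{j}^{2}g\left(x\right)=\Delta_M g\left(x\right)$, and $\sum_{j}\mathrm{Ric}_{jj}\left(x\right)=\mathrm{Scal}\left(x\right)$, so the $\epsilon$-term equals $\epsilon\,\tfrac{m_{2}}{2}\bigl(\Delta_M g\left(x\right)-\tfrac13\,\mathrm{Scal}\left(x\right)g\left(x\right)\bigr)$, which is precisely \eqref{eq:basic_asymp_expansion}. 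Since $M$ is compact, all curvature quantities, the Taylor remainders, and the derivatives of $g$ are bounded uniformly in $x$, so the neglected contributions — fourth order in $u$, hence order $\epsilon^{2}$ after rescaling — are $O\left(\epsilon^{2}\right)$ uniformly.

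The only delicate points I anticipate are the bookkeeping of the quadratic terms — in particular obtaining the coefficient $-\tfrac13$ from the combination $\tfrac12-\tfrac16$ together with the moment identity $\int z^{j}z^{k}\Phi=\tfrac1d\,\delta^{jk}\int\left\|z\right\|^{2}\Phi$ — and checking that the error estimate is uniform in $x$. This is the geometric analogue of \cite[Lemma 8]{CoifmanLafon2006} and \cite[Lemma B.10]{SingerWu2012VDM}, and the argument can be modeled on those proofs.
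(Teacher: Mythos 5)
Your proposal is correct and follows essentially the same route as the paper's proof: geodesic normal coordinates at $x$, the normal-coordinate expansion of the volume density, Taylor expansion of $g$, rescaling by $\sqrt{\epsilon}$, and killing the odd-order terms by symmetry before evaluating the second moments. The coefficient bookkeeping ($\tfrac12$ from the Hessian of $g$ and $-\tfrac16$ from the Ricci term, combined with the moment identity) matches the paper exactly, including the observation that the $\epsilon^{3/2}$-order contributions vanish by parity so the remainder is genuinely $O\left(\epsilon^{2}\right)$.
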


\begin{proof}
  We put everything in geodesic normal coordinates centered at $x\in M$. If $d_M \left( x,y \right)=r$, let $y$ have geodesic normal coordinates $\left(s^1,\cdots,s^d\right)$ such that $\left(s^1\right)^2+\cdots+\left(s^d\right)^2=r^2$. Note that
  \begin{equation}
  \label{lem1:geodesicnormalcoords}
    \begin{aligned}
      \int_M \Phi_{\epsilon}\left( x,y \right)g \left( y \right)\,d\mathrm{vol}_M \left( y \right) &= \int_{B_{\sqrt{\epsilon}}\left( x \right)} \Phi\left(\frac{d^2_M \left( x,y \right)}{\epsilon}\right)g \left( y \right)\,d\mathrm{vol}_M \left( y \right)\\
      &=\int_{B_{\sqrt{\epsilon}}\left( 0 \right)}\Phi \left( \frac{r^2}{\epsilon} \right)\tilde{g} \left(s\right)\,d\mathrm{vol}_M \left(s\right)
    \end{aligned}
  \end{equation}
where
\begin{equation*}
  \begin{aligned}
    \tilde{g}\left( s\right)=\tilde{g}\left( s^1,\cdots,s^d \right)&=g \circ \mathrm{exp}_x \left( s^1e_1+\cdots+s^de_d \right),\\
    d\mathrm{vol}_M \left( s \right)=d\mathrm{vol}_M \left( s^1,\cdots,s^d \right)&=d\mathrm{vol}_M \left( \mathrm{exp}_x \left( s^1e_1+\cdots+s^de_d \right) \right)
  \end{aligned}
\end{equation*}
 with $\left\{e_1,\cdots,e_d\right\}$ being an orthonormal basis for $T_xM$. A further change of variables
$$\tilde{s}^1=\frac{s^1}{\sqrt{\epsilon}},\cdots,\tilde{s}^d=\frac{s^d}{\sqrt{\epsilon}}; \quad\tilde{r}=\frac{r}{\sqrt{\epsilon}}$$
leads to 
\begin{equation}
\label{lem1:changeofcoordinates}
  \begin{aligned}
    \int_{B_{\sqrt{\epsilon}}\left( 0 \right)} & \Phi \left( \frac{r^2}{\epsilon} \right)\tilde{g} \left(s \right)\,d\mathrm{vol}_M \left(s \right)=\int_{B_1\left( 0 \right)}\Phi \left(\tilde{r}^2 \right)\tilde{g} \left(\sqrt{\epsilon}\,\tilde{s} \right)d\mathrm{vol}_M \left(\sqrt{\epsilon}\,\tilde{s}\right)\\
  &=\int_{B_1\left( 0 \right)}\Phi \left(\tilde{r}^2 \right)\tilde{g} \left(\sqrt{\epsilon}\,\tilde{s}^1,\cdots,\sqrt{\epsilon}\,\tilde{s}^d  \right)d\mathrm{vol}_M \left(\sqrt{\epsilon}\,\tilde{s}^1,\cdots,\sqrt{\epsilon}\,\tilde{s}^d \right).
  \end{aligned}
\end{equation}
Recall \cite{Petersen2006} that in geodesic normal coordinates
\begin{equation*}
  d\mathrm{vol}_M \left( s^1,\cdots,s^d \right)=\left[1-\frac{1}{6}R_{kl}\left( x \right)s^ks^l+O \left( r^3 \right)\right]ds^1\cdots ds^d
\end{equation*}
where $R_{kl}$ is the Ricci curvature tensor
\begin{equation*}
  R_{kl} \left( x \right)=g^{ij}R_{kilj}\left( x \right).
\end{equation*}
Thus
\begin{equation}
\label{lem1:dvol}
  d\mathrm{vol}_M \left( \sqrt{\epsilon}\,\tilde{s}^1,\cdots,\sqrt{\epsilon}\,\tilde{s}^d \right)=\left[1-\frac{\epsilon}{6}R_{kl}\left( x \right)\tilde{s}^k\tilde{s}^l+O \left(\epsilon^{\frac{3}{2}}\tilde{r}^3 \right)\right]\cdot \epsilon^{\frac{d}{2}}d\tilde{s}^1\cdots d\tilde{s}^d.
\end{equation}
In the meanwhile, the Taylor expansion of $\tilde{g}\left( s^1,\cdots,s^d \right)$ near $x$ reads
\begin{equation*}
  \tilde{g}\left( s^1,\cdots,s^d \right)=\tilde{g}\left( 0 \right)+\frac{\partial\tilde{g}}{\partial s^j}\left( 0 \right)s^j+ \frac{1}{2}\frac{\partial^2\tilde{g}}{\partial s^k\partial s^l}\left( 0 \right)s^ks^l+O \left( r^3 \right)
\end{equation*}
and thus
\begin{equation}
\label{lem1:taylorexpansion}
  \tilde{g} \left(\sqrt{\epsilon}\,\tilde{s}^1,\cdots,\sqrt{\epsilon}\,\tilde{s}^d  \right)=g \left( x \right)+\sqrt{\epsilon}\cdot \frac{\partial\tilde{g}}{\partial s^j}\left( 0 \right)\tilde{s}^j+\epsilon\cdot \frac{1}{2} \frac{\partial^2\tilde{g}}{\partial s^k\partial s^l}\left( 0 \right)\tilde{s}^k\tilde{s}^l+O \left( \epsilon^{\frac{3}{2}}\tilde{r}^3 \right).
\end{equation}
Combining (\ref{lem1:geodesicnormalcoords})--(\ref{lem1:taylorexpansion}) and noting that
\begin{equation*}
  \int_{B_1 \left( 0 \right)}\Phi \left( \tilde{r}^2 \right)\cdot\sqrt{\epsilon}\cdot \frac{\partial\tilde{g}}{\partial s^j}\left( 0 \right)\tilde{s}^j\cdot\epsilon^{\frac{d}{2}}d\tilde{s}^1\cdots d\tilde{s}^d=0
\end{equation*}
by the symmetry of the kernel and the domain of integration $B_1 \left( 0 \right)$, we have
\begin{equation*}
  \begin{aligned}
    \int_M & \Phi_{\epsilon}\left( x,y \right)g \left( y \right)\,d\mathrm{vol}_M \left( y \right)=\int_{B_1\left( 0 \right)}\!\!\Phi \left(\tilde{r}^2\right)\tilde{g} \left(\sqrt{\epsilon}\,\tilde{s}^1,\cdots,\sqrt{\epsilon}\,\tilde{s}^d  \right)\,d\mathrm{vol}_M \left(\sqrt{\epsilon}\,\tilde{s} \right)\\
    &=\int_{B_1 \left( 0 \right)}\!\!\Phi \left( \tilde{r}^2 \right)\left[g \left( x \right)+\sqrt{\epsilon}\cdot \frac{\partial\tilde{g}}{\partial s^j}\left( 0 \right)\tilde{s}^j+\epsilon\cdot \frac{1}{2} \frac{\partial^2\tilde{g}}{\partial s^k\partial s^l}\left( 0 \right)\tilde{s}^k\tilde{s}^l+O \left( \epsilon^{\frac{3}{2}}\tilde{r}^3 \right)  \right]\cdot\\
    &\phantom{aaaaaaaaaaaaaaaaaaaaaaaa}\left[1-\frac{\epsilon}{6}R_{kl}\left( x \right)\tilde{s}^k\tilde{s}^l+O \left(\epsilon^{\frac{3}{2}}\tilde{r}^3 \right)\right]\cdot \epsilon^{\frac{d}{2}}d\tilde{s}^1\cdots d\tilde{s}^d\\
    &=\epsilon^{\frac{d}{2}}\left[ g \left( x \right)\int_{B_1 \left( 0 \right)}\!\!\Phi \left( \tilde{r}^2 \right)\,d\tilde{s}+\epsilon\left(\frac{1}{2} \frac{\partial^2\tilde{g}}{\partial s^k\partial s^l}\left( 0 \right)-\frac{1}{6}g \left( x \right)R_{kl}\left( x \right) \right)\int_{B_1 \left( 0 \right)}\!\!\Phi \left( \tilde{r}^2\right)\tilde{s}^k\tilde{s}^l \,d\tilde{s}+O \left( \epsilon^2 \right) \right]\\
    &=\epsilon^{\frac{d}{2}}\left[ g \left( x \right)\int_{B_1 \left( 0 \right)}\!\!\Phi \left( \tilde{r}^2\right)\,d\tilde{s} +\epsilon\left( \frac{1}{2} \frac{\partial^2\tilde{g}}{\partial \left(s^k\right)^2}\left( 0 \right)-\frac{1}{6}g \left( x \right)R_{kk}\left( x \right) \right)\int_{B_1 \left( 0 \right)}\!\!\Phi \left( \tilde{r}^2 \right)\left( \tilde{s}^k \right)^2d\tilde{s}+O \left( \epsilon^2 \right) \right]
  \end{aligned}
\end{equation*}
where the last equality follows from the observation that
\begin{equation*}
  \int_{B_1 \left( 0 \right)}\!\!\Phi \left( \tilde{r}^2 \right)\tilde{s}^k\tilde{s}^l\,d\tilde{s}^1\cdots d\tilde{s}^d=0\quad\textrm{ if }k\neq l
\end{equation*}
again by the symmetry of the kernel and the domain of integration $B_1 \left( 0 \right)$; the $O \left( \epsilon^{\frac{3}{2}} \right)$ term vanishes due to the symmetry of the kernel (as argued in~\cite[\S 2]{Singer2006ConvergenceRate}). The constants $m_0,m_2$ can now be explicitly characterized:
\begin{equation*}
  \begin{aligned}
    m_0&:=\int_{B_1 \left( 0 \right)}\!\!\Phi \left( \tilde{r}^2 \right)\,d\tilde{s}^1\cdots d\tilde{s}^d=\int_0^1\Phi \left( \tilde{r}^2 \right)\tilde{r}^{d-1}d\tilde{r}\int_{S_1 \left( 0 \right)}d\sigma=\omega^{d-1}\int_0^1\Phi \left( \tilde{r}^2 \right)\tilde{r}^{d-1}d\tilde{r},\\
    m_2 &:= \int_{B_1 \left( 0 \right)}\!\!\Phi \left( \tilde{r}^2 \right)\left( \tilde{s}^k \right)^2\,d\tilde{s}^1\cdots d\tilde{s}^d\quad\textrm{independent of $k\in \left\{ 1,\cdots,d \right\}$ by symmetry.}
  \end{aligned}
\end{equation*}
Finally, recall that in geodesic normal coordinates
\begin{equation*}
  \begin{aligned}
    \sum_{k=1}^d \frac{\partial^2\tilde{g}}{\partial \left( s^k \right)^2}\left( 0 \right)=\Delta_M g \left( x \right),\quad\sum_{k=1}^dR_{kk}\left( x \right)=\mathrm{Scal}\left( x \right),
  \end{aligned}
\end{equation*}
from which the conclusion follows:
\begin{equation*}
  \begin{aligned}
    \int_M \Phi_{\epsilon}\left( x,y \right)g \left( y \right)\,d\mathrm{vol}_M \left( y \right)&=\epsilon^{\frac{d}{2}}\left[ m_0 g \left(x \right) +\epsilon m_2\left( \frac{1}{2} \Delta_Mg \left( x \right)-\frac{1}{6}\mathrm{Scal}\left( x \right)g \left( x \right) \right) +O \left( \epsilon^2 \right) \right]\\
      &=\epsilon^{\frac{d}{2}}\left[ m_0 g \left(x \right) +\epsilon \frac{m_2}{2}\left( \Delta_Mg \left( x \right)-\frac{1}{3}\mathrm{Scal}\left( x \right)g \left( x \right) \right) +O \left( \epsilon^2 \right) \right].
  \end{aligned}
\end{equation*}
\end{proof}

In order to study $K_{\epsilon,\delta}$, we need an expansion for the parallel transport term $P_{y,x}v$. This is established in Lemma~\ref{lem:taylor_parallel_transport}.
\begin{lemma}
\label{lem:taylor_parallel_transport}
  Let $M$ be a Riemannian manifold, $x\in M$, $v\in T_xM$. In geodesic normal coordinates around $x$, the parallel transport of $v$ along a geodesic $\gamma:t\mapsto \mathrm{exp}_xt\theta$ ($t\in \left[ 0,\epsilon \right],\|\theta\|_{T_xM}=1$) has the following asymptotic expansion:
\begin{equation*}
  \left(P_{\mathrm{exp}_xt\theta,x}\left(v\right)\right)^j=v^j-\frac{t^2}{6}\theta^k\theta^{\sigma}v^l \left( R_{l\sigma k}^{\phantom{l\sigma k}j}\left( x \right)+R_{k\sigma l}^{\phantom{l\sigma k}j}\left( x \right) \right)+O \left( t^3 \right)
\end{equation*}
where $P_{y,x}:T_xM\rightarrow T_yM$ denotes the parallel transport from $T_xM$ to $T_yM$ along the geodesic segment connecting $x$ and $y$.
\end{lemma}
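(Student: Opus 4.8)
The plan is to integrate the parallel-transport equation along the radial geodesic directly in geodesic normal coordinates centred at $x$, and then to substitute into it the classical first-order Taylor expansion of the Christoffel symbols at the centre of those coordinates. I would begin by fixing such coordinates; there the radial geodesic is the straight line $\gamma \left( t \right)=t\theta$, so $\dot\gamma^k\left( t \right)=\theta^k$ for all $t\in \left[ 0,\epsilon \right]$. Writing $V\left( t \right):=P_{\gamma \left( t \right),x}\left( v \right)$, the equation $DV/dt=0$ becomes the linear ODE
\begin{equation*}
  \dot V^j\left( t \right)=-\Gamma^j_{kl}\left( t\theta \right)\,\theta^k\,V^l\left( t \right),\qquad V\left( 0 \right)=v ,
\end{equation*}
equivalently $V^j\left( t \right)=v^j-\int_0^t\Gamma^j_{kl}\left( s\theta \right)\theta^kV^l\left( s \right)\,ds$. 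Since $\Gamma^j_{kl}$ vanishes at the origin in normal coordinates, the integrand is $O \left( s \right)$ as $s\to 0$; in particular $\dot V\left( 0 \right)=0$, so $V^l\left( s \right)=v^l+O \left( s^2 \right)$, and substituting this estimate back into the integral yields
\begin{equation*}
  V^j\left( t \right)=v^j-\int_0^t\Gamma^j_{kl}\left( s\theta \right)\theta^k v^l\,ds+O \left( t^4 \right).
\end{equation*}

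The second ingredient is the standard normal-coordinate expansion of the connection coefficients, a purely local consequence of $g_{ab}\left( s \right)=\delta_{ab}-\tfrac13 R_{acbd}\left( x \right)s^cs^d+O \left( \left| s \right|^3 \right)$: differentiating this, using $g^{ab}\left( s \right)=\delta^{ab}+O \left( \left| s \right|^2 \right)$ to lower-order accuracy, and applying the first Bianchi identity together with the (anti)symmetries of the curvature tensor gives $\Gamma^j_{kl}\left( s \right)=\tfrac13\bigl( R_{l\sigma k}^{\phantom{l\sigma k}j}\left( x \right)+R_{k\sigma l}^{\phantom{l\sigma k}j}\left( x \right) \bigr)s^\sigma+O \left( \left| s \right|^2 \right)$, with the index placement $R_{l\sigma k}^{\phantom{l\sigma k}j}=g^{jm}R_{l\sigma km}$ and the overall sign as dictated by the curvature convention of \cite{doCarmo1992RG}. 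Plugging $s\mapsto s\theta$ into this expansion, the remaining integral is elementary, $\int_0^t s\,ds=t^2/2$, and it produces precisely
\begin{equation*}
  V^j\left( t \right)=v^j-\frac{t^2}{6}\,\theta^k\theta^\sigma v^l\bigl( R_{l\sigma k}^{\phantom{l\sigma k}j}\left( x \right)+R_{k\sigma l}^{\phantom{l\sigma k}j}\left( x \right) \bigr)+O \left( t^3 \right) ,
\end{equation*}
which is the asserted formula (the $O \left( t^4 \right)$ error being absorbed into $O \left( t^3 \right)$).

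The analytic content is essentially trivial: one is integrating a smooth linear ODE whose right-hand side vanishes to first order at $t=0$, and all error terms are controlled by smoothness of $g$ along the compact geodesic segment $\gamma \left( \left[ 0,\epsilon \right] \right)$. The only step that genuinely demands care is the index bookkeeping in the Christoffel expansion — correctly raising the fourth curvature index and invoking the Bianchi identity and the pair/antisymmetries so that the coefficient of $t^2$ emerges already in the symmetrized form $R_{l\sigma k}^{\phantom{l\sigma k}j}+R_{k\sigma l}^{\phantom{l\sigma k}j}$ appearing in the statement, and with the sign consistent with the convention fixed elsewhere in the paper. I expect this bookkeeping (rather than anything in the ODE analysis) to be the main, though still routine, obstacle.
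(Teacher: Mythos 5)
Your proposal is correct and follows essentially the same route as the paper: both arguments reduce to the parallel-transport ODE in geodesic normal coordinates, exploit $\Gamma^j_{kl}\left( x \right)=0$ to kill the first-order term, and extract the $t^2$ coefficient from the expansion $\Gamma^j_{kl}\left( s \right)=\tfrac{1}{3}s^{\sigma}\bigl(R_{l\sigma k}^{\phantom{l\sigma k}j}\left( x \right)+R_{k\sigma l}^{\phantom{k\sigma l}j}\left( x \right)\bigr)+O\left( \left| s \right|^2 \right)$ obtained from the normal-coordinate metric expansion. The only (cosmetic) difference is that you iterate the integral form of the ODE once, whereas the paper differentiates the ODE twice at $t=0$ and Taylor-expands $V^j\left( t \right)$; the index bookkeeping you flag as the real content is exactly the computation the paper carries out explicitly.
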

\begin{proof}
  Let $\left\{s^1,\cdots,s^d\right\}$ be a geodesic normal coordinate chart centered at $x\in M$. Assume $v\in T_xM$ takes the expression
  \begin{equation*}
    v = \sum_{j=1}^dv^j \frac{\partial}{\partial s^j}\Bigg|_x
  \end{equation*}
and let $V:\left[ 0,\epsilon \right]\rightarrow TM$ be the parallel transported vector field along the given geodesic $\gamma$
\begin{equation*}
  V \left( t \right)=\sum_{j=1}^dV^j \left( t \right)\frac{\partial}{\partial s^j}\Bigg|_{\gamma \left( t \right)}.
\end{equation*}
Note that $\left( s^1 \right)^2+\cdots+\left( s^d \right)^2=t^2$. Recall that $V$ being parallel along $\gamma$ means
\begin{equation*}
  0 = \nabla_{\gamma' \left( t \right)}V \left( t \right)=\sum_{j=1}^d \left( \frac{dV^j}{dt}\left( t \right)+\sum_{k,l=1}^d \frac{d\gamma^k}{dt}\left( t \right)V^l \left( t \right)\Gamma_{kl}^j \left( \gamma \left( t \right) \right) \right)\frac{\partial}{\partial x^j}\Bigg|_{\gamma \left( t \right)}
\end{equation*}
or equivalently
\begin{equation}
\label{eq:paralleltransport}
  \frac{dV^j}{dt}\left( t \right)+\sum_{k,l=1}^d \frac{d\gamma^k}{dt}\left( t \right)V^l \left( t \right)\Gamma_{kl}^j \left( \gamma \left( t \right)\right) = 0\quad\textrm{ for all }j=1,\cdots,d.
\end{equation}
Let $\left( t;\theta^1,\cdots,\theta^d \right)$ be the geodesic polar coordinates corresponding to $\left( s^1,\cdots,s^d \right)$. By iteratively using (\ref{eq:paralleltransport}),
\begin{equation*}
  \begin{aligned}
    V^j \left( 0 \right)&=v^j\\
    \frac{dV^j}{dt}\left( 0 \right)&=-\sum_{k,l=1}^d \frac{d\gamma^k}{dt}\left( 0 \right)V^l \left( 0 \right)\Gamma_{kl}^j \left( \gamma \left( 0 \right)\right)=0\\
    \frac{d^2V^j}{dt^2}\left( 0 \right)&=-\sum_{k,l=1}^d \frac{d}{dt}\Bigg|_{t=0} \left(\frac{d\gamma^k}{dt}\left( t \right) V^l \left( t \right)\right)\Gamma_{kl}^j \left( \gamma \left( 0 \right)\right)-\sum_{k,l=1}^d \frac{d\gamma^k}{dt}\left( 0 \right)V^l \left( 0 \right)\frac{d}{dt}\Bigg|_{t=0} \Gamma_{kl}^j \left( \gamma \left( t \right)\right)\\
    &=-\sum_{k,l=1}^d\theta^k v^l\sum_{\sigma=1}^d\partial_\sigma\Gamma_{kl}^j \left( \gamma \left( 0 \right) \right)\theta^{\sigma}=-\frac{1}{3}\theta^k\theta^{\sigma} v^l \left( R_{l\sigma k}^{\phantom{l\sigma k}j}\left( x \right)+R_{k\sigma l}^{\phantom{k\sigma l}j}\left( x \right) \right)
  \end{aligned}
\end{equation*}
where the last equality for $\displaystyle \frac{d^2V^j}{dt^2}$ follows from a simple calculation of Christoffel symbols, as follows. Recall that in geodesic normal coordinates
\begin{equation*}
  g_{ij}=\delta_{ij}+\frac{1}{3}R_{iklj}\left( x \right)s^ks^l+O \left( t^3 \right)
\end{equation*}
hence
\begin{equation*}
  \partial_\mu g_{ij}=\frac{1}{3}R_{i\mu lj}\left( x \right) s^l+\frac{1}{3}R_{ik\mu j}\left( x \right)s^k+O \left( t^2 \right)=\frac{1}{3}s^k \left(R_{i\mu k j}\left( x \right)+R_{ik\mu j}\left( x \right)  \right)+O \left( t^2 \right).
\end{equation*}
Plugging these partial derivatives into the expression of Christofeel symbols to obtain
\begin{equation*}
  \begin{aligned}
    \Gamma_{kl}^j&=\frac{1}{2}g^{\nu j}\left( \partial_k g_{l\nu}+\partial_l g_{k\nu}-\partial_{\nu}g_{kl} \right)\\
    &=\frac{1}{2}\left[ \delta^{\nu j}+O \left( t^2 \right) \right]\times\\
    &\qquad\frac{1}{3}\left[s^{\sigma}\left( R_{lk\sigma\nu}\left( x \right)+R_{l\sigma k\nu}\left( x \right)+R_{kl\sigma\nu}\left( x \right)+R_{k\sigma l\nu}\left( x \right)-R_{k\nu \sigma l}\left( x \right)-R_{k\sigma\nu l}\left( x \right) \right)  +O \left( t^2 \right)\right]\\
    &=\frac{1}{6}\delta^{\nu j}s^{\sigma}\left[2R_{l\sigma k\nu}\left( x \right)+2R_{k\sigma l\nu}\left( x \right)  \right]+O \left( t^2 \right)=\frac{1}{3}s^{\sigma}\left(R_{l\sigma k}^{\phantom{l\sigma k}j} \left( x \right)+R_{k\sigma l}^{\phantom{k\sigma l}j}\left( x \right)\right)+O \left( t^2 \right),
  \end{aligned}
\end{equation*}
therefore
\begin{equation*}
  \partial_{\sigma}\Gamma_{kl}^j \left( x \right)=\frac{1}{3}\left(R_{l\sigma k}^{\phantom{l\sigma k}j}\left( x \right)+R_{k\sigma l}^{\phantom{k\sigma l}j}\left( x \right)\right)+O \left( t \right)
\end{equation*}
which verifies the last equality for $\displaystyle\frac{\mathrm{d}^2V^j}{\mathrm{d}t^2}\left( 0 \right)$. Therefore, we have the following Taylor expansion for $V^j \left( t \right)$ up to the second order:
\begin{equation*}
  \begin{aligned}
  V^j \left( t \right)&=v^j-\frac{t^2}{6}\theta^k\theta^{\sigma}v^l \left( R_{l\sigma k}^{\phantom{l\sigma k}j}\left( x \right)+R_{k\sigma l}^{\phantom{l\sigma k}j}\left( x \right) \right)+O \left( t^3 \right),
  \end{aligned}
\end{equation*}
which establishes the desired conclusion.
\end{proof}

\begin{remark}
\label{rem:asymptotic_expansion_parallel_transport}
It is also useful to note that
\begin{equation*}
  V^j \left( t \right)=v^j-\frac{1}{6}s^ks^{\sigma}v^l \left( R_{l\sigma k}^{\phantom{l\sigma k}j}\left( x \right)+R_{k\sigma l}^{\phantom{l\sigma k}j}\left( x \right) \right)+O \left( t^3 \right).
\end{equation*}
where $\left( s^1,\cdots,s^d \right)$ are the geodesic normal coordinates of $y$ in the proof of Lemma~\ref{lem:taylor_parallel_transport}. Moreover, it is not hard to obtain higher order terms in the asymptotic expansion. For instance, differentiating both sides of \eqref{eq:paralleltransport} twice, we have
\begin{equation}
\label{eq:third_order_derivative}
  \begin{aligned}
    \frac{d^3V^j}{dt^3}\left( 0 \right)&=-\sum_{k,l=1}^d \frac{d^2}{dt^2}\bigg|_{t=0} \left(\frac{d\gamma^k}{dt}\left( t \right)V^l \left( t \right)\Gamma_{kl}^j \left( \gamma \left( t \right)\right)\right)\\
    &=-\sum_{k,l=1}^d \frac{d^2}{dt^2}\Bigg|_{t=0}\!\!\!\left(\frac{d\gamma^k}{dt}\left( t \right) V^l \left( t \right)\right)\Gamma_{kl}^j \left( \gamma \left( 0 \right)\right)-\sum_{k,l=1}^d \frac{d\gamma^k}{dt}\left( 0 \right)V^l \left( 0 \right)\frac{d^2}{dt^2}\Bigg|_{t=0}\!\!\!\Gamma_{kl}^j \left( \gamma \left( t \right)\right)\\
    &\qquad-2\sum_{k,l=1}^d \frac{d}{dt}\Bigg|_{t=0}\!\!\!\left(\frac{d\gamma^k}{dt}\left( t \right) V^l \left( t \right)\right)\frac{d}{dt}\Bigg|_{t=0}\!\!\!\Gamma_{kl}^j \left( \gamma \left( t \right)\right)
  \end{aligned}
\end{equation}
Note that $\Gamma_{kl}^j \left( \gamma \left( 0 \right) \right)=0$ since the coordinate system is geodesic, the first term in the right hand side of ~\eqref{eq:third_order_derivative} vanishes. In the meanwhile, since in geodesic normal coordinates the parametrization of geodesic $\gamma$ is linear, we have
\begin{equation*}
  \frac{d^2\gamma^k}{dt^2}\left( 0 \right) = 0,\quad k=1,\cdots,d.
\end{equation*}
Combining this observation with the computation
\begin{equation*}
  \frac{dV^j}{dt}\left( 0 \right)=-\sum_{k,l=1}^d \frac{d\gamma^k}{dt}\left( 0 \right)V^l \left( 0 \right)\Gamma_{kl}^j \left( \gamma \left( 0 \right)\right)=0,
\end{equation*}
we conclude that the last term in the right hand side of ~\eqref{eq:third_order_derivative} also vanishes. Therefore, ~\eqref{eq:third_order_derivative} is left with
\begin{equation*}
  \begin{aligned}
    \frac{d^3V^j}{dt^3}\left( 0 \right)&=-\sum_{k,l=1}^d \frac{d\gamma^k}{dt}\left( 0 \right)V^l \left( 0 \right)\frac{d^2}{dt^2}\Bigg|_{t=0}\!\!\!\Gamma_{kl}^j \left( \gamma \left( t \right)\right)\\
    &=\theta^kv^l\partial_{\sigma\omega}\Gamma_{kl}^j \left( x \right)\theta^{\sigma}\theta^{\omega}=\theta^k\theta^{\sigma}\theta^{\omega}v^l\partial_{\sigma\omega}\Gamma_{kl}^j \left( x \right).
  \end{aligned}
\end{equation*}
We can compute the second order derivative of the Christoffel symbol at $x$ (see, e.g., ~\cite{GuarreraJohnsonWolfe2002}) and completely determine the $O \left( t^3 \right)$ term; but this is less crucial for our application. The only point in carrying through the computation of the third order derivative of $V^j \left( t \right)$ is that the $O \left( t^3 \right)$ term in the expansion of Lemma~\ref{lem:taylor_parallel_transport} is a third order homogeneous polynomial in the geodesic coordinates $\left( s^1,\cdots,s^d \right)$:
\begin{equation*}
  t^3\frac{d^3V^j}{dt^3}\left( 0 \right)=t^3\theta^k\theta^{\sigma}\theta^{\omega}v^l\partial_{\sigma\omega}\Gamma_{kl}^j \left( x \right)=s^ks^{\sigma}s^{\omega}v^l\partial_{\sigma\omega}\Gamma_{kl}^j \left( x \right),
\end{equation*}
an observation that is necessary for dropping the higher order error term in Lemma~\ref{lem:kernel_parallel_transport} from $O \left( \epsilon^{\frac{3}{2}} \right)$ to $\epsilon^2$, as we will see below.
\end{remark}

Armed with Lemma~\ref{lem:basic_kernel_integral} and Lemma~\ref{lem:taylor_parallel_transport}, we are ready to take a step at analyzing $K_{\epsilon,\delta}$. Lemma~\ref{lem:kernel_parallel_transport} starts our investigation of kernel functions incorporated with parallel-transports. Virtually it only deals with $K_{\epsilon,\delta}$ with $\delta\rightarrow0$, but we'll soon see that it opens the door for understanding much more general cases.
\begin{lemma}
\label{lem:kernel_parallel_transport}
  Following Lemma \ref{lem:basic_kernel_integral}, let $P_{y,x}:T_xM\rightarrow T_yM$ denote the parallel transport from $T_xM$ to $T_yM$ determined by the Levi-Civita connection on $M$. For any function $f\in C^{\infty}\left( TM \right)$, as $\epsilon\rightarrow 0$,
\begin{equation}
  \label{eq:kernel_parallel_transport}
  \begin{aligned}
    \int_M &\Phi_{\epsilon}\left( x,y \right)f \left( y,P_{y,x}v \right)\,d\mathrm{vol}_M \left( y \right)\\
    &=\epsilon^{\frac{d}{2}}\left\{m_0 f \left( x,v \right)+\epsilon \frac{m_2}{2}\left[ \Delta^H f \left( x,v \right)-\frac{1}{3}\mathrm{Scal}\left( x \right)f \left( x,v \right) \right]+O \left( \epsilon^2 \right) \right\},
  \end{aligned}
\end{equation}
where $\Delta^H$ is the horizontal Laplacian on $TM$ defined in~\eqref{eq:horizontal_laplace_beltrami}.
\end{lemma}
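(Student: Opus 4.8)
The plan is to reduce Lemma~\ref{lem:kernel_parallel_transport} to Lemma~\ref{lem:basic_kernel_integral} by regarding the integrand as a scalar function on the base manifold. Fix $x\in M$ and $v\in T_xM$, and for $y$ in the geodesic normal neighbourhood of $x$ set $g(y):=f\left(y,P_{y,x}v\right)$, where $P_{y,x}$ is parallel transport along the unique radial geodesic from $x$ to $y$. Since $\exp_x$ is a diffeomorphism on this neighbourhood and parallel transport is the solution of a linear ODE with smooth coefficients, $g$ is smooth near $x$; extending it to an arbitrary element of $C^{\infty}(M)$ does no harm, because for $\epsilon$ small the kernel $\Phi_{\epsilon}(x,\cdot)$ is supported in $B_{\sqrt{\epsilon}}(x)$ and the right-hand side of \eqref{eq:basic_asymp_expansion} only depends on the values of $g$ and $\Delta_M g$ at $x$. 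Applying Lemma~\ref{lem:basic_kernel_integral} to $g$ then gives
\begin{equation*}
  \int_M\Phi_{\epsilon}(x,y)f\left(y,P_{y,x}v\right)\,d\mathrm{vol}_M(y)=\epsilon^{\frac{d}{2}}\left[m_0\,g(x)+\epsilon\frac{m_2}{2}\left(\Delta_M g(x)-\frac{1}{3}\,\mathrm{Scal}(x)g(x)\right)+O(\epsilon^2)\right].
\end{equation*}
Since $P_{x,x}=\mathrm{id}_{T_xM}$ we have $g(x)=f(x,v)$, so the leading term and the scalar-curvature term already match \eqref{eq:kernel_parallel_transport}, and the entire problem is reduced to proving the identity $\Delta_M g(x)=\Delta^H f(x,v)$.

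To prove this identity I would pass to geodesic normal coordinates $\left(s^1,\dots,s^d\right)$ centred at $x$, where $\Delta_M g(x)=\sum_j\partial^2_{s^j}g(0)$ (the reduction already used inside the proof of Lemma~\ref{lem:basic_kernel_integral}). Writing $g$ in these coordinates as $g(s)=f\left(s,w(s)\right)$ with $w(s)$ the coordinate vector of $P_{\exp_x s,\,x}v$, Lemma~\ref{lem:taylor_parallel_transport} (equivalently Remark~\ref{rem:asymptotic_expansion_parallel_transport}) gives
\begin{equation*}
  w^{\beta}(s)=v^{\beta}-\frac{1}{6}\,s^k s^{\sigma}v^l\left(R_{l\sigma k}{}^{\beta}(x)+R_{k\sigma l}{}^{\beta}(x)\right)+O\!\left(|s|^3\right),
\end{equation*}
so that $w^{\beta}(0)=v^{\beta}$ and $\partial_{s^j}w^{\beta}(0)=0$. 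Hence the chain rule annihilates the mixed $x$-$v$ and the purely vertical second derivatives at the origin and leaves
\begin{equation*}
  \partial^2_{s^j}g(0)=\partial^2_{x^j}f(x,v)+\partial_{v^{\beta}}f(x,v)\,\partial^2_{s^j}w^{\beta}(0).
\end{equation*}
Summing over $j$ and using $\sum_j\partial^2_{s^j}\left(s^k s^{\sigma}\right)=2\delta^{k\sigma}$ together with the symmetries of the Riemann tensor and the Ricci convention $R_{kl}=g^{ij}R_{kilj}$ of Lemma~\ref{lem:basic_kernel_integral}, one obtains $\sum_j\partial^2_{s^j}w^{\beta}(0)=\frac{1}{3} R_{\alpha\beta}(x)v^{\alpha}$ (with $\beta$ the free index), whence
\begin{equation*}
  \Delta_M g(x)=\sum_j\partial^2_{x^j}f(x,v)+\frac{1}{3} R_{\alpha\beta}(x)v^{\alpha}\partial_{v^{\beta}}f(x,v),
\end{equation*}
which is exactly $\Delta^H f(x,v)$ by \eqref{eq:horizontal_laplace_beltrami_geodesic_normal}; substituting back into the expansion above yields \eqref{eq:kernel_parallel_transport}.

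The one genuinely delicate step is the curvature bookkeeping in the last computation. After the contraction $k=\sigma=j$ one is left with $\sum_j\left(R_{ljj}{}^{\beta}+R_{jjl}{}^{\beta}\right)$; here one must exploit that $\sum_j R_{jjl}{}^{\beta}=0$ (the summation index lies in an antisymmetric pair) and that $R_{ljjm}=-R_{ljmj}$, so that $\sum_j R_{ljj}{}^{\beta}=-R_l{}^{\beta}$, and then keep careful track of the prefactor $-\frac{1}{3} v^l\cdot(-1)=\frac{1}{3} v^l$ and of the sign built into the Ricci convention, in order to reproduce the $+\frac{1}{3} R_{\alpha\beta}v^{\alpha}\partial_{v^{\beta}}f$ of \eqref{eq:horizontal_laplace_beltrami_geodesic_normal} and not its negative. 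A secondary point worth recording is that, although the $O\!\left(|s|^3\right)$ remainder in Lemma~\ref{lem:taylor_parallel_transport} has vanishing second derivative at the origin and therefore plays no role in the computation of $\Delta_M g(x)$, it is precisely the observation of Remark~\ref{rem:asymptotic_expansion_parallel_transport}---that this remainder is a \emph{homogeneous cubic} in the geodesic coordinates---that one would invoke if, rather than applying Lemma~\ref{lem:basic_kernel_integral} as a black box, one preferred to rerun its proof with the vertical slot carried along, so as to discard the odd-order contributions by symmetry and retain an $O(\epsilon^2)$ rather than $O(\epsilon^{3/2})$ error.
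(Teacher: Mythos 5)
Your argument is correct, and it reaches the same conclusion via a more modular route than the paper. The paper proves Lemma~\ref{lem:kernel_parallel_transport} by Taylor-expanding $\tilde f$ simultaneously in the base and fibre slots and re-running the term-by-term integration of Lemma~\ref{lem:basic_kernel_integral} with the extra vertical contributions carried along; you instead compose first, setting $g(y)=f\bigl(y,P_{y,x}v\bigr)$, apply Lemma~\ref{lem:basic_kernel_integral} as a black box, and then establish the pointwise identity $\Delta_M g(x)=\Delta^H f(x,v)$ by the chain rule together with Lemma~\ref{lem:taylor_parallel_transport}. The geometric content is identical in both routes --- the second-order expansion of parallel transport and the contraction $\sum_j R_{ljj}{}^{\beta}=-R_{l}{}^{\beta}$, $\sum_j R_{jjl}{}^{\beta}=0$ producing the $\tfrac{1}{3}R_{\alpha\beta}v^{\alpha}\partial_{v^{\beta}}f$ term of \eqref{eq:horizontal_laplace_beltrami_geodesic_normal} --- and your sign bookkeeping checks out. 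What your reduction buys is that the vanishing of the odd-order ($O(\epsilon^{3/2})$) terms and the $O(\epsilon^2)$ error come for free from Lemma~\ref{lem:basic_kernel_integral}, so Remark~\ref{rem:asymptotic_expansion_parallel_transport} is not needed here, exactly as you observe; your handling of the local definition of $g$ (extend smoothly, note the kernel is supported in $B_{\sqrt{\epsilon}}(x)$) is also fine. What the paper's less economical route buys is that the same two-slot expansion machinery is reused essentially verbatim in Lemma~\ref{lem:asymp_sym_kernels_utm}, where the kernel genuinely depends on the fibre variable and the composition trick no longer reduces everything to a scalar integral on $M$.
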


\begin{proof}
Consider the geodesic normal neighborhood around $x\in M$, with $\epsilon>0$ sufficiently small such that a geodesic ball of radius $\sqrt{\epsilon}$ centered at $x$ is contained in this neighborhood. The integral is actually supported only on such a geodesic ball, due to the compact support of $\Phi$
\begin{equation*}
  \begin{aligned}
    \int_M \Phi_{\epsilon}\left( x,y \right)f \left( y,P_{y,x}v \right)\,d\mathrm{vol}_M \left( y \right) &= \int_M \Phi\left( \frac{d^2_M \left( x,y \right)}{\epsilon} \right)f \left( y,P_{y,x}v \right)\,d\mathrm{vol}_M \left( y \right)\\
    &=\int_{B_{\sqrt{\epsilon}}\left( x \right)} \Phi\left( \frac{d^2_M \left( x,y \right)}{\epsilon} \right)f \left( y,P_{y,x}v \right)\,d\mathrm{vol}_M \left( y \right).
  \end{aligned}
\end{equation*}
Express $y\in B_{\sqrt{\epsilon}}\left( x \right)$ in geodesic coordinates $\left( s^1,\cdots,s^d \right)$, with
\begin{equation*}
  \left( s^1 \right)^2+\cdots+\left( s^d \right)^2=r^2=d^2_M \left( x,y \right),
\end{equation*}
and put $y$ into polar coordinates
\begin{equation*}
  y = \exp_xr\theta,\quad \theta\in T_xM,\,\,\|\theta\|_x=1.
\end{equation*}
Recall from Lemma \ref{lem:taylor_parallel_transport} that the $j$-th coordinate component ($j=1,\cdots,d$) of $P_{y,x}v$ is
\begin{equation*}
  \begin{aligned}
    \left( P_{y,x}v \right)^j&=v^j-\frac{r^2}{6}\theta^k\theta^{\sigma}v^l \left( R_{l\sigma k}^{\phantom{l\sigma k}j}\left( x \right)+R_{k\sigma l}^{\phantom{l\sigma k}j}\left( x \right) \right)+O \left( r^3 \right)\\
    &=v^j-\frac{1}{6}s^ks^{\sigma}v^l \left( R_{l\sigma k}^{\phantom{l\sigma k}j}\left( x \right)+R_{k\sigma l}^{\phantom{l\sigma k}j}\left( x \right) \right)+O \left( r^3 \right).
  \end{aligned}
\end{equation*}
A further change of coordinates
\begin{equation*}
  \tilde{s}^1=\frac{s^1}{\sqrt{\epsilon}},\cdots,\tilde{s}^d=\frac{s^d}{\sqrt{\epsilon}};\quad \tilde{r}=\frac{r}{\sqrt{\epsilon}}
\end{equation*}
leads to
\begin{equation}
\label{eq:localized_geodesic_coordinates}
  \begin{aligned}
    \int_{B_{\sqrt{\epsilon}}\left( x \right)} &\Phi\left( \frac{d^2_M \left( x,y \right)}{\epsilon} \right)f \left( y,P_{y,x}v \right)\,d\mathrm{vol}_M \left( y \right)\\
    &=\int_{B_{\sqrt{\epsilon}}\left( 0 \right)} \Phi \left( \frac{r^2}{\epsilon} \right)\tilde{f} \left( s^1,\cdots,s^d, \left( P_{y,x}v \right)^1,\cdots,\left( P_{y,x}v \right)^d \right)d\mathrm{vol}_M \left( s^1,\cdots,s^d \right)\\
    &=\int_{B_1 \left( 0 \right)}\Phi \left( \tilde{r}^2 \right)\tilde{f} \left( \epsilon^{\frac{1}{2}}\tilde{s}^1,\cdots,\epsilon^{\frac{1}{2}}\tilde{s}^d,\left( P_{y,x}v \right)^1,\cdots,\left( P_{y,x}v \right)^d \right)d\mathrm{vol}_M \left( \tilde{s}^1,\cdots,\tilde{s}^d \right)
  \end{aligned}
\end{equation}
where $\tilde{f}$ denotes for $f$ in these geodesic normal coordinates,
\begin{equation*}
  \begin{aligned}
    \left( P_{y,x}v \right)^j = v^j-\frac{\epsilon}{6}\tilde{s}^k\tilde{s}^{\sigma}v^l \left( R_{l\sigma k}^{\phantom{l\sigma k}j}\left( x \right)+R_{k\sigma l}^{\phantom{l\sigma k}j}\left( x \right) \right)+O \left( \epsilon^{\frac{3}{2}}\tilde{r}^3 \right)
  \end{aligned}
\end{equation*}
and
\begin{equation*}
  \begin{aligned}
    &d\mathrm{vol}_M \left( s^1,\cdots,s^d \right)=\left[1-\frac{1}{6}R_{kl}\left( x \right)s^ks^l+O \left( r^3 \right)\right]ds^1\cdots ds^d,\\
    &d\mathrm{vol}_M \left( \tilde{s}^1,\cdots,\tilde{s}^d \right)=\left[1-\frac{\epsilon}{6}R_{kl}\left( x \right)\tilde{s}^k\tilde{s}^l+O \left( \epsilon^{\frac{3}{2}}\tilde{r}^3 \right)\right]\cdot\epsilon^{\frac{d}{2}}d\tilde{s}^1\cdots d\tilde{s}^d.
  \end{aligned}
\end{equation*}
Taylor expanding $\tilde{f}$ at $\left(0,v\right)\in T_xM$ in these coordinates, we have
\begin{equation*}
  \begin{aligned}
    &\tilde{f}\left( \epsilon^{\frac{1}{2}}\tilde{s}^1,\cdots,\epsilon^{\frac{1}{2}}\tilde{s}^d,\left( P_{y,x}v \right)^1,\cdots,\left( P_{y,x}v \right)^d \right)\\
    &=\tilde{f}\left( 0,v \right)+\epsilon^{\frac{1}{2}}\tilde{s}^j \frac{\partial\tilde{f}}{\partial s^j}\left( 0,v \right)+\left[ \left( P_{y,x}v \right)^j-v^j \right]\frac{\partial\tilde{f}}{\partial v^j}\left( 0,v \right)+\frac{\epsilon}{2}\tilde{s}^k\tilde{s}^l \frac{\partial^2\tilde{f}}{\partial s^k\partial s^l}\left( 0,v \right)\\
    &\quad+\frac{1}{2}\left[ \left( P_{y,x}v \right)^k-v^k \right]\left[ \left( P_{y,x}v \right)^l-v^l \right]\frac{\partial^2\tilde{f}}{\partial v^k\partial v^l}\left( 0,v \right)+\frac{\epsilon^{\frac{1}{2}}}{2}\left[ \left( P_{y,x}v \right)^j-v^j \right]\tilde{s}^m \frac{\partial^2\tilde{f}}{\partial v^js^m}\left( 0,v \right)+O \left( \epsilon^{\frac{3}{2}} \right).
  \end{aligned}
\end{equation*}
The rest of the proof follows from simply substituting this Taylor expansion into \eqref{eq:localized_geodesic_coordinates} and integrate term-by-term. For simplicity of notations, let us write
\begin{equation*}
  m_0:=\int_{B_1 \left( 0 \right)}\Phi \left( \tilde{r}^2 \right)d\tilde{s}^1\cdots d\tilde{s}^d.
\end{equation*}
By the symmetry of the domain of integration,
\begin{equation*}
  \begin{aligned}
    &\int_{B_1 \left( 0 \right)}\Phi \left( \tilde{r}^2 \right)\tilde{s}^jd\tilde{s}^1\cdots d\tilde{s}^d=0,\quad j=1,\cdots,d,\\
    &\int_{B_1 \left( 0 \right)}\Phi \left( \tilde{r}^2 \right)\tilde{s}^k\tilde{s}^ld\tilde{s}^1\cdots d\tilde{s}^d=0,\quad k\neq l,\,\,k,l=1,\cdots,d,
  \end{aligned}
\end{equation*}
and
\begin{equation*}
  m_2:=\int_{B_1 \left( 0 \right)}\Phi \left( \tilde{r}^2 \right)\left(\tilde{s}^j\right)^2d\tilde{s}^1\cdots d\tilde{s}^d,\quad j=1,\cdots,d
\end{equation*}
are constants independent of super-indices $1\leq j\leq d$. Following a direct computation,
\begin{align*}
    \int_{B_1 \left( 0 \right)}&\Phi \left( \tilde{r}^2 \right)\tilde{f} \left( 0,v \right)\left[1-\frac{\epsilon}{6}R_{kl}\left( x \right)\tilde{s}^k\tilde{s}^l+O \left( \epsilon^{\frac{3}{2}}\tilde{r}^3 \right)\right]\cdot\epsilon^{\frac{d}{2}}d\tilde{s}^1\cdots d\tilde{s}^d\\
    &=\epsilon^{\frac{d}{2}}f \left( x,v \right)\left[ m_0-\epsilon\frac{m_2}{6}\mathrm{Scal}\left( x \right)+O \left( \epsilon^2 \right) \right],\\
    \int_{B_1 \left( 0 \right)}&\Phi \left( \tilde{r}^2 \right)\epsilon^{\frac{1}{2}}\tilde{s}^j \frac{\partial\tilde{f}}{\partial s^j}\left( 0,v \right)\left[1-\frac{\epsilon}{6}R_{kl}\left( x \right)\tilde{s}^k\tilde{s}^l+O \left( \epsilon^{\frac{3}{2}}\tilde{r}^3 \right)\right]\cdot\epsilon^{\frac{d}{2}} d\tilde{s}^1\cdots d\tilde{s}^d\\
    &=\epsilon^{\frac{d}{2}}\epsilon^{\frac{1}{2}}\frac{\partial\tilde{f}}{\partial s^j}\left( 0,v \right)\left[\int_{B_1 \left( 0 \right)}\Phi \left( \tilde{r} \right)\tilde{s}^jd\tilde{s}^1\cdots d\tilde{s}^d+O \left( \epsilon^{\frac{3}{2}} \right)\right]=\epsilon^{\frac{d}{2}}\cdot O \left( \epsilon^2 \right),\\
    \int_{B_1 \left( 0 \right)}&\Phi \left( \tilde{r}^2 \right)\left[ \left( P_{y,x}v \right)^j-v^j \right]\frac{\partial\tilde{f}}{\partial v^j}\left( 0,v \right)\left[1-\frac{\epsilon}{6}R_{kl}\left( x \right)\tilde{s}^k\tilde{s}^l+O \left( \epsilon^{\frac{3}{2}}\tilde{r}^3 \right)\right]\cdot\epsilon^{\frac{d}{2}} d\tilde{s}^1\cdots d\tilde{s}^d\\
    &=\epsilon^{\frac{d}{2}}\left[ \epsilon\frac{m_2}{6}v^l\frac{\partial\tilde{f}}{\partial v^j}\left( 0,v \right)R_{lj}\left( x \right) +O \left( \epsilon^2 \right)\right],\\
    \int_{B_1 \left( 0 \right)}&\Phi \left( \tilde{r}^2 \right)\frac{\epsilon}{2}\tilde{s}^k\tilde{s}^l \frac{\partial^2\tilde{f}}{\partial s^k\partial s^l}\left( 0,v \right)  \left[1-\frac{\epsilon}{6}R_{kl}\left( x \right)\tilde{s}^k\tilde{s}^l+O \left( \epsilon^{\frac{3}{2}}\tilde{r}^3 \right)\right]\cdot\epsilon^{\frac{d}{2}} d\tilde{s}^1\cdots d\tilde{s}^d\\
    &=\epsilon^{\frac{d}{2}}\left[ \epsilon\frac{m_2}{2}\sum_{k=1}^d \frac{\partial^2\tilde{f}}{\partial \left( s^k \right)^2}\left( 0,v \right)+O \left( \epsilon^2\right) \right],\\
    \int_{B_1 \left( 0 \right)}&\Phi \left( \tilde{r}^2 \right)\cdot\frac{1}{2}\left[ \left( P_{y,x}v \right)^k-v^k \right]\left[ \left( P_{y,x}v \right)^l-v^l \right]\frac{\partial^2\tilde{f}}{\partial v^k\partial v^l}\left( 0,v \right)\left[1-\frac{\epsilon}{6}R_{kl}\left( x \right)\tilde{s}^k\tilde{s}^l+O \left( \epsilon^{\frac{3}{2}}\tilde{r}^3 \right)\right]\cdot\epsilon^{\frac{d}{2}} d\tilde{s}^1\cdots d\tilde{s}^d\\
    &=\epsilon^{\frac{d}{2}}\cdot O \left( \epsilon^2 \right),\\
    \int_{B_1 \left( 0 \right)}&\Phi \left( \tilde{r}^2 \right)\frac{\epsilon^{\frac{1}{2}}}{2}\left[ \left( P_{y,x}v \right)^j-v^j \right]\tilde{s}^m \frac{\partial^2\tilde{f}}{\partial v^js^m}\left( 0,v \right)\left[1-\frac{\epsilon}{6}R_{kl}\left( x \right)\tilde{s}^k\tilde{s}^l+O \left( \epsilon^{\frac{3}{2}}\tilde{r}^3 \right)\right]\cdot\epsilon^{\frac{d}{2}} d\tilde{s}^1\cdots d\tilde{s}^d\\
    &=\epsilon^{\frac{d}{2}}\cdot O \left( \epsilon^2 \right),
\end{align*}
where all $O \left( \epsilon^{\frac{3}{2}} \right)$ terms drop out as in the proof of Lemma~\ref{lem:basic_kernel_integral}, thanks for Remark~\ref{rem:asymptotic_expansion_parallel_transport}. Combining these computation, we have
\begin{align*}
    \int_M &\Phi_{\epsilon}\left( x,y \right)f \left( y,P_{y,x}v \right)\,d\mathrm{vol}_M \left( y \right)\\
    &=\int_{B_1 \left( 0 \right)}\Phi \left( \tilde{r} \right)\tilde{f} \left( \epsilon^{\frac{1}{2}}\tilde{s}^1,\cdots,\epsilon^{\frac{1}{2}}\tilde{s}^d,\left( P_{y,x}v \right)^1,\cdots,\left( P_{y,x}v \right)^d \right)d\mathrm{vol}_M \left( \tilde{s}^1,\cdots,\tilde{s}^d \right)\\
    &=\epsilon^{\frac{d}{2}}\Bigg\{f \left( x,v \right)\left[ m_0-\epsilon\frac{m_2}{6}\mathrm{Scal}\left( x \right) \right]\\
    &\phantom{aaaaaaaaaaaa}+\epsilon\frac{m_2}{6}v^l\frac{\partial\tilde{f}}{\partial v^j}\left( 0,v \right)R_{lj}\left( x \right)+\epsilon\frac{m_2}{2}\sum_{k=1}^d \frac{\partial^2\tilde{f}}{\partial \left( s^k \right)^2}\left( 0,v \right)+O \left( \epsilon^2\right)\Bigg\}\\
    &=\epsilon^{\frac{d}{2}}\Bigg\{f \left( x,v \right)\left[ m_0-\epsilon\frac{m_2}{6}\mathrm{Scal}\left( x \right) \right]\\
    &\phantom{aaaaaaaaaaaa}+\epsilon \frac{m_2}{2}\left[\sum_{k=1}^d \frac{\partial^2\tilde{f}}{\partial \left( s^k \right)^2}\left( 0,v \right)+\frac{1}{3}R_{lj}\left( x \right)v^l\frac{\partial\tilde{f}}{\partial v^j}\left( 0,v \right)\right]+O \left( \epsilon^2 \right) \Bigg\}\\
    &=\epsilon^{\frac{d}{2}}\left\{ m_0 f \left( x,v \right)-\epsilon \frac{m_2}{6}f \left( x,v \right)\mathrm{Scal}\left( x \right)+\epsilon \frac{m_2}{2}\Delta^Hf \left( x,v \right)+O \left( \epsilon^2 \right) \right\}\\
    &=\epsilon^{\frac{d}{2}}\left\{m_0 f \left( x,v \right)+\epsilon \frac{m_2}{2}\left[ \Delta^H f \left( x,v \right)-\frac{1}{3}\mathrm{Scal}\left( x \right)f \left( x,v \right) \right]+O \left( \epsilon^2 \right) \right\},
\end{align*}
where in the second to last equality we used the expression of $\Delta^H$ in geodesic normal coordinates from \eqref{eq:horizontal_laplace_beltrami_geodesic_normal}.
\end{proof}

The following Lemma~\ref{lem:kernel_parallel_transport_utm} is the unit tangent bundle version of Lemma~\ref{lem:kernel_parallel_transport}.
\begin{lemma}
\label{lem:kernel_parallel_transport_utm}
Following Lemma \ref{lem:basic_kernel_integral}, for any function $f\in C^{\infty}\left( UTM \right)$, as $\epsilon\rightarrow 0$,
\begin{equation}
  \label{eq:kernel_parallel_transport_utm}
  \begin{aligned}
    \int_M &\Phi_{\epsilon}\left( x,y \right)f \left( y,P_{y,x}v \right)d\mathrm{vol}_M \left( y \right)\\
    &=\epsilon^{\frac{d}{2}}\left\{m_0 f \left( x,v \right)+\epsilon \frac{m_2}{2}\left[ \Delta_S^H f \left( x,v \right)-\frac{1}{3}\mathrm{Scal}\left( x \right)f \left( x,v \right) \right]+O \left( \epsilon^2 \right) \right\},
  \end{aligned}
\end{equation}
where $\Delta_S^H$ is the horizontal Laplacian on $UTM$ as defined in \eqref{eq:horizontal_spherical_gradient_laplace_beltrami}.
\end{lemma}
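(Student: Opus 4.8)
The plan is to deduce Lemma~\ref{lem:kernel_parallel_transport_utm} directly from Lemma~\ref{lem:kernel_parallel_transport} by an extension argument, exploiting that the unit tangent bundle sits inside the tangent bundle as a subbundle. Fix $x\in M$ and $v\in S_x$. Since $M$ is closed, $UTM$ is a compact embedded hypersurface of $TM$, so I would first choose a smooth extension $\hat f\in C^{\infty}\left( TM \right)$ with $\hat f|_{UTM}=f$ (for instance via a tubular neighborhood of $UTM$ in $TM$ together with a cutoff away from the zero section).

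The key observation is that the integrand on the left-hand side of~\eqref{eq:kernel_parallel_transport_utm} only evaluates $\hat f$ on $UTM$. Indeed, $P_{y,x}$ is an isometry (recall the discussion following~\eqref{eq:parallel_transport}), so $\left\| P_{y,x}v\right\|_y=\left\| v\right\|_x=1$ and hence $\left( y,P_{y,x}v \right)\in UTM$ for every $y$ with $d_M\left( x,y \right)<\mathrm{Inj}\left( M \right)$; since $\Phi$ is compactly supported in $\left[ 0,1 \right]$ and $\epsilon$ is small, all $y$ contributing to the integral satisfy this. Therefore
\begin{equation*}
  \int_M \Phi_{\epsilon}\left( x,y \right)f \left( y,P_{y,x}v \right)d\mathrm{vol}_M \left( y \right)=\int_M \Phi_{\epsilon}\left( x,y \right)\hat f \left( y,P_{y,x}v \right)d\mathrm{vol}_M \left( y \right),
\end{equation*}
and I would apply Lemma~\ref{lem:kernel_parallel_transport} verbatim to the right-hand side, obtaining
\begin{equation*}
  \epsilon^{\frac{d}{2}}\left\{m_0 \hat f \left( x,v \right)+\epsilon \frac{m_2}{2}\left[ \Delta^H \hat f \left( x,v \right)-\frac{1}{3}\mathrm{Scal}\left( x \right)\hat f \left( x,v \right) \right]+O \left( \epsilon^2 \right) \right\}.
\end{equation*}
Finally, $\hat f\left( x,v \right)=f\left( x,v \right)$ because $v\in S_x$, and by the definition~\eqref{eq:horizontal_spherical_gradient_laplace_beltrami} of $\Delta_S^H$ we have $\Delta^H\hat f\left( x,v \right)=\Delta_S^Hf\left( x,v \right)$, so substitution would then yield exactly~\eqref{eq:kernel_parallel_transport_utm}.

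There is essentially no analytic obstacle here: the proof reuses the entire Taylor-expansion computation of Lemma~\ref{lem:kernel_parallel_transport} unchanged, and the geometry (the expansion of $P_{y,x}v$ from Lemma~\ref{lem:taylor_parallel_transport}) is identical. The one point that would require a word of justification — and it is already dispatched in Appendix~\ref{sec:unit-tangent-bundle} — is that the answer is intrinsic to $f$, independent of the chosen extension $\hat f$: the value $\Delta^H\hat f\left( x,v \right)$ depends only on the restriction of $\hat f$ to the integral curves of the basic horizontal lifts~\eqref{eq:basic_lifts} through $\left( x,v \right)$, and those curves remain on $UTM$ precisely because they realize parallel transport, which preserves the unit length of $v$. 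Hence the passage from $TM$ to $UTM$ is purely a matter of restriction and extension, with no new estimates needed beyond those already in place.
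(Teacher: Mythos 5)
Your proposal is correct and follows essentially the same route as the paper's own proof: extend $f$ to $\hat f\in C^{\infty}\left( TM \right)$, observe that the integrand only samples $\hat f$ on $UTM$ because $P_{y,x}$ is an isometry, apply Lemma~\ref{lem:kernel_parallel_transport}, and identify $\Delta^H\hat f\left( x,v \right)$ with $\Delta_S^Hf\left( x,v \right)$ via the definition~\eqref{eq:horizontal_spherical_gradient_laplace_beltrami} and the extension-independence established in Appendix~\ref{sec:unit-tangent-bundle}. No gaps.
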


\begin{proof}
  First of all, note that by the metric compatibility of the Levi-Civita connection, $P_{y,x}:T_xM\rightarrow T_yM$ is an isometry, which descends naturally to an isometry from $S_x$ to $S_y$. For any function $f\in C^{\infty}\left( UTM \right)$, let $\hat{f}$ denote its extension to the whole $TM$, as in~\eqref{eq:horizontal_spherical_gradient_laplace_beltrami}. By Lemma \ref{lem:kernel_parallel_transport},
\begin{equation*}
  \begin{aligned}
    \int_M &\Phi_{\epsilon}\left( x,y \right)f \left( y,P_{y,x}v \right)d\mathrm{vol}_M \left( y \right) = \int_M \Phi_{\epsilon}\left( x,y \right)\hat{f} \left( y,P_{y,x}v \right)d\mathrm{vol}_M \left( y \right)\\
    &=\epsilon^{\frac{d}{2}}\left\{m_0 \hat{f} \left( x,v \right)+\epsilon \frac{m_2}{2}\left[ \Delta^H \hat{f} \left( x,v \right)-\frac{1}{3}\mathrm{Scal}\left( x \right)\hat{f}\left( x,v \right) \right]+O \left( \epsilon^2 \right) \right\}\\
    &=\epsilon^{\frac{d}{2}}\left\{m_0 f\left( x,v \right)+\epsilon \frac{m_2}{2}\left[ \Delta_S^H f \left( x,v \right)-\frac{1}{3}\mathrm{Scal}\left( x \right)f\left( x,v \right) \right]+O \left( \epsilon^2 \right) \right\}.
  \end{aligned}
\end{equation*}
\end{proof}

Based on Lemma~\ref{lem:kernel_parallel_transport_utm}, we have the following Lemma~\ref{lem:asymp_sym_kernels_utm}, which carries most of the work for proving the first part of Theorem~\ref{thm:main_utm}.
\begin{lemma}
\label{lem:asymp_sym_kernels_utm}
Assume $M$ is a $d$-dimensional closed Riemannian manifold with $\mathrm{Inj}\left( M \right)>0$. For any function $g\in C^{\infty}\left( UTM \right)$ and sufficiently small $\epsilon\in \left(0,\mathrm{Inj}\left( M \right)^2  \right)$, $\delta= O \left( \epsilon \right)$,
\begin{equation}
  \label{eq:numerator_taylor_expansion_utm}
  \begin{aligned}
   &\int_{UTM}K_{\epsilon,\delta}\left( x,v; y,w \right)g \left( y,w \right)\,d\Theta \left( y,w \right)=\int_M\!\int_{S_y}K_{\epsilon,\delta}\left( x,v; y,w \right)g \left( y,w \right)\,d\sigma_y\left(w\right)d\mathrm{vol}_M\left(y\right)\\
    &=\epsilon^{\frac{d}{2}}\delta^{\frac{d-1}{2}}\Bigg\{m_0g \left( x,v \right)+\epsilon\frac{m_{21}}{2}\left( \Delta_S^Hg \left( x,v \right)-\frac{1}{3}\mathrm{Scal}\left( x \right)g \left( x,v \right) \right)\\
    &\phantom{aaaaaaaaaaaaaaaaa}+\delta\frac{m_{22}}{2}\left(\Delta_S^Vg \left( x,v \right)-\frac{\left( d-1 \right)\left( d-2 \right)}{3}g \left( x,v \right)  \right)+O \left( \epsilon^2+\delta^2 \right) \Bigg\}
  \end{aligned}
\end{equation}
where $m_0,m_{21},m_{22}$ are positive constants, $d\sigma_y$ is the volume element on $S_y$, and $\Delta_S^H$, $\Delta_S^V$ are the horizontal and vertical spherical Laplace-Beltrami operators on $UTM$ as defined in~\eqref{eq:vertical_spherical_gradient_laplace_beltrami}\eqref{eq:horizontal_spherical_gradient_laplace_beltrami}.
\end{lemma}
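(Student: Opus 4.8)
The plan is to evaluate the double integral by iteration: first integrate over the fibre $S_y$ and then over the base $M$, using the fact recorded above that the Liouville measure disintegrates locally as the product $d\Theta(y,w)=d\sigma_y(w)\,d\mathrm{vol}_M(y)$. The two already-proved lemmas supply the two halves of the estimate: Lemma~\ref{lem:basic_kernel_integral} handles the fibre-wise integral (an ordinary diffusion-kernel expansion on a round sphere), and Lemma~\ref{lem:kernel_parallel_transport_utm} handles the subsequent integral over the base with the parallel-transport term built in.

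\textbf{Step 1 (fibre integral).} Fix $x$, $v\in S_x$, and a point $y$ with $d_M(x,y)\le\sqrt{\epsilon}$ (outside this ball the integrand vanishes because $\mathrm{supp}(K)\subset[0,1]\times[0,1]$). Writing $a:=d_M^2(x,y)/\epsilon\in[0,1]$ and regarding $z\mapsto K(a,z)$ as a fixed one-variable kernel compactly supported in $[0,1]$, the inner integral $\int_{S_y}K(a,d_{S_y}^2(P_{y,x}v,w)/\delta)\,g(y,w)\,d\sigma_y(w)$ is exactly of the form treated by Lemma~\ref{lem:basic_kernel_integral}, now applied on the $(d-1)$-dimensional manifold $S_y$ — which is isometric to the unit round sphere $S^{d-1}$, with injectivity radius $\pi$ and constant scalar curvature $(d-1)(d-2)$ — with bandwidth $\delta$, center $P_{y,x}v\in S_y$ (recall $P_{y,x}$ is an isometry, hence maps $S_x$ to $S_y$), and smooth integrand $w\mapsto g(y,w)$. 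Setting $\mu_0(a):=\int_{B_1^{d-1}(0)}K(a,\|z\|^2)\,dz$ and $\mu_2(a):=\int_{B_1^{d-1}(0)}K(a,\|z\|^2)(z^1)^2\,dz$, this yields
\[
\delta^{\frac{d-1}{2}}\!\left[\mu_0(a)\,g(y,P_{y,x}v)+\delta\,\frac{\mu_2(a)}{2}\!\left(\Delta_S^Vg(y,P_{y,x}v)-\frac{(d-1)(d-2)}{3}g(y,P_{y,x}v)\right)+O(\delta^2)\right],
\]
where I have used $\Delta_S^Vg(y,\cdot)=\Delta_{S_y}[g(y,\cdot)]$ from \eqref{eq:vertical_spherical_gradient_laplace_beltrami}. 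Since $K$ is smooth and compactly supported, the family $\{K(a,\cdot)\}_{a\in[0,1]}$ is bounded in $C^\infty$, and since $M$ is compact and $g$ smooth, the $O(\delta^2)$ remainder is uniform in $y$.

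\textbf{Step 2 (base integral).} The maps $a\mapsto\mu_0(a)$ and $a\mapsto\mu_2(a)$ are smooth and compactly supported in $[0,1]$, so $y\mapsto\mu_i(d_M^2(x,y)/\epsilon)$ is again an admissible kernel of the type $\Phi_\epsilon$. Integrating the leading term of Step~1 over $M$ and invoking Lemma~\ref{lem:kernel_parallel_transport_utm} with $\Phi=\mu_0$ and $f=g$ gives
\[
\int_M\mu_0\!\left(\tfrac{d_M^2(x,y)}{\epsilon}\right)g(y,P_{y,x}v)\,d\mathrm{vol}_M(y)=\epsilon^{\frac d2}\!\left[m_0\,g(x,v)+\epsilon\,\frac{m_{21}}{2}\!\left(\Delta_S^Hg(x,v)-\tfrac13\mathrm{Scal}(x)g(x,v)\right)+O(\epsilon^2)\right].
\]
For the $\delta$-term only the leading order is needed; applying Lemma~\ref{lem:kernel_parallel_transport_utm} with $\Phi=\mu_2$ and $f=\Delta_S^Vg-\frac{(d-1)(d-2)}{3}g\in C^\infty(UTM)$ produces $\epsilon^{d/2}\big[m_{22}\big(\Delta_S^Vg(x,v)-\frac{(d-1)(d-2)}{3}g(x,v)\big)+O(\epsilon)\big]$ for a constant $m_{22}$ absorbing the factor $\tfrac12$. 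The uniform $O(\delta^2)$ error of Step~1 is supported on $B_{\sqrt{\epsilon}}(x)$, so after integration it contributes at most $O(\delta^2\cdot\mathrm{vol}(B_{\sqrt{\epsilon}}(x)))=O(\epsilon^{d/2}\delta^2)$. Collecting the three pieces, pulling out the common factor $\epsilon^{d/2}\delta^{(d-1)/2}$, and bounding cross-order terms via $\epsilon\delta\le\tfrac12(\epsilon^2+\delta^2)$ gives exactly \eqref{eq:numerator_taylor_expansion_utm}; positivity of $m_0,m_{21},m_{22}$ follows from $K\ge0$ and the standing nondegeneracy of $K$.

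The main obstacle I anticipate is not conceptual but one of uniform control: one must verify that the fibre-wise expansion in Step~1 holds with a remainder uniform in $y$ so that it survives integration over the base, and that the coefficient functions $\mu_0,\mu_2$ emerging there are legitimate inputs to Lemma~\ref{lem:kernel_parallel_transport_utm} — both points reduce to compactness of $M$ together with smoothness and compact support of $K$, but they are where care is genuinely needed. A secondary bookkeeping point is checking that no spurious half-integer-order ($\epsilon^{1/2}$, $\delta^{1/2}$, $\epsilon^{1/2}\delta^{1/2}$) terms survive; they do not, because the first-order part of the fibre integral is annihilated by the antipodal symmetry of the round sphere and the first-order part of the base integral is annihilated by the symmetry of the geodesic ball, precisely as in the proofs of Lemma~\ref{lem:basic_kernel_integral} and Lemma~\ref{lem:kernel_parallel_transport}.
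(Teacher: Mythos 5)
Your proof is correct and follows essentially the same route as the paper: integrate over the fibre $S_y$ first via Lemma~\ref{lem:basic_kernel_integral} (your $\mu_0,\mu_2$ are the paper's $M_0,M_2$), then integrate over the base via Lemma~\ref{lem:kernel_parallel_transport_utm} applied to each coefficient kernel. Your explicit attention to the uniformity in $y$ of the fibre-wise remainder and to the admissibility of $\mu_0,\mu_2$ as kernels for the base integral is a point the paper leaves implicit, and is handled correctly.
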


\begin{proof}
By definition,
\begin{equation}
\label{eq:preparation}
  \begin{aligned}
    \int_M\!\int_{S_y}K_{\epsilon,\delta}\left( x,v; y,w \right)&\,g \left( y,w \right)\,d\sigma_y\left(w\right)d\mathrm{vol}_M\left(y\right)\\
    &=\int_M\!\int_{S_y}K\left( \frac{d^2_M \left( x,y \right)}{\epsilon}, \frac{d^2_{S_y}\left(P_{y,x}v,w  \right)}{\delta} \right)g \left( y,w \right)\,d\sigma_y\left(w\right)d\mathrm{vol}_M\left(y\right).
  \end{aligned}
\end{equation}
Since $\delta=O \left( \epsilon \right)$, $\delta\rightarrow0$ as $\epsilon\rightarrow 0$. Applying Lemma \ref{lem:basic_kernel_integral},
\begin{equation*}
  \begin{aligned}
    &\int_{S_y}K\left( \frac{d^2_M \left( x,y \right)}{\epsilon}, \frac{d^2_{S_y}\left(P_{y,x}v,w  \right)}{\delta} \right)g \left( y,w \right)\,d\sigma_y\left(w\right)\\
    &=\delta^{\frac{d-1}{2}}\Bigg\{ M_0 \left( \frac{d^2_M \left( x,y \right)}{\epsilon} \right)g \left( y,P_{y,x}v \right)\\
    &\qquad+\frac{\delta}{2}M_2 \left( \frac{d^2_M \left( x,y \right)}{\epsilon} \right)\left[ \Delta_S^Vg \left( y,P_{y,x}v \right)-\frac{1}{3}\mathrm{Scal}^{S_y} \left( P_{y,x}v \right)g \left( y,P_{y,x}v \right) \right]+O \left( \delta^2 \right) \Bigg\},
  \end{aligned}
\end{equation*}
where $\mathrm{Scal}^{S_y} \left( \cdot \right)$ is the scalar curvature of $S_y$, and $M_0 \left( r \right)$, $M_2 \left( r \right)$ are functions of a single variable determined by the following integrals over the unit ball in $\mathbb{R}^d$
\begin{equation*}
  \begin{aligned}
    M_0 \left( r^2 \right)&=\int_{B_1^{d-1} \left( 0 \right)}K \left( r^2,\rho^2 \right)d\theta^1\cdots d\theta^{d-1},\\
    M_2\left( r^2 \right)&=\int_{B_1^{d-1} \left( 0 \right)}\left(\theta^1\right)^2K \left( r^2,\rho^2 \right)d\theta^1\cdots d\theta^{d-1}
  \end{aligned}
\end{equation*}
with
\begin{equation*}
  \rho^2 = \left( \theta^1 \right)^2+\cdots\left(\theta^{d-1}\right)^2.
\end{equation*}
Moreover, for any $y\in M$, since $\left(T_yM,\|\cdot\|_y\right)$ is isometric to the standard $d$-Euclidean space (this can be seen by simply fixing an orthonormal basis in $T_yM$), $S_y$ equipped with the induced metric from $\|\cdot\|_y$ is also isometric to the standard unit $\left( d-1 \right)$-ball in $\mathbb{R}^d$. As a result,
\begin{equation*}
  \mathrm{Scal}^{S_y} \left( w \right)=\left( d-1 \right)\left( d-2 \right)\quad\textrm{for all $y\in M$, $w\in S_y$.}
\end{equation*}
Thus
\begin{equation}
\label{eq:inner_integral}
  \begin{aligned}
    &\int_{S_y}K\left( \frac{d^2_M \left( x,y \right)}{\epsilon}, \frac{d^2_{S_y}\left(P_{y,x}v,w  \right)}{\delta} \right)g \left( y,w \right)\,d\sigma_y\left(w\right)\\
    &=\delta^{\frac{d-1}{2}}\Bigg\{ M_0 \left( \frac{d^2_M \left( x,y \right)}{\epsilon} \right)g \left( y,P_{y,x}v \right)\\
    &\qquad+\frac{\delta}{2}M_2 \left( \frac{d^2_M \left( x,y \right)}{\epsilon} \right)\left[ \Delta_S^Vg \left( y,P_{y,x}v \right)-\frac{\left( d-1 \right)\left( d-2 \right)}{3} g \left( y,P_{y,x}v \right) \right]+O \left( \delta^2 \right) \Bigg\}.
  \end{aligned}
\end{equation}
It remains to apply Lemma \ref{lem:kernel_parallel_transport_utm} multiple times to \eqref{eq:inner_integral} to obtain
\begin{equation*}
  \begin{aligned}
    &\int_M\!\int_{S_y}K_{\epsilon,\delta}\left( x,v; y,w \right)\,g \left( y,w \right)\,d\sigma_y\left(w\right)d\mathrm{vol}_M\left(y\right)\\
    &=\int_M\left[\int_{S_y}K\left( \frac{d^2_M \left( x,y \right)}{\epsilon}, \frac{d^2_{S_y}\left(P_{y,x}v,w  \right)}{\delta} \right)g \left( y,w \right)d\sigma_y\left(w\right)\right]d\mathrm{vol}_M\left(y\right)\\
    &=\delta^{\frac{d-1}{2}}\Bigg\{\int_MM_0 \left( \frac{d^2_M \left( x,y \right)}{\epsilon} \right)g \left( y,P_{y,x}v \right)d\mathrm{vol}_M\left(y\right)\\
    &+\int_M \frac{\delta}{2}M_2 \left( \frac{d^2_M \left( x,y \right)}{\epsilon} \right)\left[ \Delta_S^Vg \left( y,P_{y,x}v \right)-\frac{1}{3}\left( d-1 \right)\left( d-2 \right) g \left( y,P_{y,x}v \right) \right] d\mathrm{vol}_M\left(y\right)+O \left( \delta^2 \right)\Bigg\}\\
    &=\delta^{\frac{d-1}{2}}\Bigg\{\epsilon^{\frac{d}{2}}\left[ m_0g \left( x,v \right)+\epsilon\frac{m_{21}}{2}\left( \Delta_S^Hg \left( x,v \right)-\frac{1}{3}\mathrm{Scal}\left( x \right)g \left( x,v \right) \right)+O \left( \epsilon^2 \right) \right]\\
    &\qquad+\epsilon^{\frac{d}{2}}\cdot\delta\frac{m_{22}}{2}\left(\Delta_S^Vg \left( x,v \right)-\frac{\left( d-1 \right)\left( d-2 \right)}{3}g \left( x,v \right)  \right)+\epsilon^{\frac{d}{2}}\cdot O \left( \epsilon^2+\delta^2 \right)\Bigg\}\\
    &=\epsilon^{\frac{d}{2}}\delta^{\frac{d-1}{2}}\Bigg\{m_0g \left( x,v \right)+\epsilon\frac{m_{21}}{2}\left( \Delta_S^Hg \left( x,v \right)-\frac{1}{3}\mathrm{Scal}\left( x \right)g \left( x,v \right) \right)\\
    &\phantom{aaaaaaaaaaaaaaaaa}+\delta\frac{m_{22}}{2}\left(\Delta_S^Vg \left( x,v \right)-\frac{\left( d-1 \right)\left( d-2 \right)}{3}g \left( x,v \right)  \right)+O \left( \epsilon^2+\delta^2 \right) \Bigg\},
  \end{aligned}
\end{equation*}
where $m_0$, $m_1$, $m_2$ are constants determined by the following integrals of $M_0 \left( r^2 \right)$ or $M_2 \left( r^2 \right)$ over the unit ball in $\mathbb{R}^d$
\begin{equation*}
  \begin{aligned}
   &m_0=\int_{B_1^d \left( 0 \right)}M_0 \left( r^2 \right)ds^1\cdots ds^d,\\
   &m_{21}=\int_{B_1^d \left( 0 \right)}M_0 \left( r^2 \right)\left( s^1 \right)^2ds^1\cdots ds^d,\\
   &m_{22}=\int_{B_1^d \left( 0 \right)}M_2 \left( r^2 \right)ds^1\cdots ds^d
  \end{aligned}
\end{equation*}
where
\begin{equation*}
  r^2=\left( s^1 \right)^2+\cdots+\left( s^d \right)^2.
\end{equation*}
\end{proof}

\begin{proof}[Proof of Theorem~\ref{thm:main_utm}]
As $\delta=O \left( \epsilon \right)$, direct application of Lemma~\ref{lem:asymp_sym_kernels_utm} gives
\begin{equation}
  \label{eq:empirical_density_function_asymp_utm}
  \begin{aligned}
  p_{\epsilon,\delta}\left( x,v \right)&=\int_{UTM} K_{\epsilon,\delta}\left( x,v; y,w \right)p \left( y,w \right)d\Theta \left( y,w \right)\\
  &=\epsilon^{\frac{d}{2}}\delta^{\frac{d-1}{2}}\Bigg\{m_0p \left( x,v \right)+\epsilon\frac{m_{21}}{2}\left( \Delta_S^Hp \left( x,v \right)-\frac{1}{3}\mathrm{Scal}\left( x \right)p \left( x,v \right) \right)\\
  &\quad+\delta\frac{m_{22}}{2}\left(\Delta_S^Vp \left( x,v \right)-\frac{\left( d-1 \right)\left( d-2 \right)}{3}p\left( x,v \right)  \right)+O \left( \epsilon^2+\delta^2 \right) \Bigg\}.
\end{aligned}
\end{equation}
In order to expand the denominator of \eqref{eq:hdo_utm}, note that
\begin{equation*}
  \begin{aligned}
  p_{\epsilon,\delta}^{-\alpha}\left( x,v \right)&=\epsilon^{-\frac{\alpha d}{2}}\delta^{-\frac{\alpha\left(d-1\right)}{2}}\Bigg\{m_0p \left( x,v \right)+\epsilon\frac{m_{21}}{2}\left( \Delta_S^Hp\left( x,v \right)-\frac{1}{3}\mathrm{Scal}\left( x \right)p\left( x,v \right) \right)\\
  &\quad+\delta\frac{m_{22}}{2}\left(\Delta_S^Vp\left( x,v \right)-\frac{\left( d-1 \right)\left( d-2 \right)}{3}p\left( x,v \right)  \right)+O \left(\epsilon^2+\delta^2 \right) \Bigg\}^{-\alpha}\\
  &=\epsilon^{-\frac{\alpha d}{2}}\delta^{-\frac{\alpha\left(d-1\right)}{2}}m_0^{-\alpha}p^{-\alpha}\left( x,v \right)\Bigg\{1-\epsilon\frac{\alpha m_{21}}{2m_0}\left( \frac{\Delta_S^Hp\left( x,v \right)}{p \left( x,v \right)}-\frac{1}{3}\mathrm{Scal}\left( x \right)\right)\\
  &\quad-\delta\frac{\alpha m_{22}}{2m_0}\left(\frac{\Delta_S^Vp\left( x,v \right)}{p \left( x,v \right)}-\frac{\left( d-1 \right)\left( d-2 \right)}{3} \right)+O \left(\epsilon^2+\delta^2 \right) \Bigg\},
  \end{aligned}
\end{equation*}
and hence by Lemma \ref{lem:asymp_sym_kernels_utm}
\begin{align*}
    &\int_{UTM}K_{\epsilon,\delta}^{\alpha} \left( x,v; y,w \right)p \left( y,w \right)d\Theta \left( y,w \right)\\
    &=\int_M\!\int_{S_y}K_{\epsilon,\delta}\left( x,v; y,w \right)p_{\epsilon,\delta}^{-\alpha}\left( x,v \right)p_{\epsilon,\delta}^{-\alpha}\left( y,w \right) p\left( y,w \right)d\sigma_y\left(w\right)d\mathrm{vol}_M\left(y\right)\\
    &=\epsilon^{-\frac{\alpha d}{2}}\delta^{-\frac{\alpha\left(d-1\right)}{2}}m_0^{-\alpha}p_{\epsilon,\delta}^{-\alpha}\left( x,v \right)\times\\
    &\int_M\!\int_{S_y}K_{\epsilon,\delta}\left( x,y; v,w \right)p^{1-\alpha}\left( y,w \right)\Bigg[1-\epsilon\frac{\alpha m_{21}}{2m_0}\left( \frac{\Delta_S^Hp\left( y,w \right)}{p \left( y,w \right)}-\frac{1}{3}\mathrm{Scal}\left( y \right)\right)\\
    &\phantom{aaaaaaaaaaaaaaaaaaaaaaaaaaaaa}-\delta\frac{\alpha m_{22}}{2m_0}\left(\frac{\Delta_S^Vp\left( y,w \right)}{p \left( y,w \right)}-\frac{\left( d-1 \right)\left( d-2 \right)}{3} \right)+O \left( \epsilon^2+\delta^2 \right) \Bigg]\\
    &=\epsilon^{\frac{\left(1-\alpha\right)d}{2}}\delta^{\frac{\left(1-\alpha\right)\left(d-1\right)}{2}}m_0^{-\alpha}p_{\epsilon,\delta}^{-\alpha}\left( x,v \right)\times\\
    &\Bigg\{m_0p^{1-\alpha}\left( x,v \right)\Bigg[1-\epsilon\frac{\alpha m_{21}}{2m_0}\left( \frac{\Delta_S^Hp\left( x,v \right)}{p \left(x,v \right)}-\frac{1}{3}\mathrm{Scal}\left( x \right)\right)-\delta\frac{\alpha m_{22}}{2m_0}\left(\frac{\Delta_S^Vp\left( x,v \right)}{p \left( x,v \right)}-\frac{\left( d-1 \right)\left( d-2 \right)}{3} \right)\Bigg]\\
    &+\epsilon \frac{m_{21}}{2}\left[ \Delta_S^Hp^{1-\alpha}\left( x,v \right)-\frac{1}{3}\mathrm{Scal}\left( x \right)p^{1-\alpha}\left( x,v \right) \right]+\delta \frac{m_{22}}{2}\left[ \Delta_S^Vp^{1-\alpha}\left( x,v \right)-\frac{\left( d-1 \right)\left( d-2 \right)}{3}p^{1-\alpha}\left( x,v \right) \right]\\
    &+O \left(\epsilon^2+\delta^2 \right) \Bigg\}\\
    &=\epsilon^{\frac{\left(1-\alpha\right)d}{2}}\delta^{\frac{\left(1-\alpha\right)\left(d-1\right)}{2}}m_0^{1-\alpha}p_{\epsilon,\delta}^{-\alpha}\left( x,v \right)p^{1-\alpha}\left( x,v \right)\times\\
    &\Bigg\{1-\epsilon\frac{\alpha m_{21}}{2m_0}\left( \frac{\Delta_S^Hp\left( x,v \right)}{p \left(x,v \right)}-\frac{1}{3}\mathrm{Scal}\left( x \right)\right)-\delta\frac{\alpha m_{22}}{2m_0}\left(\frac{\Delta_S^Vp\left( x,v \right)}{p \left( x,v \right)}-\frac{\left( d-1 \right)\left( d-2 \right)}{3} \right)\\
    &\quad+\epsilon \frac{m_{21}}{2m_0}\left( \frac{\Delta_S^Hp^{1-\alpha}\left( x,v \right)}{p^{1-\alpha}\left( x,v \right)}-\frac{1}{3}\mathrm{Scal}\left( x \right) \right)+\delta \frac{m_{22}}{2m_0}\left( \frac{\Delta_S^Vp^{1-\alpha}\left( x,v \right)}{p^{1-\alpha}\left( x,v \right)}-\frac{\left( d-1 \right)\left( d-2 \right)}{3} \right)\\
    &+O \left(\epsilon^2+\delta^2\right) \Bigg\}\\
    &=\epsilon^{\frac{\left(1-\alpha\right)d}{2}}\delta^{\frac{\left(1-\alpha\right)\left(d-1\right)}{2}}m_0^{1-\alpha}p_{\epsilon,\delta}^{-\alpha}\left( x,v \right)p^{1-\alpha}\left( x,v \right)\times\\
    &\Bigg\{1+\epsilon \frac{m_{21}}{2m_0}\left[\frac{\Delta_S^Hp^{1-\alpha}\left( x,v \right)}{p^{1-\alpha}\left( x,v \right)}-\alpha \frac{\Delta_S^Hp\left( x,v \right)}{p \left(x,v \right)}-\left( 1-\alpha \right)\frac{1}{3}\mathrm{Scal}\left( x \right)  \right]\\
    &\quad+\delta \frac{m_{22}}{2m_0}\left[\frac{\Delta_S^Vp^{1-\alpha}\left( x,v \right)}{p^{1-\alpha}\left( x,v \right)}-\alpha\frac{\Delta_S^Vp\left( x,v \right)}{p \left( x,v \right)}-\left( 1-\alpha \right)\frac{\left( d-1 \right)\left( d-2 \right)}{3}  \right]+O \left(\epsilon^2+\delta^2 \right) \Bigg\}.
\end{align*}
A similar computation expands the numerator of \eqref{eq:hdo_utm}:
\begin{align*}
  &\int_{UTM} K_{\epsilon,\delta}^{\alpha} \left( x,y; v,w \right)f \left( y,w \right)p \left( y,w \right)\,\mathrm{d}\sigma_y\left(w\right)d\mathrm{vol}_M\left(y\right)\\
  &=\int_M\!\int_{S_y}K_{\epsilon,\delta}\left( x,y; v,w \right)p_{\epsilon,\delta}^{-\alpha}\left( x,v \right)p_{\epsilon,\delta}^{-\alpha}\left( y,w \right)f \left( y,w \right) p\left( y,w \right)d\sigma_y\left(w\right)d\mathrm{vol}_M\left(y\right)\\
  &=\epsilon^{-\frac{\alpha d}{2}}\delta^{-\frac{\alpha\left(d-1\right)}{2}}m_0^{-\alpha}p_{\epsilon,\delta}^{-\alpha}\left( x,v \right)\times\\
  &\int_M\!\int_{S_y}K_{\epsilon,\delta}\left( x,y; v,w \right)f \left( y,w \right)p^{1-\alpha}\left( y,w \right)\Bigg[1-\epsilon\frac{\alpha m_{21}}{2m_0}\left( \frac{\Delta_S^Hp\left( y,w \right)}{p \left( y,w \right)}-\frac{1}{3}\mathrm{Scal}\left( y \right)\right)\\
    &\phantom{aaaaaaaaaaaaaaaaaaaaaaaaaaaaa}-\delta\frac{\alpha m_{22}}{2m_0}\left(\frac{\Delta_S^Vp\left( y,w \right)}{p \left( y,w \right)}-\frac{\left( d-1 \right)\left( d-2 \right)}{3} \right)+O \left(\epsilon^2+\delta^2 \right) \Bigg]\\
    &=\epsilon^{\frac{\left(1-\alpha\right)d}{2}}\delta^{\frac{\left(1-\alpha\right)\left(d-1\right)}{2}}m_0^{-\alpha}p_{\epsilon,\delta}^{-\alpha}\left( x,v \right)\times\\
    &\Bigg\{m_0 f \left( x,v \right) p^{1-\alpha}\left( x,v \right)\Bigg[1-\epsilon\frac{\alpha m_{21}}{2m_0}\left( \frac{\Delta_S^Hp\left( x,v \right)}{p \left(x,v \right)}-\frac{1}{3}\mathrm{Scal}\left( x \right)\right)-\delta\frac{\alpha m_{22}}{2m_0}\left(\frac{\Delta_S^Vp\left( x,v \right)}{p \left( x,v \right)}-\frac{\left( d-1 \right)\left( d-2 \right)}{3} \right)\Bigg]\\
    &+\epsilon \frac{m_1}{2}\left[ \Delta_S^H\left[fp^{1-\alpha}\right]\left( x,v \right)-\frac{1}{3}\mathrm{Scal}\left( x \right)\left[fp^{1-\alpha}\right]\left( x,v \right) \right]\\
    &+\delta \frac{m_2}{2}\left[ \Delta_S^V\left[fp^{1-\alpha}\right]\left( x,v \right)-\frac{\left( d-1 \right)\left( d-2 \right)}{3}\left[fp^{1-\alpha}\right]\left( x,v \right) \right]+O \left(\epsilon^2+\delta^2 \right) \Bigg\}\\
    &=\epsilon^{\frac{\left(1-\alpha\right)d}{2}}\delta^{\frac{\left(1-\alpha\right)\left(d-1\right)}{2}}m_0^{1-\alpha}p_{\epsilon,\delta}^{-\alpha}\left( x,v \right)p^{1-\alpha}\left( x,v \right)\times\\
    &\Bigg\{f \left( x,v \right)-\epsilon\frac{\alpha m_{21}}{2m_0}f \left( x,v \right)\left( \frac{\Delta_S^Hp\left( x,v \right)}{p \left(x,v \right)}-\frac{1}{3}\mathrm{Scal}\left( x \right)\right)-\delta\frac{\alpha m_{22}}{2m_0}f \left( x,v \right)\left(\frac{\Delta_S^Vp\left( x,v \right)}{p \left( x,v \right)}-\frac{\left( d-1 \right)\left( d-2 \right)}{3} \right)\\
    &\quad+\epsilon \frac{m_{21}}{2m_0}f \left( x,v \right)\left( \frac{\Delta_S^H\left[fp^{1-\alpha}\right]\left( x,v \right)}{\left[fp^{1-\alpha}\right]\left( x,v \right)}-\frac{1}{3}\mathrm{Scal}\left( x \right) \right)+\delta \frac{m_{22}}{2m_0}f \left( x,v \right)\left( \frac{\Delta_S^V\left[fp^{1-\alpha}\right]\left( x,v \right)}{\left[fp^{1-\alpha}\right]\left( x,v \right)}-\frac{\left( d-1 \right)\left( d-2 \right)}{3} \right)\\
    &+O \left( \epsilon^2+\delta^2 \right) \Bigg\}\\
    &=\epsilon^{\frac{\left(1-\alpha\right) d}{2}}\delta^{\frac{\left(1-\alpha\right)\left(d-1\right)}{2}}m_0^{1-\alpha}p_{\epsilon,\delta}^{-\alpha}\left( x,v \right)p^{1-\alpha}\left( x,v \right)\times\\
    &\quad\Bigg\{f \left( x,v \right)+\epsilon \frac{m_{21}}{2 m_0}f \left( x,v \right)\Bigg[ \frac{\Delta_S^H\left[fp^{1-\alpha}\right]\left( x,v \right)}{\left[fp^{1-\alpha}\right]\left( x,v \right)}-\alpha \frac{\Delta_S^H p \left( x,v \right)}{p \left( x,v \right)}-\frac{1}{3}\left( 1-\alpha \right)\mathrm{Scal}\left( x \right) \Bigg]\\
    &\qquad+\frac{m_{22}}{2m_0}f \left( x,v \right)\left[\frac{\Delta_S^V\left[fp^{1-\alpha}\right]\left( x,v \right)}{\left[fp^{1-\alpha}\right]\left( x,v \right)}-\alpha \frac{\Delta_S^V p \left( x,v \right)}{p \left( x,v \right)}-\left( 1-\alpha \right)\frac{\left( d-1 \right)\left( d-2 \right)}{3}\right]\\
    &\qquad+O \left(\epsilon^2+\delta^2 \right) \Bigg\}.
\end{align*}
Combining expansions for denominator and numerator, we conclude that
\begin{align*}
  &H_{\epsilon,\delta}^{\alpha}f \left( x,v \right)=\frac{\int_{UTM}K_{\epsilon,\delta}^{\alpha}\left( x,v;y,w \right)f \left( y,w \right)p \left( y,w \right)\,d\Theta \left( y,w \right)}{\int_{UTM}K_{\epsilon,\delta}^{\alpha}\left( x,v;y,w \right)p \left( y,w \right)\,d\Theta \left( y,w \right)}\\
  &=\Bigg\{f \left( x,v \right)+\epsilon \frac{m_{21}}{2 m_0}f \left( x,v \right)\left[ \frac{\Delta_S^H\left[fp^{1-\alpha}\right]\left( x,v \right)}{\left[fp^{1-\alpha}\right]\left( x,v \right)}-\alpha \frac{\Delta_S^H p \left( x,v \right)}{p \left( x,v \right)}-\frac{1}{3}\left( 1-\alpha \right)\mathrm{Scal}\left( x \right) \right]\\
  &\qquad+\delta\frac{m_{22}}{2m_0}f \left( x,v \right)\left[\frac{\Delta_S^V\left[fp^{1-\alpha}\right]\left( x,v \right)}{\left[fp^{1-\alpha}\right]\left( x,v \right)}-\alpha \frac{\Delta_S^V p \left( x,v \right)}{p \left( x,v \right)}-\left( 1-\alpha \right)\frac{\left( d-1 \right)\left( d-2 \right)}{3}\right]+O \left( \epsilon^2+\delta^2 \right) \Bigg\}\\
  &\cdot\Bigg\{ 1+\epsilon \frac{m_{21}}{2 m_0}\left[ \frac{\Delta_S^Hp^{1-\alpha}\left( x,v \right)}{p^{1-\alpha}\left( x,v \right)}-\alpha \frac{\Delta_S^H p \left( x,v \right)}{p \left( x,v \right)}-\frac{1}{3}\left( 1-\alpha \right)\mathrm{Scal}\left( x \right) \right]\\
  &\qquad+\delta \frac{m_{22}}{2m_0}\left[\frac{\Delta_S^V p^{1-\alpha}\left( x,v \right)}{p^{1-\alpha}\left( x,v \right)}-\alpha \frac{\Delta_S^V p \left( x,v \right)}{p \left( x,v \right)}-\left( 1-\alpha \right)\frac{\left( d-1 \right)\left( d-2 \right)}{3}  \right]+O \left( \epsilon^2+\delta^2 \right) \Bigg\}^{-1}\\
  &=f \left( x,v \right)\Bigg\{1+\epsilon \frac{m_{21}}{2 m_0}\left[ \frac{\Delta_S^H\left[fp^{1-\alpha}\right]\left( x,v \right)}{\left[fp^{1-\alpha}\right]\left( x,v \right)}-\alpha \frac{\Delta_S^H p \left( x,v \right)}{p \left( x,v \right)}-\frac{1}{3}\left( 1-\alpha \right)\mathrm{Scal}\left( x \right) \right]\\
  &\qquad+\delta \frac{m_{22}}{2m_0}\left[\frac{\Delta_S^V\left[fp^{1-\alpha}\right]\left( x,v \right)}{\left[fp^{1-\alpha}\right]\left( x,v \right)}-\alpha \frac{\Delta_S^V p \left( x,v \right)}{p \left( x,v \right)}-\left( 1-\alpha \right)\frac{\left( d-1 \right)\left( d-2 \right)}{3}\right]\\
  &\qquad-\epsilon \frac{m_{21}}{2 m_0}\left[ \frac{\Delta_S^Hp^{1-\alpha}\left( x,v \right)}{p^{1-\alpha}\left( x,v \right)}-\alpha \frac{\Delta_S^H p \left( x,v \right)}{p \left( x,v \right)}-\frac{1}{3}\left( 1-\alpha \right)\mathrm{Scal}\left( x \right) \right]\\
  &\qquad-\delta \frac{m_{22}}{2m_0}\left[\frac{\Delta_S^V p^{1-\alpha}\left( x,v \right)}{p^{1-\alpha}\left( x,v \right)}-\alpha \frac{\Delta_S^V p \left( x,v \right)}{p \left( x,v \right)}-\left( 1-\alpha \right)\frac{\left( d-1 \right)\left( d-2 \right)}{3}  \right]\Bigg\}+O \left( \epsilon^2+\delta^2 \right)\\
  &=f \left( x,v \right)\Bigg\{1+\epsilon \frac{m_{21}}{2 m_0}\Bigg[ \frac{\Delta_S^H\left[fp^{1-\alpha}\right]\left( x,v \right)}{\left[fp^{1-\alpha}\right]\left( x,v \right)}-\frac{\Delta_S^Hp^{1-\alpha}\left( x,v \right)}{p^{1-\alpha}\left( x,v \right)}\Bigg]\\
  &\qquad+\delta \frac{m_{22}}{2m_0}\left[\frac{\Delta_S^V\left[fp^{1-\alpha}\right]\left( x,v \right)}{\left[fp^{1-\alpha}\right]\left( x,v \right)}-\frac{\Delta_S^V p^{1-\alpha}\left( x,v \right)}{p^{1-\alpha}\left( x,v \right)}\right]\Bigg\}+O \left(\epsilon^2+\delta^2 \right)\\
  &= f \left( x,v \right)+\epsilon \frac{m_{21}}{2m_0}\left[\frac{\Delta_S^H\left[fp^{1-\alpha}\right]\left( x,v \right)}{p^{1-\alpha}\left( x,v \right)}-f \left( x,v \right) \frac{\Delta_S^Hp^{1-\alpha}\left( x,v \right)}{p^{1-\alpha}\left( x,v \right)}  \right]\\
  &\phantom{f \left( x,v \right)}\quad\,+\delta \frac{m_{22}}{2m_0}\left[\frac{\Delta_S^V\left[fp^{1-\alpha}\right]\left( x,v \right)}{p^{1-\alpha}\left( x,v \right)}-f \left( x,v \right) \frac{\Delta_S^Vp^{1-\alpha}\left( x,v \right)}{p^{1-\alpha}\left( x,v \right)}  \right]+O\left(\epsilon^2+\delta^2  \right).
\end{align*}
\end{proof}
A proof of Theorem~\ref{thm:main_tm} can be composed with a similar direct computation. The only prerequisite is to establish a tangent bundle version of Lemma~\ref{lem:asymp_sym_kernels_utm}, using Lemma~\ref{lem:kernel_parallel_transport} instead of Lemma~\ref{lem:kernel_parallel_transport_utm}. Similarly, a proof of Proposition~\ref{prop:gamma_infty} can be derived from the following proof of Proposition~\ref{prop:gamma_infty_utm}.

\begin{proof}[Proof of Proposition~\ref{prop:gamma_infty_utm}]
  To establish~\eqref{eq:main_utm_ii_base}, first note that
\begin{align*}
    \lim_{\gamma\rightarrow\infty}p_{\epsilon,\gamma\epsilon}\left( x,v \right)&=\lim_{\gamma\rightarrow\infty}\int_M\!\int_{S_y} K_{\epsilon,\gamma\epsilon}\left( x,v; y,w \right)p \left( y,w \right)d\sigma_y\left(w\right)d\mathrm{vol}_M\left(y\right)\\
    &=\lim_{\gamma\rightarrow\infty}\int_M\!\int_{S_y}K \left( \frac{d^2_M \left( x,y \right)}{\epsilon}, \frac{d^2_{S_y}\left(P_{y,x}v,w  \right)}{\gamma\epsilon} \right) p \left( y,w \right)d\sigma_y\left(w\right)d\mathrm{vol}_M\left(y\right)\\
    &=\int_M\!\int_{S_y}\lim_{\gamma\rightarrow\infty}K \left( \frac{d^2_M \left( x,y \right)}{\epsilon}, \frac{d^2_{S_y}\left(P_{y,x}v,w  \right)}{\gamma\epsilon} \right) p \left( y,w \right)d\sigma_y\left(w\right)d\mathrm{vol}_M\left(y\right)\\
    &=\int_M\!\int_{S_y}K \left( \frac{d^2_M \left( x,y \right)}{\epsilon},0 \right) p \left( y,w \right)d\sigma_y\left(w\right)d\mathrm{vol}_M\left(y\right)\\
    &=\int_M\!K \left( \frac{d^2_M \left( x,y \right)}{\epsilon},0 \right) \Bigg[\int_{S_y}p \left( y,w \right)d\sigma_y\left(w\right)\Bigg]d\mathrm{vol}_M\left(y\right),
\end{align*}
since $P_{y,x}v$ does not depend on $\gamma$. Recall from~\eqref{eq:density_on_base_utm} that
\begin{equation*}
  \overline{p}\left( x \right)=\int_{S_x}p \left( x,v \right)\,dV_x \left( v \right),
\end{equation*}
and define
\begin{equation*}
  \overline{K}_{\epsilon}\left( x,y \right)=\overline{K}\left( \frac{d^2_M \left( x,y \right)}{\epsilon} \right)=K \left( \frac{d^2_M \left( x,y \right)}{\epsilon},0 \right),
\end{equation*}
\begin{equation*}
  \overline{K}_{\epsilon}^{\alpha}\left( x,y \right)=\frac{\overline{K}_{\epsilon}\left( x,y \right)}{\overline{p}_{\epsilon}^{\alpha}\left( x \right)\overline{p}_{\epsilon}^{\alpha}\left( y \right)}.
\end{equation*}
Then
\begin{equation}
\label{eq:homogenized_H_alpha}
  \begin{aligned}
    \lim_{\gamma\rightarrow\infty}H_{\epsilon,\gamma\epsilon}^{\alpha}f \left( x,v \right)&=\frac{\displaystyle\int_M\overline{K}_{\epsilon}^{\alpha}\left( x,y \right)\Bigg[\int_{S_y}f \left( y,w \right)p \left( y,w \right)d\sigma_y\left(w\right)\Bigg]d\mathrm{vol}_M\left(y\right)}{\displaystyle\int_M\overline{K}_{\epsilon}^{\alpha}\left( x,y \right)\Bigg[\int_{S_y}p \left( y,w \right)d\sigma_y\left(w\right)\Bigg]d\mathrm{vol}_M\left(y\right)}\\
    &=\frac{\displaystyle\int_M\overline{K}_{\epsilon}^{\alpha}\left( x,y \right)\Bigg[\int_{S_y}f \left( y,w \right)\frac{p \left( y,w \right)}{\overline{p}\left( y \right)}d\sigma_y\left(w\right)\Bigg]\overline{p}\left( y \right)d\mathrm{vol}_M\left(y\right)}{\displaystyle\int_M\overline{K}_{\epsilon}^{\alpha}\left( x,y \right)\overline{p}\left( y \right)d\mathrm{vol}_M\left(y\right)}\\
    &=\frac{\displaystyle\int_M\overline{K}_{\epsilon}^{\alpha}\left( x,y \right)\overline{f}\left( y \right)\overline{p}\left( y \right)d\mathrm{vol}_M\left(y\right)}{\displaystyle\int_M\overline{K}_{\epsilon}^{\alpha}\left( x,y \right)\overline{p}\left( y \right)d\mathrm{vol}_M\left(y\right)}.
  \end{aligned}
\end{equation}
By \cite[Theorem 2]{CoifmanLafon2006}, as $\epsilon\rightarrow0$
\begin{equation}
\label{eq:homogenized_limit}
\begin{aligned}
  \lim_{\gamma\rightarrow\infty}&H^{\alpha}_{\epsilon,\gamma\epsilon}f \left( x,v \right)\\
  &=\frac{\displaystyle\int_M\overline{K}_{\epsilon}^{\alpha}\left( x,y \right)\overline{f}\left( y \right)\overline{p}\left( y \right)d\mathrm{vol}_M\left(y\right)}{\displaystyle\int_M\overline{K}_{\epsilon}^{\alpha}\left( x,y \right)\overline{p}\left( y \right)d\mathrm{vol}_M\left(y\right)}\\
  &=\overline{f}\left( x \right)+\epsilon \frac{m'_2}{2m'_0} \left[\frac{\Delta_M\left[\overline{f}\overline{p}^{1-\alpha}\right]\left( x \right)}{\overline{p}^{1-\alpha}\left( x \right)}-\overline{f}\left( x \right)\frac{\Delta_M\overline{p}^{1-\alpha}\left( x \right)}{\overline{p}^{1-\alpha}\left( x \right)} \right]+O \left( \epsilon^2 \right),
\end{aligned}
\end{equation}
where
\begin{equation*}
  \begin{aligned}
    m'_0&=\int_{B_1^d \left( 0 \right)}\overline{K}\left( r^2 \right)ds^1\cdots ds^d,\\
    m'_2&=\int_{B_1^d \left( 0 \right)}\left(s^1\right)^2\overline{K}\left( r^2 \right)ds^1\cdots ds^d,
  \end{aligned}
\end{equation*}
and we again dropped the higher order error term from $O \left( \epsilon^{\frac{3}{2}} \right)$ to $O \left( \epsilon^2 \right)$, as argued in~\cite[\S 2]{Singer2006ConvergenceRate}.
\end{proof}

\subsection{Proofs of Theorem~\ref{thm:utm_finite_sampling_noiseless} and Theorem~\ref{thm:utm_finite_sampling_noise}}
\label{sec:proof-finite-sampling-theorem}

To prove the two finite sampling theorems, we'll follow the path paved by \cite{BelkinNiyogi2005,HeinAudibertVonLuxburg2007,Singer2006ConvergenceRate,SingerWu2012VDM}.

\subsubsection{Sampling without Noise}
\label{sec:proof-theor-noiseless}

The following lemma builds the bridge between the geodesic distance on the manifold and the Euclidean distance in the ambient space.

\begin{lemma}
\label{lem:distance_expansion}
Let $\iota:M\hookrightarrow \mathbb{R}^D$ be an isometric embedding of the smooth $d$-dimensional closed Riemannian manifold $M$ into $\mathbb{R}^D$. For any $x,y\in M$ such that $d_M \left( x,y \right)<\mathrm{Inj}\left( M \right)$, we have
\begin{equation}
  \label{eq:taylor_geodesic_distance}
  d_M^2 \left( x,y \right)=\left\|\iota \left( x \right)-\iota \left( y \right)  \right\|^2+\frac{1}{12}d_M^4 \left( x,y \right)\left\|\Pi \left( \theta,\theta \right) \right\|^2+O \left( d_M^5 \left( x,y \right)\right),
\end{equation}
where $\theta\in T_xM$, $\left\|\theta\right\|_x=1$ comes from the geodesic polar coordinates of $y$ in a geodesic normal neighborhood of $x$:
\begin{equation*}
  y=\mathrm{exp}_xr\theta,\quad r=d_M \left( x,y \right).
\end{equation*}
\end{lemma}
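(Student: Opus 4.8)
The plan is to compare the geodesic and chordal distances by tracking the image under $\iota$ of the minimizing geodesic from $x$ to $y$ and Taylor-expanding the squared Euclidean distance in $\mathbb{R}^D$. First I would fix a geodesic normal neighborhood of $x$ and write $\gamma(t)=\mathrm{exp}_x\!\left( t\theta \right)$ for $t\in\left[ 0,r \right]$, where $r=d_M\!\left( x,y \right)<\mathrm{Inj}\!\left( M \right)$ and $\left\|\theta\right\|_x=1$, so that $\gamma\!\left( 0 \right)=x$, $\gamma\!\left( r \right)=y$, and $\gamma$ is a unit-speed geodesic in $M$. Set $c\!\left( t \right):=\iota\!\left( \gamma\!\left( t \right) \right)\in\mathbb{R}^D$. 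Since $\iota$ is an isometric embedding, $c$ is a unit-speed curve, i.e. $\left\|c'\!\left( t \right)\right\|\equiv 1$. The Gauss formula for the submanifold $\iota\!\left( M \right)\subset\mathbb{R}^D$ gives $c''\!\left( t \right)=\Pi\!\left( \gamma'\!\left( t \right),\gamma'\!\left( t \right) \right)$: the ordinary second derivative in $\mathbb{R}^D$ equals the second fundamental form evaluated on the velocity, because the tangential part $\nabla_{\gamma'}\gamma'$ vanishes along a geodesic. In particular $c''\!\left( 0 \right)=\Pi\!\left( \theta,\theta \right)$, which is normal to $\iota_{*}T_xM$.

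Next I would extract the two orthogonality relations needed. Differentiating $\left\langle c'\!\left( t \right),c'\!\left( t \right)\right\rangle\equiv 1$ once yields $\left\langle c'\!\left( t \right),c''\!\left( t \right)\right\rangle\equiv 0$, hence $c'\!\left( 0 \right)\perp c''\!\left( 0 \right)$; differentiating again yields $\left\|c''\!\left( t \right)\right\|^2+\left\langle c'\!\left( t \right),c'''\!\left( t \right)\right\rangle\equiv 0$, hence $\left\langle c'\!\left( 0 \right),c'''\!\left( 0 \right)\right\rangle=-\left\|\Pi\!\left( \theta,\theta \right)\right\|^2$. Now Taylor expand
\begin{equation*}
  c\!\left( r \right)-c\!\left( 0 \right)=r\,c'\!\left( 0 \right)+\frac{r^2}{2}c''\!\left( 0 \right)+\frac{r^3}{6}c'''\!\left( 0 \right)+\frac{r^4}{24}c^{\left( 4 \right)}\!\left( 0 \right)+O\!\left( r^5 \right)
\end{equation*}
and compute $\left\|c\!\left( r \right)-c\!\left( 0 \right)\right\|^2$ by expanding the inner product and collecting powers of $r$: the $r^2$ term is $\left\|c'\!\left( 0 \right)\right\|^2=1$; the $r^3$ term is $r^3\left\langle c'\!\left( 0 \right),c''\!\left( 0 \right)\right\rangle=0$; the $r^4$ term is $\tfrac14\left\|c''\!\left( 0 \right)\right\|^2+\tfrac13\left\langle c'\!\left( 0 \right),c'''\!\left( 0 \right)\right\rangle=\left( \tfrac14-\tfrac13 \right)\left\|\Pi\!\left( \theta,\theta \right)\right\|^2=-\tfrac1{12}\left\|\Pi\!\left( \theta,\theta \right)\right\|^2$; and every remaining contribution is $O\!\left( r^5 \right)$. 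Therefore $\left\|\iota\!\left( x \right)-\iota\!\left( y \right)\right\|^2=r^2-\tfrac{r^4}{12}\left\|\Pi\!\left( \theta,\theta \right)\right\|^2+O\!\left( r^5 \right)$, and since $r=d_M\!\left( x,y \right)$, rearranging gives exactly \eqref{eq:taylor_geodesic_distance}.

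The one step I expect to require genuine care, rather than routine calculation, is the uniformity of the remainder: the bound $O\!\left( d_M^5\!\left( x,y \right) \right)$ must hold with a constant independent of the base point $x$ and the direction $\theta$, since later (in the kernel asymptotics via Lemma~\ref{lem:basic_kernel_integral} and Lemma~\ref{lem:kernel_parallel_transport}) this expansion is integrated against kernels supported in shrinking geodesic balls uniformly in $x$. This is where compactness of $M$ is used: the map $\mathrm{exp}$ and its derivatives, the embedding $\iota$ and its derivatives, and the second fundamental form $\Pi$ together with its covariant derivatives are all smooth and hence uniformly bounded on $M$, while $\mathrm{Inj}\!\left( M \right)>0$ ensures the normal coordinates and the above Taylor estimates are simultaneously valid for all $x$ whenever $d_M\!\left( x,y \right)<\mathrm{Inj}\!\left( M \right)$. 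I would also record, for use downstream, the structural observation that the leading correction is even in $d_M\!\left( x,y \right)$ and homogeneous quartic, which is precisely what makes the odd-order error terms drop out in the subsequent kernel moment computations.
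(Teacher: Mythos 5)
Your proposal is correct. The paper itself offers no argument here — it simply cites \cite{SWW2007} (Proposition 6) — and your derivation is the standard one that underlies that reference: the Gauss formula gives $c''=\Pi\left(\gamma',\gamma'\right)$ along the geodesic, the two orthogonality identities $\left\langle c',c''\right\rangle=0$ and $\left\langle c',c'''\right\rangle=-\left\|\Pi\left(\theta,\theta\right)\right\|^2$ follow from unit speed, and the coefficient bookkeeping $\tfrac14-\tfrac13=-\tfrac1{12}$ together with the vanishing $r^3$ term yields exactly \eqref{eq:taylor_geodesic_distance}. Your closing remarks on uniformity of the $O\left(d_M^5\right)$ constant via compactness, and on the evenness of the leading correction, are precisely the properties used downstream in Lemma~\ref{lem:euclidean_kernel_integral}, so including them is appropriate.
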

\begin{proof}
  See \cite[Proposition 6]{SWW2007}.
\end{proof}

The reason we need Lemma~\ref{lem:distance_expansion} is due to the fact that the hypoelliptic diffusion operator in \eqref{eq:kernel_UTM} is constructed using geodesic distances on the manifolds, whereas in practice only the Euclidean distance in the ambient space is observed. In order to prove Theorem~\ref{thm:utm_finite_sampling_noiseless}, it is convenient to introduce the ``Euclidean distance version'' of the hypoelliptic diffusion operators. Note that in Definition~\ref{defn:utm_finite_sampling_noiseless} the hat ``$\hat{\phantom{a}}$'' is used for empirical quantities; for the remainder of this appendix, the tilde ``$\tilde{\phantom{a}}$'' will be used for quantities in hypoelliptic diffusion operators that replace the geodesic distance with the Euclidean distance. These quantities include\footnote{Note that in this subsection $\hat{K}_{\epsilon,\delta}$ is not much different from $\tilde{K}_{\epsilon,\delta}^{\alpha}$, since they are both constructed from Euclidean distance and exact parallel-transports. They will represent quite different quantities in next subsection, where $\hat{K}_{\epsilon,\delta}^{\alpha}$ is constructed from estimated parallel-transports.}
\begin{equation*}
  \begin{aligned}
    \tilde{K}_{\epsilon,\delta}\left( x,v;y,w \right)&=K \left( \frac{\left\|x-y\right\|^2}{\epsilon},\frac{\left\| P_{y,x}v-w \right\|^2_y}{\delta} \right),\\
    \tilde{p}_{\epsilon,\delta}\left( x,v \right)&=\int_{UTM}\tilde{K}_{\epsilon,\delta}\left( x,v;y,w \right)p \left( y,w \right)d\Theta \left( y,w \right),\\
    \tilde{K}_{\epsilon,\delta}^{\alpha}\left( x,v;y,w \right)&=\frac{\tilde{K}_{\epsilon,\delta}\left( x,v;y,w \right)}{\tilde{p}_{\epsilon,\delta}^{\alpha}\left( x,v \right)\tilde{p}_{\epsilon,\delta}^{\alpha}\left( y,w \right)},
  \end{aligned}
\end{equation*}
and eventually
\begin{equation*}
  \begin{aligned}
    \tilde{H}_{\epsilon,\delta}^{\alpha}f \left( x,v \right)=\frac{\displaystyle\int_{UTM}\tilde{K}_{\epsilon,\delta}^{\alpha}\left( x,v;y,w \right)f \left( y,w \right)p \left( y,w \right)\,d\Theta \left( y,w \right)}{\displaystyle\int_{UTM}\tilde{K}_{\epsilon,\delta}^{\alpha}\left( x,v;y,w \right)p \left( y,w \right)\,d\Theta \left( y,w \right)}.
  \end{aligned}
\end{equation*}

The next step is to establish an asymptotic expansion of type \eqref{eq:main_utm_i} for $\tilde{H}_{\epsilon,\delta}^{\alpha}$. We deduce the following Lemma~\ref{lem:euclidean_kernel_integral}, the ``Euclidean distance version'' of Lemma~\ref{lem:basic_kernel_integral}, from Lemma~\ref{lem:distance_expansion} and Lemma~\ref{lem:basic_kernel_integral} itself.
\begin{lemma}
\label{lem:euclidean_kernel_integral}
  Let $\Phi:\mathbb{R}\rightarrow\mathbb{R}$ be a smooth function compactly supported in $\left[ 0,1 \right]$. Assume $M$ is a $d$-dimensional closed Riemannian manifold isometrically embedded in $\mathbb{R}^D$, with injectivity radius $\mathrm{Inj}\left( M \right)>0$. For any $\epsilon>0$, define kernel function
  \begin{equation}
    \hat{\Phi}_{\epsilon}\left( x,y \right)=\Phi \left( \frac{\left\| x-y\right\|^2}{\epsilon} \right)
  \end{equation}
on $M\times M$, where $\left\| \cdot \right\|$ is the Euclidean distance on $\mathbb{R}^D$. If the parameter $\epsilon$ is sufficiently small such that $0\leq\epsilon\leq\sqrt{\mathrm{Inj}\left( M \right)}$, then the integral operator associated with kernel $\Phi_{\epsilon}$
\begin{equation}
  \left(\hat{\Phi}_{\epsilon}\,g\right)\left( x \right):=\int_M \Phi_{\epsilon}\left( x,y \right)g \left( y \right)d\mathrm{vol}_M \left( y \right)
\end{equation}
has the following asymptotic expansion as $\epsilon\rightarrow 0$
\begin{equation}
  \left(\hat{\Phi}_{\epsilon}\,g\right)\left( x \right) = \epsilon^{\frac{d}{2}}\left[ m_0 g \left( x \right)+\epsilon \frac{m_2}{2}\left( \Delta_M g \left( x \right)+E\left( x \right)g \left( x \right) \right)+O \left( \epsilon^2 \right) \right],
\end{equation}
with
\begin{equation*}
  E \left( x \right) = -\frac{1}{3}\mathrm{Scal}\left( x \right)+\frac{d \left( d+2 \right)}{12}A \left( x \right)
\end{equation*}
where $m_0,m_2$ are constants that depend on the moments of $\Phi$ and the dimension $d$ of the Riemannian manifold $M$, $\Delta_M$ is the Laplace-Beltrami operator on $M$, $\mathrm{Scal}\left( x \right)$ is the scalar curvature of $M$ at $x$, and $A \left( x \right)$ is a scalar function on $M$ that only depends on the intrinsic dimension $d$ and the second fundamental form of the isometric embedding $\iota: M\hookrightarrow\mathbb{R}^D$.
\end{lemma}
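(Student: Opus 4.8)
The plan is to reduce the claim to Lemma~\ref{lem:basic_kernel_integral} by substituting the Taylor comparison between geodesic and Euclidean distances furnished by Lemma~\ref{lem:distance_expansion}. Fix $x\in M$, work in geodesic normal and geodesic polar coordinates centered at $x$, and write $r=d_M(x,y)$, $y=\mathrm{exp}_xr\theta$ with $\theta\in T_xM$, $\|\theta\|_x=1$. Lemma~\ref{lem:distance_expansion} rearranges to
\begin{equation*}
  \|\iota(x)-\iota(y)\|^2=r^2-\frac{1}{12}r^4\left\|\Pi(\theta,\theta)\right\|^2+O\left(r^5\right),
\end{equation*}
where, as in \cite[Proposition 6]{SWW2007}, the $O(r^5)$ remainder is to leading order an odd-degree homogeneous polynomial in the geodesic normal coordinates. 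Since $\Phi$ is supported in $[0,1]$, the integrand is nonzero only when $\|\iota(x)-\iota(y)\|^2\le\epsilon$, which for small $\epsilon$ forces $r=O(\sqrt{\epsilon})$; hence on the support $r^4/\epsilon=O(\epsilon)$ and $r^5/\epsilon=O(\epsilon^{3/2})$. Taylor expanding $\Phi$ about $r^2/\epsilon$ then gives
\begin{equation*}
  \hat{\Phi}_{\epsilon}(x,y)=\Phi\left(\frac{r^2}{\epsilon}\right)-\Phi'\left(\frac{r^2}{\epsilon}\right)\frac{r^4\left\|\Pi(\theta,\theta)\right\|^2}{12\epsilon}+\mathcal{R}_{\epsilon}(x,y),
\end{equation*}
where $\mathcal{R}_{\epsilon}$ is a sum of terms that are $O(\epsilon^{3/2})$ pointwise on the support and whose leading pieces, after rescaling, are odd-degree homogeneous polynomials in the rescaled geodesic normal coordinates, exactly in the spirit of Remark~\ref{rem:asymptotic_expansion_parallel_transport}; when integrated against $g\,d\mathrm{vol}_M$ these contribute only $O(\epsilon^{\frac{d}{2}}\epsilon^2)$ by the symmetry of the ball.

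Next I would handle the two main terms. The first, $\Phi(r^2/\epsilon)=\Phi_\epsilon(x,y)$, is exactly the geodesic kernel of Lemma~\ref{lem:basic_kernel_integral}, so
\begin{equation*}
  \int_M\Phi\left(\frac{d_M^2(x,y)}{\epsilon}\right)g(y)\,d\mathrm{vol}_M(y)=\epsilon^{\frac{d}{2}}\left[m_0g(x)+\epsilon\frac{m_2}{2}\left(\Delta_Mg(x)-\frac{1}{3}\mathrm{Scal}(x)g(x)\right)+O(\epsilon^2)\right].
\end{equation*}
For the second term I pass to geodesic normal coordinates $s$ at $x$, rescale $\tilde{s}=s/\sqrt{\epsilon}$, $\tilde{r}=r/\sqrt{\epsilon}$, use $r^2/\epsilon=\tilde{r}^2$, $r^4/\epsilon=\epsilon\tilde{r}^2\tilde{r}^2$, and $d\mathrm{vol}_M(s)=\epsilon^{\frac{d}{2}}(1+O(\epsilon))\,d\tilde{s}$, and observe that $\|\Pi(\theta,\theta)\|^2$ depends only on the unit direction $\theta=\tilde{s}/\tilde{r}$. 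Taylor expanding $g$ (whose first-order term drops out by symmetry of the ball) leaves
\begin{equation*}
  -\int_M\Phi'\left(\frac{r^2}{\epsilon}\right)\frac{r^4\left\|\Pi(\theta,\theta)\right\|^2}{12\epsilon}g(y)\,d\mathrm{vol}_M(y)=-\epsilon^{\frac{d}{2}}\cdot\frac{\epsilon}{12}g(x)\int_{B_1(0)}\Phi'(\tilde{r}^2)\tilde{r}^4\left\|\Pi(\theta,\theta)\right\|^2\,d\tilde{s}+O\left(\epsilon^{\frac{d}{2}}\epsilon^2\right).
\end{equation*}

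The computational heart of the proof is to evaluate this integral in terms of $m_2$. Separating the radial and angular factors and integrating by parts in the radial variable, using that $\Phi$ vanishes at both endpoints of $[0,1]$, one gets $\int_0^1\Phi'(u)u^{(d+2)/2}\,du=-\frac{d+2}{2}\int_0^1\Phi(u)u^{d/2}\,du$, and combining with the identity $\int_{B_1(0)}\Phi(\tilde{r}^2)\tilde{r}^2\,d\tilde{s}=d\,m_2$ (from $\sum_k(\tilde{s}^k)^2=\tilde{r}^2$ and the definition of $m_2$) yields
\begin{equation*}
  \int_{B_1(0)}\Phi'(\tilde{r}^2)\tilde{r}^4\left\|\Pi(\theta,\theta)\right\|^2\,d\tilde{s}=-\frac{d(d+2)}{2}\,m_2\,A(x),\qquad A(x):=\frac{1}{\left|S^{d-1}\right|}\int_{S^{d-1}}\left\|\Pi(\theta,\theta)\right\|^2\,d\sigma(\theta).
\end{equation*}
Substituting back, the second term contributes $\epsilon^{\frac{d}{2}}\cdot\epsilon\cdot\frac{m_2}{2}\cdot\frac{d(d+2)}{12}A(x)g(x)$, and adding the geodesic part produces exactly the claimed expansion with $E(x)=-\frac{1}{3}\mathrm{Scal}(x)+\frac{d(d+2)}{12}A(x)$. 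Finally I would remark that $A(x)$ depends only on $d$ and on the second fundamental form $\Pi$ at $x$, since $\theta\mapsto\|\Pi(\theta,\theta)\|^2$ is a smooth function on the unit sphere of $T_xM$ determined by $\Pi$ alone.

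The main obstacle I anticipate is the error bookkeeping rather than any single identity: one must verify that the remainders from three sources — the $O(r^5)$ term in Lemma~\ref{lem:distance_expansion}, the higher-order terms in the Taylor expansion of $\Phi$, and the third-order Taylor terms of $g$ — each reduce (to leading order, after rescaling) to odd-degree homogeneous polynomials in the rescaled geodesic normal coordinates times bounded factors, so that they integrate to zero at order $\epsilon^{3/2}$ over the symmetric ball and leave only $O(\epsilon^2)$. Without this parity observation, which is the same device used in Remark~\ref{rem:asymptotic_expansion_parallel_transport} and in \cite[\S 2]{Singer2006ConvergenceRate}, one would obtain only an $O(\epsilon^{3/2})$ remainder, which is too weak for the stated conclusion.
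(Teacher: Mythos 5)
Your proposal is correct and follows essentially the same route as the paper's proof: reduce to Lemma~\ref{lem:basic_kernel_integral} via the distance comparison of Lemma~\ref{lem:distance_expansion}, Taylor expand $\Phi$, rescale in geodesic normal coordinates, and integrate by parts in the radial variable to express the correction in terms of $m_2$ and the spherical average of $\left\|\Pi\left(\theta,\theta\right)\right\|^2$. Your explicit attention to the parity argument needed to upgrade the $O\left(\epsilon^{3/2}\right)$ remainders (notably the one coming from the $O\left(r^5\right)$ term in Lemma~\ref{lem:distance_expansion}) to $O\left(\epsilon^2\right)$ is, if anything, slightly more careful than the paper's own bookkeeping.
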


\begin{proof}
  From Lemma~\ref{lem:basic_kernel_integral},
\begin{equation*}
  \begin{aligned}
  \int_M & \Phi \left( \frac{d^2_M \left( x,y \right)}{\epsilon} \right)g \left( y \right)d\mathrm{vol}_M \left( y \right)\\
  & = \epsilon^{\frac{d}{2}}\left[ m_0 g \left( x \right)+\epsilon \frac{m_2}{2}\left( \Delta_M g \left( x \right)-\frac{1}{3}\mathrm{Scal}\left( x \right)g \left( x \right) \right)+O \left( \epsilon^2 \right) \right],
  \end{aligned}
\end{equation*}
thus we only need to expand
\begin{equation}
\label{eq:distance_approximation_error}
  \int_M \left[\Phi \left( \frac{\left\| x-y\right\|^2}{\epsilon} \right) - \Phi \left( \frac{d^2_M \left( x,y \right)}{\epsilon} \right)\right] g \left( y \right)d\mathrm{vol}_M \left( y \right).
\end{equation}
Put $y$ in geodesic polar coordinates in a geodesic normal neighborhood of $x\in M$,
\begin{equation*}
  y=\mathrm{exp}_xr\theta, \quad r=d_M \left( x,y \right), \theta\in T_xM, \left\|\theta\right\|_x=1,
\end{equation*}
and denote the geodesic normal coordinates around $x$ as $\left( s^1,\cdots,s^d \right)$. By Lemma~\ref{lem:distance_expansion},
\begin{equation*}
  \left\|x-y\right\|^2-d_M^2 \left( x,y \right)=-\frac{1}{12}d_M^4 \left( x,y \right)\left\|\Pi \left( \theta,\theta \right)\right\|^2+O \left( d_M^5 \left( x,y \right) \right)
\end{equation*}
thus
\begin{equation}
\label{eq:taylor_kernel_function}
  \begin{aligned}
    &\Phi \left( \frac{\left\| x-y\right\|^2}{\epsilon} \right) - \Phi \left( \frac{d^2_M \left( x,y \right)}{\epsilon} \right)\\
    &=\Phi' \left( \frac{d^2_M \left( x,y \right)}{\epsilon} \right)\cdot \left[ \frac{\left\|x-y\right\|^2}{\epsilon}-\frac{d_M^2 \left( x,y \right)}{\epsilon}  \right]+O \left(  \left[ \frac{\left\|x-y\right\|^2}{\epsilon}-\frac{d_M^2 \left( x,y \right)}{\epsilon}  \right]^2 \right)\\
    &=\Phi' \left(\frac{d_M^2 \left( x,y \right)}{\epsilon}  \right)\cdot \left( -\frac{1}{12\epsilon}d_M^4 \left( x,y \right)\left\|\Pi \left( \theta,\theta \right)\right\|^2 \right)+O \left( \frac{d_M^8 \left( x,y \right)}{\epsilon^2} \right).
  \end{aligned}
\end{equation}
Recall that $\Phi$ is supported on the unit interval, which implies that in \eqref{eq:distance_approximation_error} only those $y\in M$ satisfying $\left\|x-y \right\|\leq \sqrt{\epsilon}$ or $d_M \left( x,y \right)\leq \sqrt{\epsilon}$ are involved. According to Lemma~\ref{lem:distance_expansion}, for sufficiently small $\epsilon>0$, $\left\|x-y\right\|\leq \sqrt{\epsilon}$ implies $d_M \left( x,y \right)<2\sqrt{\epsilon}$, which means that the higher order error in \eqref{eq:taylor_kernel_function} is indeed
\begin{equation*}
  O \left( \frac{d_M^8 \left( x,y \right)}{\epsilon^2} \right)=O \left(\frac{\left(\sqrt{\epsilon}\right)^8}{\epsilon^2} \right)=O \left( \epsilon^2 \right).
\end{equation*}
Therefore,
\begin{equation}
\label{eq:error_involves_second_fundamental_form}
  \begin{aligned}
    \int_M & \left[\Phi \left( \frac{\left\| x-y\right\|^2}{\epsilon} \right) - \Phi \left( \frac{d^2_M \left( x,y \right)}{\epsilon} \right)\right] g \left( y \right)d\mathrm{vol}_M \left( y \right)\\
    &=-\frac{1}{12\epsilon}\int_M\Phi' \left(\frac{d_M^2 \left( x,y \right)}{\epsilon}  \right)d_M^4 \left( x,y \right)\left\|\Pi \left( \theta,\theta \right)\right\|^2g \left( y \right)d\mathrm{vol}_M \left( y \right)+\epsilon^{\frac{d}{2}}\cdot O \left( \epsilon^2 \right)\\
    &=-\frac{1}{12\epsilon}\int_M\Phi' \left(\frac{r^2}{\epsilon}  \right)r^4\left\|\Pi \left( \theta,\theta \right)\right\|^2g \left( y \right)d\mathrm{vol}_M \left( y \right)+\epsilon^{\frac{d}{2}}\cdot O \left( \epsilon^2 \right).
  \end{aligned}
\end{equation}
In geodesic normal coordinates $\left( s^1,\cdots,s^d \right)$,
\begin{equation}
\label{eq:put_geodesic_coordinates}
  \begin{aligned}
    \int_M& \Phi' \left(\frac{r^2}{\epsilon}  \right)r^4\left\|\Pi \left(\theta,\theta \right)\right\|^2g \left( y \right)d\mathrm{vol}_M \left( y \right)\\
    &=\int_{B_{\sqrt{\epsilon}}\left( 0 \right)}\Phi' \left(\frac{r^2}{\epsilon}  \right)r^4\left\|\Pi \left( \theta,\theta \right)\right\|^2\tilde{g}\left( s \right)\left[ 1-\frac{1}{6}R_{kl}\left( x \right)s^ks^l+O \left( r^3 \right) \right]ds^1\cdots ds^d.
  \end{aligned}
\end{equation}
As in Lemma~\ref{lem:basic_kernel_integral}, we Taylor expand $\tilde{g} \left( s \right)$ around $s=0$
\begin{equation*}
  \tilde{g}\left( s^1,\cdots,s^d \right)=\tilde{g}\left( 0 \right)+\frac{\partial\tilde{g}}{\partial s^j}\left( 0 \right)s^j+ \frac{1}{2}\frac{\partial^2\tilde{g}}{\partial s^k\partial s^l}\left( 0 \right)s^ks^l+O \left( r^3 \right),
\end{equation*}
and note that by symmetry
\begin{equation*}
  \int_{B_{\sqrt{\epsilon}}\left( 0 \right)}\Phi' \left(\frac{r^2}{\epsilon}  \right)r^4\left\|\Pi \left( \theta,\theta \right)\right\|^2 s^jds^1\cdots ds^d=0, \quad j=1,\cdots,d,
\end{equation*}
thus \eqref{eq:put_geodesic_coordinates} reduces to
\begin{equation*}
  \begin{aligned}
    \int_{B_{\sqrt{\epsilon}}\left( 0 \right)} & \Phi' \left(\frac{r^2}{\epsilon}  \right)r^4\left\|\Pi \left( \theta,\theta \right)\right\|^2 g \left( x \right)ds^1\cdots ds^d\\
    &+\int_{B_{\sqrt{\epsilon}}\left( 0 \right)}\Phi' \left(\frac{r^2}{\epsilon}  \right)r^4\left\|\Pi \left( \theta,\theta \right)\right\|^2 O \left( r^2 \right)ds^1\cdots ds^d
  \end{aligned}
\end{equation*}
where
\begin{equation}
\label{eq:error_leading_term}
  \begin{aligned}
    \int_{B_{\sqrt{\epsilon}}\left( 0 \right)} &\Phi' \left(\frac{r^2}{\epsilon}  \right)r^4\left\|\Pi \left( \theta,\theta \right)\right\|^2 g \left( x \right)ds^1\cdots ds^d\\
    &=g \left( x \right)\int_{B_1 \left( 0 \right)}\Phi' \left( \tilde{r}^2 \right)\cdot\epsilon^2\tilde{r}^4\left\|\Pi \left( \theta,\theta \right)\right\|^2\cdot \epsilon^{\frac{d}{2}}d\tilde{s}^1\cdots\tilde{s}^d\\
    &=\epsilon^{\frac{d}{2}}\cdot\epsilon^2g \left( x \right)\int_{S_1 \left( 0 \right)}\left\|\Pi \left( \theta,\theta \right)\right\|^2d\theta\int_0^1\Phi' \left(\tilde{r}^2\right)\tilde{r}^{4+\left(d-1\right)}d\tilde{r}\\
    &=\epsilon^{\frac{d}{2}}\cdot\epsilon^2g \left( x \right)\int_{S_1 \left( 0 \right)}\left\|\Pi \left( \theta,\theta \right)\right\|^2d\theta\int_0^1\Phi' \left(\tilde{r}^2\right)\tilde{r}^{3+d}d\tilde{r},
  \end{aligned}
\end{equation}
and
\begin{equation}
\label{eq:error_higher_order_term}
  \begin{aligned}
    \int_{B_{\sqrt{\epsilon}}\left( 0 \right)} &\Phi' \left(\frac{r^2}{\epsilon}  \right)r^4\left\|\Pi \left( \theta,\theta \right)\right\|^2 O \left( r^2 \right)ds^1\cdots ds^d\\
    &=\int_{B_1 \left( 0 \right)}\Phi' \left( \tilde{r}^2 \right)\cdot\epsilon^2\tilde{r}^4\left\|\Pi \left( \theta,\theta \right)\right\|^2\cdot\epsilon O \left( \tilde{r}^2 \right) \cdot \epsilon^{\frac{d}{2}}d\tilde{s}^1\cdots\tilde{s}^d\\
    &=\epsilon^{\frac{d}{2}}\cdot O \left( \epsilon^3 \right).
  \end{aligned}
\end{equation}
Recall from the proof of Lemma~\ref{lem:kernel_parallel_transport}, we adopted notation
\begin{equation*}
  m_2=\int_{B_1 \left( 0 \right)}\Phi \left( \tilde{r}^2 \right)\left(\tilde{s}^j\right)^2d\tilde{s}^1\cdots d\tilde{s}^d,\quad j=1,\cdots,d,
\end{equation*}
thus
\begin{equation*}
  m_2d=\int_{B_1 \left( 0 \right)}\Phi \left( \tilde{r}^2 \right)\tilde{r}^2d\tilde{s}^1\cdots d\tilde{s}^d=\omega_{d-1}\int_0^1\Phi \left( \tilde{r}^2 \right)\tilde{r}^{d+1}dr
\end{equation*}
where $\omega_{d-1}$ is the volume of the standard unit sphere of dimension $\left( d-1 \right)$. Denoting
\begin{equation*}
  A \left( x \right)= \frac{1}{\omega_{d-1}}\int_{S_1 \left( 0 \right)}\left\|\Pi \left( \theta,\theta \right)\right\|^2d\theta
\end{equation*}
as the average of the length of the second fundamental form over the standard unit sphere, we can write \eqref{eq:error_leading_term} as
\begin{equation}
\label{eq:last_piece}
  \begin{aligned}
    \epsilon^{\frac{d}{2}}&\cdot\epsilon^2g \left( x \right)\int_{S_1 \left( 0 \right)}\left\|\Pi \left( \theta,\theta \right)\right\|^2d\theta\int_0^1\Phi' \left(\tilde{r}^2\right)\tilde{r}^{3+d}d\tilde{r}\\
    &=\epsilon^{\frac{d}{2}}\cdot\epsilon^2g \left( x \right)A \left( x \right)\omega_{d-1}\cdot \left( -\frac{m_2}{2\omega_{d-1}}d \left( d+2 \right) \right) =-\epsilon^{\frac{d}{2}}\cdot \epsilon^2 \frac{m_2}{2}d \left( d+2 \right)g \left( x \right)A \left( x \right),
  \end{aligned}
\end{equation}
where we integrated by parts
\begin{equation*}
  \begin{aligned}
    \int_0^1\Phi' \left(\tilde{r}^2\right)\tilde{r}^{3+d}d\tilde{r}&\overset{\xi=\tilde{r}^2}{=\joinrel=\joinrel=}\frac{1}{2}\int_0^1\Phi' \left( \xi \right)\xi^{\frac{3+d}{2}}\cdot\xi^{-\frac{1}{2}}d\xi=\frac{1}{2}\int_0^1\Phi' \left( \xi \right)\xi^{1+\frac{d}{2}}d\xi\\
    &=\frac{1}{2}\left[ \Phi \left( \xi \right)\xi^{1+\frac{d}{2}}\Big|_{\xi=0}^{\xi=1}- \left( 1+\frac{d}{2} \right) \int_0^1\Phi \left( \xi \right)\xi^{\frac{d}{2}}d\xi \right]\\
    &=-\frac{d+2}{2}\int_0^1\Phi \left( \tilde{r}^2 \right)\tilde{r}^{d+1}d\tilde{r}=-\frac{m_2}{2\omega_{d-1}}d \left( d+2 \right).
  \end{aligned}
\end{equation*}
Combining \eqref{eq:last_piece}, \eqref{eq:error_leading_term}, \eqref{eq:error_higher_order_term} with \eqref{eq:put_geodesic_coordinates}, we conclude that
\begin{equation*}
  \begin{aligned}
  \int_M &\left[\Phi \left( \frac{\left\| x-y\right\|^2}{\epsilon} \right) - \Phi \left( \frac{d^2_M \left( x,y \right)}{\epsilon} \right)\right] g \left( y \right)d\mathrm{vol}_M \left( y \right)\\
  &=\epsilon^{\frac{d}{2}}\left[\frac{1}{12\epsilon}\cdot\epsilon^2 \frac{m_2}{2}d \left( d+2 \right)g \left( x \right)A \left( x \right)+O \left( \epsilon^2 \right)  \right]\\
  &=\epsilon^{\frac{d}{2}}\left[ \epsilon \frac{m_2}{24}d \left( d+2 \right)A \left( x \right)g \left( x \right)+O \left( \epsilon^2 \right) \right],
  \end{aligned}
\end{equation*}
which establishes
\begin{equation*}
  \begin{aligned}
    &\left(\hat{\Phi}_{\epsilon}\,g\right)\left( x \right)=\int_M \hat{\Phi}_{\epsilon}\left( x,y \right)g \left( y \right)d\mathrm{vol}_M \left( y \right)\\
    &=\int_M \Phi \left( \frac{d^2_M \left( x,y \right)}{\epsilon} \right)g \left( y \right)d\mathrm{vol}_M \left( y \right)+\int_M\left[\Phi \left( \frac{\left\| x-y\right\|^2}{\epsilon} \right) - \Phi \left( \frac{d^2_M \left( x,y \right)}{\epsilon} \right)\right] g \left( y \right)d\mathrm{vol}_M \left( y \right)\\
    &=\epsilon^{\frac{d}{2}}\left[ m_0 g \left( x \right)+\epsilon \frac{m_2}{2}\left( \Delta_M g \left( x \right)-\frac{1}{3}\mathrm{Scal}\left( x \right)g \left( x \right)+\frac{1}{12}d \left( d+2 \right)A \left( x \right)g \left( x \right) \right) +O \left( \epsilon^2 \right) \right]\\
    &=\epsilon^{\frac{d}{2}}\left[ m_0 g \left( x \right)+\epsilon \frac{m_2}{2}\left( \Delta_M g \left( x \right)+E \left( x \right)g \left( x \right)\right) +O \left( \epsilon^2 \right) \right]
  \end{aligned}
\end{equation*}
with
\begin{equation*}
  E \left( x \right) := -\frac{1}{3}\mathrm{Scal}\left( x \right)+\frac{1}{12}d \left( d+2 \right)A \left( x \right).
\end{equation*}
\end{proof}

\begin{remark}
\label{rem:lazy_remark}
  The only difference between the conclusions in Lemma~\ref{lem:euclidean_kernel_integral} and Lemma~\ref{lem:basic_kernel_integral} is that the scalar function $E \left( x \right)$ takes the place of the scalar curvature $\mathrm{Scal}\left( x \right)$; one can check, essentially by going through the proof of Theorem~\ref{thm:main_utm}, that this change does not affect the conclusion of Theorem~\ref{thm:main_utm}. Specifically, in that proof the same $\mathrm{Scal} \left( x \right)$ from the numerator and denominator cancel out with each other in the asymptotic expansion, and this cancellation still occurs if one replaces $\mathrm{Scal}\left( x \right)$ with $E \left( x \right)$. In fact, by applying Lemma~\ref{lem:euclidean_kernel_integral} repeatedly we have the following expansions for $f,g\in C^{\infty}\left( UTM \right)$
\begin{equation}
  \label{eq:kernel_parallel_transport_utm_hat}
  \begin{aligned}
    \int_M &\hat{\Phi}_{\epsilon}\left( x,y \right)f \left( y,P_{y,x}v \right)d\mathrm{vol}_M \left( y \right)\\
    &=\epsilon^{\frac{d}{2}}\left\{m_0 f \left( x,v \right)+\epsilon \frac{m_2}{2}\left[ \Delta_S^H f \left( x,v \right)+E_1 \left( x \right) f \left( x,v \right) \right]+O \left( \epsilon^2 \right) \right\},
  \end{aligned}
\end{equation}
and
\begin{equation}
  \label{eq:numerator_taylor_expansion_utm_hat}
  \begin{aligned}
   &\int_{UTM}\tilde{K}_{\epsilon,\delta}\left( x,v; y,w \right)g \left( y,w \right)\,d\Theta \left( y,w \right)\\
    &=\epsilon^{\frac{d}{2}}\delta^{\frac{d-1}{2}}\Bigg\{m_0g \left( x,v \right)+\epsilon\frac{m_{21}}{2}\left[ \Delta_S^Hg \left( x,v \right)+E_1\left( x \right)g \left( x,v \right) \right]\\
    &\phantom{aaaaaaaaaaaaaaaaa}+\delta\frac{m_{22}}{2}\left[\Delta_S^Vg \left( x,v \right)+E_2\cdot g \left( x,v \right)  \right]+O \left( \epsilon^2+\delta^2 \right) \Bigg\},
  \end{aligned}
\end{equation}
where
\begin{equation*}
  E_1 \left( \xi_i \right)=-\frac{1}{3}\mathrm{Scal}_M\left( \xi_i \right)+\frac{d \left( d+2 \right)}{12}\cdot \frac{1}{\omega_{d-1}}\int_{S_1 \left( 0 \right)}\left\|\Pi_M \left( \theta,\theta \right)\right\|^2d\theta
\end{equation*}
only depends on the scalar curvature $\mathrm{Scal}_M$ and the second fundamental form $\Pi_M$ of the base manifold $M $at $\xi$, and
\begin{equation*}
  E_2=-\frac{1}{3}\mathrm{Scal}_S+\frac{\left( d-1 \right)\left( d+1 \right)}{12}\cdot \frac{1}{\omega_{d-2}}\int_{S_1 \left( 0 \right)}\left\|\Pi_S \left( \theta,\theta \right)\right\|^2d\theta
\end{equation*}
is a constant because
\begin{equation*}
    \mathrm{Scal}_S \equiv \left( d-1 \right)\left( d-2 \right),\quad \left\|\Pi_S \left( \theta,\theta \right)\right\|^2 \equiv1\quad\textrm{for any unit tangent vector $\theta$}.
\end{equation*}
These expansions are essentially the equivalents of Lemma~\ref{lem:kernel_parallel_transport_utm} and Lemma~\ref{lem:asymp_sym_kernels_utm} for $\tilde{K}_{\epsilon,\delta}$. Using \eqref{eq:kernel_parallel_transport_utm_hat} and \eqref{eq:numerator_taylor_expansion_utm_hat}, and picking $\delta=O \left( \epsilon \right)$ as $\epsilon\rightarrow0$, a version of Theorem~\ref{thm:main_utm} holds true when $K_{\epsilon,\delta}^{\alpha}$ is replaced with $\tilde{K}_{\epsilon,\delta}^{\alpha}$, i.e., as $\epsilon\rightarrow0$ (and thus $\delta\rightarrow0$),
\begin{equation}
\label{eq:bias_error_term}
  \begin{aligned}
    \tilde{H}_{\epsilon,\delta}^{\alpha}f\left( x,v \right) &= f \left( x,v \right)+\epsilon \frac{m_{21}}{2m_0}\left[\frac{\Delta_S^H\left[fp^{1-\alpha}\right]\left( x,v \right)}{p^{1-\alpha}\left( x,v \right)}-f \left( x,v \right) \frac{\Delta_S^Hp^{1-\alpha}\left( x,v \right)}{p^{1-\alpha}\left( x,v \right)}  \right]\\
    &+\delta \frac{m_{22}}{2m_0}\left[\frac{\Delta_S^V\left[fp^{1-\alpha}\right]\left( x,v \right)}{p^{1-\alpha}\left( x,v \right)}-f \left( x,v \right) \frac{\Delta_S^Vp^{1-\alpha}\left( x,v \right)}{p^{1-\alpha}\left( x,v \right)}  \right]+O\left(\epsilon^2+\delta^2 \right).\\
\end{aligned}
\end{equation}
As we shall see below, this observation is the key to establish estimates for the bias error in the proof of Theorem~\ref{thm:utm_finite_sampling_noiseless}.
\end{remark}

The last missing piece for the proof of Theorem~\ref{thm:utm_finite_sampling_noiseless} is a large deviation bound for our two-step sampling strategy. Recall from Assumption~\ref{assum:utm_finite_sampling_noiseless} that we first sample $N_B$ points $\xi_1,\cdots,\xi_{N_B}$ i.i.d. with respect to $\overline{p}$ on the base manifold $M$, and then sample $N_F$ points on each fibre $S_{\xi_j}$ i.i.d. with respect to $p \left( \cdot\mid\xi_j \right)$. The resulting $N_B\times N_F$ points on $UTM$
\begin{equation*}
  \begin{matrix}
    x_{1,1}, &x_{1,2}, &\cdots, &x_{1,N_F}\\
    x_{2,1}, &x_{2,2}, &\cdots, &x_{2,N_F}\\
    \vdots & \vdots &\cdots &\vdots\\
    x_{N_B,1}, &x_{N_B,2}, &\cdots, &x_{N_B,N_F}
  \end{matrix}
\end{equation*}
are generally not i.i.d. sampled from $UTM$. This forbids applying the \emph{Law of Large Numbers} directly to quantities that take the form of an average over the entire unit tangent bundle, such as
\begin{equation*}
  \frac{1}{N_BN_F}\sum_{j=1}^{N_B}\sum_{s=1}^{N_F}\hat{K}_{\epsilon,\delta}\left( x_{i,r},x_{j,s} \right)f \left( x_{j,s} \right).
\end{equation*}
However, due to the conditional i.i.d. fibrewise sampling, it makes sense to apply the law of large numbers to average quantities on a fixed fibre, e.g.,
\begin{equation*}
  \frac{1}{N_F}\sum_{s=1}^{N_F}\hat{K}_{\epsilon,\delta}\left( x_{i,r},x_{j,s} \right)f \left( x_{j,s} \right)\longrightarrow \mathbb{E}_Z \left[ \tilde{K}_{\epsilon,\delta}\left( x_{i,r},\left( \xi_j,Z \right) \right)f \left( \xi_j,Z \right) \right],
\end{equation*}
where $\mathbb{E}_Z$ stands for the expectation with respect to the ``fibre component'' of the coordinates of the points on $S_{\xi_j}$. Explicitly,
\begin{equation*}
  \mathbb{E}_Z \left[ \tilde{K}_{\epsilon,\delta}\left( x_{i,r},\left( \xi_j,\cdot \right) \right)f \left( \xi_j,\cdot \right) \right]=\int_{S_{\xi_j}}\tilde{K}_{\epsilon,\delta}\left( x_{i,r},\left( \xi_j,w \right) \right)f \left( \xi_j,w \right)p \left( w\mid \xi_j \right)d\sigma_{\xi_j} \left( w \right).
\end{equation*}
Next, note that $\xi_1,\cdots,\xi_{N_B}$ are i.i.d. sampled from the base manifold $M$, the partial expectations
\begin{equation*}
  \left\{ \mathbb{E}_Z \left[ \tilde{K}_{\epsilon,\delta}\left( x_{i,r},\left( \xi_j,Z \right) \right)f \left( \xi_j,Z \right) \right] \right\}_{j=1}^{N_B}
\end{equation*}
are i.i.d. random variables on $M$ with respect to $\overline{p}$. Thus
\begin{equation*}
  \frac{1}{N_B}\sum_{j=1}^{N_B}\mathbb{E}_Z \left[ \tilde{K}_{\epsilon,\delta}\left( x_{i,r},\left( \xi_j,Z \right) \right)f \left( \xi_j,Z \right) \right]\longrightarrow \mathbb{E}_Y\left[ \mathbb{E}_Z \left[\tilde{K}_{\epsilon,\delta}\left( x_{i,r},\left( Y,Z \right) \right)f \left(Y,Z \right) \right] \right].
\end{equation*}
Explicitly,
\begin{equation*}
  \begin{aligned}
    &\mathbb{E}_Y\left[ \mathbb{E}_Z \left[\tilde{K}_{\epsilon,\delta}\left( x_{i,r},\left( Y,Z \right) \right)f \left(Y,Z \right) \right] \right]\\
   =&\int_M\overline{p}\left( y \right)\int_{S_{\xi_j}}\tilde{K}_{\epsilon,\delta}\left( x_{i,r},\left( \xi_j,w \right) \right)f \left( y,w \right)p \left( w\mid y \right)d\sigma_{y} \left( w \right)d\mathrm{vol}_M \left( y \right)\\
   =&\int_M\!\!\int_{S_{\xi_j}}\tilde{K}_{\epsilon,\delta}\left( x_{i,r},\left( \xi_j,w \right) \right)f \left( y,w \right)\left[\overline{p}\left( y \right)p \left( w\mid y \right)\right]d\sigma_{y} \left( w \right)d\mathrm{vol}_M \left( y \right)\\
   =&\int_M\!\int_{S_{\xi_j}}\tilde{K}_{\epsilon,\delta}\left( x_{i,r},\left( \xi_j,w \right) \right)f \left( y,w \right)p \left( y,w \right)d\sigma_{y} \left( w \right)d\mathrm{vol}_M \left( y \right).
  \end{aligned}
\end{equation*}
This observation suggests the following iterated limit process
\begin{equation*}
  \begin{aligned}
    \lim_{N_B\rightarrow\infty}&\lim_{N_F\rightarrow\infty}\frac{1}{N_BN_F}\sum_{j=1}^{N_B}\sum_{s=1}^{N_F}\hat{K}_{\epsilon,\delta}\left( x_{i,r},x_{j,s} \right)f \left( x_{j,s} \right)\\
    &=\int_M\!\int_{S_{\xi_j}}\tilde{K}_{\epsilon,\delta}\left( x_{i,r},\left( \xi_j,w \right) \right)f \left( y,w \right)p \left( y,w \right)d\sigma_{y} \left( w \right)d\mathrm{vol}_M \left( y \right),
  \end{aligned}
\end{equation*}
and the two limits on the left hand side generally do not commute.

For this reason, it is natural for us to consider iterated partial expectations rather than total expectation on the entire $UTM$. For simplicity of notation, let us denote $\mathbb{E}_Y,\mathbb{E}_Z$ as $\mathbb{E}_1,\mathbb{E}_2$ respectively, see the following definition.

\begin{definition}
\label{defn:special_sampling}
Let $p$ be a probability density function on $UTM$, and
\begin{equation*}
  \overline{p}\left( x \right)=\int_{S_x}p \left( x,w \right)d\sigma_x \left( w \right),\quad p \left( v\mid x \right)=\frac{p \left( x,v \right)}{\overline{p}\left( x \right)}
\end{equation*}
as defined in \eqref{eq:density_on_base}\eqref{eq:conditional_density_on_fibre}. For any function $f\in C^{\infty}\left( M \right)$, define
\begin{equation*}
  \mathbb{E}_1 f:= \int_Mf \left( y \right)p \left( y \right)d\mathrm{vol}_M \left( y \right).
\end{equation*}
For any function $g\in C^{\infty}\left( S_{\xi} \right)$ for $\xi\in M$, define
\begin{equation*}
  \mathbb{E}_2^{\xi}g := \int_{S_{\xi}}g \left( \xi,w \right)p \left( w\mid \xi \right)d\sigma_{\xi}\left( w \right).
\end{equation*}
\end{definition}

\begin{definition}
\label{defn:procrustesian}
Let $p$ be a probability density function on $UTM$. We call a collection of $N_B\times N_F$ real-valued random functions
\begin{equation*}
  \left\{ X_{j,s}\mid 1\leq j\leq N_B,1\leq s\leq N_F \right\}
\end{equation*}
\emph{Procrustean with respect to $p$ on $UTM$}, if
\begin{enumerate}[(i)]
\item\label{item:37} For each $1\leq j\leq N_B$, the subcollection $\left\{ X_{j,s}\mid 1\leq s\leq N_F \right\}$ are i.i.d. on $S_{\xi_j}$ for some $\xi_j\in M$, with respect to the conditional probability density $p \left( \cdot\mid \xi_j \right)$;
\item\label{item:38} The points $\left\{ \xi_j\mid 1\leq j\leq N_B \right\}$ are i.i.d. on $M$ with respect to the projected probability density $\overline{p} \left( \cdot \right)$.
\end{enumerate}
Due to \eqref{item:37}, we denote for simplicity of notation
\begin{equation*}
  \mathbb{E}_2^{\xi_j}X_j:=\mathbb{E}_2^{\xi_j}X_{j,s},\quad \mathbb{E}_2^{\xi_j}X_j^2:=\mathbb{E}_2^{\xi_j}X_{j,s}^2,
\end{equation*}
and for the same purpose, due to \eqref{item:38},
\begin{equation*}
  \mathbb{E}_1\mathbb{E}_2X:=\mathbb{E}_1\mathbb{E}_2^{\xi_j}X_j,\quad \mathbb{E}_1 \left( \mathbb{E}_2X \right)^2:=\mathbb{E}_1\left(\mathbb{E}_2^{\xi_j}X_j\right).
\end{equation*}

\end{definition}

\begin{lemma}
\label{lem:large_deviation_bound}
Let $\left\{ X_{j,s}\mid 1\leq j\leq N_B,1\leq s\leq N_F \right\}$ be a collection of Procrustean random functions with respect to some density function $p$ on $UTM$. If
\begin{equation*}
  \left| X_{j,s} \right|\leq M_0,\,\,\left| \mathbb{E}_2^{\xi_j}X_j \right|\leq M_1,\,\,\left| \mathbb{E}_1\mathbb{E}_2X \right|\leq M_2\quad\textrm{a.s. for all }1\leq j\leq N_B,1\leq s\leq N_F,
\end{equation*}
then for any $t>0$ and $0<\theta<1$,
\begin{equation*}
  \begin{aligned}
    &\mathbb{P}\left\{ \frac{1}{N_BN_F}\sum_{j=1}^{N_B}\sum_{s=1}^{N_F}X_{j,s}-\mathbb{E}_1\mathbb{E}_2X>t \right\}\\
    &\leq \sum_{j=1}^{N_B}\exp \left\{-\frac{\displaystyle \frac{1}{2}\left(1-\theta\right)^2N_Ft^2}{\displaystyle \left[\mathbb{E}_2^{\xi_j}X_j^2-\left(\mathbb{E}_2^{\xi_j}X_j \right)^2\right]+ \frac{1}{3}\left(M_0+M_1\right)\left(1-\theta\right) t}  \right\}\\
    &+\exp \left\{ -\frac{\displaystyle \frac{1}{2}\theta^2N_Bt^2}{\displaystyle \left[\mathbb{E}_1 \left( \mathbb{E}_2X\right)^2-\left(\mathbb{E}_1\mathbb{E}_2X \right)^2\right]+\frac{1}{3}\left(M_1+M_2\right)\theta t} \right\}.
  \end{aligned}
\end{equation*}
\end{lemma}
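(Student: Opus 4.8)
The plan is to reduce the bound to two applications of Bernstein's inequality, one ``within a fibre'' and one ``across fibres'', glued together by the free parameter $\theta$. First I would introduce, for each $1\le j\le N_B$, the within-fibre fluctuation $Y_j:=\frac{1}{N_F}\sum_{s=1}^{N_F}X_{j,s}-\mathbb{E}_2^{\xi_j}X_j$ and the across-fibre fluctuation $Z:=\frac{1}{N_B}\sum_{j=1}^{N_B}\mathbb{E}_2^{\xi_j}X_j-\mathbb{E}_1\mathbb{E}_2X$, so that the quantity of interest decomposes as $\frac{1}{N_BN_F}\sum_{j,s}X_{j,s}-\mathbb{E}_1\mathbb{E}_2X=\frac{1}{N_B}\sum_j Y_j+Z$. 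Since a sum of two numbers can exceed $t$ only if the first exceeds $(1-\theta)t$ or the second exceeds $\theta t$, a union bound gives $\mathbb{P}\{\,\cdot>t\,\}\le\mathbb{P}\{\frac{1}{N_B}\sum_j Y_j>(1-\theta)t\}+\mathbb{P}\{Z>\theta t\}$, and it remains to bound each term.

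For the across-fibre term I would use that, by Definition~\ref{defn:procrustesian}\eqref{item:38}, the random variables $\mathbb{E}_2^{\xi_j}X_j$, $1\le j\le N_B$, are i.i.d.\ functions of the i.i.d.\ base points $\xi_j$, with common mean $\mathbb{E}_1\mathbb{E}_2X$; their centred versions are bounded a.s.\ by $M_1+M_2$ and have variance $\mathbb{E}_1(\mathbb{E}_2X)^2-(\mathbb{E}_1\mathbb{E}_2X)^2$. Bernstein's inequality then yields exactly the second exponential in the claimed bound, with $N_B$ in the numerator.

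For the within-fibre term I would first note that an average strictly exceeding $(1-\theta)t$ forces at least one summand to do so, hence $\{\frac{1}{N_B}\sum_j Y_j>(1-\theta)t\}\subseteq\bigcup_{j=1}^{N_B}\{Y_j>(1-\theta)t\}$, and apply a union bound over $j$. Conditionally on $\xi_j$, Definition~\ref{defn:procrustesian}\eqref{item:37} tells us that $X_{j,1},\dots,X_{j,N_F}$ are i.i.d.\ with mean $\mathbb{E}_2^{\xi_j}X_j$; the centred variables are bounded by $M_0+M_1$ and have conditional variance $\mathbb{E}_2^{\xi_j}X_j^2-(\mathbb{E}_2^{\xi_j}X_j)^2$, so the conditional Bernstein bound produces the $j$-th exponential with $N_F$ in the numerator. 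Summing over $j$ and adding the across-fibre estimate gives the lemma.

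The main obstacle is conceptual rather than computational: the $N_BN_F$ samples are \emph{not} jointly independent (only conditionally independent along each fibre), so no single concentration inequality applies to the full average, and one must tease apart the two sources of randomness at the cost of the extra parameter $\theta$, which interpolates between the $N_F$-rate of the fibrewise law of large numbers and the $N_B$-rate of the base-manifold law of large numbers. A minor technical point worth care is that the within-fibre Bernstein estimate is applied conditionally and therefore carries the random conditional variance $\mathbb{E}_2^{\xi_j}X_j^2-(\mathbb{E}_2^{\xi_j}X_j)^2$; since Bernstein's bound holds as a pointwise a.s.\ inequality in the realization of $\xi_j$, taking expectation over $\xi_j$ preserves it, which is what legitimizes writing the bound with that (random) quantity inside the exponential.
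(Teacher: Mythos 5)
Your proposal is correct and follows essentially the same route as the paper's proof: the same decomposition into within-fibre and across-fibre fluctuations, the same $(1-\theta)t$/$\theta t$ union bound, the same reduction of the within-fibre event to a union over $j$ (an average exceeding a threshold forces some summand to exceed it), and two applications of Bernstein's inequality. Your closing remark about the random conditional variance appearing inside the exponential addresses a point the paper itself glosses over (the stated bound is a random quantity, harmless here because the variance is later bounded uniformly in $\xi_j$), but otherwise the arguments coincide.
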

\begin{proof}
  Note that
  \begin{equation*}
    \begin{aligned}
      &\mathbb{P}\left\{ \frac{1}{N_BN_F}\sum_{j=1}^{N_B}\sum_{s=1}^{N_F}X_{j,s}-\mathbb{E}_1\mathbb{E}_2X>t \right\}\\
      &=\mathbb{P}\left\{ \left(\frac{1}{N_BN_F}\sum_{j=1}^{N_B}\sum_{s=1}^{N_F}X_{j,s}-\frac{1}{N_B}\sum_{j=1}^{N_B}\mathbb{E}_2^{\xi_j}X_j\right)+\left(\frac{1}{N_B}\sum_{j=1}^{N_B}\mathbb{E}_2^{\xi_j}X_j-\mathbb{E}_1\mathbb{E}_2X\right)>t \right\}\\
      &\leq \mathbb{P}\left( \left\{\frac{1}{N_BN_F}\sum_{j=1}^{N_B}\sum_{s=1}^{N_F}X_{j,s}-\frac{1}{N_B}\sum_{j=1}^{N_B}\mathbb{E}_2^{\xi_j}X_j>\left(1-\theta\right) t \right\}\bigcup \left\{\frac{1}{N_B}\sum_{j=1}^{N_B}\mathbb{E}_2^{\xi_j}X_j-\mathbb{E}_1\mathbb{E}_2X> \theta t \right\} \right)\\
      &\leq \mathbb{P}\left\{\frac{1}{N_BN_F}\sum_{j=1}^{N_B}\sum_{s=1}^{N_F}X_{j,s}-\frac{1}{N_B}\sum_{j=1}^{N_B}\mathbb{E}_2^{\xi_j}X_j>\left(1-\theta\right) t \right\}+\mathbb{P}\left\{\frac{1}{N_B}\sum_{j=1}^{N_B}\mathbb{E}_2^{\xi_j}X_j-\mathbb{E}_1\mathbb{E}_2X> \theta t \right\}\\
      &=: \left( \mathrm{I} \right)+\left( \mathrm{II} \right),
    \end{aligned}
  \end{equation*}
where $\theta\in \left( 0,1 \right)$ will be fixed in specific applications. Since
\begin{equation*}
  \left| \mathbb{E}_2^{\xi_j}X_j-\mathbb{E}_1\mathbb{E}_2X \right|\leq M_1+M_2,
\end{equation*}
by Bernstein's Inequality~\cite[\S2.2]{Chung2006},
\begin{align*}
    \left( \mathrm{II} \right)&=\mathbb{P}\left\{ \sum_{j=1}^{N_B}\left( \mathbb{E}_2^{\xi_j}X_j -\mathbb{E}_1\mathbb{E}_2X\right)> \theta N_Bt \right\}\\
    &\leq \exp \left\{ -\frac{\displaystyle \frac{1}{2}\theta^2N_B^2t^2}{\displaystyle \sum_{j=1}^{N_B}\mathbb{E}_1\left[ \mathbb{E}_2^{\xi_j}X_j -\mathbb{E}_1\mathbb{E}_2X\right]^2+\frac{1}{3}\left(M_1+M_2\right) \theta N_Bt}\right\}\\
    &=\exp \left\{ -\frac{\displaystyle \frac{1}{2}\theta^2N_B^2t^2}{\displaystyle N_B\left[\mathbb{E}_1 \left(\mathbb{E}_2X\right)^2 -\left(\mathbb{E}_1\mathbb{E}_2X\right)^2\right]+\frac{1}{3}\left(M_1+M_2\right) \theta N_Bt}\right\}\\
    &=\exp \left\{ -\frac{\displaystyle \frac{1}{2}\theta^2N_Bt}{\displaystyle\left[\mathbb{E}_1 \left(\mathbb{E}_2X\right)^2 -\left(\mathbb{E}_1\mathbb{E}_2X\right)^2\right]+\frac{1}{3} \left(M_1+M_2\right) \theta t} \right\}.
\end{align*}
For $\left( \mathrm{I} \right)$, note that
\begin{equation*}
  \begin{aligned}
    \left( \mathrm{I} \right)&=\mathbb{P}\left\{\sum_{j=1}^{N_B}\left( \frac{1}{N_F}\sum_{s=1}^{N_F}X_{j,s}-\mathbb{E}_2^{\xi_j}X_j \right)>\left(1-\theta\right) N_Bt \right\}\leq \sum_{j=1}^{N_B}\mathbb{P}\left\{ \frac{1}{N_F}\sum_{s=1}^{N_F}X_{j,s}-\mathbb{E}_2^{\xi_j}X_j>\left(1-\theta\right) t \right\}\\
    &=\sum_{j=1}^{N_B}\mathbb{P}\left\{ \sum_{s=1}^{N_F}\left( X_{j,s}-\mathbb{E}_2^{\xi_j}X_j \right)>\left(1-\theta\right) N_F t \right\}
  \end{aligned}
\end{equation*}
and applying Bernstein's Inequality to each individual term in the summation yields
\begin{equation*}
  \begin{aligned}
    &\mathbb{P}\left\{ \sum_{s=1}^{N_F}\left( X_{j,s}-\mathbb{E}_2^{\xi_j}X_j \right)>\left(1-\theta\right) N_F t \right\}\leq \exp \left\{ -\frac{\displaystyle \frac{1}{2}\left(1-\theta\right)^2N_F^2t^2}{\displaystyle \sum_{s=1}^{N_F}\mathbb{E}_2^{\xi_j}\left[ X_{j,s}-\mathbb{E}_2^{\xi_j}X_j \right]^2+\frac{1}{3}\left(M_0+M_1\right)\left(1-\theta\right) N_Ft} \right\}\\
    &=\exp \left\{-\frac{\displaystyle \frac{1}{2}\left(1-\theta\right)^2N_F^2t^2}{\displaystyle N_F\left[\mathbb{E}_2^{\xi_j}X_j^2-\left(\mathbb{E}_2^{\xi_j}X_j \right)^2\right]+\frac{1}{3}\left(M_0+M_1\right)\left(1-\theta\right) N_Ft}\right\}\\
    &=\exp \left\{-\frac{\displaystyle \frac{1}{2}\left(1-\theta\right)^2N_F t^2}{\displaystyle \left[\mathbb{E}_2^{\xi_j}X_j^2-\left(\mathbb{E}_2^{\xi_j}X_j \right)^2\right]+\frac{1}{3}\left(M_0+M_1\right)\left(1-\theta\right) t}  \right\},
  \end{aligned}
\end{equation*}
and thus
\begin{equation*}
  \left( \mathrm{I} \right)\leq \sum_{j=1}^{N_B}\exp \left\{-\frac{\displaystyle \frac{1}{2}\left(1-\theta\right)^2N_F t^2}{\displaystyle \left[\mathbb{E}_2^{\xi_j}X_j^2-\left(\mathbb{E}_2^{\xi_j}X_j \right)^2\right]+\frac{1}{3}\left(M_0+M_1\right)\left(1-\theta\right) t}  \right\}.
\end{equation*}
\end{proof}
\begin{remark}
\label{rem:insight_large_deviation_bound}
  Intuitively, the second term in the bound stems from the sampling error on the base manifold, and is thus independent of $\delta$ and $N_F$; the first term in the bound comes from accumulating fibrewise sampling error across all $N_B$ fibres.
\end{remark}

\begin{proof}[Proof of Theorem{\rm ~\ref{thm:utm_finite_sampling_noiseless}}]
  We shall first establish the result for $\alpha=0$. In this case, $\hat{K}^0_{\epsilon,\delta}\left( \cdot,\cdot \right)=\hat{K}_{\epsilon,\delta}\left( \cdot,\cdot \right)$, and
\begin{align*}
    \hat{H}_{\epsilon,\delta}^0f \left( x_{i,r} \right)&=\frac{\displaystyle\sum_{j=1}^{N_B}\sum_{s=1}^{N_F}\hat{K}_{\epsilon,\delta}\left( x_{i,r},x_{j,s} \right)f \left( x_{j,s} \right)}{\displaystyle\sum_{j=1}^{N_B}\sum_{s=1}^{N_F}\hat{K}_{\epsilon,\delta}\left( x_{i,r},x_{j,s} \right)}\\
    &=\frac{\displaystyle \frac{1}{N_BN_F} \sum_{j=1}^{N_B}\sum_{s=1}^{N_F}K \left( \frac{\|\xi_i-\xi_j\|^2}{\epsilon}, \frac{\|P_{\xi_j,\xi_i}x_{i,r}-x_{j,s}\|^2}{\delta} \right)f \left( x_{j,s} \right)}{\displaystyle\frac{1}{N_BN_F}\sum_{j=1}^{N_B}\sum_{s=1}^{N_F}K \left( \frac{\|\xi_i-\xi_j\|^2}{\epsilon}, \frac{\|P_{\xi_j,\xi_i}x_{i,r}-x_{j,s}\|^2}{\delta} \right)}.
\end{align*}
Since $\left\{x_{j,s}\right\}_{s=1}^{N_F}$ are i.i.d. with respect to $p \left( \cdot\mid \xi_j \right)$, by the law of large numbers, for each fixed $j=1,\cdots,N_B$, as $N_F\rightarrow\infty$,
\begin{equation*}
  \begin{aligned}
    \lim_{N_F\rightarrow\infty}&\frac{1}{N_F}\sum_{s=1}^{N_F}K \left( \frac{\|\xi_i-\xi_j\|^2}{\epsilon}, \frac{\|P_{\xi_j,\xi_i}x_{i,r}-x_{j,s}\|^2}{\delta} \right)f \left( x_{j,s} \right)\\
    &= \int_{S_{\xi_j}}K \left( \frac{\|\xi_i-\xi_j\|^2}{\epsilon}, \frac{\|P_{\xi_j,\xi_i}x_{i,r}-w\|^2}{\delta} \right)f \left( \xi_j,w \right) p \left( w\mid\xi_j \right) d\sigma_{\xi_j} \left( w \right),
  \end{aligned}
\end{equation*}
Note that $\left\{\xi_j\right\}_{j=1}^{N_B}$ are i.i.d. with respect to $\overline{p}$, again by the law of large numbers, as $N_B\rightarrow\infty$,
\begin{equation*}
  \begin{aligned}
    &\lim_{N_B\rightarrow\infty}\frac{1}{N_B}\sum_{j=1}^{N_B}\lim_{N_F\rightarrow\infty}\frac{1}{N_F}\sum_{s=1}^{N_F} K \left( \frac{\|\xi_i-\xi_j\|^2}{\epsilon}, \frac{\|P_{\xi_j,\xi_i}x_{i,r}-x_{j,s}\|^2}{\delta} \right)f \left( x_{j,s} \right)\\
    &=\int_M\overline{p}\left( y \right)\cdot\frac{1}{\omega_{d-1}}\int_{S_{\xi_j}}\!\!\!K \left( \frac{\|\xi_i-y\|^2}{\epsilon}, \frac{\|P_{y,\xi_i}x_{i,r}-w\|^2}{\delta} \right)f \left( y,w \right)d\sigma_{\xi_j} \left( w \right)p \left( w\mid y \right)d\mathrm{vol}_M\left( y \right)\\
    &=\int_{UTM}K \left( \frac{\|\xi_i-y\|^2}{\epsilon}, \frac{\|P_{y,\xi_i}x_{i,r}-w\|^2}{\delta} \right)f \left( y,w \right)p \left( y,w \right) d\Theta \left( y,w \right),
  \end{aligned}
\end{equation*}
where we used $p \left( y,w \right)=\overline{p}\left( y \right)p \left( w\mid y \right)$. Setting $f\equiv 1$,
\begin{equation*}
  \begin{aligned}
    &\lim_{N_B\rightarrow\infty}\frac{1}{N_B}\sum_{j=1}^{N_B}\lim_{N_F\rightarrow\infty}\frac{1}{N_F}\sum_{s=1}^{N_F} K \left( \frac{\|\xi_i-\xi_j\|^2}{\epsilon}, \frac{\|P_{\xi_j,\xi_i}x_{i,r}-x_{j,s}\|^2}{\delta} \right)\\
    &=\int_{UTM}K \left( \frac{\|\xi_i-y\|^2}{\epsilon}, \frac{\|P_{y,\xi_i}x_{i,r}-w\|^2}{\delta} \right)p \left( y,w \right) d\Theta \left( y,w \right).
  \end{aligned}
\end{equation*}
Therefore,
\begin{align*}
    \lim_{N_B\rightarrow\infty}&\lim_{N_F\rightarrow\infty}\hat{H}_{\epsilon,\delta}^0f \left( x_{i,r} \right)\\
    &=\frac{\displaystyle\int_{UTM}K \left( \frac{\|\xi_i-y\|^2}{\epsilon}, \frac{\|P_{y,\xi_i}x_{i,r}-w\|^2}{\delta} \right)f \left( y,w \right)p \left( y,w \right)d\Theta \left( y,w \right)}{\displaystyle\int_{UTM}K \left( \frac{\|\xi_i-y\|^2}{\epsilon}, \frac{\|P_{y,\xi_i}x_{i,r}-w\|^2}{\delta} \right)p \left( y,w \right)d\Theta \left( y,w \right)}\\
    &=\tilde{H}_{\epsilon,\delta}^0f \left( x_{i,r} \right)\\
    &=f \left( x_{i,r} \right)+\epsilon \frac{m_{21}}{2m_0}\left[\frac{\Delta_S^H\left[fp\right]\left( x_{i,r} \right)}{p\left( x_{i,r} \right)}-f \left( x_{i,r} \right) \frac{\Delta_S^Hp\left( x_{i,r} \right)}{p\left( x_{i,r} \right)}  \right]\\
    &+\delta \frac{m_{22}}{2m_0}\left[\frac{\Delta_S^V\left[fp\right]\left( x_{i,r} \right)}{p\left( x_{i,r} \right)}-f \left( x_{i,r} \right) \frac{\Delta_S^Vp\left( x_{i,r} \right)}{p\left( x_{i,r} \right)}  \right]+O \left( \epsilon^2+\delta^2 \right),
\end{align*}
where in the last equality we used the assumption $\delta=O \left( \epsilon \right)$ as $\epsilon\rightarrow0$, as well as the observation in Remark~\ref{rem:lazy_remark} that Theorem~\ref{thm:main_utm} holds true when $K_{\epsilon,\delta}^{\alpha}$ is replaced with $\tilde{K}_{\epsilon,\delta}^{\alpha}$. The bias error is thus $O \left( \epsilon^2+\delta^2 \right)$.

It remains to estimate the variance error for the special case $\alpha=0$. Write for any fixed $x_{i,r}\in UTM$
\begin{equation*}
  \begin{aligned}
    F_{j,s} = \hat{K}_{\epsilon,\delta}\left( x_{i,r},x_{j,s} \right)f \left( x_{j,s} \right),\qquad G_{j,s} = \hat{K}_{\epsilon,\delta}\left( x_{i,r},x_{j,s} \right).
  \end{aligned}
\end{equation*}
Note that $F_{i,s}=0$, $G_{i,s}=0$ for all $s=1,\cdots,N_F$, by Definition~\ref{defn:utm_finite_sampling_noiseless}~\eqref{item:34}. Also, by the compactness of $UTM$ we have some trivial bounds uniform in $j,s$:
\begin{equation*}
  \left| F_{j,s} \right|\leq \left\|K\right\|_{\infty}\left\|f\right\|_{\infty},\quad \left| G_{j,s} \right|\leq \left\|K\right\|_{\infty}.
\end{equation*}
In these notations,
\begin{equation*}
  \lim_{N_B\rightarrow\infty}\lim_{N_F\rightarrow\infty}\hat{H}_{\epsilon,\delta}^0f \left( x_{i,r} \right)=\frac{\mathbb{E}_1\mathbb{E}_2 F}{\mathbb{E}_1\mathbb{E}_2 G},
\end{equation*}
and we would like to estimate
\begin{equation*}
  p \left( N_B,N_F,\beta \right):=\mathbb{P} \left\{ \frac{\sum_j\sum_sF_{j,s}}{\sum_j\sum_sG_{j,s}}-\frac{\mathbb{E}_1\mathbb{E}_2 F}{\mathbb{E}_1\mathbb{E}_2 G}>\beta \right\}
\end{equation*}
for sufficiently small $\beta>0$. An upper bound for
\begin{equation*}
  \mathbb{P} \left\{ \frac{\sum_j\sum_sF_{j,s}}{\sum_j\sum_sG_{j,s}}-\frac{\mathbb{E}_1\mathbb{E}_2 F}{\mathbb{E}_1\mathbb{E}_2 G}<-\beta \right\}
\end{equation*}
can be obtained in a similar manner.

Since $G_{j,s}$ are all positive,
\begin{equation*}
  \begin{aligned}
    &p \left( N_B,N_F,\beta \right) = \mathbb{P} \left\{ \frac{\left(\sum_j\sum_sF_{j,s}\right)\mathbb{E}_1\mathbb{E}_2 G-\left( \sum_j\sum_sG_{j,s} \right)\mathbb{E}_1\mathbb{E}_2 F}{\left(\sum_j\sum_sG_{j,s}\right)\mathbb{E}_1\mathbb{E}_2 G} > \beta \right\}\\
    &=\mathbb{P} \left\{ \left(\sum_j\sum_sF_{j,s}\right)\mathbb{E}_1\mathbb{E}_2 G-\left( \sum_j\sum_sG_{j,s} \right)\mathbb{E}_1\mathbb{E}_2 F>\beta\left(\sum_j\sum_sG_{j,s}\right)\mathbb{E}_1\mathbb{E}_2 G \right\}.
  \end{aligned}
\end{equation*}
Denote
\begin{equation*}
  Y_{j,s}:=F_{j,s}\mathbb{E}_1\mathbb{E}_2G-G_{j,s}\mathbb{E}_1\mathbb{E}_2F+\beta \left( \mathbb{E}_1\mathbb{E}_2G-G_{j,s} \right)\mathbb{E}_1\mathbb{E}_2G,
\end{equation*}
then it is easily verifiable that $\mathbb{E}_1\mathbb{E}_2Y_{j,s}=0$ for all $1\leq j\leq N_B$, $1\leq s\leq N_F$, and
\begin{equation*}
  p \left( N_B,N_F,\beta \right)=\mathbb{P}\left\{ \frac{1}{N_BN_F}\sum_j\sum_s Y_{j,s}> \beta\left( \mathbb{E}_1\mathbb{E}_2G \right)^2 \right\}.
\end{equation*}
By Lemma~\ref{lem:large_deviation_bound}, bounding this quantity reduces to computing various moments. 
Define
\begin{equation*}
  X_j:=\mathbb{E}_2Y_j,
\end{equation*}
then $X_1,\cdots,X_{N_B}$ are i.i.d. on $M$ with respect to $\overline{p}$, and $\mathbb{E}_1X_j=0$ for $1\leq j\leq N_B$. Furthermore, $X_1,\cdots,X_{N_B}$ are uniformly bounded. To find this bound explicitly, note that
\begin{equation*}
  \begin{aligned}
    \left| X_j \right|&=\left| \mathbb{E}_2Y_j \right|=\left| \left(\mathbb{E}_2F_j\right)\mathbb{E}_1\mathbb{E}_2G-\left(\mathbb{E}_2G_j\right)\mathbb{E}_1\mathbb{E}_2F+\beta \left( \mathbb{E}_1\mathbb{E}_2G-\mathbb{E}_2G_j \right)\mathbb{E}_1\mathbb{E}_2G \right|\\
    &\leq \left|\left(\mathbb{E}_2F_j\right)\mathbb{E}_1\mathbb{E}_2G \right|+ \left|\left(\mathbb{E}_2G_j\right)\mathbb{E}_1\mathbb{E}_2F\right|+\beta \left( \mathbb{E}_1\mathbb{E}_2G \right)^2+\beta \left| \mathbb{E}_2G_j \right|\left| \mathbb{E}_1\mathbb{E}_2G \right|,
  \end{aligned}
\end{equation*}
and recall from Lemma~\ref{lem:euclidean_kernel_integral} and Remark~\ref{rem:lazy_remark} that
\begin{equation*}
  \begin{aligned}
    \mathbb{E}_1\mathbb{E}_2F &= O \left( \epsilon^{\frac{d}{2}}\delta^{\frac{d-1}{2}} \right),\quad \mathbb{E}_1\mathbb{E}_2G = O \left( \epsilon^{\frac{d}{2}}\delta^{\frac{d-1}{2}} \right),\\
    \mathbb{E}_2F_j&=O \left( \delta^{\frac{d-1}{2}} \right),\quad \mathbb{E}_2G_j=O \left( \delta^{\frac{d-1}{2}} \right),
  \end{aligned}
\end{equation*}
thus
\begin{equation*}
  \left| X_j \right|\leq  \tilde{C}\epsilon^{\frac{d}{2}}\delta^{d-1}+\beta \left( \epsilon^d\delta^{d-1}+\epsilon^{\frac{d}{2}}\delta^{d-1} \right)
\end{equation*}
where $\tilde{C}$ is some positive constant depending on the pointwise bounds of $K$, $p$, and $f$. Since we will be mostly interested in small $\beta>0$, let us pick $\beta = O \left( \epsilon^2+\delta^2 \right)$ and write the upper bound on $\left| X_j \right|$ as
\begin{equation}
\label{eq:constant_C}
  \left| X_j \right|\leq C\epsilon^{\frac{d}{2}}\delta^{d-1}, \quad C=C \left(\left\|K\right\|_{\infty}, \left\|f\right\|_{\infty}, p_m, p_M\right)>0.
\end{equation}
We then need to bound $\mathbb{E}_1X_j^2$. Note that
\begin{align*}
    \mathbb{E}_1X_j^2&=\mathbb{E}_1\left[\left(\mathbb{E}_2F_j\right)^2\right]\left(\mathbb{E}_1\mathbb{E}_2G\right)^2+\mathbb{E}_1\left[\left(\mathbb{E}_2G_j\right)^2\right]\left(\mathbb{E}_1\mathbb{E}_2F\right)^2\\
    &\quad-2 \mathbb{E}_1\left[\left( \mathbb{E}_2F_j \right)\left( \mathbb{E}_2G_j \right)\right]\left( \mathbb{E}_1\mathbb{E}_2F \right)\left( \mathbb{E}_1\mathbb{E}_2G \right)+\beta^2  \mathbb{E}_1 \left[\left( \mathbb{E}_1\mathbb{E}_2G-\mathbb{E}_2G_j \right)^2\right]\left(\mathbb{E}_1\mathbb{E}_2G\right)^2\\
    &\quad+2\beta \left(\mathbb{E}_1\mathbb{E}_2G\right)\mathbb{E}_1\big\{\left(\mathbb{E}_1\mathbb{E}_2G-\mathbb{E}_2G_j \right) \left[\left(\mathbb{E}_2F_j\right)\mathbb{E}_1\mathbb{E}_2G-\left(\mathbb{E}_2G_j\right)\mathbb{E}_1\mathbb{E}_2F \right]\big\}\\
    &=\left[\mathbb{E}_1\left(\mathbb{E}_2F_j\right)^2\right]\left(\mathbb{E}_1\mathbb{E}_2G\right)^2+\left[\mathbb{E}_1\left(\mathbb{E}_2G_j\right)^2\right]\left(\mathbb{E}_1\mathbb{E}_2F\right)^2\\
    &\quad-2 \mathbb{E}_1\left[\left( \mathbb{E}_2F_j \right)\left( \mathbb{E}_2G_j \right)\right]\left( \mathbb{E}_1\mathbb{E}_2F \right)\left( \mathbb{E}_1\mathbb{E}_2G \right)+\beta^2 \left( \mathbb{E}_1\mathbb{E}_2G \right)^2 \left[ \mathbb{E}_1 \left(\mathbb{E}_2G\right)^2-\left( \mathbb{E}_1\mathbb{E}_2G \right)^2 \right]\\
    &\quad+2\beta\left(\mathbb{E}_1\mathbb{E}_2G\right) \left[ \mathbb{E}_1 \left( \mathbb{E}_2G \right)^2\mathbb{E}_1\mathbb{E}_2F-\left(\mathbb{E}_1\mathbb{E}_2G\right)\mathbb{E}_1 \left( \mathbb{E}_2F_j\mathbb{E}_2G_j \right) \right],
\end{align*}
it suffices to compute the first and second moments of $\mathbb{E}_2F_j$, $\mathbb{E}_2G_j$ for $1\leq j\leq N_B$. By~\eqref{eq:numerator_taylor_expansion_utm_hat},
\begin{align*}
  \mathbb{E}_1\mathbb{E}_2F&=\epsilon^{\frac{d}{2}}\delta^{\frac{d-1}{2}}\bigg\{m_0\left[fp\right] \left( x_{i,r} \right)+\epsilon \frac{m_{21}}{2}\left( \Delta_S^H\left[fp\right]\left( x_{i,r} \right)+E_1 \left( \xi_i \right)\left[ fp\right]\left( x_{i,r} \right) \right)\\
   &+\delta \frac{m_{22}}{2}\left( \Delta_S^V \left[ fp\right]\left( x_{i,r} \right)+E_2\cdot\left[fp\right]\left( x_{i,r} \right)\right) +O \left( \epsilon^2+\delta^2 \right) \bigg\},
\end{align*}
and if we set $f\equiv 1$, then
\begin{align*}
  \mathbb{E}_1\mathbb{E}_2G&=\epsilon^{\frac{d}{2}}\delta^{\frac{d-1}{2}}\bigg\{m_0p \left( x_{i,r} \right)+\epsilon \frac{m_{21}}{2}\left( \Delta_S^Hp\left( x_{i,r} \right)+E_1 \left( \xi_i \right)p\left( x_{i,r} \right) \right)\\
   &+\delta \frac{m_{22}}{2}\left( \Delta_S^Vp\left( x_{i,r} \right)+E_2 p\left( x_{i,r} \right)\right) +O \left( \epsilon^2+\delta^2 \right) \bigg\}.
\end{align*}
Before we turn to computation of second moments, let us introduce another notation: write the conditional probability density function on fibre $S_x$ as
\begin{equation*}
  p \left( v\mid x \right)=\frac{p \left( x,v \right)}{\overline{p}\left( x \right)}=\frac{p \left( x,v \right)}{\overline{p}\circ\pi \left( x,v \right)}=\left[ \frac{p}{\overline{p}\circ\pi} \right]\left( x,v \right).
\end{equation*}
where $\pi:UTM\rightarrow M$ is the canonical projection from $UTM$ to $M$. This will help us avoid creating new notations for the base and fibre components of the coordinates of $x_{i,r}$, as needed in $p \left( v\mid x \right)$. Applying Lemma~\ref{lem:euclidean_kernel_integral} once, we have
\begin{align*}
  &\mathbb{E}_2F_j=\delta^{\frac{d-1}{2}}\bigg\{ M_0 \left( \frac{\left\|\xi_i-\xi_j\right\|^2}{\epsilon} \right)\left[ \frac{fp}{\overline{p}\circ\pi} \right]\left(P_{\xi_j,\xi_i}x_{i,r} \right)\\
  &+\frac{\delta}{2}M_2 \left( \frac{\left\|\xi_i-\xi_j\right\|^2}{\epsilon} \right)\left[ \Delta_S^V \left[ \frac{fp}{\overline{p}\circ\pi} \right]\left(P_{\xi_j,\xi_i}x_{i,r} \right)+E_2\cdot\left[ \frac{fp}{\overline{p}\circ\pi} \right]\left(P_{\xi_j,\xi_i}x_{i,r} \right) \right]+O \left( \delta^2 \right) \bigg\},
\end{align*}
where $M_0 \left( \cdot \right)$, $M_2 \left( \cdot \right)$ are functions depending only on the kernel $K$, as in the proof of Lemma~\ref{lem:asymp_sym_kernels_utm}. For simplicity of notation, let us write them as $M_0$,$M_2$ for short. Now we square both sides of the equality above
\begin{align*}
  \left( \mathbb{E}_2F_j \right)^2&=\delta^{d-1} \bigg\{M^2_0 \left[ \frac{fp}{\overline{p}\circ\pi} \right]^2\left(P_{\xi_j,\xi_i}x_{i,r} \right)\\
  &+\delta M_0M_2\left[ \frac{fp}{\overline{p}\circ\pi} \right]\left(P_{\xi_j,\xi_i}x_{i,r} \right)\left( \Delta_S^V \left[ \frac{fp}{\overline{p}\circ\pi} \right]\left(P_{\xi_j,\xi_i}x_{i,r} \right)+E_2\cdot\left[ \frac{fp}{\overline{p}\circ\pi} \right]\left(P_{\xi_j,\xi_i}x_{i,r} \right) \right)+O \left( \delta^2 \right) \bigg\},
\end{align*}
and apply Lemma~\ref{lem:euclidean_kernel_integral} to get
\begin{align*}
  &\mathbb{E}_1\left( \mathbb{E}_2F_j \right)^2=\epsilon^{\frac{d}{2}}\delta^{d-1}\bigg\{m'_0\left[ \frac{\left(fp\right)^2}{\overline{p}\circ\pi} \right]\left( x_{i,r} \right)+\epsilon \frac{m_{21}'}{2} \left( \Delta_S^H\left[ \frac{\left(fp\right)^2}{\overline{p}\circ\pi} \right]\left( x_{i,r} \right)+E_1 \left( \xi_i \right)\left[ \frac{\left(fp\right)^2}{\overline{p}\circ\pi} \right]\left( x_{i,r} \right) \right)\\
  &+\delta m_{22}' \left[ fp \right] \left( x_{i,r} \right)\left( \Delta_S^V \left[ \frac{fp}{\overline{p}\circ\pi} \right]\left( x_{i,r} \right)+E_2\cdot\left[ \frac{fp}{\overline{p}\circ\pi} \right]\left( x_{i,r} \right) \right)+O \left( \epsilon^2+\delta^2 \right)\bigg\},
\end{align*}
where $m_0'$, $m_{21}'$, $m_{22}'$ are positive constants determined by the kernel function $K$ and dimension $d$:
\begin{align*}
    m_0'&=\int_{B_1^d \left( 0 \right)}M_0^2 \left( r^2 \right)ds^1\cdots ds^d,\quad r^2=\sum_{k=1}^d\left(s^k\right)^2,\\
    m_{21}'&=\int_{B_1^d \left( 0 \right)}M_0 \left( r^2 \right)\left( s^1 \right)^2ds^1\cdots ds^d,\\
    m_{22}'&=\int_{B_1^d \left( 0 \right)}M_0 \left( r^2 \right)M_2 \left( r^2 \right)ds^1\cdots ds^d.
\end{align*}
Setting $f\equiv 1$, we obtain a similar expansion for the variance of $\mathbb{E}_2G_j$
\begin{align*}
  &\mathbb{E}_1\left( \mathbb{E}_2G_j \right)^2=\epsilon^{\frac{d}{2}}\delta^{d-1}\bigg\{m'_0\left[ \frac{p^2}{\overline{p}\circ\pi} \right]\left( x_{i,r} \right)+\epsilon \frac{m_{21}'}{2} \left( \Delta_S^H\left[ \frac{p^2}{\overline{p}\circ\pi} \right]\left( x_{i,r} \right)+E_1 \left( \xi_i \right)\left[ \frac{p^2}{\overline{p}\circ\pi} \right]\left( x_{i,r} \right) \right)\\
  &+\delta m_{22}' p \left( x_{i,r} \right) \left(\Delta_S^V \left[ \frac{p}{\overline{p}\circ\pi} \right]\left( x_{i,r} \right)+E_2\cdot\left[ \frac{p}{\overline{p}\circ\pi} \right]\left( x_{i,r} \right) \right)+O \left( \epsilon^2+\delta^2 \right)\bigg\}.
\end{align*}
Similarly,
\begin{align*}
  &\mathbb{E}_1 \left[ \left(\mathbb{E}_2F_j\right)\left(\mathbb{E}_2G_j\right) \right]=\epsilon^{\frac{d}{2}}\delta^{d-1}\bigg\{m'_0 \left[ \frac{fp^2}{\overline{p}\circ\pi}\right]\left( x_{i,r} \right)\\
  &+\epsilon \frac{m_{21}'}{2} \left( \Delta_S^H\left[ \frac{fp^2}{\overline{p}\circ\pi} \right]\left( x_{i,r} \right)+E_1 \left( \xi_i \right)\left[ \frac{fp^2}{\overline{p}\circ\pi} \right]\right)\\
  &+\delta \frac{m_{22}'}{2} \bigg( p\left( x_{i,r} \right) \Delta_S^V \left[ \frac{fp}{\overline{p}\circ\pi} \right]\left( x_{i,r} \right)+\left[fp\right]\left( x_{i,r} \right) \Delta_S^V \left[ \frac{p}{\overline{p}\circ\pi} \right]\left( x_{i,r} \right)\\
  &+2E_2\cdot \left[ \frac{fp^2}{\overline{p}\circ\pi} \right]\left( x_{i,r} \right) \bigg)+O \left( \epsilon^2+\delta^2 \right)\bigg\}.
\end{align*}
It remains to plug all these moment expansions back into $\mathbb{E}_1X_j^2$. Clearly, we are only interested in scenarios in which $\beta$ is sufficiently small, say $\beta=O \left( \epsilon^2+\delta^2 \right)$, thus the $O \left( \beta \right)$ and $O \left( \beta^2 \right)$ terms in $\mathbb{E}_1X_j^2$ can be thrown into $O \left[\epsilon^{\frac{3d}{2}}\delta^{2 \left( d-1 \right)}\left( \epsilon^2+\delta^2 \right)\right]$. Direct computation yields
\begin{align*}
  \mathbb{E}_1X_j^2&=\left[\mathbb{E}_1\left(\mathbb{E}_2F_j\right)^2\right]\left(\mathbb{E}_1\mathbb{E}_2G\right)^2+\left[\mathbb{E}_1\left(\mathbb{E}_2G_j\right)^2\right]\left(\mathbb{E}_1\mathbb{E}_2F\right)^2\\
   &\qquad-2 \mathbb{E}_1\left[\left( \mathbb{E}_2F_j \right)\left( \mathbb{E}_2G_j \right)\right]\left( \mathbb{E}_1\mathbb{E}_2F \right)\left( \mathbb{E}_1\mathbb{E}_2G \right)+O \left( \epsilon^2+\delta^2 \right)\\
   &=\epsilon^{\frac{3d}{2}}\delta^{2 \left( d-1 \right)}\bigg\{\epsilon \frac{m_0^2m_{21}'}{2}p^2\left( x_{i,r} \right)\bigg(\Delta_S^H \left[ \frac{\left(fp\right)^2}{\overline{p}\circ\pi} \right] \left( x_{i,r} \right)+f^2 \left( x_{i,r} \right)\Delta_S^H\left[ \frac{p^2}{\overline{p}\circ\pi} \right]\left( x_{i,r} \right)\\
   &\qquad-2f \left( x_{i,r} \right)\Delta_S^H \left[\frac{fp^2}{\overline{p}\circ\pi} \right]\left( x_{i,r} \right) \bigg)+O \left( \epsilon^2+\delta^2 \right)\bigg\}\\
   &\overset{\left(*\right)}{=}\epsilon^{\frac{3d}{2}}\delta^{2 \left( d-1 \right)}\bigg\{\epsilon \frac{m_0^2m_{21}'}{2}p^2\left( x_{i,r} \right)\cdot 2\left[ \frac{p^2}{\overline{p}\circ\pi} \right]\left( x_{i,r} \right)\left\|\nabla_S^H f \right\|^2\left( x_{i,r} \right)\\
   &\qquad\qquad\qquad +O \left( \epsilon^2+\delta^2 \right)\bigg\}\\
   &=\epsilon^{\frac{3d}{2}}\delta^{2 \left( d-1 \right)}\bigg\{\epsilon m_0^2m_{21}'\left[ \frac{p^4}{\overline{p}\circ\pi} \right]\left( x_{i,r} \right)\left\|\nabla_S^H f \right\|^2 \left( x_{i,r} \right)+O \left( \epsilon^2+\delta^2 \right)\bigg\}.
\end{align*}
Note that at $\left( * \right)$ we used (denote $g=p^2/\left(\overline{p}\circ\pi\right)$ for short)
\begin{align*}
  \Delta_S^H \left( f^2g \right)&+f^2\Delta_S^Hg-2f\Delta_S^H \left( fg \right)\\
  &=\left( \Delta_S^Hf^2 \right)g+ f^2\Delta_S^Hg+2\left\langle\nabla_S^Hf^2,\nabla_S^Hg\right\rangle+f^2\Delta_S^Hg\\
  &\qquad-2f^2\Delta_S^Hg-2fg\Delta_S^Hf-4f\left\langle\nabla_S^Hf,\nabla_S^Hg\right\rangle\\
  &=\left[ \Delta_S^Hf^2-2f\Delta_S^Hf \right]g+2\left\langle\nabla_S^Hf^2-2f\nabla_S^Hf, \nabla_S^Hg  \right\rangle\\
  &=2\left\|\nabla^H_Sf\right\|^2g.
\end{align*}
Therefore, we can bound $\mathbb{E}_1 X_j^2$ uniformly in $j$ as
\begin{equation*}
  \mathbb{E}_1X_j^2\leq \epsilon^{\frac{3d}{2}}\delta^{2 \left( d-1 \right)}\left( C'\epsilon+O \left( \epsilon^2+\delta^2 \right)\right)
\end{equation*}
where
\begin{equation*}
  C'=\frac{m_0^2m_{21}'p_M^4\left\|\nabla_S^Hf\right\|_{\infty}^2}{\omega_{d-1}^4p_m}
\end{equation*}
is a positive constant. Interestingly, $O \left( \delta \right)$ terms do not show up in this bound. In hindsight, this makes sense because $X_j=\mathbb{E}_2Y_{j,s}$ is already the expectation along the fibre direction, which intuitively ``froze'' the variability controlled by the fibrewise bandwidth $\delta$.

It remains to bound
\begin{equation*}
  \mathbb{E}_2^{\xi_j}Y_j^2-\left(\mathbb{E}_2^{\xi_j}Y_j\right)^2
\end{equation*}
for each $1\leq j\leq N_B$. For $\beta= O \left( \epsilon^2+\delta^2 \right)$, a bound for $\left| Y_{j,s} \right|$ can be found as
\begin{equation*}
  \begin{aligned}
    \left| Y_{j,s} \right| &= \left| F_{j,s}\mathbb{E}_1\mathbb{E}_2G-G_{j,s}\mathbb{E}_1\mathbb{E}_2F+\beta \left( \mathbb{E}_1\mathbb{E}_2G-G_{j,s} \right)\mathbb{E}_1\mathbb{E}_2G \right|\\
    &=O \left( \epsilon^{\frac{d}{2}}\delta^{\frac{d-1}{2}} \left\|K\right\|_{\infty}\left\|f\right\|_{\infty} \right)+O \left( \epsilon^{\frac{d}{2}}\delta^{\frac{d-1}{2}}\left\|K\right\|_{\infty}\left\|f\right\|_{\infty} \right)\\
    &\quad+\beta \left( O \left( \epsilon^{\frac{d}{2}}\delta^{\frac{d-1}{2}}\left\|K\right\|_{\infty} \right)+O \left( \epsilon^d\delta^{d-1}\left\|K\right\|_{\infty} \right) \right)\\
    &\leq C\epsilon^{\frac{d}{2}}\delta^{\frac{d-1}{2}}
  \end{aligned}
\end{equation*}
where
\begin{equation*}
  C = C \left( \left\|K\right\|_{\infty},\left\|f\right\|_{\infty},p_m,p_M \right)
\end{equation*}
is a positive constant independent of $j$. Again taking advantage of $\beta=O \left( \epsilon^2+\delta^2 \right)$, we have
\begin{align*}
  \mathbb{E}_2Y_j^2&=\mathbb{E}_2 F_{j,s}^2\left(\mathbb{E}_1\mathbb{E}_2G\right)^2+\mathbb{E}_2 G_{j,s}^2\left(\mathbb{E}_1\mathbb{E}_2F\right)^2-2\mathbb{E}_2 \left( F_{j,s}G_{j,s} \right)\left(\mathbb{E}_1\mathbb{E}_2F\right)\left(\mathbb{E}_1\mathbb{E}_2G\right)\\
  &+O \left[\epsilon^d\delta^{2 \left( d-1 \right)} \left( \epsilon^2+\delta^2 \right)\right],\\
  \left(\mathbb{E}_2Y_j\right)^2&=\left[\left(\mathbb{E}_2F_j\right)\mathbb{E}_1\mathbb{E}_2G-\left(\mathbb{E}_2G_j\right)\mathbb{E}_1\mathbb{E}_2F+\beta \left( \mathbb{E}_1\mathbb{E}_2G-\mathbb{E}_2G_j \right)\mathbb{E}_1\mathbb{E}_2G\right]^2\\
  &=\left(\mathbb{E}_2F_j\right)^2\left(\mathbb{E}_1\mathbb{E}_2G\right)^2+\left(\mathbb{E}_2G_j\right)^2\left(\mathbb{E}_1\mathbb{E}_2F\right)^2\\
  &\qquad-2\left(\mathbb{E}_2F_j\right)\left(\mathbb{E}_1\mathbb{E}_2G\right)\left(\mathbb{E}_2G_j\right)\left(\mathbb{E}_1\mathbb{E}_2F\right)+O \left[\epsilon^d\delta^{2 \left( d-1 \right)} \left( \epsilon^2+\delta^2 \right)\right],
\end{align*}
thus
\begin{equation*}
  \begin{aligned}
    \mathbb{E}_2Y_j^2-\left(\mathbb{E}_2Y_j\right)^2&=\left[ \mathbb{E}_2F_{j,s}^2- \left( \mathbb{E}_2F_j \right)^2 \right]\left(\mathbb{E}_1\mathbb{E}_2G\right)^2+\left[ \mathbb{E}_2G_{j,s}^2- \left( \mathbb{E}_2G_j \right)^2 \right]\left(\mathbb{E}_1\mathbb{E}_2F\right)^2\\
    &\quad+2 \left[ \left( \mathbb{E}_2F_j \right)\left( \mathbb{E}_2G_j \right)-\mathbb{E}_2 \left( F_{j,s}G_{j,s} \right) \right])\left(\mathbb{E}_1\mathbb{E}_2F\right)\left(\mathbb{E}_1\mathbb{E}_2G\right)\\
    &\quad+O \left[\epsilon^d\delta^{2 \left( d-1 \right)} \left( \epsilon^2+\delta^2 \right)\right].
  \end{aligned}
\end{equation*}
Observe that
\begin{equation*}
  \mathbb{E}_2F_{j,s}^2=O \left( \delta^{\frac{d-1}{2}} \right),\quad \mathbb{E}_2G_{j,s}^2=O \left( \delta^{\frac{d-1}{2}} \right),\quad \mathbb{E}_2 \left[ F_{j,s}G_{j,s} \right]=O \left( \delta^{\frac{d-1}{2}} \right),
\end{equation*}
while
\begin{equation*}
  \left(\mathbb{E}_2F_{j,s}\right)^2=O \left( \delta^{d-1} \right),\quad\left(\mathbb{E}_2G_{j,s}\right)^2=O \left( \delta^{d-1} \right),\quad\left( \mathbb{E}_2F_j \right)\left( \mathbb{E}_2G_j \right)=O \left( \delta^{d-1} \right),
\end{equation*}
thus the leading order error in $\mathbb{E}_2Y_j^2-\left(\mathbb{E}_2Y_j\right)^2$ are determined by
\begin{equation*}
  \left(\mathbb{E}_2F_{j,s}^2\right)\left(\mathbb{E}_1\mathbb{E}_2G\right)^2+\left(\mathbb{E}_2G_{j,s}^2\right)\left(\mathbb{E}_1\mathbb{E}_2F\right)^2-2\mathbb{E}_2 \left( F_{j,s}G_{j,s} \right)\left(\mathbb{E}_1\mathbb{E}_2F\right)\left(\mathbb{E}_1\mathbb{E}_2G\right).
\end{equation*}
Note that by Lemma~\ref{lem:euclidean_kernel_integral}
\begin{equation*}
  \begin{aligned}
    \mathbb{E}_2F_{j,s}^2&=\int_{S_{\xi_j}}K^2 \left( \frac{\left\|\xi_i-\xi_j\right\|^2}{\epsilon}, \frac{\left\|P_{\xi_j,\xi_i}x_{i,r}-\left( \xi_j,w \right)\right\|^2}{\delta} \right)f^2 \left( \xi_j,w \right)p \left( w\mid\xi_j \right)d\sigma_{\xi_j}\left( w \right)\\
    &=\delta^{\frac{d-1}{2}}\bigg\{\widetilde{M}_0 \left( \frac{\left\|\xi_i-\xi_j\right\|^2}{\epsilon} \right)f^2 \left( P_{\xi_j,\xi_i}x_{i,r} \right)\left[\frac{p}{\overline{p}\circ\pi}\right]\left( P_{\xi_j,\xi_i}x_{i,r} \right)\\
    &+\frac{\delta}{2}\widetilde{M}_2\left( \frac{\left\|\xi_i-\xi_j\right\|^2}{\epsilon} \right)\left[ \Delta_S^V \left[ \frac{f^2p}{\overline{p}\circ\pi} \right]\left( P_{\xi_j,\xi_i}x_{i,r} \right)+E_2\cdot\left[ \frac{f^2p}{\overline{p}\circ\pi} \right]\left( P_{\xi_j,\xi_i}x_{i,r} \right) \right]\\
    &+O \left( \delta^2 \right)  \bigg\},
  \end{aligned}
\end{equation*}
where $\widetilde{M}_0$, $\widetilde{M}_2$ are constants depending on $\xi_j$ but uniformly bounded over $M$:
\begin{align*}
  \left|\widetilde{M}_0\left( \frac{\left\|\xi_i-\xi_j\right\|^2}{\epsilon} \right)\right|&=\left|\int_{B_1^{d-1} \left( 0 \right)}K^2 \left(\frac{\left\|\xi_i-\xi_j\right\|^2}{\epsilon}, \rho^2 \right)d\theta^1\cdots d\theta^{d-1} \right|\leq \left\|K\right\|_{\infty}^2\mathrm{Vol}\left( B_1^{d-1}\left( 0 \right) \right),\\
  \left|\widetilde{M}_2\left( \frac{\left\|\xi_i-\xi_j\right\|^2}{\epsilon} \right)\right|&=\left|\int_{B_1^{d-1} \left( 0 \right)}\left( \theta^1 \right)^2K^2 \left(\frac{\left\|\xi_i-\xi_j\right\|^2}{\epsilon}, \rho^2 \right)d\theta^1\cdots d\theta^{d-1} \right|\leq \left\|K\right\|_{\infty}^2\mathrm{Vol}\left( B_1^{d-1}\left( 0 \right) \right).
\end{align*}
For simplicity of notation, we shall denote these constants merely as $\widetilde{M}_0$, $\widetilde{M}_2$, dropping the dependency on $\xi_j$. The expansion for $\mathbb{E}_2F_{j,s}^2$ is thus
\begin{equation*}
  \begin{aligned}
    \mathbb{E}_2F_{j,s}^2&=\delta^{\frac{d-1}{2}}\bigg\{\widetilde{M}_0\left[\frac{f^2p}{\overline{p}\circ\pi}\right]\left( P_{\xi_j,\xi_i}x_{i,r} \right)+\frac{\delta}{2}\widetilde{M}_2\bigg[ \Delta_S^V \left[ \frac{f^2p}{\overline{p}\circ\pi} \right]\left( P_{\xi_j,\xi_i}x_{i,r} \right)\\
    &\qquad\qquad\qquad\qquad\qquad\qquad+E_2\cdot\left[ \frac{f^2p}{\overline{p}\circ\pi} \right]\left( P_{\xi_j,\xi_i}x_{i,r} \right) \bigg]+O \left( \delta^2 \right)  \bigg\}.
  \end{aligned}
\end{equation*}
Similarly,
\begin{equation*}
  \begin{aligned}
   \mathbb{E}_2G_{j,s}^2&=\delta^{\frac{d-1}{2}}\bigg\{\widetilde{M}_0\left[\frac{p}{\overline{p}\circ\pi}\right]\left( P_{\xi_j,\xi_i}x_{i,r} \right)+\frac{\delta}{2}\widetilde{M}_2\bigg[ \Delta_S^V \left[ \frac{p}{\overline{p}\circ\pi} \right]\left( P_{\xi_j,\xi_i}x_{i,r} \right)\\
    &\qquad\qquad\qquad\qquad\qquad\qquad+E_2\cdot\left[ \frac{p}{\overline{p}\circ\pi} \right]\left( P_{\xi_j,\xi_i}x_{i,r} \right) \bigg]+O \left( \delta^2 \right)  \bigg\},\\
    \mathbb{E}_2\left(F_{j,s}G_{j,s}\right)&=\delta^{\frac{d-1}{2}}\bigg\{\widetilde{M}_0\left[\frac{fp}{\overline{p}\circ\pi}\right]\left( P_{\xi_j,\xi_i}x_{i,r} \right)+\frac{\delta}{2}\widetilde{M}_2\bigg[ \Delta_S^V \left[ \frac{fp}{\overline{p}\circ\pi} \right]\left( P_{\xi_j,\xi_i}x_{i,r} \right)\\
    &\qquad\qquad\qquad\qquad\qquad\qquad+E_2\cdot\left[ \frac{fp}{\overline{p}\circ\pi} \right]\left( P_{\xi_j,\xi_i}x_{i,r} \right) \bigg]+O \left( \delta^2 \right)  \bigg\}.\\
  \end{aligned}
\end{equation*}
A direct computation yields
\begin{align*}
  &\left(\mathbb{E}_2F_{j,s}^2\right)\left(\mathbb{E}_1\mathbb{E}_2G\right)^2+\left(\mathbb{E}_2G_{j,s}^2\right)\left(\mathbb{E}_1\mathbb{E}_2F\right)^2-2\mathbb{E}_2 \left( F_{j,s}G_{j,s} \right)\left(\mathbb{E}_1\mathbb{E}_2F\right)\left(\mathbb{E}_1\mathbb{E}_2G\right)\\
  &=\epsilon^d\delta^{\frac{3 \left( d-1 \right)}{2}}\bigg\{m_0^2\widetilde{M}_0 p^2\left( x_{i,r} \right)\left[ \frac{p}{\overline{p}\circ\pi} \right]\left( P_{\xi_j,\xi_i}x_{i,r} \right)\left[ f \left( P_{\xi_j,\xi_i}x_{i,r} \right)-f \left( x_{i,r} \right) \right]^2\\
  &+\epsilon m_0m_{21}\widetilde{M}_0p \left( x_{i,r} \right) \left[ \frac{p}{\overline{p}\circ\pi} \right]\left( P_{\xi_j,\xi_i}x_{i,r} \right) \bigg[\left( \Delta_S^Hp\left( x_{i,r} \right)+E_1 \left( \xi \right)p\left( x_{i,r} \right) \right)\left[ f \left( P_{\xi_j,\xi_i}x_{i,r} \right)-f \left( x_{i,r} \right) \right]^2\\
  &-\left(\Delta_S^H\left[fp\right]\left( x_{i,r} \right)-f \left( x_{i,r} \right)\Delta_S^H p\left( x_{i,r} \right) \right)\left[ f \left( P_{\xi_j,\xi_i}x_{i,r} \right)-f \left( x_{i,r} \right) \right]  \bigg]\\
  &+\delta m_0^2\widetilde{M}_2p^2\left( x_{i,r} \right)\left[ \frac{p}{\overline{p}\circ\pi} \right]\left( P_{\xi_j,\xi_i}x_{i,r} \right)\left\|\nabla_S^Hf \left( P_{\xi_j,\xi_i}x_{i,r} \right)\right\|^2\\
  &+\delta m_0m_{22}\widetilde{M}_0p \left( x_{i,r} \right) \left[ \frac{p}{\overline{p}\circ\pi} \right]\left( P_{\xi_j,\xi_i}x_{i,r} \right)\bigg[\left( \Delta_S^Vp\left( x_{i,r} \right)+E_2\cdot p\left( x_{i,r} \right) \right)\left[ f \left( P_{\xi_j,\xi_i}x_{i,r} \right)-f \left( x_{i,r} \right) \right]^2\\
  &-\left(\Delta_S^V\left[fp\right]\left( x_{i,r} \right)-f \left( x_{i,r} \right)\Delta_S^V p\left( x_{i,r} \right) \right)\left[ f \left( P_{\xi_j,\xi_i}x_{i,r} \right)-f \left( x_{i,r} \right) \right]  \bigg]+O \left( \epsilon^2+\delta^2 \right) \bigg\}.
\end{align*}
Recall from Lemma~\ref{lem:taylor_parallel_transport} that the difference between $P_{\xi_j,\xi_i}x_{i,r}$ and $x_{i,r}$ along the fibre direction is $O \left( d^2_M \left( \xi_j,\xi_i \right) \right)=O \left( \epsilon \right)$. Therefore, the distance (under the Sasaki metric) between $P_{\xi_j,\xi_i}x_{i,r}$ and $x_{i,r}$ in $UTM$ is bounded by the square root of the square sum of $d_M \left( \xi_j,\xi_i \right)$ and the difference between $P_{\xi_j,\xi_i}x_{i,r}$ and $x_{i,r}$ along the fibre direction, which is of order $O \left( \epsilon^{\frac{1}{2}} \right)$. As a result, all terms involving\
\begin{equation*}
  \left[ f \left( P_{\xi_j,\xi_i}x_{i,r} \right)-f \left( x_{i,r} \right) \right]
\end{equation*}
are of order $O \left( \epsilon^{\frac{1}{2}} \right)$. Thus
\begin{equation*}
  \begin{aligned}
    &\left| \left(\mathbb{E}_2F_{j,s}^2\right)\left(\mathbb{E}_1\mathbb{E}_2G\right)^2+\left(\mathbb{E}_2G_{j,s}^2\right)\left(\mathbb{E}_1\mathbb{E}_2F\right)^2-2\mathbb{E}_2 \left( F_{j,s}G_{j,s} \right)\left(\mathbb{E}_1\mathbb{E}_2F\right)\left(\mathbb{E}_1\mathbb{E}_2G\right) \right|\\
    &\leq \epsilon^d\delta^{\frac{3 \left( d-1 \right)}{2}} \left( C'\epsilon+C''\delta \right),\quad C'>0,C''>0.
  \end{aligned}
\end{equation*}
As a result,
\begin{equation*}
  \mathbb{E}_2^{\xi_j}Y_j^2-\left(\mathbb{E}_2^{\xi_j}Y_j\right)^2=O \left( \epsilon^d\delta^{\frac{3 \left( d-1 \right)}{2}}\left( \epsilon+\delta \right) \right).
\end{equation*}
We are now ready for applying Lemma~\ref{lem:large_deviation_bound} to $Y_{j,s}$. Since
\begin{equation*}
  \left|\mathbb{E}_1\mathbb{E}_2G\right|=O \left( \epsilon^{\frac{d}{2}}\delta^{\frac{d-1}{2}} \right),
\end{equation*}
we have constants $C''_1,C''_2$ such that
\begin{equation*}
  C_1''\epsilon^{\frac{d}{2}}\delta^{\frac{d-1}{2}}\leq \left|\mathbb{E}_1\mathbb{E}_2G\right| \leq C_2''\epsilon^{\frac{d}{2}}\delta^{\frac{d-1}{2}}.
\end{equation*}
For any $\theta\in\left( 0,1 \right)$ to be fixed later,
\begin{align*}
  &p \left( N_B,N_F,\beta \right)=\mathbb{P}\left\{ \frac{1}{N_BN_F}\sum_j\sum_s Y_{j,s}> \beta\left( \mathbb{E}_1\mathbb{E}_2G \right)^2 \right\}\\
  &\leq \sum_{j=1}^{N_B}\exp \left\{ -\frac{\displaystyle \frac{1}{2}\left(1-\theta\right)^2N_F\beta^2 \left( \mathbb{E}_1\mathbb{E}_2G \right)^4}{\displaystyle \left[\mathbb{E}_2^{\xi_j}Y_j^2-\left(\mathbb{E}_2^{\xi_j}Y_j\right)^2 \right]+\frac{2}{3}\cdot 2C\epsilon^{\frac{d}{2}}\delta^{\frac{d-1}{2}}\left(1-\theta\right)\beta\left( \mathbb{E}_1\mathbb{E}_2G \right)^2} \right\}\\
  &+\exp \left\{ -\frac{\displaystyle \frac{1}{2}\theta^2N_B\beta^2 \left( \mathbb{E}_1\mathbb{E}_2G \right)^4}{\displaystyle \mathbb{E}_1 X_j^2+\frac{2}{3}C\epsilon^{\frac{d}{2}}\delta^{d-1}\theta\beta\left( \mathbb{E}_1\mathbb{E}_2G \right)^2} \right\}\\
  &\leq N_B\cdot \exp \left\{ -\frac{\displaystyle \frac{1}{2}\left(1-\theta\right)^2N_F\beta^2 \left( C_1'' \right)^4\epsilon^{2d}\delta^{2\left(d-1\right)} }{\displaystyle \tilde{C}\epsilon^d\delta^{\frac{3 \left( d-1 \right)}{2}}\left( \epsilon+\delta \right)+\frac{4}{3}C\epsilon^{\frac{d}{2}}\delta^{\frac{d-1}{2}}\left(1-\theta\right)\beta\cdot\left( C_2'' \right)^2\epsilon^d\delta^{d-1}} \right\}\\
  &+\exp \left\{ -\frac{\displaystyle \frac{1}{2}\theta^2N_B\beta^2\left( C_1'' \right)^4\epsilon^{2d}\delta^{2\left(d-1\right)}}{\displaystyle \left(C'\epsilon+O \left( \epsilon^2+\delta^2 \right)  \right)\epsilon^{\frac{3d}{2}}\delta^{2 \left( d-1 \right)}+\frac{2}{3}C\epsilon^{\frac{d}{2}}\delta^{d-1} \theta\beta\cdot\left( C_2'' \right)^2\epsilon^d\delta^{d-1}} \right\}.
\end{align*}
Again by restricting ourselves to $\beta=O \left( \epsilon^2+\delta^2 \right)$, this bound can be rewritten as
\begin{equation}
\label{eq:special_case_bound}
  p \left( N_B,N_F,\beta \right)\leq N_B\exp \left\{ -\frac{\left(1-\theta\right)^2N_F\epsilon^d\delta^{\frac{d-1}{2}}\beta^2}{C_1 \left( \epsilon+\delta \right)+O\left(\epsilon^{\frac{d}{2}}\left( \epsilon^2+\delta^2 \right)\right)} \right\}+\exp \left\{ -\frac{\theta^2N_B\epsilon^{\frac{d}{2}}\beta^2}{C_2\epsilon+O \left( \epsilon^2+\delta^2 \right)} \right\}.
\end{equation}
As pointed out in Remark~\ref{rem:insight_large_deviation_bound}, the second term in this bound is the sampling error on the base manifold; the noise error resulted from this term is of the order
\begin{equation*}
  O\left[\left( N_B\epsilon^{\frac{d}{2}-1} \right)^{-\frac{1}{2}}\right]=O \left(N^{-\frac{1}{2}}_B\epsilon^{\frac{1}{2}-\frac{d}{4}} \right),
\end{equation*}
which is in accordance with the convergence rate obtained in \cite{Singer2006ConvergenceRate}. The first term in the bound reflects the accumulated fibrewise sampling error and grows linearly with respect to the number of fibres sampled, but can be reduced as one increases $N_F$ accordingly (which has an effect of reducing fibrewise sampling errors). The choice of $\theta$ is important: as $\theta$ increases from $0$ to $1$, the first term in the bound decreases but the second term increases. One may wish to pick an ``optimal'' $\theta\in \left( 0,1 \right)$, but this does not make sense unless one chooses $\epsilon,\delta,N_F$ appropriately so as to make the sum of the two terms smaller than $1$. Let us consider $\theta_{*}\in \left( 0,1 \right)$ satisfying
\begin{equation}
\label{eq:seraching_for_theta_star}
  \left(1-\theta_{*}\right)^2N_F\epsilon^d\delta^{\frac{d-1}{2}}=\theta_{*}^2N_B\epsilon^{\frac{d}{2}},
\end{equation}
or equivalently
\begin{equation}
\label{eq:theta_star}
  \epsilon^{\frac{d}{4}}\delta^{\frac{d-1}{4}}\sqrt{\frac{N_F}{N_B}}=\frac{\theta_{*}}{1-\theta_{*}}\Leftrightarrow\theta_{*}=\frac{\displaystyle \epsilon^{\frac{d}{4}}\delta^{\frac{d-1}{4}}\sqrt{\frac{N_F}{N_B}}}{\displaystyle 1+\epsilon^{\frac{d}{4}}\delta^{\frac{d-1}{4}}\sqrt{\frac{N_F}{N_B}}}.
\end{equation}
Setting $\theta=\theta_{*}$ in \eqref{eq:special_case_bound}, we have
\begin{equation}
\label{eq:bound_with_theta_star}
  \begin{aligned}
    p \left( N_B,N_F,\beta \right)& \leq \left( N_B+1 \right)\exp \left\{ -\frac{\theta_{*}^2N_B\epsilon^{\frac{d}{2}}\beta^2}{C \left( \epsilon+\delta \right)} \right\}\\
    &=\exp \left(-\frac{\theta_{*}^2N_B\epsilon^{\frac{d}{2}}\beta^2}{C \left( \epsilon+\delta \right)}+\log \left( N_B+1 \right) \right),
  \end{aligned}
\end{equation}
where $C$ is some positive constant. Since
\begin{equation*}
  \lim_{N_B\rightarrow\infty}\frac{N_B}{\log N_B}=\infty,
\end{equation*}
for any fixed $\epsilon,\delta$ we have $p \left( N_B,N_F,\beta \right)\rightarrow0$ as $N_B\rightarrow\infty$, as long as one increases $N_F$ accordingly so as to prevent $\theta_{*}$ from approaching $0$ or $1$; for instance, this can be achieved by requiring
\begin{equation}
\label{eq:balance_sampling_number}
  \lim_{N_B\rightarrow\infty\atop N_F\rightarrow\infty}\frac{N_F}{N_B}=\rho\in \left( 0,\infty \right).
\end{equation}
Under this condition, we have the pointwise convergence in probability of $\hat{H}_{\epsilon,\delta}^0f$.

We now turn to the general case $\alpha\neq 0$. Recall that
\begin{equation*}
  \begin{aligned}
    \hat{H}_{\epsilon,\delta}^\alpha f \left( x_{i,r} \right)&=\frac{\displaystyle\sum_{j=1}^{N_B}\sum_{s=1}^{N_F}\hat{K}^{\alpha}_{\epsilon,\delta}\left( x_{i,r},x_{j,s} \right)f \left( x_{j,s} \right)}{\displaystyle\sum_{j=1}^{N_B}\sum_{s=1}^{N_F}\hat{K}^{\alpha}_{\epsilon,\delta}\left( x_{i,r},x_{j,s} \right)}=\frac{\displaystyle\sum_{j=1}^{N_B}\sum_{s=1}^{N_F}\frac{\hat{K}_{\epsilon,\delta}\left( x_{i,r},x_{j,s} \right)f \left( x_{j,s} \right)}{\hat{p}^{\alpha}_{\epsilon,\delta}\left( x_{i,r} \right)\hat{p}^{\alpha}_{\epsilon,\delta}\left( x_{j,s} \right)}}{\displaystyle\sum_{j=1}^{N_B}\sum_{s=1}^{N_F}\frac{\hat{K}_{\epsilon,\delta}\left( x_{i,r},x_{j,s} \right)}{\hat{p}^{\alpha}_{\epsilon,\delta}\left( x_{i,r} \right)\hat{p}^{\alpha}_{\epsilon,\delta}\left( x_{j,s} \right)}}
  \end{aligned}
\end{equation*}
where
\begin{equation*}
  \begin{aligned}
    \hat{p}\left( x_{j,s} \right)=\sum_{k=1}^{N_B}\sum_{t=1}^{N_F}\hat{K}_{\epsilon,\delta}\left( x_{j,s},x_{k,t} \right).
  \end{aligned}
\end{equation*}
As $N_B\rightarrow\infty$, $N_F\rightarrow\infty$, by the law of large numbers,
\begin{equation*}
  \begin{aligned}
    \lim_{N_B\rightarrow\infty}\frac{1}{N_B}\lim_{N_F\rightarrow\infty}\frac{1}{N_F}\hat{p}\left( x_{j,s} \right)&=\int_{UTM}\tilde{K}_{\epsilon,\delta}\left( x_{i,r},\eta \right)p \left( \eta \right)d\Theta \left( \eta \right)\\
     &=\tilde{p} \left( x_{i,r} \right)=\mathbb{E}_1\mathbb{E}_2\left[\tilde{K}_{\epsilon,\delta}\left( x_{j,s},\cdot \right)\right].
  \end{aligned}
\end{equation*}
Therefore, as $N_B\rightarrow\infty,N_F\rightarrow\infty$, we expect $\hat{H}_{\epsilon,\delta}^\alpha f \left( x_{i,r} \right)$ to converge to
\begin{equation*}
  \begin{aligned}
    &\frac{\displaystyle\int_{UTM}\tilde{K}^{\alpha}_{\epsilon,\delta} \left( x_{i,r},\eta \right)f \left( \eta \right)p \left( \eta \right)d\Theta \left( y,w \right)}{\displaystyle\int_{UTM}\tilde{K}^{\alpha}_{\epsilon,\delta} \left( x_{i,r},\eta \right)p \left( \eta \right)d\Theta \left( y,w \right)}=\tilde{H}_{\epsilon,\delta}^{\alpha}f \left( x_{i,r} \right)\\
    =&f \left( x_{i,r} \right)+\epsilon \frac{m_{21}}{2m_0}\left[\frac{\Delta_S^H\left[fp^{1-\alpha}\right]\left( x_{i,r} \right)}{p^{1-\alpha}\left( x_{i,r} \right)}-f \left( x_{i,r} \right) \frac{\Delta_S^Hp^{1-\alpha}\left( x_{i,r} \right)}{p^{1-\alpha}\left( x_{i,r} \right)}  \right]\\
    &+\delta \frac{m_{22}}{2m_0}\left[\frac{\Delta_S^V\left[fp^{1-\alpha}\right]\left( x_{i,r} \right)}{p^{1-\alpha}\left( x_{i,r} \right)}-f \left( x_{i,r} \right) \frac{\Delta_S^Vp^{1-\alpha}\left( x_{i,r} \right)}{p^{1-\alpha}\left( x_{i,r} \right)}  \right]+O \left( \epsilon^2+\delta^2 \right),
  \end{aligned}
\end{equation*}
which gives the same bias error $O \left( \epsilon^2+\delta^2 \right)$ as for the $\alpha=0$ case.

Now it remains to estimate the variance error. Since our notation $\hat{p}_{\epsilon,\delta}$ differs from the standard kernel density estimator by a factor $\epsilon^{\frac{d}{2}}\delta^{\frac{d-1}{2}}$, we shall compensate for it in the following computation.
\begin{equation*}
  \begin{aligned}
    \hat{H}_{\epsilon,\delta}^\alpha f \left( x_{i,r} \right)=\frac{\displaystyle\sum_{j=1}^{N_B}\sum_{s=1}^{N_F}\frac{\hat{K}_{\epsilon,\delta}\left( x_{i,r},x_{j,s} \right)f \left( x_{j,s} \right)}{\hat{p}^{\alpha}_{\epsilon,\delta}\left( x_{i,r} \right)\hat{p}^{\alpha}_{\epsilon,\delta}\left( x_{j,s} \right)}}{\displaystyle\sum_{j=1}^{N_B}\sum_{s=1}^{N_F}\frac{\hat{K}_{\epsilon,\delta}\left( x_{i,r},x_{j,s} \right)}{\hat{p}^{\alpha}_{\epsilon,\delta}\left( x_{i,r} \right)\hat{p}^{\alpha}_{\epsilon,\delta}\left( x_{j,s} \right)}}=\frac{\displaystyle\sum_{j=1}^{N_B}\sum_{s=1}^{N_F}\hat{K}_{\epsilon,\delta}\left( x_{i,r},x_{j,s} \right)\hat{p}^{-\alpha}_{\epsilon,\delta}\left( x_{j,s} \right)f \left( x_{j,s} \right)}{\displaystyle\sum_{j=1}^{N_B}\sum_{s=1}^{N_F}\hat{K}_{\epsilon,\delta}\left( x_{i,r},x_{j,s} \right)\hat{p}^{-\alpha}_{\epsilon,\delta}\left( x_{j,s} \right)},
  \end{aligned}
\end{equation*}
and
\begin{align*}
  &\frac{\displaystyle\sum_{j=1}^{N_B}\sum_{s=1}^{N_F}\hat{K}_{\epsilon,\delta}\left( x_{i,r},x_{j,s} \right)\hat{p}^{-\alpha}_{\epsilon,\delta}\left( x_{j,s} \right)f \left( x_{j,s} \right)}{\displaystyle\sum_{j=1}^{N_B}\sum_{s=1}^{N_F}\hat{K}_{\epsilon,\delta}\left( x_{i,r},x_{j,s} \right)\hat{p}^{-\alpha}_{\epsilon,\delta}\left( x_{j,s} \right)}-\frac{\displaystyle\sum_{j=1}^{N_B}\sum_{s=1}^{N_F}\hat{K}_{\epsilon,\delta}\left( x_{i,r},x_{j,s} \right)\tilde{p}^{-\alpha}_{\epsilon,\delta}\left( x_{j,s} \right)f \left( x_{j,s} \right)}{\displaystyle\sum_{j=1}^{N_B}\sum_{s=1}^{N_F}\hat{K}_{\epsilon,\delta}\left( x_{i,r},x_{j,s} \right)\tilde{p}^{-\alpha}_{\epsilon,\delta}\left( x_{j,s} \right)}\\
  =&\frac{\displaystyle\sum_{j=1}^{N_B}\sum_{s=1}^{N_F}\hat{K}_{\epsilon,\delta}\left( x_{i,r},x_{j,s} \right)\left[N_B^{\alpha}N^{\alpha}_F\hat{p}^{-\alpha}_{\epsilon,\delta}\left( x_{j,s} \right)-\tilde{p}^{-\alpha}_{\epsilon,\delta}\left( x_{j,s} \right)\right]f \left( x_{j,s} \right)}{\displaystyle\sum_{j=1}^{N_B}\sum_{s=1}^{N_F}\hat{K}_{\epsilon,\delta}\left( x_{i,r},x_{j,s} \right)N_B^{\alpha}N_F^{\alpha}\hat{p}^{-\alpha}_{\epsilon,\delta}\left( x_{j,s} \right)}\\
  &+\sum_{j=1}^{N_B}\sum_{s=1}^{N_F}\hat{K}_{\epsilon,\delta}\left( x_{i,r},x_{j,s} \right)\tilde{p}^{-\alpha}_{\epsilon,\delta}\left( x_{j,s} \right)f \left( x_{j,s} \right)\times\\
  &\quad\left[\frac{-\displaystyle \sum_{j=1}^{N_B}\sum_{s=1}^{N_F}\hat{K}_{\epsilon,\delta}\left( x_{i,r},x_{j,s} \right)\left[N_B^{\alpha}N_F^{\alpha}\hat{p}^{-\alpha}_{\epsilon,\delta}\left( x_{j,s} \right)-\tilde{p}^{-\alpha}_{\epsilon,\delta}\left( x_{j,s} \right)\right]}{\displaystyle\left(\sum_{j=1}^{N_B}\sum_{s=1}^{N_F}\hat{K}_{\epsilon,\delta}\left( x_{i,r},x_{j,s} \right)N_B^{\alpha}N_F^{\alpha}\hat{p}^{-\alpha}_{\epsilon,\delta}\left( x_{j,s} \right)\right)\left(\sum_{j=1}^{N_B}\sum_{s=1}^{N_F}\hat{K}_{\epsilon,\delta}\left( x_{i,r},x_{j,s} \right)\tilde{p}^{-\alpha}_{\epsilon,\delta}\left( x_{j,s} \right) \right)} \right]\\
  &=: \left( A \right)+ \left( B \right),
\end{align*}
thus if we can estimate $\left( A \right)$, $\left( B \right)$ by controlling the error
\begin{equation*}
\left[N_B^{\alpha}N^{\alpha}_F\hat{p}^{-\alpha}_{\epsilon,\delta}\left( x_{j,s} \right)-\tilde{p}^{-\alpha}_{\epsilon,\delta}\left( x_{j,s} \right)\right]
\end{equation*}
then it suffices to estimate the variance error caused by
\begin{equation}
\label{eq:better_variance_error}
  \frac{\displaystyle\sum_{j=1}^{N_B}\sum_{s=1}^{N_F}\frac{\hat{K}_{\epsilon,\delta}\left( x_{i,r},x_{j,s} \right)f \left( x_{j,s} \right)}{\tilde{p}^{\alpha}_{\epsilon,\delta}\left( x_{i,r} \right)\tilde{p}^{\alpha}_{\epsilon,\delta}\left( x_{j,s} \right)}}{\displaystyle\sum_{j=1}^{N_B}\sum_{s=1}^{N_F}\frac{\hat{K}_{\epsilon,\delta}\left( x_{i,r},x_{j,s} \right)}{\tilde{p}^{\alpha}_{\epsilon,\delta}\left( x_{i,r} \right)\tilde{p}^{\alpha}_{\epsilon,\delta}\left( x_{j,s} \right)}}.
\end{equation}
Our previous proof for the special case $\alpha=0$ can then be applied to \eqref{eq:better_variance_error}: the only adjustment is to replace the kernel $\hat{K}_{\epsilon,\delta}\left( x,y \right)$ in that proof with the $\alpha$-normalized kernel
\begin{equation*}
  \frac{\tilde{K}_{\epsilon,\delta}\left( x,y \right)}{\tilde{p}_{\epsilon,\delta}^{\alpha}\left( x \right)\tilde{p}_{\epsilon,\delta}^{\alpha}\left( y \right)}.
\end{equation*}

We would like to estimate the tail probability
\begin{equation*}
  \mathbb{P}\left\{ \frac{1}{N_BN_F}\hat{p}_{\epsilon,\delta}\left( x_{j,s} \right)-\tilde{p}_{\epsilon,\delta}\left( x_{j,s} \right)>\beta \right\},
\end{equation*}
but since $\tilde{p}_{\epsilon,\delta}\left( x_{j,s} \right)=O \left( \epsilon^{\frac{d}{2}}\delta^{\frac{d-1}{2}} \right)$, it is not lower bounded away from $0$ as $\epsilon,\delta\rightarrow0$. For this reason, and noting that $\left( A \right)$ and $\left( B \right)$ are invariant if we replace $\hat{p}_{\epsilon,\delta}$, $\tilde{p}_{\epsilon,\delta}$ with $\epsilon^{-\frac{d}{2}}\delta^{-\frac{d-1}{2}}\hat{p}_{\epsilon,\delta}$, $\epsilon^{-\frac{d}{2}}\delta^{-\frac{d-1}{2}}\tilde{p}_{\epsilon,\delta}$, we estimate instead
\begin{equation*}
  \begin{aligned}
    q \left( N_B, N_F, \beta \right)&:=\mathbb{P}\left\{ \frac{1}{N_BN_F}\epsilon^{-\frac{d}{2}}\delta^{-\frac{d-1}{2}}\tilde{p}_{\epsilon,\delta}\left( x_{j,s} \right)-\epsilon^{-\frac{d}{2}}\delta^{-\frac{d-1}{2}}\tilde{p}_{\epsilon,\delta}\left( x_{j,s} \right)>\beta \right\}\\
    &=\mathbb{P}\left\{ \frac{1}{N_BN_F}\tilde{p}_{\epsilon,\delta}\left( x_{j,s} \right)-\tilde{p}_{\epsilon,\delta}\left( x_{j,s} \right)>\epsilon^{\frac{d}{2}}\delta^{\frac{d-1}{2}}\beta \right\}
  \end{aligned}
\end{equation*}
where
\begin{equation*}
  \begin{aligned}
    \hat{p}_{\epsilon,\delta}\left( x_{j,s} \right)&=\sum_{k=1}^{N_B}\sum_{t=1}^{N_F}\hat{K}_{\epsilon,\delta}\left( x_{j,s},x_{k,t} \right),\\
    \tilde{p}_{\epsilon,\delta}\left( x_{j,s} \right)&=\mathbb{E}_1\mathbb{E}_2\left[\tilde{K}_{\epsilon,\delta}\left( x_{j,s},\cdot \right)\right].
  \end{aligned}
\end{equation*}
We would like to apply Lemma~\ref{lem:large_deviation_bound} again. For this purpose, first note that
\begin{equation*}
  \begin{aligned}
    \left|\tilde{K}_{\epsilon,\delta}\left( x_{i,r},x_{j,s} \right)\right| &\leq \left\|K\right\|_{\infty},\quad \left|\mathbb{E}_2\left[\tilde{K}_{\epsilon,\delta}\left( x_{i,r},\cdot \right)\right]\right|\leq C\delta^{\frac{d-1}{2}},\\
    &\left|\mathbb{E}_1\mathbb{E}_2 \left[ \tilde{K}_{\epsilon,\delta}\left( x_{i,r},\cdot \right)\right]\right|\leq C\epsilon^{\frac{d}{2}}\delta^{\frac{d-1}{2}},
  \end{aligned}
\end{equation*}
where
\begin{equation*}
  C = C \left( \left\|K\right\|_{\infty},p_M,p_m,d \right)
\end{equation*}
is some positive constant.
Moreover, direct computation yields
\begin{equation*}
  \mathbb{E}_2^{\xi_j}\left[\tilde{K}_{\epsilon,\delta}\left( x_{i,r},\cdot \right)\right]^2=O \left( \delta^{\frac{d-1}{2}} \right),\quad \mathbb{E}_1\left[\mathbb{E}_2\tilde{K}_{\epsilon,\delta}\left( x_{i,r},\cdot \right)\right]^2=O \left( \epsilon^{\frac{d}{2}}\delta^{d-1} \right).
\end{equation*}
Therefore, by Lemma~\ref{lem:large_deviation_bound}, for $\beta=O \left( \epsilon^2+\delta^2 \right)$,
\begin{align*}
  \begin{aligned}
    q \left( N_B,N_F,\beta \right)&\leq N_B\exp \left\{ -\frac{\left(1-\theta\right)^2N_F\epsilon^d\delta^{d-1}\beta^2}{2C_1\delta^{\frac{d-1}{2}}} \right\}+\exp \left\{ -\frac{\theta^2N_B\epsilon^d\delta^{d-1}\beta^2}{2C_1\epsilon^{\frac{d}{2}}\delta^{d-1}} \right\}\\
    &=N_B\exp \left\{ -\frac{\left(1-\theta\right)^2N_F\epsilon^d\delta^{\frac{d-1}{2}}\beta^2}{2C_1} \right\}+\exp \left\{ -\frac{\theta^2N_B\epsilon^{\frac{d}{2}}\beta^2}{2C_1} \right\}
  \end{aligned}
\end{align*}
where $C_1>0$ is some constant. A simple union bound gives
\begin{equation*}
  \begin{aligned}
    &\mathbb{P}\left(\bigcup_{j,s}\left\{\left|\frac{1}{N_BN_F}\hat{p}_{\epsilon,\delta}\left( x_{j,s} \right)-\tilde{p}_{\epsilon,\delta}\left( x_{j,s} \right) \right|>\epsilon^{\frac{d}{2}}\delta^{\frac{d-1}{2}}\beta\right\}\right)\\
    &\leq N_BN_F \left[ N_B\exp \left\{ -\frac{\left(1-\theta\right)^2N_F\epsilon^d\delta^{\frac{d-1}{2}}\beta^2}{2C_1} \right\}+\exp \left\{ -\frac{\theta^2N_B\epsilon^{\frac{d}{2}}\beta^2}{2C_1} \right\} \right].
  \end{aligned}
\end{equation*}
If we let $\theta=\theta_{*}$, where $\theta_{*}$ is defined in \eqref{eq:seraching_for_theta_star},
\begin{align*}
  \left(1-\theta_{*}\right)^2N_F=\frac{\theta_{*}^2N_B}{\epsilon^{\frac{d}{2}}\delta^{\frac{d-1}{2}}},
\end{align*}
and hence
\begin{equation}
\label{eq:density_uniform_bound}
\begin{aligned}
  &\mathbb{P}\left(\bigcup_{j,s}\left\{\left|\frac{1}{N_BN_F}\hat{p}_{\epsilon,\delta}\left( x_{j,s} \right)-\tilde{p}_{\epsilon,\delta}\left( x_{j,s} \right) \right|>\epsilon^{\frac{d}{2}}\delta^{\frac{d-1}{2}}\beta\right\}\right)\\
  &\leq N_B \left( N_B+1 \right)N_F\exp \left\{ -\frac{\theta_{*}^2N_B\epsilon^{\frac{d}{2}}\beta^2}{2C_1} \right\}.
\end{aligned}
\end{equation}
We are interested in seeing how this bound compares with the bound in \eqref{eq:bound_with_theta_star}. As $N_B,N_F\rightarrow\infty$, as long as \eqref{eq:balance_sampling_number} holds,
\begin{align*}
  &\frac{\displaystyle N_B \left( N_B+1 \right)N_F \exp \left\{ -\frac{\theta_{*}^2N_B\epsilon^{\frac{d}{2}}\beta^2}{2C_1} \right\}}{\displaystyle \left( N_B+1 \right)\exp \left\{ -\frac{\theta_{*}^2N_B\epsilon^{\frac{d}{2}}\beta^2}{C \left( \epsilon+\delta \right)} \right\}}\\
  =&N_B N_F\exp \left\{ -\theta_{*}^2N_B\epsilon^{\frac{d}{2}}\beta^2\left[\frac{1}{2C_1}-\frac{1}{C \left( \epsilon+\delta \right)}\right] \right\}\longrightarrow\infty\quad\textrm{for small $\epsilon,\delta$},
\end{align*}
thus the bound in \eqref{eq:bound_with_theta_star} is asymptotically negligible compared to the bound in \eqref{eq:density_uniform_bound}. This means that when $\alpha\neq 0$ the density estimation in general slows down the convergence rate by a factor $\left( \epsilon+\delta \right)^{\frac{1}{2}}$, which is consistent with the conclusion for standard diffusion maps on manifolds~\cite{SingerWu2012VDM,HeinAudibertVonLuxburg2007}, since $\hat{H}_{\epsilon,\delta}^{\alpha}$ is essentially the heat kernel of the diffusion process (instead of the graph hypoelliptic Laplacian itself). As a consequence of this observation, we know that for probability at least
\begin{equation*}
  1-N_B \left( N_B+1 \right)N_F\exp \left\{ -\frac{\theta_{*}^2N_B\epsilon^{\frac{d}{2}}\beta^2}{2C_1} \right\}
\end{equation*}
we have
\begin{equation*}
  \left|\frac{\displaystyle\sum_{j=1}^{N_B}\sum_{s=1}^{N_F}\frac{\displaystyle\hat{K}_{\epsilon,\delta}\left( x_{i,r},x_{j,s} \right)f \left( x_{j,s} \right)}{\tilde{p}^{\alpha}_{\epsilon,\delta}\left( x_{i,r} \right)\tilde{p}^{\alpha}_{\epsilon,\delta}\left( x_{j,s} \right)}}{\displaystyle \sum_{j=1}^{N_B}\sum_{s=1}^{N_F}\frac{\hat{K}_{\epsilon,\delta}\left( x_{i,r},x_{j,s} \right)\tilde{p}^{-\alpha}_{\epsilon,\delta}\left( x_{j,s} \right)}{\tilde{p}^{\alpha}_{\epsilon,\delta}\left( x_{i,r} \right)\tilde{p}^{\alpha}_{\epsilon,\delta}\left( x_{j,s} \right)}}-\tilde{H}^{\alpha}_{\epsilon,\delta}f \left( x_{i,r} \right)\right|\leq \beta
\end{equation*}
as well as
\begin{equation*}
  \left|\frac{1}{N_BN_F}\hat{p}_{\epsilon,\delta}\left( x_{j,s} \right)-\tilde{p}_{\epsilon,\delta}\left( x_{j,s} \right) \right|\leq \epsilon^{\frac{d}{2}}\delta^{\frac{d-1}{2}}\beta\quad\textrm{for all }1\leq j\leq N_B, 1\leq s\leq N_F.
\end{equation*}
Note that by our assumption
\begin{equation*}
  0<p_m\leq p \left( x,v \right)\leq p_M<\infty\quad\textrm{for all }\left( x,v \right)\in UTM
\end{equation*}
there exists constants $C_1,C_2$ such that
\begin{equation*}
  0<C_1 < \epsilon^{-\frac{d}{2}}\delta^{-\frac{d-1}{2}}\tilde{p}_{\epsilon,\delta}\left( x_{j,s} \right) < C_2<\infty.
\end{equation*}
For sufficiently small $\beta$, these bounds also apply to $N_B^{-1}N_F^{-1}\hat{p}_{\epsilon,\delta}\left( x_{j,s} \right)$ with high probability:
\begin{equation*}
  0<C_1 < \frac{1}{N_BN_F}\epsilon^{-\frac{d}{2}}\delta^{-\frac{d-1}{2}}\hat{p}_{\epsilon,\delta}\left( x_{j,s} \right) < C_2<\infty.
\end{equation*}
More specifically, we have
\begin{align*}
  &0<\frac{1}{C_2\epsilon^{\frac{d}{2}}\delta^{\frac{d-1}{2}}}<N_BN_F\hat{p}_{\epsilon,\delta}^{-1}\left( x_{j,s} \right)<\frac{1}{C_1\epsilon^{\frac{d}{2}}\delta^{\frac{d-1}{2}}}<\infty,\\
  &0<\frac{1}{C_2\epsilon^{\frac{d}{2}}\delta^{\frac{d-1}{2}}}<\tilde{p}_{\epsilon,\delta}^{-1}\left( x_{j,s} \right)<\frac{1}{C_1\epsilon^{\frac{d}{2}}\delta^{\frac{d-1}{2}}}<\infty,
\end{align*}
and
\begin{equation*}
  \left| N_BN_F\hat{p}_{\epsilon,\delta}^{-1}\left( x_{j,s} \right)-\tilde{p}_{\epsilon,\delta}^{-1}\left( x_{j,s} \right) \right|\leq \epsilon^{\frac{d}{2}}\delta^{\frac{d-1}{2}}\beta\cdot \frac{1}{C_1^2\epsilon^d\delta^{d-1}}=\frac{\beta}{C_1^2\epsilon^{\frac{d}{2}}\delta^{\frac{d-1}{2}}}.
\end{equation*}
The errors $\left( A \right)$, $\left( B \right)$ can thus be bounded as
\begin{align*}
  &\left|\left( A \right)\right|\leq C_2^{\alpha}\epsilon^{\frac{\alpha d}{2}}\delta^{\frac{\alpha \left( d-1 \right)}{2}}\left\|f\right\|_{\infty}\cdot \alpha \left(\frac{2}{C_2\epsilon^{\frac{d}{2}}\delta^{\frac{d-1}{2}}}\right)^{\alpha-1}\frac{\beta}{C_1^2\epsilon^{\frac{d}{2}}\delta^{\frac{d-1}{2}}}=\frac{2^{\alpha-1}\alpha C_2 \left\|f\right\|_{\infty}}{C_1^2}\beta,\\
  &\left|\left( B \right)\right|\leq \frac{C_2^{2\alpha}\epsilon^{\alpha d}\delta^{\alpha \left( d-1 \right)}}{C_1^{\alpha}\epsilon^{\frac{\alpha d}{2}}\delta^{\frac{\alpha \left( d-1 \right)}{2}}}\left\|f\right\|_{\infty}\cdot \alpha \left(\frac{2}{C_2\epsilon^{\frac{d}{2}}\delta^{\frac{d-1}{2}}}\right)^{\alpha-1}\frac{\beta}{C_1^2\epsilon^{\frac{d}{2}}\delta^{\frac{d-1}{2}}}=\frac{2^{\alpha-1}\alpha C_2^{\alpha+1} \left\|f\right\|_{\infty}}{C_1^{\alpha+2}}\beta.
\end{align*}
Since $C_1,C_2$ only depends on the kernel function $K$, the dimension $d$, and $p_m,p_M$, these bounds ensures that
\begin{equation*}
  \left| \hat{H}_{\epsilon,\delta}^{\alpha}f \left( x_{i,r} \right)-\tilde{H}_{\epsilon,\delta}^{\alpha}f \left( x_{i,r} \right) \right| < C\beta
\end{equation*}
with probability at least
\begin{equation*}
  1-N_B \left( N_B+1 \right)N_F\exp \left\{ -\frac{\theta_{*}^2N_B\epsilon^{\frac{d}{2}}\beta^2}{2C_1} \right\},
\end{equation*}
where constants $C,C_1$ only depends on the kernel function $K$, the dimension $d$, and $p_m,p_M$. This establishes the conclusion for all $\alpha\in \left[ 0,1 \right]$.
\end{proof}

\subsubsection{Sampling from Empirical Tangent Spaces}
\label{sec:proor-theor-noise}

The following two lemmas from \cite{SingerWu2012VDM} provide estimates for the approximation error in the local PCA step. We adapted these lemmas to our notation; note that the statements are more compact than their original form since we assume $M$ is closed.

\begin{lemma}
\label{lem:local_PCA}
  Suppose $K_{\mathrm{PCA}}\in C^2 \left( \left[ 0,1 \right] \right)$. If $\epsilon_{\mathrm{PCA}}=O \left( N_B^{-\frac{2}{d+2}} \right)$, then, with high probability, the columns of the $D\times d$ matrix $O_i$ determined by local PCA form an orthonormal basis to a $d$-dimensional subspace of $\mathbb{R}^D$ that deviates from $\iota_{*}T_{x_i}M$ by $O \left( \epsilon_{\mathrm{PCA}}^{\frac{3}{2}} \right)$, in the following sense:
  \begin{equation}
    \label{eq:localPCAerror}
    \min_{O\in O \left( d \right)}\|O_i^{\top}\Theta_i-O\|_{\mathrm{HS}}=O \left( \epsilon_{\mathrm{PCA}}^{\frac{3}{2}} \right)=O \left( N_B^{-\frac{3}{d+2}} \right),
  \end{equation}
where $\Theta_i$ is a $D\times d$ matrix whose columns form an orthonormal basis to $\iota_{*}T_{x_i}M$. Let the minimizer if \eqref{eq:localPCAerror} be
\begin{equation}
  \label{eq:localPCAminimizer}
  \hat{O}_i=\argmin_{O\in O \left( d \right)}\|O_i^{\top}\Theta_i-O\|_{\mathrm{F}},
\end{equation}
and denote by $Q_i$ the $D\times d$ matrix
\begin{equation}
  \label{eq:localPCAbasis}
  Q_i:=\Theta_i\hat{O}_i^{\top},
\end{equation}
The columns of $Q_i$ form an orthonormal basis to $\iota_{*}T_{x_i}M$, and
\begin{equation}
  \label{eq:localPCAbasiserror}
  \|O_i-Q_i\|_{\mathrm{F}}=O \left( \epsilon_{\mathrm{PCA}} \right),
\end{equation}
where $\left\|\cdot\right\|_{\mathrm{F}}$ is the matrix Frobenius norm.
\end{lemma}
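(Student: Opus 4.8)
The plan is to reduce the statement to an eigenspace perturbation bound for the weighted local covariance matrix. Suppressing the embedding $\iota$ from the notation, set
\[
C_i := X_iD_i^2X_i^{\top} = \sum_{l=1}^{k} K_{\mathrm{PCA}}\!\left(\frac{\|\xi_i-\xi_{i_l}\|}{\sqrt{\epsilon_{\mathrm{PCA}}}}\right)\bigl(\xi_{i_l}-\xi_i\bigr)\bigl(\xi_{i_l}-\xi_i\bigr)^{\top}\in\mathbb{R}^{D\times D},
\]
whose $d$ leading eigenvectors are, by construction, the columns of $O_i$. Because $K_{\mathrm{PCA}}$ is supported in $[0,1]$, only samples with $\|\xi_j-\xi_i\|\le\sqrt{\epsilon_{\mathrm{PCA}}}$ contribute, so $C_i$ may equivalently be written as a sum of i.i.d.\ bounded rank-one terms over all such $j$; this removes the $k$-nearest-neighbour bookkeeping and makes $C_i$ amenable to matrix concentration. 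First I would compute $\mathbb{E}\,C_i$ by conditioning on $\xi_i$, writing a contributing neighbour as $y=\exp_{\xi_i}(r\theta)$ with $\|\theta\|=1$ and invoking the isometric-embedding expansion $\iota(y)-\iota(\xi_i)=r\,\iota_*\theta+\tfrac12 r^2\Pi(\theta,\theta)+O(r^3)$ already used in Lemma~\ref{lem:distance_expansion}, together with the geodesic-polar volume expansion and the Taylor expansion of $\overline{p}$. In the orthogonal splitting $\mathbb{R}^D=\iota_*T_{\xi_i}M\oplus(\iota_*T_{\xi_i}M)^{\perp}$ this shows that the tangential block of $\mathbb{E}\,C_i$ is of order $N_B\epsilon_{\mathrm{PCA}}^{1+d/2}$ and a positive multiple of $I_d$ to leading order, the normal block is of order $N_B\epsilon_{\mathrm{PCA}}^{2+d/2}$, and the tangential--normal cross block is of still higher order, because the relevant angular integrals $\int_{S^{d-1}}\theta\otimes\Pi(\theta,\theta)\,d\sigma(\theta)$ and their next-order analogues vanish by the reflection symmetry $\theta\mapsto-\theta$ of the sphere.

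Next I would run the variance step: $C_i-\mathbb{E}\,C_i$ is a sum of centred i.i.d.\ rank-one matrices of operator norm $O(\epsilon_{\mathrm{PCA}})$, so a matrix Bernstein inequality (the matrix analogue of the scalar bound used in Lemma~\ref{lem:large_deviation_bound}, and of the concentration step of \cite{SingerWu2012VDM}) bounds $\|C_i-\mathbb{E}\,C_i\|$ with high probability. Under the hypothesis $\epsilon_{\mathrm{PCA}}=O(N_B^{-2/(d+2)})$ the expected number of contributing neighbours is of order $N_B\epsilon_{\mathrm{PCA}}^{d/2}$, which tends to infinity, and the fluctuation is smaller than the spectral gap (of order $N_B\epsilon_{\mathrm{PCA}}^{1+d/2}$) separating the tangential block of $\mathbb{E}\,C_i$ from its normal block by the appropriate power of $\epsilon_{\mathrm{PCA}}$. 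Combining the bias and variance estimates, $C_i$ has a spectral gap of order $N_B\epsilon_{\mathrm{PCA}}^{1+d/2}$ isolating its $d$ largest eigenvalues, while the coupling between the tangential and normal subspaces (from bias and from variance alike, once the parity cancellations are used) is of order $\epsilon_{\mathrm{PCA}}^{3/2}$ relative to that gap; the Davis--Kahan $\sin\Theta$ theorem then yields that the principal angles between the column space of $O_i$ and $\iota_*T_{\xi_i}M$ are $O(\epsilon_{\mathrm{PCA}}^{3/2})$ with high probability, which is exactly \eqref{eq:localPCAerror}. For \eqref{eq:localPCAbasiserror}, since $\hat O_i$ from \eqref{eq:localPCAminimizer} is the orthogonal Procrustes minimiser of $\|O_i^{\top}\Theta_i-O\|_{\mathrm F}$ over $O(d)$ and $Q_i=\Theta_i\hat O_i^{\top}$ as in \eqref{eq:localPCAbasis}, we have $\|O_i-Q_i\|_{\mathrm F}=\|O_i\hat O_i-\Theta_i\|_{\mathrm F}$ after multiplying on the right by $\hat O_i$; decomposing $O_i\hat O_i-\Theta_i$ orthogonally along $\mathrm{col}(O_i)$ and its complement gives $\|O_i\hat O_i-\Theta_i\|_{\mathrm F}^2=\|\hat O_i-O_i^{\top}\Theta_i\|_{\mathrm F}^2+\|(I-O_iO_i^{\top})\Theta_i\|_{\mathrm F}^2$, where the first term is $O(\epsilon_{\mathrm{PCA}}^{3})$ by optimality of $\hat O_i$ and \eqref{eq:localPCAerror}, and the second is the squared subspace distance, also $O(\epsilon_{\mathrm{PCA}}^{3})$; hence $\|O_i-Q_i\|_{\mathrm F}=O(\epsilon_{\mathrm{PCA}}^{3/2})=O(\epsilon_{\mathrm{PCA}})$, the cruder exponent $1$ being all that is used downstream.

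The argument as a whole is the one carried out in \cite{SingerWu2012VDM}; the only adaptation is that compactness of $M$ makes all geometric quantities ($\Pi$, $\overline{p}$, the scalar functions appearing in $\mathbb{E}\,C_i$, and the constants in the concentration bound) uniformly bounded, so that the estimates hold uniformly in $i$ and the exceptional probabilities can be taken uniform over all base samples. I expect the delicate point to be the interface between the bias and variance steps: one must verify that the $k$-nearest-neighbour construction genuinely behaves, for the purpose of concentration, like a weighted sum over an i.i.d.\ sample in a fixed $\sqrt{\epsilon_{\mathrm{PCA}}}$-ball, and one must track the bias and variance orders precisely enough — including the parity cancellations in the cross block — that the spectral gap of $C_i$ strictly dominates the tangential--normal coupling throughout the stated range $\epsilon_{\mathrm{PCA}}=O(N_B^{-2/(d+2)})$, so that Davis--Kahan applies with the claimed rate.
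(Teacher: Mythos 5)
The paper does not actually prove this lemma: its entire ``proof'' is the citation ``See \cite[Lemma B.1]{SingerWu2012VDM}.'' Your sketch reconstructs the cited argument (weighted local covariance matrix, bias expansion via $\iota(y)-\iota(\xi_i)=r\iota_*\theta+\tfrac12 r^2\Pi(\theta,\theta)+O(r^3)$ with parity cancellations, matrix concentration, Davis--Kahan), and that architecture is the right one; so in substance you are filling in what the paper outsources, not taking a different route.

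There is, however, one genuine gap, in your derivation of \eqref{eq:localPCAbasiserror} from \eqref{eq:localPCAerror}. Your Pythagorean identity
$\|O_i\hat O_i-\Theta_i\|_{\mathrm F}^2=\|\hat O_i-O_i^{\top}\Theta_i\|_{\mathrm F}^2+\|(I-O_iO_i^{\top})\Theta_i\|_{\mathrm F}^2$
is correct, but the claim that the second term is $O(\epsilon_{\mathrm{PCA}}^3)$ ``by \eqref{eq:localPCAerror}'' does not hold: writing $\sigma_j=\cos\vartheta_j$ for the singular values of $O_i^{\top}\Theta_i$ ($\vartheta_j$ the principal angles), the Procrustes quantity in \eqref{eq:localPCAerror} is $\bigl(\sum_j(1-\cos\vartheta_j)^2\bigr)^{1/2}\sim\bigl(\sum_j\vartheta_j^4\bigr)^{1/2}$, which is \emph{second} order in the angles, whereas $\|(I-O_iO_i^{\top})\Theta_i\|_{\mathrm F}^2=\sum_j\sin^2\vartheta_j\sim\sum_j\vartheta_j^2$ is first order squared. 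From \eqref{eq:localPCAerror} alone you only get $\vartheta_j=O(\epsilon_{\mathrm{PCA}}^{3/4})$, hence $\|(I-O_iO_i^{\top})\Theta_i\|_{\mathrm F}^2=O(\epsilon_{\mathrm{PCA}}^{3/2})$ and $\|O_i-Q_i\|_{\mathrm F}=O(\epsilon_{\mathrm{PCA}}^{3/4})$, which is \emph{not} $O(\epsilon_{\mathrm{PCA}})$. The same conflation appears earlier when you say the principal angles being $O(\epsilon_{\mathrm{PCA}}^{3/2})$ ``is exactly \eqref{eq:localPCAerror}'' --- an angle bound of that order actually gives the much stronger $\min_O\|O_i^{\top}\Theta_i-O\|=O(\epsilon_{\mathrm{PCA}}^3)$. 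The repair is to carry the first-order $\sin\Theta$ bound from the Davis--Kahan step directly into the last display rather than routing it through \eqref{eq:localPCAerror}: the bias-over-gap ratio you compute is $O(\epsilon_{\mathrm{PCA}})$ (note the cross block of $\mathbb{E}\,C_i$ is generically of the \emph{same} order $N_B\epsilon_{\mathrm{PCA}}^{2+d/2}$ as the normal block, not higher, once the density-gradient and third-order embedding terms are included), which gives $\vartheta_j=O(\epsilon_{\mathrm{PCA}})$ and hence both \eqref{eq:localPCAerror} and \eqref{eq:localPCAbasiserror}.
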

\begin{proof}
  See \cite[Lemma B.1]{SingerWu2012VDM}.
\end{proof}

\begin{lemma}
\label{lem:approximate_parallel_transport}
  Consider points $x_i,x_j\in M$ such that the geodesic distance between them is $O \left( \epsilon^{\frac{1}{2}} \right)$. For $\epsilon_{\mathrm{PCA}}=O \left( N_B^{-\frac{2}{d+2}} \right)$, with high probability, $O_{ij}$ approximates $P_{x_i,x_j}$ in the following sense:
  \begin{equation}
    \label{eq:approximate_parallel_transport}
    O_{ij}\bar{X}_j=\left( \langle\iota_{*}P_{x_i,x_j}X \left( x_j \right),u_l \left( x_i \right)\rangle \right)_{l=1}^d+O \left( \epsilon_{\mathrm{PCA}}^{\frac{1}{2}}+\epsilon^{\frac{3}{2}} \right),\quad\textrm{for all }X\in \Gamma \left( M,TM \right),
  \end{equation}
where $\left\{ u_l \left( x_i \right) \right\}_{l=1}^d$ is an orthonormal set determined by local PCA, and
\begin{equation*}
  \bar{X}_i\equiv \left( \langle\iota_{*}X \left( x_i \right),u_l \left( x_i \right)\rangle \right)_{l=1}^d\in\mathbb{R}^d.
\end{equation*}
\end{lemma}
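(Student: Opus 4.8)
The statement is the transcription of \cite[Lemma B.2]{SingerWu2012VDM} to our setting, in which $M$ is closed; the plan is to follow that argument, the point of the compactness hypothesis being that it makes every implied constant uniform over the pairs $(x_i,x_j)$ under consideration. \emph{Step 1 (reduction to genuine tangent frames).} By Definition~\ref{defn:utm_finite_sampling_noise}, $O_{ij}$ is the orthogonal polar factor of the $d\times d$ Gram matrix $B_i^{\top}B_j$, so that $O_{ij}\bar X_j$ is the image, under this polar factor, of the coordinate vector of $X(x_j)$ in the local PCA frame at $x_j$. By Lemma~\ref{lem:local_PCA} there exist, with high probability (using $\epsilon_{\mathrm{PCA}}=O(N_B^{-2/(d+2)})$), genuine orthonormal bases $Q_i,Q_j$ of $\iota_{*}T_{x_i}M$ and $\iota_{*}T_{x_j}M$ with $\|B_i-Q_i\|_{\mathrm{F}}=O(\epsilon_{\mathrm{PCA}})$ and $\|B_j-Q_j\|_{\mathrm{F}}=O(\epsilon_{\mathrm{PCA}})$, whence $B_i^{\top}B_j=Q_i^{\top}Q_j+O(\epsilon_{\mathrm{PCA}})$. \emph{Step 2 (stability of the polar factor).} Since $d_M(x_i,x_j)=O(\epsilon^{1/2})$, the planes $\iota_{*}T_{x_i}M$ and $\iota_{*}T_{x_j}M$ meet at principal angles of size $O(\epsilon^{1/2})$, so all singular values of $Q_i^{\top}Q_j$ lie in $[1-O(\epsilon),1]$; in particular $Q_i^{\top}Q_j$ is uniformly well conditioned and its polar factor is a Lipschitz function of the matrix near $Q_i^{\top}Q_j$, so the polar factor $O_{ij}$ of $B_i^{\top}B_j$ differs from the polar factor $W_{ij}$ of $Q_i^{\top}Q_j$ by an amount controlled by $\epsilon_{\mathrm{PCA}}$; after the bookkeeping carried out verbatim in \cite{SingerWu2012VDM} this is the source of the $O(\epsilon_{\mathrm{PCA}}^{1/2})$ term.

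\emph{Step 3 (geometric identification with parallel transport).} It remains to prove that $W_{ij}\bar X_j$ agrees with the coordinate vector at $x_i$ of $\iota_{*}P_{x_i,x_j}X(x_j)$ up to $O(d_M^3(x_i,x_j))=O(\epsilon^{3/2})$. Working in geodesic normal coordinates at $x_i$ and inserting the second–fundamental–form expansion of the embedding $\iota$ together with the expansion of parallel transport from Lemma~\ref{lem:taylor_parallel_transport}, one expands $Q_i^{\top}Q_j$ — equivalently, the ambient orthogonal projection $\iota_{*}T_{x_j}M\to\iota_{*}T_{x_i}M$ read in these frames: its symmetric part is $I+O(\epsilon)$, which is precisely what passing to the polar factor removes, so that $W_{ij}$ coincides with the $d\times d$ matrix of $P_{x_i,x_j}$ relative to the chosen frames up to $O(d_M^3(x_i,x_j))$. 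Applying this to $\bar X_j$, whose Euclidean norm is bounded by $\|X\|_{\infty}$ uniformly, and combining with Steps~1--2 gives $O_{ij}\bar X_j=\bigl(\langle\iota_{*}P_{x_i,x_j}X(x_j),u_l(x_i)\rangle\bigr)_{l=1}^d+O(\epsilon_{\mathrm{PCA}}^{1/2}+\epsilon^{3/2})$ with high probability, the probability statement being inherited from Lemma~\ref{lem:local_PCA}.

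The main obstacle is Step~3: the explicit third–order Taylor computation showing that the polar factor of the Gram matrix of two nearby embedded tangent planes reproduces parallel transport along the connecting geodesic with only an $O(\mathrm{dist}^3)$ error — in particular that the $O(\mathrm{dist}^2)$ curvature and second–fundamental–form corrections are either symmetric (hence annihilated by the polar projection) or cancel. This is exactly the delicate estimate in \cite{SingerWu2012VDM}; the closedness of $M$ bounds the curvature tensor, the second fundamental form of $\iota$, and the injectivity radius uniformly, so the resulting $O$-estimates hold uniformly over all pairs $(x_i,x_j)$ with $d_M(x_i,x_j)=O(\epsilon^{1/2})$, which is what allows the conclusion to be stated once and for all $i,j$. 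This uniformity is what the downstream arguments (e.g. the $\delta^{-1}(\epsilon_{\mathrm{PCA}}^{1/2}+\epsilon^{3/2})$ error term in Theorem~\ref{thm:utm_finite_sampling_noise}) rely upon.
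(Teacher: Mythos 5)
Your proposal follows the same route as the paper: the paper's entire proof of this lemma is the citation to \cite[Theorem B.2]{SingerWu2012VDM}, and your three steps are precisely an outline of that argument, with the delicate polar-factor versus parallel-transport Taylor computation deferred to the same reference. The only quibble is internal to your Step 2 --- a Lipschitz bound on the polar factor at a well-conditioned matrix would yield an $O\left( \epsilon_{\mathrm{PCA}} \right)$ perturbation, so the weaker $O\left( \epsilon_{\mathrm{PCA}}^{\frac{1}{2}} \right)$ term must be traced to the detailed bookkeeping in \cite{SingerWu2012VDM} rather than to the stability estimate you invoke --- but since you, like the paper, ultimately defer to that reference, this does not affect the conclusion.
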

\begin{proof}
  See \cite[Theorem B.2]{SingerWu2012VDM}.
\end{proof}

\begin{proof}[Proof of Theorem{\rm ~\ref{thm:utm_finite_sampling_noise}}]
By Definition~\ref{defn:utm_finite_sampling_noise}~\eqref{item:40},
\begin{equation*}
  O_{ji}c_{i,r}=\frac{O_{ji}B_i^{\top}\overline{\tau}_{i,r}}{\left\|B_i^{\top}\overline{\tau}_{i,r}\right\|}.
\end{equation*}
By Lemma~\ref{lem:approximate_parallel_transport},
\begin{equation*}
  O_{ji}B_i^{\top}\overline{\tau}_{i,r}=B_j^{\top}\left( P_{\xi_j,\xi_i}\overline{\tau}_{i,r} \right)+O \left( \epsilon_{\mathrm{PCA}}^{\frac{1}{2}}+\epsilon^{\frac{3}{2}} \right),
\end{equation*}
thus
\begin{align*}
  \frac{O_{ji}B_i^{\top}\overline{\tau}_{i,r}}{\left\|B_i^{\top}\overline{\tau}_{i,r}\right\|}=\frac{B_j^{\top}\left( P_{\xi_j,\xi_i}\overline{\tau}_{i,r} \right)}{\left\|B_i^{\top}\overline{\tau}_{i,r}\right\|}+O \left( \epsilon_{\mathrm{PCA}}^{\frac{1}{2}}+\epsilon^{\frac{3}{2}} \right),
\end{align*}
where we used
\begin{align*}
  \left| \left\|B_i^{\top}\overline{\tau}_{i,r}\right\|_{\mathrm{F}}-1 \right|&=\left| \left\|B_i^{\top}\overline{\tau}_{i,r}\right\|_{\mathrm{F}}-\left\|Q_i^{\top}\overline{\tau}_{i,r}\right\|_{\mathrm{F}}\right|\leq \left\|B_i^{\top}\overline{\tau}_{i,r}-Q_i^{\top}\overline{\tau}_{i,r}\right\|_{\mathrm{F}}\\
  &\leq \left\|B_i^{\top}-Q_i^{\top}\right\|_{\mathrm{F}}=O \left( \epsilon_{\mathrm{PCA}} \right)
\end{align*}
and
\begin{align*}
  \left\|B_j^{\top}\left( P_{\xi_j,\xi_i}\overline{\tau}_{i,r} \right)\right\|\leq \left\|P_{\xi_j,\xi_i}\overline{\tau}_{i,r}\right\|=1.
\end{align*}
Therefore,
\begin{align*}
  O_{ji}c_{i,r}-c_{j,s}&=\frac{O_{ji}B_i^{\top}\overline{\tau}_{i,r}}{\left\|B_i^{\top}\overline{\tau}_{i,r}\right\|}-\frac{B_j^{\top}\overline{\tau}_{j,s}}{\left\|B_j^{\top}\overline{\tau}_{j,s}\right\|}\\
  &=\frac{B_j^{\top}\left( P_{\xi_j,\xi_i}\overline{\tau}_{i,r} \right)}{\left\|B_i^{\top}\overline{\tau}_{i,r}\right\|}+O \left( \epsilon_{\mathrm{PCA}}^{\frac{1}{2}}+\epsilon^{\frac{3}{2}} \right)-\frac{B_j^{\top}\overline{\tau}_{j,s}}{\left\|B_j^{\top}\overline{\tau}_{j,s}\right\|}\\
  &=B_j^{\top}\left( P_{\xi_j,\xi_i}\overline{\tau}_{i,r} \right)-B_j^{\top}\overline{\tau}_{j,s}+O \left( \epsilon_{\mathrm{PCA}}^{\frac{1}{2}}+\epsilon^{\frac{3}{2}} \right)\\
  &=P_{\xi_j,\xi_i}\overline{\tau}_{i,r}-\overline{\tau}_{j,s}+O \left( \epsilon_{\mathrm{PCA}}^{\frac{1}{2}}+\epsilon^{\frac{3}{2}} \right),
\end{align*}
and
\begin{align*}
  \left|\left\|O_{ji}c_{i,r}-c_{j,s}\right\|^2-\left\|P_{\xi_j,\xi_i}\overline{\tau}_{i,r}-\overline{\tau}_{j,s}\right\|^2\right|&\leq 4\left\|\left(O_{ji}c_{i,r}-c_{j,s}\right)-\left(P_{\xi_j,\xi_i}\overline{\tau}_{i,r}-\overline{\tau}_{j,s} \right)\right\|\\
  &=O \left( \epsilon_{\mathrm{PCA}}^{\frac{1}{2}}+\epsilon^{\frac{3}{2}} \right).
\end{align*}
Thus a Taylor expansion for $K$ at point
\begin{align*}
  \left( \frac{\left\|\xi_i-\xi_j\right\|^2}{\epsilon},\frac{\left\|O_{ji}c_{i,r}-c_{j,s}\right\|^2}{\delta} \right)
\end{align*}
gives
\begin{align*}
  &K\left( \frac{\left\|\xi_i-\xi_j\right\|^2}{\epsilon},\frac{\left\|O_{ji}c_{i,r}-c_{j,s}\right\|^2}{\delta} \right)\\
  &=K\left( \frac{\left\|\xi_i-\xi_j\right\|^2}{\epsilon},\frac{\left\|P_{\xi_j,\xi_i}\overline{\tau}_{i,r}-\overline{\tau}_{j,s}+O \left( \epsilon_{\mathrm{PCA}}^{\frac{1}{2}}+\epsilon^{\frac{3}{2}} \right)\right\|^2}{\delta} \right)\\
  &=K \left(\frac{\left\|\xi_i-\xi_j\right\|^2}{\epsilon},\frac{\left\|P_{\xi_j,\xi_i}\overline{\tau}_{i,r}-\overline{\tau}_{j,s}\right\|^2}{\delta} \right)\\
  &\quad+\partial_2K\left(\frac{\left\|\xi_i-\xi_j\right\|^2}{\epsilon},\frac{\left\|P_{\xi_j,\xi_i}\overline{\tau}_{i,r}-\overline{\tau}_{j,s}\right\|^2}{\delta} \right)\cdot \frac{O \left( \epsilon_{\mathrm{PCA}}^{\frac{1}{2}}+\epsilon^{\frac{3}{2}} \right)}{\delta}.
\end{align*}
For any function $g\in C^{\infty}\left( UTM \right)$, this leads to
\begin{align*}
  &\int_{UTM}\mathscr{K}_{\epsilon,\delta}\left( \overline{\tau}_{i,r},\eta \right)g \left( \eta \right)d\Theta \left( \eta \right)\\
  &=\int_{UTM}\tilde{K}_{\epsilon,\delta}\left( \overline{\tau}_{i,r},\eta \right)g \left( \eta \right)d\Theta \left( \eta \right)\\
  &+\frac{O \left( \epsilon_{\mathrm{PCA}}^{\frac{1}{2}}+\epsilon^{\frac{3}{2}} \right)}{\delta}\int_M\!\int_{S_y}\partial_2K\left(\frac{\left\|\xi_i-y\right\|^2}{\epsilon},\frac{\left\|P_{\xi_j,\xi_i}\overline{\tau}_{i,r}-w\right\|^2}{\delta} \right)g \left( y,w \right)d\sigma_y \left( w \right)d\mathrm{vol}_M \left( y \right)\\
  &=\int_{UTM}\tilde{K}_{\epsilon,\delta}\left( \overline{\tau}_{i,r},\eta \right)g \left( \eta \right)d\Theta \left( \eta \right)+\epsilon^{\frac{d}{2}}\delta^{\frac{d-1}{2}-1}O \left( \epsilon_{\mathrm{PCA}}^{\frac{1}{2}}+\epsilon^{\frac{3}{2}} \right).
\end{align*}
Following the notation used in the proof of Theorem~\ref{thm:utm_finite_sampling_noiseless}, by the law of large numbers
\begin{align*}
  &\lim_{N_B\rightarrow\infty}\lim_{N_F\rightarrow\infty}\frac{1}{N_BN_F}\hat{q}_{\epsilon,\delta}\left( \overline{\tau}_{i,r} \right)=\mathbb{E}_1\mathbb{E}_2\left[\mathscr{K}_{\epsilon,\delta}\left(\overline{\tau}_{i,r},\cdot \right)\right]\\
  &=\mathbb{E}_1\mathbb{E}_2\left[\tilde{K}_{\epsilon,\delta}\left(\overline{\tau}_{i,r},\cdot \right)\right]+\epsilon^{\frac{d}{2}}\delta^{\frac{d-1}{2}-1}O \left( \epsilon_{\mathrm{PCA}}^{\frac{1}{2}}+\epsilon^{\frac{3}{2}} \right),
\end{align*}
and hence we expect $\mathscr{H}_{\epsilon,\delta}^{\alpha}f \left( \overline{\tau}_{i,r} \right)$ to converge to
\begin{align*}
    &\tilde{H}_{\epsilon,\delta}^{\alpha}f \left( \overline{\tau}_{i,r} \right)+O \left( \delta^{-1}\left( \epsilon_{\mathrm{PCA}}^{\frac{1}{2}}+\epsilon^{\frac{3}{2}} \right) \right)\\
    &= f \left( \overline{\tau}_{i,r} \right)+\epsilon \frac{m_{21}}{2m_0}\left[\frac{\Delta_S^H\left[fp^{1-\alpha}\right]\left( \overline{\tau}_{i,r} \right)}{p^{1-\alpha}\left( \overline{\tau}_{i,r} \right)}-f \left( \overline{\tau}_{i,r} \right) \frac{\Delta_S^Hp^{1-\alpha}\left( \overline{\tau}_{i,r} \right)}{p^{1-\alpha}\left( \overline{\tau}_{i,r} \right)}  \right]\\
    &\quad+\delta \frac{m_{22}}{2m_0}\left[\frac{\Delta_S^V\left[fp^{1-\alpha}\right]\left( \overline{\tau}_{i,r} \right)}{p^{1-\alpha}\left( \overline{\tau}_{i,r} \right)}-f \left( \overline{\tau}_{i,r} \right) \frac{\Delta_S^Vp^{1-\alpha}\left( \overline{\tau}_{i,r} \right)}{p^{1-\alpha}\left( \overline{\tau}_{i,r} \right)}  \right]\\
    &\quad+O \left( \epsilon^2+\delta^2\right)+O \left( \delta^{-1}\left( \epsilon_{\mathrm{PCA}}^{\frac{1}{2}}+\epsilon^{\frac{3}{2}} \right) \right).
\end{align*}
In fact, noting that
\begin{align*}
  \frac{1}{\epsilon^{\frac{d}{2}}\delta^{\frac{d-1}{2}}N_BN_F}\hat{q}_{\epsilon,\delta}\left( \overline{\tau}_{i,r} \right)&=\frac{1}{\epsilon^{\frac{d}{2}}\delta^{\frac{d-1}{2}}N_BN_F}\sum_{j=1}^{N_F}\sum_{s=1}^{N_B}\mathscr{K}_{\epsilon,\delta} \left( \overline{\tau}_{i,r},\overline{\tau}_{j,s} \right)\\
   &=\frac{1}{\epsilon^{\frac{d}{2}}\delta^{\frac{d-1}{2}}N_BN_F}K_{\epsilon,\delta}\left( \overline{\tau}_{i,r},\overline{\tau}_{j,s} \right)+\frac{O \left( \delta^{-1}\left( \epsilon_{\mathrm{PCA}}^{\frac{1}{2}}+\epsilon^{\frac{3}{2}} \right) \right)}{\epsilon^{\frac{d}{2}}\delta^{\frac{d-1}{2}}}\\
   &=\frac{1}{\epsilon^{\frac{d}{2}}\delta^{\frac{d-1}{2}}N_BN_F}\hat{p}_{\epsilon,\delta}\left( \overline{\tau}_{i,r},\overline{\tau}_{j,s} \right)+\frac{O \left( \delta^{-1}\left( \epsilon_{\mathrm{PCA}}^{\frac{1}{2}}+\epsilon^{\frac{3}{2}} \right) \right)}{\epsilon^{\frac{d}{2}}\delta^{\frac{d-1}{2}}},
\end{align*}
we have
\begin{align*}
  &\epsilon^{\alpha d}\delta^{\alpha\left(d-1\right)}N_B^{2\alpha}N_F^{2\alpha}\mathscr{K}_{\epsilon,\delta}^{\alpha} \left( \overline{\tau}_{i,r}, \overline{\tau}_{j,s} \right)\\
  & = \frac{\mathscr{K}_{\epsilon,\delta}\left( \overline{\tau}_{i,r}, \overline{\tau}_{j,s} \right)}{\displaystyle \left(\frac{1}{\epsilon^{\frac{d}{2}}\delta^{\frac{d-1}{2}}N_BN_F}\hat{q}_{\epsilon,\delta}\left( \overline{\tau}_{i,r}\right)\right)^{\alpha}\left(\frac{1}{\epsilon^{\frac{d}{2}}\delta^{\frac{d-1}{2}}N_BN_F}\hat{q}_{\epsilon,\delta}\left( \overline{\tau}_{j,s}\right)\right)^{\alpha}}\\
  &=\frac{\displaystyle K_{\epsilon,\delta}\left( \overline{\tau}_{i,r},\overline{\tau}_{j,s} \right)+O \left( \delta^{-1}\left( \epsilon_{\mathrm{PCA}}^{\frac{1}{2}}+\epsilon^{\frac{3}{2}} \right) \right)}{\displaystyle \left(\frac{1}{\epsilon^{\frac{d}{2}}\delta^{\frac{d-1}{2}}N_BN_F}\hat{p}_{\epsilon,\delta}\left( \overline{\tau}_{i,r}\right)\right)^{\alpha}\left(\frac{1}{\epsilon^{\frac{d}{2}}\delta^{\frac{d-1}{2}}N_BN_F}\hat{p}_{\epsilon,\delta}\left( \overline{\tau}_{j,s}\right)\right)^{\alpha}+O \left( \delta^{-1}\left( \epsilon_{\mathrm{PCA}}^{\frac{1}{2}}+\epsilon^{\frac{3}{2}} \right) \right)}\\
  &=\epsilon^{\alpha d}\delta^{\alpha\left(d-1\right)}N_B^{2\alpha}N_F^{2\alpha}K_{\epsilon,\delta}^{\alpha}\left( \overline{\tau}_{i,r},\overline{\tau}_{j,s} \right)+O \left( \delta^{-1}\left( \epsilon_{\mathrm{PCA}}^{\frac{1}{2}}+\epsilon^{\frac{3}{2}} \right) \right).
\end{align*}
Consequently, $\mathscr{H}_{\epsilon,\delta}^{\alpha}f \left( \overline{\tau}_{i,r} \right)$ is approximately $\hat{H}_{\epsilon,\delta}^{\alpha}f \left( \overline{\tau}_{i,r} \right)$:
\begin{align*}
  &\mathscr{H}_{\epsilon,\delta}^{\alpha}f \left( \overline{\tau}_{i,r} \right)=\frac{\displaystyle \sum_{j=1}^{N_B}\sum_{s=1}^{N_F}\mathscr{K}_{\epsilon,\delta}^{\alpha} \left( \overline{\tau}_{i,r}, \overline{\tau}_{j,s}\right)f \left( \overline{\tau}_{j,s} \right)}{\displaystyle \sum_{j=1}^{N_B}\sum_{s=1}^{N_F}\mathscr{K}_{\epsilon,\delta}^{\alpha} \left( \overline{\tau}_{i,r}, \overline{\tau}_{j,s}\right)}\\
  &=\frac{\displaystyle \frac{1}{N_BN_F}\sum_{j=1}^{N_B}\sum_{s=1}^{N_F}\epsilon^{\alpha d}\delta^{\alpha\left(d-1\right)}N_B^{2\alpha}N_F^{2\alpha}\mathscr{K}_{\epsilon,\delta}^{\alpha} \left( \overline{\tau}_{i,r}, \overline{\tau}_{j,s}\right)f \left( \overline{\tau}_{j,s} \right)}{\displaystyle \frac{1}{N_BN_F}\sum_{j=1}^{N_B}\sum_{s=1}^{N_F}\epsilon^{\alpha d}\delta^{\alpha\left(d-1\right)}N_B^{2\alpha}N_F^{2\alpha}\mathscr{K}_{\epsilon,\delta}^{\alpha} \left( \overline{\tau}_{i,r}, \overline{\tau}_{j,s}\right)}\\
  &=\frac{\displaystyle \frac{1}{N_BN_F}\sum_{j=1}^{N_B}\sum_{s=1}^{N_F}\epsilon^{\alpha d}\delta^{\alpha\left(d-1\right)}N_B^{2\alpha}N_F^{2\alpha}\tilde{K}_{\epsilon,\delta}^{\alpha} \left( \overline{\tau}_{i,r}, \overline{\tau}_{j,s}\right)f \left( \overline{\tau}_{j,s} \right)+O \left( \delta^{-1}\left( \epsilon_{\mathrm{PCA}}^{\frac{1}{2}}+\epsilon^{\frac{3}{2}} \right) \right)}{\displaystyle \frac{1}{N_BN_F}\sum_{j=1}^{N_B}\sum_{s=1}^{N_F}\epsilon^{\alpha d}\delta^{\alpha\left(d-1\right)}N_B^{2\alpha}N_F^{2\alpha}\tilde{K}_{\epsilon,\delta}^{\alpha} \left( \overline{\tau}_{i,r}, \overline{\tau}_{j,s}\right)+O \left( \delta^{-1}\left( \epsilon_{\mathrm{PCA}}^{\frac{1}{2}}+\epsilon^{\frac{3}{2}} \right) \right)}\\
  &=\frac{\displaystyle \frac{1}{N_BN_F}\sum_{j=1}^{N_B}\sum_{s=1}^{N_F}\epsilon^{\alpha d}\delta^{\alpha\left(d-1\right)}N_B^{2\alpha}N_F^{2\alpha}\tilde{K}_{\epsilon,\delta}^{\alpha} \left( \overline{\tau}_{i,r}, \overline{\tau}_{j,s}\right)f \left( \overline{\tau}_{j,s} \right)}{\displaystyle \frac{1}{N_BN_F}\sum_{j=1}^{N_B}\sum_{s=1}^{N_F}\epsilon^{\alpha d}\delta^{\alpha\left(d-1\right)}N_B^{2\alpha}N_F^{2\alpha}\tilde{K}_{\epsilon,\delta}^{\alpha} \left( \overline{\tau}_{i,r}, \overline{\tau}_{j,s}\right)}+O \left( \delta^{-1}\left( \epsilon_{\mathrm{PCA}}^{\frac{1}{2}}+\epsilon^{\frac{3}{2}} \right) \right)\\
  &=\frac{\displaystyle \sum_{j=1}^{N_B}\sum_{s=1}^{N_F}\tilde{K}_{\epsilon,\delta}^{\alpha} \left( \overline{\tau}_{i,r}, \overline{\tau}_{j,s}\right)f \left( \overline{\tau}_{j,s} \right)}{\displaystyle \sum_{j=1}^{N_B}\sum_{s=1}^{N_F}\tilde{K}_{\epsilon,\delta}^{\alpha} \left( \overline{\tau}_{i,r}, \overline{\tau}_{j,s}\right)}+O \left( \delta^{-1}\left( \epsilon_{\mathrm{PCA}}^{\frac{1}{2}}+\epsilon^{\frac{3}{2}} \right) \right)\\
  &=\hat{H}_{\epsilon,\delta}^{\alpha}f \left( \overline{\tau}_{i,r} \right)+O \left( \delta^{-1}\left( \epsilon_{\mathrm{PCA}}^{\frac{1}{2}}+\epsilon^{\frac{3}{2}} \right) \right).
\end{align*}
Therefore, under the assumption that
\begin{align*}
  \delta^{-1}\left( \epsilon_{\mathrm{PCA}}^{\frac{1}{2}}+\epsilon^{\frac{3}{2}} \right)\longrightarrow 0\quad \textrm{as $\epsilon\rightarrow0$,}
\end{align*}
we can apply Theorem~\ref{thm:utm_finite_sampling_noiseless}. This completes the whole proof.
\end{proof}


\bibliographystyle{plain}
\bibliography{hedm}

\end{document}